\newif\ifrs
\ifrs \usepackage{mathrsfs} \fi
\newif\ifcol
\newcommand{\colred}{\color[rgb]{0.8,0,0}}
\newcommand{\colorr}{\color[rgb]{0.8,0,0}}
\newcommand{\colorg}{\color[rgb]{0,0.5,0}}
\newcommand{\colorb}{\color[rgb]{0,0,0.8}}
\newcommand{\colorn}{\color[rgb]{1,1,1}}
\newcommand{\coloro}{\color[rgb]{1,0.4,0}}%{1,0.851,0}}
\newcommand{\colred}{\color{black}}%{\color[rgb]{0.8,0,0}}
\newcommand{\colorb}{\color{black}}%{\color[rgb]{0,0,0.8}}
\newcommand{\colorr}{\color{black}}% {{\color[rgb]{0.8,0,0}}
\newcommand{\colorg}{\color{black}}% {\color[rgb]{0,0.5,0}}
\newcommand{\colorn}{\color{black}}% {\color[rgb]{1,1,1}}
\newcommand{\coloro}{\color{black}}% {\color[rgb]{1,0.851,0}}
\newtheorem{theorem*}{Theorem}[subsection]
\newtheorem{note*}[theorem*]{Note}
\newtheorem{lemma*}[theorem*]{Lemma}
\newtheorem{definition*}[theorem*]{Definition}
\newtheorem{proposition*}[theorem*]{Proposition}
\newtheorem{corollary*}[theorem*]{Corollary}
\newtheorem{remark*}[theorem*]{Remark}
\newtheorem{example*}[theorem*]{Example}
\numberwithin{equation}{section}
\newtheorem{prop}{Proposition}[section]
\newtheorem{lemme}[prop]{Lemma}
\newtheorem{rem}[prop]{Remark}
\newtheorem{thm}[prop]{Theorem}
\def\bd{\begin{description}}
\def\ed{\end{description}}
\def\partialbs{\backslash\!\!\!\partial}
\def\D2{\bbD_{2,\infty-}}
\def\tj{{t_j}}
\def\tjm{{t_{j-1}}}
\def\D{{\bf D}}
\def\calb{{\cal B}}
\def\calc{{\cal C}}
\def\cale{{\cal E}}
\def\calf{{\cal F}}
\def\calg{{\cal G}}
\def\cali{{\cal I}}
\def\cals{{\cal S}}
\def\yeq{\>=\>}
\def\sfd{{\sf d}}
\def\sfp{{\sf p}}
\def\simleq{\ \raisebox{-.7ex}{$\stackrel{{\textstyle <}}{\sim}$}\ }
\def\ep{\epsilon}
\def\half{\frac{1}{2}}
\def\up{\uparrow}
\def\down{\downarrow}
\def\halflineskip{\vspace*{3mm}}
\def\nn{\nonumber}
\def\be{\begin{equation}}
\def\ee{\end{equation}}
\def\bea{\begin{eqnarray}}
\def\eea{\end{eqnarray}}
\def\beas{\begin{eqnarray*}}
\def\eeas{\end{eqnarray*}}
\def\bi{\begin{itemize}}
\def\ei{\end{itemize}}
\def\im{\item}
\def\bd{\begin{description}}
\def\ed{\end{description}}
\def\dotc{\stackrel{\circ}{C}}
\def\dotw{\stackrel{\circ}{W}}
\newcommand{\bbC}{{\mathbb C}}
\newcommand{\bbD}{{\mathbb D}}
\newcommand{\bbN}{{\mathbb N}}
\newcommand{\bbR}{{\mathbb R}}
\newcommand{\bbW}{{\mathbb W}}
\newcommand{\bbZ}{{\mathbb Z}}
\def\csfd{{\check{\sfd}}}
\def\mfh{{\EuFrak H}}
\def\tti{{\tt i}}
\def\onelineskip{\halflineskip\halflineskip}
\newcommand{\sfx}{{\sf x}}
\newcommand{\sfz}{{\sf z}}
\def\sfp{{\sf p}}
\def\sfd{{\sf d}}
\def\tj{{t_j}}
\def\tjm{{t_{j-1}}}
\def\tkm{{t_{k-1}}}
\def\tlm{{t_{\ell-1}}}
\def\bd{\begin{description}}
\def\ed{\end{description}}
\def\partialbs{\backslash\!\!\!\partial}
\def\D2{\bbD_{2,\infty-}}
\def\dotc{\stackrel{\circ}{C}}
\def\dotw{\stackrel{\circ}{W}}
\def\dotx{\stackrel{\circ}{X}}
\def\dotg{\stackrel{\circ}{G}}
\def\HH{\EuFrak H}
\def\cal{\mathcal}
\def\1{{\mathbf{1}}}
\def\1{{\mathbf{1}}}
\def\0.5{{\frac{1}{2}}}
\begin{document}
%\pagecolor{black}
%\color{white}

\title{%Expansion of the Asymptotically Conditionally Normal Law %for ISM Research Memo. 
Asymptotic expansion of Skorohod integrals%arXiv
%Asymptotic expansion of the quasi maximum likelihood estimator for volatility%Submit. 
\footnote{
This work was in part supported by 
 the    NSF grant  DMS 1512891;
Japan Society for the Promotion of Science 
Grants-in-Aid for Scientific Research No. 17H01702 (Scientific Research), 
%No. 26540011 (Challenging Exploratory Research); 
%the Global COE program ``The Research and Training Center for New Development in Mathematics'' 
%of the Graduate School of Mathematical Sciences, University of Tokyo; 
Japan Science and Technology Agency CREST  JPMJCR14D7; 
%NS Solutions Corporation; 
and by a Cooperative Research Program of the Institute of Statistical Mathematics.
%{\colorred The authors thank the referee for valuable comments.}
}
}
\author[1]{David Nualart}
\affil[1]{Department of Mathematics, Kansas University\footnote
        {Lawrence, Kansas, 66045, USA
%Email: nualart@math.ku.edu
%Home page: http://www.math.ku.edu/\UTF{223C}nualart
}}

\author[2,3,4]{Nakahiro Yoshida}
\affil[2]{Graduate School of Mathematical Sciences, University of Tokyo\footnote
        {Graduate School of Mathematical Sciences, University of Tokyo: 3-8-1 Komaba, Meguro-ku, Tokyo 153-8914, Japan}}
\affil[3]{Japan Science and Technology Agency CREST%\footnote{}
        }
\affil[4]{The Institue of Statistical Mathematics
%%\footnote{Institue of Statistical Mathematics: 10-3 Midori-cho, Tachikawa, Tokyo 190-8562, Japan}
        }

\date{%November 16, 2016 \\
%April 21, 2017\\
%June 15, 2017\\
%November 18, 2017
%December 14, 2017
December 30, 2017
%Revised February 25, 2012
}
\maketitle
\ \\
{\it Summary} 
Asymptotic expansion of the distribution of a perturbation $Z_n$ of a Skorohod integral 
jointly with a reference variable $X_n$ is derived. 
We introduce a second-order interpolation formula in frequency domain 
to expand a characteristic functional and combine it with the scheme developed in the martingale expansion. 
The second-order interpolation and Fourier inversion give asymptotic expansion of the expectation 
$E[f(Z_n,X_n)]$ for differentiable functions $f$ and also measurable functions $f$. 
In the latter case, the interpolation method connects the two non-degeneracies of variables 
for finite $n$ and $\infty$. 
Random symbols are used for expressing the asymptotic expansion formula. 
Quasi tangent, quasi torsion and modified quasi torsion are introduced in this paper. 
We identify these random symbols for a certain quadratic form of a fractional Brownian motion and 
for a quadratic from of a fractional Brownian motion with random weights. 
For a quadratic form of a Brownian motion with random weights, 
we observe that our formula reproduces the formula originally obtained by the martingale expansion. \ \\
\ \\
{\it Keywords and phrases} 
Asymptotic expansion, Skorohod integral, interpolation, random symbol, 
quasi tangent, quasi torsion, modified quasi torsion, Malliavain covariance, 
quadratic form, fractional Brownian motion.
\ \\

%%%%%%%%%%%%%%%%%%%%%%%%%%%%%%%%%%%%%%%%%%%%%%%%%%%%%%%%%%%%
%%%%%%%%%%%%%%%%%%%%%%%%%%%%%%%%%%%%%%%%%%%%%%%%%%%%%%%%%%%%
%%%%%%%%%%%%%%%%%%%%%%%%%%%%%%%%%%%%%%%%%%%%%%%%%%%%%%%%%%%%
%%%%%%%%%%%%%%%%%%%%%%%%%%%%%%%%%%%%%%%%%%%%%%%%%%%%%%%%%%%%
%%%%%%%%%%%%%%%%%%%%%%%%%%%%%%%%%%%%%%%%%%%%%%%%%%%%%%%%%%%%

\section{Introduction}

Asymptotic expansion of distributions is one of the fundamentals of theoretical statistics. 
Its applications spread over higher order approximation of probability distributions, 
theory of higher order asymptotic efficiency of estimators, 
prediction, information criteria for model selection, saddle point approximation, 
bootstrap and resampling methods, information geometry and so on. 
Bhattacharya and Rao \cite{bhattacharya2010normal} give an excellent exposition of the probabilistic aspects 
of asymptotic expansion for independent variables, while we omit a huge amount of literature 
on statistical applications of asymptotic expansion methods. 
Asymptotic expansion has a long history even for dependent models. 
A celebrated paper G\"otze and Hipp \cite{GotzeHipp1983} is a  compilation of the studies 
of asymptotic expansion for Markovian or nearly Markovian chains with mixing property. 
It was followed by G\"otze and Hipp \cite{GotzeHipp1994}, that applied their result to time series models 
more explicitly. 

For functionals in stochastic analysis, there are two ways: 
martingale approach and mixing approach. 
Yoshida \cite{Yoshida1997,yoshida2001malliavin} gave asymptotic expansions for martingales 
with the Malliavin calculus on Wiener/Wiener-Poisson space 
and applied them to an ergodic diffusion, a volatility estimation over a finite time interval in central limit case, 
and a stochastic regression model with an explanatory process having long memory. 
Regularity of the distribution is critical to validate an asymptotic expansion. 
Thus, naturally the Malliavin calculus was used there to ensure a decay rate of 
 characteristic type functionals. 
In connection, though the regularity problem does not occur there, 
Mykland \cite{Mykland1992} is a pioneering work on expansion of moments for a smooth function of a martingale. 
The mixing approach is more efficient if a sufficiently fast mixing property is available. 
Kusuoka and Yoshida \cite{KusuokaYoshida2000} and 
Yoshida \cite{yoshida2004partial} developed asymptotic expansions 
for $\ep$-Markov processes possessing a mixing property. 
The Malliavin calculus was used to estimate 
certain conditional characteristic functionals defined locally in time with the assistance of support theorems. 
See e.g. Yoshida \cite{yoshida2016asymptotic} for an overview  
and references therein. 

In the last three decades, along with the developments in statistics for high frequency data, 
stable limit theorems have attracted a lot of attention. 
Estimation of volatility from high frequency data under finite time horizon typically becomes non-ergodic statistics. 
Then, the asymptotic expansion of functionals of increments of stochastic processes 
is once again an issue after recent tremendous progresses in limit theorems in this area. 
Even though big data is available, 
the problem of microstructure noise motivates  the use of asymptotic expansion. For example,
Yoshida \cite{yoshida2013martingale} extended \cite{Yoshida1997} to martingales with mixed Gaussian limit\footnote{An updated version is arXiv:1210.3680 (2012).}, 
Podolskij and Yoshida \cite{podolskij2016edgeworth} derived  a distributional asymptotic expansion of 
the p-variation of a diffusion process and
Podolskij, Veliyev and Yoshida \cite{podolskij2017edgeworth} gave an Edgeworth expansion 
for the pre-averaging estimator for a diffusion process sampled under microstructure noise.

Beyond semimartingales theory, special attention has been focused in recent years 
on limit theorems for objects in the Malliavin calculus. 
Nualart and Peccati \cite{nualart2005central} 
established the fourth moment theorem and 
characterized the central limit theorem for a sequence of multiple stochastic integrals of a fixed order. 
Nualart and Oriz-Latorre \cite{nualart2008central} extended the result in Nualart and Peccati \cite{nualart2005central}. 
%diagram formulae [see Surgailis (2000) for a detailed survey].
%
Peccati and Tudor \cite{peccati2005gaussian} presented necessary and sufficient conditions for 
the central limit theorem for vectors of multiple stochastic integrals and showed that 
componentwise convergence implies joint convergence. 
Nourdin, Nualart and Peccati \cite{nourdin2016quantitative} introduced an interpolation technique 
and proved quantitative stable limit theorems where the limit distribution is a mixture of 
Gaussian distributions. 
Power variation, stable convergence and Berry-Esseen type inequality are also in the scope of this trend. 
%
%

% power variation
Nourdin and Peccati \cite{nourdin2008weighted} showed  
the asymptotic behavior of a weighted power variation processes associated
with the so-called iterated Brownian motion. 
Corcuera, Nualart and Woerner \cite{corcuera2006power} 
gave a mixture type central limit theorem for the power variation of a stochastic integral with respect to a fractional Brownian motion. 
Nourdin, Nualart and Tudor \cite{nourdin2010central} derived central and non-central limit theorems 
for certain weighted power variations of the fractional Brownian motion. 
Nourdin \cite{nourdin2008asymptotic} showed various asymptotic behavior of weighted quadratic and 
cubic variations of a fractional Brownian motion having a small Hurst index. 

% stable convergence 
By connecting a martingale approach and deforming a nesting condition, 
Peccati and Taqqu \cite{peccati2008stable} showed stable convergence of %Skorohod Integrals
multiple Wiener-It\^o integrals. 
Nourdin and Nualart \cite{nourdin2010central} proved a central limit theorem for a sequence of multiple
Skorohod integrals and applied it to renormalized weighted Hermite
variations of the fractional Brownian motion. 
Related to stable convergence are Harnett and Nualart \cite{harnett2012weak,harnett2013central} 
on weak convergence of the Stratonovich integral with respect to a class of Gaussian processes. 

% Berry-Esseen bound for CLT
Based on Stein's method, among many others, 
Nourdin and Peccati \cite{nourdin2009stein} presented a Berry-Esseen bound for multiple Wiener-It\^o integrals, %%%
and 
Edan and V\'iquez \cite{eden2015nourdin} obtained central limit theorems with Wiener-Poisson space. 
Kusuoka and Tudor \cite{kusuoka2012stein} proposed Stein's method for invariant measures of diffusions. 
An advantage of Stein's method is that it provides fairly explicit error bounds of approximation. 
The interpolation method recently introduced by Nourdin, Nualart and Peccati \cite{nourdin2016quantitative} 
keeps this merit. 

After observing these developments, the aim of this paper is to derive asymptotic expansions
for Skorohod integrals by means of the Malliavin calculus. 
It is worth recalling the  terminology in the martingale expansion of \cite{yoshida2013martingale} 
though our discussion will be apart from the martingale theory.  
For a sequence of continuous martingales $M^n=\{M^n_t, t\in[0,1]\}$, 
denote by $C^n=\langle M^n\rangle$ the quadratic variation of $M^n$. % evaluated at $t=1$. 
When $C_n:=C^n_1\to^pC_\infty$ as $n\to\infty$, stable convergence of $M_n:=M^n_1$ to 
a mixed normal limit $M_\infty\sim N(0,C_\infty)$ 
usually takes place even if $C_\infty$ is random. 
More precise an evaluation of the gap $C_n-C_\infty$ is necessary to go up to an asymptotic expansion. 
The variable $\dotc_n=r_n^{-1}(C_n-C_\infty)$ is called {\it tangent}, where  
$r_n$ is a positive number tending to zero as $n\to\infty$. 
The effect of $\dotc_n$ appears in the first order asymptotic expansion,
and it gives everything in the classical case of constant $C_\infty$. 
On the other hand, if $C_\infty$ is random, as it is the case of non-ergodic statistics, then 
the exponential local martingale $e^n_t(\sfz)=\exp\big(\tti\sfz M^n+2^{-1}\sfz^2C^n_t\big)$ 
is no longer a local martingale under the transformed measure $\exp\big(-2^{-1}\sfz^2C_\infty\big)dP/E[\exp\big(-2^{-1}\sfz^2C_\infty\big)]$. 
This effect remains in the asymptotic expansion, called the {\it torsion} the exponential martingale suffers from. 
Two random symbols $\underline{\sigma}$ and $\overline{\sigma}$ are defined 
for tangent and torsion, respectively, and 
the asymptotic expansion formula is given in terms of the Gaussian density $\phi(z;0,C_\infty)$ 
with variance $C_\infty$ and the adjoint operation of $\underline{\sigma}$ and $\overline{\sigma}$. 
In this article, we will make an expansion formula for 
the Skorohod integral $M_n=\delta(u_n)$ of $u_n$ in a similar way through certain random symbols. 
However, since we do not have any self-evident martingale structure, 
we introduce new random symbols called {\it quasi tangent}, {\it quasi torsion} and {\it modified quasi torsion} 
defined only by Malliavin derivatives of functionals. 

We will take a Fourier analytic approach. 
It is because the formula is a perturbation of a Gaussian density and the actions of random symbols are 
simply expressed, as it was the case in classical theory. 
Moreover, if we extend such a result, 
the limit is possibly related to infinitely divisible distributions even if their mixture appears, and then  
formulation by random symbols seems natural from an operational point of view. 
We use an interpolation method in the frequency domain 
and 
expand a characteristic function of the interpolation. 
The second-order interpolation is provided to relate the distribution of $M_n$ with the random symbols 
that determine the expansion formula. 

In this paper, we combine the interpolation method and the scheme originating from martingale expansion. 
Non-degeneracy of distributions plays an essential role to validate the asymptotic expansion 
of the expectation $E[f(M_n)]$ for measurable functions $f$. 
For that, it is necessary to connect the two non-degeneracies of $M_n$ and $M_\infty$. 
The interpolation method serves as a homotopy between the two random functions, 
just as the interpolation along the real time $t$ was used in 
the martingale expansion \cite{yoshida2013martingale}. 
We require only a local non-degeneracy of $M_n$, not the full non-degeneracy. 
There is a big difference between them. For the latter, we need large deviation estimates and 
that plot often fails in practice. In parametric estimation, we quite often meet a situation where the estimator is not 
defined on the whole $\Omega$ but defined locally as a smooth functional. Then localization is inevitable.

Finally, related to this article, 
we mention a recent work by 
Tudor and Yoshida \cite{tudoryoshida2017} on  
asymptotic expansion of multiple stochastic integrals. 
 
The organization of this paper is as follows. 
We will work with the variable $Z_n$ defined in Section \ref{20160813-1}  
as a perturbation of a Skorohod integral $M_n=\delta(u_n)$ since such 
a stochastic expansion appears 
when statistical estimators are considered. 
A reference variable $X_n$ is also considered. 
This formulation is natural because Studentization is common in non-ergodic statistics, 
and also because the principal part of the normalized estimator is often expressed as the ratio of a Skorohod integral and 
Fisher information. 
Section \ref{20160813-1} introduces the interpolation method and an expansion of a characteristic type functional 
along the interpolation, 
as well as notion of quasi tangent, quasi tosion and modified quasi torsion. 
Section \ref{170813-1} gives asymptotic expansion of $E[f(Z_n,X_n)]$ for differentiable functions $f$. 
We compute the random symbols for a functional of a fractional Brownian motion in Section \ref{170813-2}. 
Since the Skorohod integral generalizes the It\^o integral, our formula should reproduce the same formula as 
that of \cite{yoshida2013martingale} 
if applied to the quadratic form of a Brownian motion with random weights. We will see this in Section \ref{170813-3} 
but the derivation is more complicated than the direct use of the martingale expansion %of \cite{yoshida2013martingale} 
for 
the double It\^o integrals. 
In Section \ref{170813-4}, the random symbols are computed for a quadratic form of a fractional Brownian motion 
with random weights. 
Finally, Section \ref{170813-5} validates asymptotic expansions of $E[f(Z_n,X_n)]$ for measurable functions $f$. 
As mentioned above, we carry out this task by using two non-degeneracies of the Malliavin covariances, 
with the help of the interpolation.

\section{Second-order interpolation formula in frequency domain}
%{Expansion of a smooth functional}
\label{20160813-1}
\subsection{Perturbation of a Skorohod integral}
Given a probability space $(\Omega,\calf,P)$, 
we consider an isonormal Gaussian process $\bbW=\{\bbW(h), h\in\mfh\}$ 
on a real separable Hilbert space $\mfh$. 
For any Hilbert space ${\sf E}$,  any real number $p\ge 1$ and any integer $k\ge 1$, we denote by  $\bbD^{k,p}({\sf E})$ the Sobolev space of
${\sf E}$-valued random variables which are $k$ times differentiable in the sense of Malliavin calculus and the derivatives up to order $k$  have finite moments of order $p$.  We denote by $D$ the  derivative operator in the framework of Malliavin calculus. Its adjoint, denoted by  $\delta$, is called the divergence or the Skorokod integral. We refer to Nualart \cite{Nualart2006} for a detailed account on Malliavin calculus.
We simply write $\bbD^{s,p}$ for $\bbD^{s,p}(\bbR)$. 
Moreover we write $\bbD^{s,\infty}({\sf E})=\cap_{p\ge 1}\bbD^{s,p}({\sf E})$.

For $n\in\bbN$, suppose that $u_n\in\bbD^{1,p}(\mfh\otimes\bbR^\sfd)$ for some $p \ge 2$, i.e., 
$u_n=(u_n^i)_{i=1}^\sfd$ with each $u_n^i\in\bbD^{1,\sfp}(\mfh)$, $i=1,\dots,\sfd$. {\color{black} Let  $M_n=\delta(u_n)$, where
 $\delta(u_n)=(\delta(u_n^i) )_{i=1}^\sfd$. 
Consider random vectors $W_n$ ($n\in\overline{\bbN}=\bbN\cup\{\infty\}$) and  $N_n:\Omega\to\bbR^\sfd$ ($n\in\bbN$).} For a sequence of positive numbers $(r_n)_{n\in\bbN}$ tending to $0$ as $n\to\infty$, 
we will consider a perturbation $Z_n$ of $M_n$ given by 
\beas 
Z_n&=& M_n+W_n+r_n N_n. 
\eeas
Let $G_\infty:\Omega\to\bbR^\sfd\otimes_+\bbR^\sfd$ {\color {black}be a random matrix}, where 
$\bbR^\sfd\otimes_+\bbR^\sfd$ is the set of $\sfd\times\sfd$ nonnegative symmetric matrices. 
The random matrix $G_\infty$ will be the asymptotic random variance matrix of $Z_n$. 
A reference variable is denoted by $X_n:\Omega\to\bbR^{\sfd_1}$, $n\in\overline{\bbN}$. 
%\im $S\in\bbD^{2,4\sfp}(\bbR^\sfd\otimes_+\bbR^\sfd)$ {\colorr [This $S$ is ``$S_\ep$''.]}
We are interested in a higher-order approximation of the joint distribution of $(Z_n,X_n)$. 

For a tensor $T=(T_{i_1,\dots,i_k})_{i_1,\dots ,{\color {black} i_k}}$, we write 
\begin{en-text}
\beas 
T[u_1,\dots,u_r,\cdot]
&=&
T[u_1\otimes\cdots\otimes u_r,\cdot]
\>=\>
\bigg(\sum_{i_1,\dots,i_r}T_{i_1,\dos,i_r,i_{r+1},\dots,i_k}
u_1^{i_1}\cdots u_r^{i_r}\bigg)_{i_{r+1},\dots,i_k}
\eeas
\end{en-text}
\beas 
T[u_1,\dots,u_k]
&=&
T[u_1\otimes\cdots\otimes u_k]
\>=\>
\sum_{i_1,\dots,i_{\colred k}}T_{i_1,\dots,i_k}
u_1^{i_1}\cdots u_k^{i_k}
\eeas
for $u_1=(u_1^{i_1})_{i_1}$, $\dots$, $u_k=(u_k^{i_k})_{i_k}$. 
Brackets $[\ \ ]$ stand for multilinear mappings. 
We simply  denote $u^{\otimes r}=u\otimes\cdots\otimes u$ ($r$ times).

\subsection{Interpolation and expansion}\label{170805-10}
Define $W_n(\theta)$ and $X_n(\theta)$ by 
$W_n(\theta)=\theta W_n+(1-\theta)W_\infty$ and $X_n(\theta)=\theta X_n+(1-\theta)X_\infty$, respectively,   
for $\theta\in[0,1]$. 
Moreover, the tangent variables are defined by 
$\dotw_n=r_n^{-1}(W_n-W_\infty)$ and $\dotx_n=r_n^{-1}(X_n-X_\infty)$. 
We will construct an interpolation like that of 
Nourdin, Nualart and Peccati \cite{nourdin2016quantitative} 
but in the frequency domain. 
%Let $(\sfz,\sfx)\in\bbR^\sfd\times\bbR^{\sfd_1}=\bbR^{\check{\sfd}}$, $\check{\sfd}=\sfd+\sfd_1$. 
%
Define $\lambda_n(\theta;\sfz,\sfx)$ by 
\bea\label{20160920-1} 
\lambda_n(\theta;\sfz,\sfx) &=& \theta M_n[\tti\sfz]
+2^{-1}(1- \theta^2)G_\infty[(\tti\sfz)^{\otimes2}]
+W_n(\theta)[\tti\sfz]+\theta r_nN_n[\tti\sfz]+X_n(\theta)[\tti\sfx]
\eea
for $\theta\in[0,1]$, $\sfz\in\bbR^\sfd$ and $\sfx\in\bbR^{\sfd_1}$. 
%Denote $\lambda_n(\theta;\sfz,\sfx)$ by $\lambda_n(\theta)$. 
In particular, 
\beas 
\lambda_n(0;\sfz,\sfx) &=&
2^{-1}G_\infty[(\tti\sfz)^{\otimes2}]+W_\infty[\tti\sfz]+X_\infty[\tti\sfx]
\>=:\>\lambda_\infty(0;\sfz,\sfx),
\eeas
\beas 
\lambda_n(1;\sfz,\sfx) &=&
Z_n[{\colorb \tti\sfz}]+X_n[\tti\sfx]
\eeas
and
\beas 
\big|e^{\lambda_n(\theta;\sfz,\sfx)}\big| &\leq& 1. 
\eeas
Let $\check{\sfd}=\sfd+\sfd_1$.

\begin{rem}\rm More generally, for 
$\gamma_i\in C^1([0,1];[0,1])$ such that 
$\gamma_i(0)=0$ and $\gamma_i(1)=1$ ($i=0,1,2,3$), we can consider 
an interpolation 
\beas 
\lambda_n(\theta;\sfz,\sfx) &=& \gamma_0(\theta)M_n[\tti\sfz]
+2^{-1}(1- \gamma_0(\theta)^2)G_\infty[(\tti\sfz)^{\otimes2}]
+W_n(\gamma_1(\theta))[\tti\sfz]+\gamma_2(\theta)\>r_nN_n[\tti\sfz]+X_n(\gamma_3(\theta))[\tti\sfx]
\eeas
for $\theta\in[0,1]$. 
However, it turns out that the derived formula does not depend on a choice of $\gamma_i$. So we will take the identity function for $\gamma_i$, i.e., (\ref{20160920-1}) 
as $\lambda_n(\theta;\sfz,\sfx)$. 
\end{rem}

{\colred 
\begin{rem}\rm 
We could start with the decomposition 
\beas 
Z_n&=& M_n+W_\infty+r_n\tilde{N}_n
\eeas
of $Z_n$, by taking $\tilde{N}_n=\dotw_n+N_n$. 
This decomposition would be expected to slightly simplify the presentation but 
%the tangent of $W_\infty$ is null. 
the complexity would be the same because, 
as a matter of fact, $\dotw_n$ and $N_n$ will always be treated as a set 
like in $\check{G}^{(1)}_n$ and $\hat{G}^{(1)}_n$ defined below. 
\end{rem}
}

Consider  a sequence $\psi_n\in\bbD^{1,p_1}(\bbR)$, and for a while we suppose that 
$u_n\in\bbD^{2,p}(\mfh\otimes\bbR^\sfd)$, 
$G_\infty\in\bbD^{1,p}(\bbR^\sfd\otimes_+\bbR^\sfd)$, 
$W_n,W_\infty,N_n\in\bbD^{1,p}(\bbR^\sfd)$ and  
$X_n,X_\infty\in\bbD^{1,p}(\bbR^{\sfd_1})$  with ${\colorr 2p^{-1}}+p_1^{-1}\leq1$. 
In the special case $\psi_n\equiv1$, we let $p_1=\infty$. 

We write 
\beas 
\varphi_n(\theta;\psi_n) 
{\colorb \>=\> \varphi_n(\theta,\sfz,\sfx;\psi_n) }
&=& 
E\big[e^{\lambda_n(\theta;\sfz,\sfx)}\psi_n\big]. 
\eeas
The random matrix $G_n$ $(\sfd\times\sfd)$ is defined by 
\beas 
G_n[(\tti\sfz)^{\otimes2}] &=& \langle DM_n[\tti\sfz],u_n[\tti\sfz]\rangle_\mfh. 
%G_\infty &=& S^2
\eeas

% Let 
%\beas 
%\dotg_n &=& r_n^{-1}(G_n-G_\infty). 
%\eeas

\begin{rem}\rm 
The variable $G_n$ is different  from $C_n=\langle M^n\rangle_T$ 
in the martingale expansion since, in general,
\beas 
\langle DM_n,u_n\rangle_\mfh &\not=& 
\langle u_n,u_n\rangle_\mfh.
\eeas
 That is, $\dotg_n= r_n^{-1}(G_n-G_\infty) $ is not 
necessarily the tangent variable $\dotc_n$, and  the sequences  $\dotg_n$ and $\dotc_n$ have  different limits,  in general.
However, 
the limit $G_\infty$ of $G_n$ may coincide with the limit $C_\infty$ of $C_n$. 
In short,  {\color {black} we may have} $G_\infty=C_\infty$, however, in general, $ \dotg_\infty\not=\dotc_\infty$.
In particular, $\dotg_n$ may converge to $0$. 
\end{rem}

The random tensor 
\beas 
{\colorb \tt{qTan}[(\tti\sfz)^{\otimes2}]}&=&
r_n^{-1}\bigg(\big\langle DM_n[\tti\sfz],u_n[\tti\sfz]\big\rangle_\mfh-G_\infty[(\tti\sfz)^{\otimes2}]\bigg)
\eeas
on $(\tti\sfz)^{\otimes2}\in\bbR^{\sfd}\otimes\bbR^{\sfd}$,
is called the {\bf quasi tangent} (q-tangent), and 
the random tensor 
\beas 
{\colorb \tt{qTor}[(\tti\sfz)^{\otimes3}]}&=&
r_n^{-1}\bigg\langle D\big\langle DM_n[\tti\sfz],u_n[\tti\sfz]\big\rangle_\mfh,u_n[\tti\sfz]\bigg\rangle_\mfh
\eeas
{\colorb on $(\tti\sfz)^{\otimes3}\in(\bbR^{\sfd})^{\otimes3}$} 
\noindent{\colorr when $u_n\in\bbD^{3,p}(\mfh\otimes\bbR^\sfd)$,}
is called the {\bf quasi torsion} (q-torsion). 
Moreover, we call the random tensor  
\beas 
{\colorb \tt{mqTor}[(\tti\sfz)^{\otimes3}]}&=&
r_n^{-1}\big\langle DG_\infty[(\tti\sfz)^{\otimes2}],u_n[\tti\sfz]\big\rangle_\mfh
\eeas
{\colorb on $(\tti\sfz)^{\otimes3}\in(\bbR^{\sfd})^{\otimes3}$},
the {\bf modified quasi torsion} (modified q-torsion). 
{\colorb 
Then 
\beas 
\big\langle D\>\tt{qTan}[(\tti\sfz)^{\otimes2}],u_n[\tti\sfz]\big\rangle_\mfh 
&=& 
\tt{qTor}[(\tti\sfz)^{\otimes3}]-\tt{mqTor}[(\tti\sfz)^{\otimes3}].
\eeas
}

Let 
\bea  \label{psi}
\Psi(\sfz,\sfx)&=& 
\exp\bigg(2^{-1}G_\infty[(\tti\sfz)^{\otimes2}]+W_\infty[\tti\sfz]+X_\infty[\tti\sfx]\bigg)
\>\equiv\> e^{\lambda_\infty(0;\sfz,\sfx)}.
\eea
 Then 
\beas 
E\big[\exp\big(Z_n[\tti\sfz]+X_n[\tti\sfx]\big)\psi_n\big]
-E[\Psi(\sfz,\sfx)\psi_n]
&=&
\varphi_n(1;\psi_n)-\varphi_n(0;\psi_n)
\>=\>
\int_0^1\partial_\theta \varphi_n(\theta;\psi_n)d\theta.
\eeas
The derivative of $\varphi_n(\theta;\psi_n)$ is computed as follows 
\beas 
\partial_\theta \varphi_n(\theta;\psi_n)
&=&
E\bigg[e^{\lambda_n(\theta;\sfz,\sfx)}
\bigg\{\delta(u_n[\tti\sfz])-\theta G_\infty[(\tti\sfz)^2]
+r_n\dotw_n[\tti\sfz]+r_n N_n[\tti\sfz]
+r_n\dotx_n[\tti\sfx]\bigg\}\psi_n\bigg]
\\&=& 
E\bigg[e^{\lambda_n(\theta;\sfz,\sfx)}
\bigg\{\delta(u_n[\tti\sfz])-\theta G_\infty[(\tti\sfz)^2]
+  \check{G}^{(1)}_n(\sfz,\sfx) \bigg\} \psi_n\bigg],
\eeas
where 
\beas 
 \check{G}^{(1)}_n(\sfz,\sfx)
&=&
r_n\dotw_n[\tti\sfz]+r_n N_n[\tti\sfz]
+r_n\dotx_n[\tti\sfx].
\eeas
Applying the duality relationship between the Skorohod integral $\delta$ and the derivative operator $D$ (we also call this duality relationship integration by parts  (IBP) formula), yields
\beas
 \partial_\theta \varphi_n(\theta;\psi_n)
&=&%
E\bigg[e^{\lambda_n(\theta;\sfz,\sfx)}
\bigg\{\theta\langle DM_n[\tti\sfz],u_n[\tti\sfz]\rangle_\mfh
+2^{-1}(1-\theta^2)\langle DG_\infty[(\tti\sfz)^2],u_n[\tti\sfz]\rangle_\mfh
\\&&
+\langle DW_n(\theta)[\tti\sfz],u_n[\tti\sfz]\rangle_\mfh
+\theta r_n\langle DN_n{\colorb [\tti\sfz]},u_n[\tti\sfz]\rangle_\mfh
+\langle DX_n(\theta)[\tti\sfx],u_n[\tti\sfz]\rangle_\mfh\bigg\}\psi_n\bigg]
\\&&
+E\big[e^{\lambda_n(\theta;\sfz,\sfx)}\langle D\psi_n,u_n[\tti\sfz]\rangle_\mfh\big]
\\&&
+E\bigg[e^{\lambda_n(\theta;\sfz,\sfx)}
\left\{-\theta G_\infty[(\tti\sfz)^2]
+  \check{G}^{(1)}_n(\sfz,\sfx) \right\}\psi_n\bigg].
\eeas
This expression can be written as
\beas
 \partial_\theta \varphi_n(\theta;\psi_n)
&=&%
E\big[e^{\lambda_n(\theta;\sfz,\sfx)}\langle D\psi_n,u_n[\tti\sfz]\rangle_\mfh\big]
\\&&
+\theta E\bigg[e^{\lambda_n(\theta;\sfz,\sfx)}
\bigg(\langle DM_n[\tti\sfz],u_n[\tti\sfz]\rangle_\mfh
-G_\infty[(\tti\sfz)^2]\bigg)\psi_n\bigg]
\\&&
+2^{-1}(1-\theta^2)E\bigg[e^{\lambda_n(\theta;\sfz,\sfx)}\langle DG_\infty[(\tti\sfz)^2],u_n[\tti\sfz]\rangle_\mfh\psi_n\bigg]
\\&&
+E\big[e^{\lambda_n(\theta;\sfz,\sfx)}G^{(1)}_n(\theta;\sfz,\sfx)\psi_n\big]
\\&=&%
\varphi_n\big(\theta;\langle D\psi_n,u_n[\tti\sfz]\rangle_\mfh\big)
\\&&
+\theta \varphi_n\bigg(\theta;\big(\langle DM_n[\tti\sfz],u_n[\tti\sfz]\rangle_\mfh
-G_\infty[(\tti\sfz)^2]\big)\psi_n\bigg)
\\&&
+2^{-1}(1-\theta^2)\varphi_n\bigg(\theta;\langle DG_\infty[(\tti\sfz)^2],u_n[\tti\sfz]\rangle_\mfh\psi_n\bigg)
\\&&
+\varphi_n\big(\theta;G^{(1)}_n(\theta;\sfz,\sfx)\psi_n\big),
\eeas
%%%%%%
where 
\bea \label{eq1a}
G^{(1)}_n(\theta;\sfz,\sfx)&=& \hat{G}^{(1)}_n(\theta;\sfz,\sfx)+  \check{G}^{(1)}_n(\sfz,\sfx)
\eea
 with 
\beas 
\hat{G}^{(1)}_n(\theta;\sfz,\sfx)
&=&
\langle DW_\infty[\tti\sfz],u_n[\tti\sfz]\rangle_\mfh
+\langle DX_\infty[\tti\sfx],u_n[\tti\sfz]\rangle_\mfh
\\&&
+\theta r_n\langle D\dotw_n[\tti\sfz],u_n[\tti\sfz]\rangle_\mfh
+\theta r_n\langle DN_n[\tti\sfz],u_n[\tti\sfz]\rangle_\mfh
+\theta r_n\langle D\dotx_n[\tti\sfx],u_n[\tti\sfz]\rangle_\mfh.
\eeas

\begin{en-text}
\beas 
G^{(1)}_n(\theta;\sfz,\sfx)
&=&
\langle DW_n(\theta)[\tti\sfz],u_n[\tti\sfz]\rangle_\mfh
+\theta r_n\langle DN_n[\tti\sfz],u_n[\tti\sfz]\rangle_\mfh
+\langle DX_n(\theta)[\tti\sfx],u_n[\tti\sfz]\rangle_\mfh
\\&&
+
r_n\dotw_n[\tti\sfz]+r_n N_n[\tti\sfz]
+r_n\dotx_n[\tti\sfx]
\\&=&
\langle DW_\infty[\tti\sfz],u_n[\tti\sfz]\rangle_\mfh
+\langle DX_\infty[\tti\sfx],u_n[\tti\sfz]\rangle_\mfh
\\&&
+\theta r_n\langle D\dotw_n[\tti\sfz],u_n[\tti\sfz]\rangle_\mfh
+\theta r_n\langle DN_n[\tti\sfz],u_n[\tti\sfz]\rangle_\mfh
+\theta r_n\langle D\dotx_n[\tti\sfx],u_n[\tti\sfz]\rangle_\mfh
\\&&
+
r_n\dotw_n[\tti\sfz]+r_n N_n[\tti\sfz]
+r_n\dotx_n[\tti\sfx].
\eeas
\end{en-text}

 Let 
\bea\label{20170419-1} 
G^{(2)}_n(\sfz) &=& 
\langle DM_n[\tti\sfz],u_n[\tti\sfz]\rangle_\mfh-G_\infty[(\tti\sfz)^{\otimes2}]
{\colorb\>=\> r_n\>\tt{qTan}[(\tti\sfz)^{\otimes2}]}
\eea
and 
\bea\label{20170419-2} 
G^{(3)}_n(\sfz) &=& 
\langle DG_\infty[(\tti\sfz)^{\otimes2}],u_n[\tti\sfz]\rangle_\mfh
{\colorb \>=\> r_n\>\tt{mqTor}[(\tti\sfz)^{\otimes3}]}. 
\eea
Thus, we obtained the following lemma. 
\begin{lemme}\label{20160920-5}
Suppose that $u_n\in\bbD^{2,p}(\mfh\otimes\bbR^\sfd)$, 
$G_\infty\in\bbD^{1,p}(\bbR^\sfd\otimes_+\bbR^\sfd)$, 
$W_n,W_\infty,N_n\in\bbD^{1,p}(\bbR^\sfd)$, 
$X_n,X_\infty\in\bbD^{1,p}(\bbR^{\sfd_1})$ and 
$\psi_n\in\bbD^{1,p_1}(\bbR)$, ${\colorr 2p^{-1}}+p_1^{-1}\leq1$. 
Then 
\bea\label{20160922-1} 
\partial_\theta \varphi_n(\theta;\psi_n)
\nn&=&%
\varphi_n\big(\theta;\langle D\psi_n,u_n[\tti\sfz]\rangle_\mfh\big)
+\theta \varphi_n\big(\theta;G^{(2)}_n(\sfz)\psi_n\big)
\\&&
+2^{-1}(1-\theta^2)\varphi_n\big(\theta;G^{(3)}_n(\sfz)\psi_n\big)
+\varphi_n\big(\theta;G^{(1)}_n(\theta;\sfz,\sfx)\psi_n\big).
\eea
\end{lemme}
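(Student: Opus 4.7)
The plan is to differentiate $\varphi_n(\theta;\psi_n)=E[e^{\lambda_n(\theta;\sfz,\sfx)}\psi_n]$ inside the expectation, then apply the Skorohod--Malliavin duality (IBP) to eliminate the divergence term that arises from $M_n[\tti\sfz]=\delta(u_n[\tti\sfz])$, and finally reorganize the resulting summands into the form \eqref{20160922-1}.

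First I would compute $\partial_\theta\lambda_n(\theta;\sfz,\sfx)$ directly from the definition \eqref{20160920-1}. Using $\partial_\theta W_n(\theta)=W_n-W_\infty=r_n\dotw_n$ and $\partial_\theta X_n(\theta)=r_n\dotx_n$, this derivative equals
\[
\delta(u_n[\tti\sfz])-\theta\,G_\infty[(\tti\sfz)^{\otimes 2}]+r_n\dotw_n[\tti\sfz]+r_nN_n[\tti\sfz]+r_n\dotx_n[\tti\sfx],
\]
the last three terms being exactly $\check{G}^{(1)}_n(\sfz,\sfx)$. Because $|e^{\lambda_n(\theta;\sfz,\sfx)}|\leq1$ and each of the multiplying factors is in $L^p$ by hypothesis, dominated convergence legitimizes bringing $\partial_\theta$ inside the expectation, yielding the intermediate formula displayed above \eqref{eq1a}.

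Next I would apply IBP to the divergence piece,
\[
E\bigl[e^{\lambda_n(\theta;\sfz,\sfx)}\psi_n\,\delta(u_n[\tti\sfz])\bigr]=E\bigl[\bigl\langle D(e^{\lambda_n(\theta;\sfz,\sfx)}\psi_n),u_n[\tti\sfz]\bigr\rangle_\mfh\bigr].
\]
The chain/product rule gives $D(e^{\lambda_n}\psi_n)=e^{\lambda_n}\bigl(\psi_n\,D\lambda_n+D\psi_n\bigr)$, and $D\lambda_n$ is the sum of the five Malliavin derivatives $\theta\,DM_n[\tti\sfz]$, $2^{-1}(1-\theta^2)DG_\infty[(\tti\sfz)^{\otimes 2}]$, $DW_n(\theta)[\tti\sfz]$, $\theta r_n\,DN_n[\tti\sfz]$ and $DX_n(\theta)[\tti\sfx]$. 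Pairing each with $u_n[\tti\sfz]$ in $\mfh$ produces the expression displayed immediately after the IBP step in the excerpt.

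Finally I would collect terms. The $\theta\langle DM_n[\tti\sfz],u_n[\tti\sfz]\rangle_\mfh$ from IBP combines with the $-\theta G_\infty[(\tti\sfz)^{\otimes 2}]$ from $\partial_\theta\lambda_n$ to give $\theta\,G^{(2)}_n(\sfz)$ by \eqref{20170419-1}; the $(1-\theta^2)/2$ term is $2^{-1}(1-\theta^2)G^{(3)}_n(\sfz)$ by \eqref{20170419-2}; splitting $W_n(\theta)=W_\infty+\theta r_n\dotw_n$ and $X_n(\theta)=X_\infty+\theta r_n\dotx_n$ in the three remaining inner products and merging with $\check{G}^{(1)}_n(\sfz,\sfx)$ gives $G^{(1)}_n(\theta;\sfz,\sfx)=\hat{G}^{(1)}_n+\check{G}^{(1)}_n$ as in \eqref{eq1a}; and the $\langle D\psi_n,u_n[\tti\sfz]\rangle_\mfh$ term furnishes the first summand $\varphi_n(\theta;\langle D\psi_n,u_n[\tti\sfz]\rangle_\mfh)$. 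The one genuine verification, and the main obstacle, is the regularity of $e^{\lambda_n(\theta;\sfz,\sfx)}\psi_n$ required for IBP: I must check $e^{\lambda_n}\psi_n\in\bbD^{1,q}(\bbR)$ for some $q$ with $q^{-1}+p^{-1}\leq 1$ so that $\langle D(e^{\lambda_n}\psi_n),u_n[\tti\sfz]\rangle_\mfh\in L^1$. This is routine: $|e^{\lambda_n}|\leq 1$, $D\lambda_n$ lies in $L^p$ by the assumptions on $u_n,G_\infty,W_n,W_\infty,N_n,X_n,X_\infty$, and $\psi_n\in\bbD^{1,p_1}$, so the condition $2p^{-1}+p_1^{-1}\leq 1$ is exactly what a H\"older bookkeeping on the factors $D\lambda_n$, $\psi_n$ and $u_n$ requires.
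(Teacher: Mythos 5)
Your proposal is correct and follows essentially the same route as the paper: differentiate $\lambda_n$ in $\theta$ under the expectation, apply the duality (IBP) formula to $\delta(u_n[\tti\sfz])$, and regroup the resulting inner products into $G^{(2)}_n$, $G^{(3)}_n$ and $G^{(1)}_n=\hat{G}^{(1)}_n+\check{G}^{(1)}_n$ via the splittings $W_n(\theta)=W_\infty+\theta r_n\dotw_n$ and $X_n(\theta)=X_\infty+\theta r_n\dotx_n$. The H\"older bookkeeping you sketch for the condition $2p^{-1}+p_1^{-1}\leq 1$ is exactly the justification the paper leaves implicit.
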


\vspace*{3mm}
In order to establish a second-order interpolation formula we need to  further expand  the last three summands in the right-hand side of
{\colred (\ref{20160922-1})}.  To do this, we  will denote by $\calg_n$ any one of the terms
$G^{(1)}_n(\theta;\sfz,\sfx)$, 
$G^{(2)}_n(\sfz)$ and $G^{(3)}(\sfz)$. 
Suppose, in addition,  that 
$G^{(1)}_n(\theta;\sfz,\sfx)\psi_n$, 
$G^{(2)}_n(\sfz)\psi_n$, $G^{(3)}(\sfz)\psi_n$, 
$\langle D(G^{(1)}_n(\theta;\sfz,\sfx)\psi_n),u_n[\tti\sfz]\rangle_\mfh$,  \newline
$\langle D(G^{(2)}_n(\theta;\sfz)\psi_n),u_n[\tti\sfz]\rangle_\mfh$ and 
$\langle D(G^{(3)}_n(\theta;\sfz)\psi_n),u_n[\tti\sfz]\rangle_\mfh$
are in $\bbD^{1,{\colorr p_2}}(\bbR)$ {\colorr with $p_2=(3p^{-1}+p_1^{-1})^{-1}$ 
with $p$ such that $5p^{-1}+p_1^{-1}\leq1$}. 
Then, by Lemma \ref{20160920-5}, we have 
\beas
\varphi_n\big(\theta;\calg_n\psi_n\big)-\varphi_n\big(0;\calg_n\psi_n\big)
&=&%
\int_0^\theta\partial_{\theta_1} \varphi_n\big(\theta_1;\calg_n\psi_n\big)d\theta_1
\\&=&%
\int_0^\theta\bigg\{
\varphi_n\big(\theta_1;\langle D(\calg_n\psi_n),u_n[\tti\sfz]\rangle_\mfh\big)
+\theta_1 \varphi_n\big(\theta_1;G^{(2)}_n(\sfz,\sfx)\calg_n\psi_n\big)
\\&&
+2^{-1}(1-\theta_1^2)\varphi_n\big(\theta_1;G^{(3)}_n(\sfz,\sfx)\calg_n\psi_n\big)
+\varphi_n\big(\theta_1;G^{(1)}_n(\theta_1;\sfz,\sfx)\calg_n\psi_n\big)
\bigg\}d\theta_1
\\&=&%
\int_0^\theta
\varphi_n\big(\theta_1;\langle D(\calg_n\psi_n),u_n[\tti\sfz]\rangle_\mfh\big)
d\theta_1
+R^{(1)}_n(\theta;\sfz,\sfx,\calg_n),
\eeas
where 
\beas 
R^{(1)}_n(\theta;\sfz,\sfx,\calg_n)
&=&
\int_0^\theta\bigg\{\theta_1 \varphi_n\big(\theta_1;G^{(2)}_n(\sfz)\calg_n\psi_n\big)
+2^{-1}(1-\theta_1^2)\varphi_n\big(\theta_1;G^{(3)}_n(\sfz)\calg_n\psi_n\big)
\\&&
+\varphi_n\big(\theta_1;G^{(1)}_n(\theta_1;\sfz,\sfx)\calg_n\psi_n\big)
\bigg\}d\theta_1.
\eeas
Therefore, once again by Lemma \ref{20160920-5},  we obtain 
\bea\label{20160920-3}
\varphi_n\big(\theta;\calg_n\psi_n\big)-\varphi_n\big(0;\calg_n\psi_n\big)
 &=& %
\theta \varphi_n\big(0;\langle D(\calg_n\psi_n),u_n[\tti\sfz]\rangle_\mfh\big)
+R^{(2)}_n(\theta;\sfz,\sfx,\calg_n),
\eea
where
\beas 
R^{(2)}_n(\theta;\sfz,\sfx,\calg_n)
&=&%
\int_0^\theta\int_0^{\theta_1}  
\partial_{\theta_2}\varphi_n\big(\theta_2;\langle D(\calg_n\psi_n),u_n[\tti\sfz]\rangle_\mfh\big)d\theta_2{\colorb d\theta_1}
+R^{(1)}_n(\theta;\sfz,\sfx,\calg_n)
\\&=&%
\int_0^\theta\int_0^{\theta_1}  
\bigg\{
\varphi_n\big(\theta_2;\langle D\big(\langle D(\calg_n\psi_n),u_n[\tti\sfz]\rangle_\mfh\big),u_n[\tti\sfz]\rangle_\mfh\big)
\\&&
+\theta_2 \varphi_n\big(\theta_2;G^{(2)}_n(\sfz)\langle D(\calg_n\psi_n),u_n[\tti\sfz]\rangle_\mfh\big)
\\&&
+2^{-1}(1-\theta_2^2)\varphi_n\big(\theta_2;G^{(3)}_n(\sfz)\langle D(\calg_n\psi_n),u_n[\tti\sfz]\rangle_\mfh\big)
\\&&
+\varphi_n\big(\theta_2;G^{(1)}_n(\theta_2;\sfz,\sfx)\langle D(\calg_n\psi_n),u_n[\tti\sfz]\rangle_\mfh\big)
\bigg\}d\theta_2{\colorb d\theta_1}
+R^{(1)}_n(\theta;\sfz,\sfx,\calg_n).
\eeas
By (\ref{20160922-1}) and (\ref{20160920-3}),  we can write
\beas 
\partial_\theta \varphi_n(\theta;\psi_n)
&=&%
\varphi_n\big(\theta;\langle D\psi_n,u_n[\tti\sfz]\rangle_\mfh\big)
\\&&
+\theta\bigg\{ \varphi_n\big(0;G^{(2)}_n(\sfz)\psi_n\big)
+\theta \varphi_n\big(0;\langle D(G^{(2)}_n(\sfz)\psi_n),u_n[\tti\sfz]\rangle_\mfh\big)
+R^{(2)}_n(\theta;\sfz,\sfx,G^{(2)}_n(\sfz))\bigg\}
\\&&
+2^{-1}(1-\theta^2)\bigg\{\varphi_n\big(0;G^{(3)}_n(\sfz)\psi_n\big)
+\theta \varphi_n\big(0;\langle D(G^{(3)}_n(\sfz)\psi_n),u_n[\tti\sfz]\rangle_\mfh\big)
+R^{(2)}_n(\theta;\sfz,\sfx,G^{(3)}_n(\sfz))\bigg\}
\\&&
+\bigg\{\varphi_n\big(0;G^{(1)}_n(\theta;\sfz,\sfx)\psi_n\big)
+\theta \varphi_n\big(0;\langle D(G^{(1)}_n(\theta;\sfz,\sfx)\psi_n),u_n[\tti\sfz]\rangle_\mfh\big)
+R^{(2)}_n(\theta;\sfz,\sfx,G^{(1)}_n(\theta;\sfz,\sfx))\bigg\}.
\eeas
Let 
{\colorg 
\beas 
R^{(3)}_n(\sfz,\sfx)
&=&
\int_0^1 \bigg\{\varphi_n\big(\theta;\langle D\psi_n,u_n[\tti\sfz]\rangle_\mfh\big)
+\theta R^{(2)}_n(\theta;\sfz,\sfx,G^{(2)}_n(\sfz))
+2^{-1}(1-\theta^2)R^{(2)}_n(\theta;\sfz,\sfx,G^{(3)}_n(\sfz))
\\&&
+R^{(2)}_n(\theta;\sfz,\sfx,G^{(1)}_n(\theta;\sfz,\sfx))
\\&&
+2^{-1}(1-\theta^2)\theta \varphi_n\big(0;\langle D(G^{(3)}_n(\sfz)\psi_n),u_n[\tti\sfz]\rangle_\mfh\big)
+\theta \varphi_n\big(0;\langle D(\hat{G}^{(1)}_n(\theta;\sfz,\sfx)\psi_n),u_n[\tti\sfz]\rangle_\mfh\big)
\bigg\}d\theta.
%\\&&
%+\frac{1}{3} \varphi_n\big(0;G^{(2)}_n(\sfz)\langle D\psi_n,u_n[\tti\sfz]\rangle_\mfh\big).
\eeas
}
Then, integrating    $\partial_\theta \varphi_n(\theta;\psi_n)$ and using the expression for  $R^{(3)}_n(\sfz,\sfx)$  and  the decomposition {\colred (\ref{eq1a})}, yields
\beas 
\int_0^1 \partial_\theta \varphi_n(\theta;\psi_n)d\theta
&=&%
\half \varphi_n\big(0;G^{(2)}_n(\sfz)\psi_n\big)
+\frac{1}{3} \varphi_n\big(0;\langle DG^{(2)}_n(\sfz),u_n[\tti\sfz]\rangle_\mfh\psi_n\big)
+\frac{1}{3} \varphi_n\big(0;G^{(3)}_n(\sfz)\psi_n\big)
\\&&
+\int_0^1\varphi_n\big(0;G^{(1)}_n(\theta;\sfz,\sfx)\psi_n\big)d\theta+R^{(3)}_n(\sfz,\sfx)
\\&&
{\colorg 
+\int_0^1 \theta\varphi_n\big(0;\langle D(\check{G}^{(1)}_n(\sfz,\sfx)\psi_n),u_n[\tti\sfz]\rangle_\mfh\big)d\theta
%+R^{(4)}_n(\sfz,\sfx)
}
\\&=&%
\half \varphi_n\big(0;G^{(2)}_n(\sfz)\psi_n\big)
+\frac{1}{3} \varphi_n\big(0;\big\langle D\langle DM_n[\tti\sfz],u_n[\tti\sfz]\rangle_\mfh,u_n[\tti\sfz]\big\rangle_\mfh\psi_n\big)
\\&&
+\int_0^1\varphi_n\big(0;G^{(1)}_n(\theta;\sfz,\sfx)\psi_n\big)d\theta
{\colorg +
\half\varphi_n\big(0;\langle D(\check{G}^{(1)}_n(\sfz,\sfx)\psi_n),u_n[\tti\sfz]\rangle_\mfh\big)
}
\\&&
+R^{(3)}_n(\sfz,\sfx).
\eeas
Consequently, 
\beas 
%E\big[\exp\big(Z_n[\tti\sfz]+X_n[\tti\sfx]\big)\psi_n\big]-E[\Psi(\sfz,\sfx)\psi_n]
\varphi_n(1;\psi_n)-\varphi_n(0;\psi_n)
&=&
\half \varphi_n\big(0;\big(DM_n[\tti\sfz],u_n[\tti\sfz]\rangle_\mfh-G_\infty[(\tti\sfz)^2]\big)\psi_n\big)
\\&&
+\frac{1}{3} \varphi_n\big(0;\big\langle D\langle DM_n[\tti\sfz],u_n[\tti\sfz]\rangle_\mfh,u_n[\tti\sfz]\big\rangle_\mfh\psi_n\big)
\\&&
+\varphi_n\bigg(0;\int_0^1G^{(1)}_n(\theta;\sfz,\sfx)d\theta\psi_n\bigg)
{\colorg +
\half\varphi_n\big(0;\langle D(\check{G}^{(1)}_n(\sfz,\sfx)\psi_n),u_n[\tti\sfz]\rangle_\mfh\big)
}
\\&&
+R^{(3)}_n(\sfz,\sfx).
\eeas
Using  the definition of $\Psi(\sfz,\sfx)$ given in (\ref{psi}) and the decomposition of $G_n^{(1)}$ given in {\colred (\ref{eq1a})}, we obtain
\beas
\varphi_n(1;\psi_n)-\varphi_n(0;\psi_n)&=&%
\frac{1}{3}E\bigg[\Psi(\sfz,\sfx)\bigg\langle D\big\langle DM_n[\tti\sfz],u_n[\tti\sfz]\big\rangle_\mfh,u_n[\tti\sfz]\bigg\rangle_\mfh\psi_n\bigg]
\\&&
+\half E\bigg[\Psi(\sfz,\sfx)\bigg(\big\langle DM_n[\tti\sfz],u_n[\tti\sfz]\big\rangle_\mfh-G_\infty[(\tti\sfz)^2]\bigg)\psi_n\bigg]
\\&&
+E\bigg[\Psi(\sfz,\sfx)\bigg\{
\big\langle DW_\infty[\tti\sfz],u_n[\tti\sfz]\big\rangle_\mfh
+\big\langle DX_\infty[\tti\sfx],u_n[\tti\sfz]\big\rangle_\mfh
\\&&
\qquad\qquad\qquad+
r_n\dotw_n[\tti\sfz]+r_n N_n[\tti\sfz]
+r_n\dotx_n[\tti\sfx]
\\&&
\qquad\qquad\qquad{\colorg 
+r_n
\langle D\dotw_n[\tti\sfz],u_n[\tti\sfz]\rangle_\mfh
+r_n\langle DN_n[\tti\sfz],u_n[\tti\sfz]\rangle_\mfh
}
\\&&
\qquad\qquad\qquad{\colorg 
+r_n\langle D\dotx_n[\tti\sfx],u_n[\tti\sfz]\rangle_\mfh
}
\bigg\}
\psi_n\bigg]
\\&&
{\colorb +R^{(3)}_n(\sfz,\sfx)}
+R^{(4)}_n(\sfz,\sfx),
\eeas
where
{\colorg 
\beas
R^{(4)}_n(\sfz,\sfx) 
&=&
\half\varphi_n\big(0;\check{G}^{(1)}_n(\sfz,\sfx)   \langle D\psi_n,u_n[\tti\sfz]\rangle_\mfh\big).
\eeas
}
\begin{en-text}
where
\beas 
{\colorb R^{(4)}_n(\sfz,\sfx) }
&=& 
2^{-1} r_n\varphi_n(0;\langle D\dotw_n[\tti\sfz],u_n[\tti\sfz]\rangle_\mfh\psi_n)
+2^{-1}  r_n\varphi_n(0;\langle DN_n,u_n[\tti\sfz]\rangle_\mfh\psi_n)
\\&&
+2^{-1}  r_n\varphi_n(0;\langle D\dotx_n[\tti\sfx],u_n[\tti\sfz]\rangle_\mfh\psi_n)
\eeas
\end{en-text}
%%

% Suppose $\psi_n$ is bounded. %takes values in $[0,1]$. 
{\colorr 
Suppose that there are random symbols 
${\mathfrak S}^{(3,0)}$, %\in L^1(\Omega;\bbR^\sfd\otimes\bbR^\sfd\otimes\bbR^\sfd)$, 
${\mathfrak S}^{(2,0)}_0$, %\in L^1(\Omega;\bbR^\sfd\otimes\bbR^\sfd)$, 
${\mathfrak S}^{(2,0)}$, %\in L^1(\Omega;\bbR^\sfd\otimes\bbR^\sfd)$, 
${\mathfrak S}^{(1,1)}$, %\in L^1(\Omega;\bbR^\sfd\otimes\bbR^{\sfd_1})$, 
${\mathfrak S}^{(1,0)}$, %\in L^1(\Omega;\bbR^\sfd)$ 
${\mathfrak S}^{(0,1)}$, %\in L^1(\Omega;\bbR^{\sfd_1})$. 
{\colorg 
${\mathfrak S}^{(2,0)}_1$
and 
${\mathfrak S}^{(1,1)}_1$, 
}
i.e., they are polynomials in $(\tti\sfz,\tti\sfx)$ with coefficients in $L^1(\Omega)$. 
}
 Let 
\beas 
R^{(5)}_n(\sfz,\sfx)
&=&
{\colorb 
r_n E\bigg[\Psi(\sfz,\sfx)\>3^{-1}{\sf qTor}[(\tti\sfz)^{\otimes3}]
%\bigg\langle D\big\langle DM_n[\tti\sfz],u_n[\tti\sfz]\big\rangle_\mfh,u_n[\tti\sfz]\bigg\rangle_\mfh
\psi_n\bigg]
}
-
r_n E\big[\Psi(\sfz,\sfx){\mathfrak S}^{(3,0)}(\tti\sfz,\tti\sfx)%[(\tti\sfz)^{\otimes3}]
\big],
\eeas
\beas 
R^{(6)}_n(\sfz,\sfx)
&=&
{\colorb 
r_n E\bigg[\Psi(\sfz,\sfx)\>2^{-1}{\sf qTan}[(\tti\sfz)^{\otimes2}]
%\bigg(\big\langle DM_n[\tti\sfz],u_n[\tti\sfz]\big\rangle_\mfh-G_\infty[(\tti\sfz)^2]\bigg)
\psi_n\bigg]
}
-
r_n E\big[\Psi(\sfz,\sfx){\mathfrak S}^{(2,0)}_0(\tti\sfz,\tti\sfx)%[(\tti\sfz)^{\otimes2}]
\big],
\eeas
\beas
R^{(7)}_n(\sfz,\sfx)
&=&
E\bigg[\Psi(\sfz,\sfx)\big\langle DW_\infty[\tti\sfz],u_n[\tti\sfz]\big\rangle_\mfh\psi_n\bigg]
-
r_n E\big[\Psi(\sfz,\sfx){\mathfrak S}^{(2,0)}(\tti\sfz,\tti\sfx)%[(\tti\sfz)^{\otimes2}]
\big],
\eeas
\beas
R^{(8)}_n(\sfz,\sfx)
&=&
E\bigg[\Psi(\sfz,\sfx)\big\langle DX_\infty[\tti\sfx],u_n[\tti\sfz]\big\rangle_\mfh\psi_n\bigg]
-
r_n E\big[\Psi(\sfz,\sfx){\mathfrak S}^{(1,1)}(\tti\sfz,\tti\sfx)%[\tti\sfz,\tti\sfx]
\big],
\eeas
\beas
R^{(9)}_n(\sfz,\sfx)
&=&
r_n E\bigg[\Psi(\sfz,\sfx)\bigg\{\dotw_n[\tti\sfz]+N_n[\tti\sfz]\bigg\}\psi_n\bigg]
-
r_n E\big[\Psi(\sfz,\sfx){\mathfrak S}^{(1,0)}(\tti\sfz,\tti\sfx)%[\tti\sfz]
\big],
\eeas
\beas
R^{(10)}_n(\sfz,\sfx)
&=&
r_n E\bigg[\Psi(\sfz,\sfx)\dotx_n[\tti\sfx]\psi_n\bigg]
-
r_n E\big[\Psi(\sfz,\sfx){\mathfrak S}^{(0,1)}(\tti\sfz,\tti\sfx)%[\tti\sfx]
\big],
\eeas
{\colorg 
\beas
R^{(11)}_n(\sfz,\sfx)
&=&
r_n E\bigg[\Psi(\sfz,\sfx)\big\langle \{D\dotw_n[\tti\sfz]+DN_n[\tti\sfz]\},u_n[\tti\sfz]\big\rangle_\HH
\psi_n\bigg]
-
r_n E\big[\Psi(\sfz,\sfx){\mathfrak S}^{(2,0)}_1(\tti\sfz,\tti\sfx),%[\tti\sfz]
\big],
\eeas
}
{\colorg 
\beas
R^{(12)}_n(\sfz,\sfx)
&=&
r_n E\bigg[\Psi(\sfz,\sfx)\big\langle D\dotx_n[\tti\sfx],u_n[\tti\sfz]\big\rangle_\HH
\psi_n\bigg]
-
r_n E\big[\Psi(\sfz,\sfx){\mathfrak S}^{(1,1)}_1(\tti\sfz,\tti\sfx)%[\tti\sfz]
\big],
\eeas
}
and 
\beas 
R_n(\sfz,\sfx) &=& 
{\colorb 
\sum_{i=3}^{12}R_n^{(i)}(\sfz,\sfx). }
\eeas

{\colred 
\begin{rem}\label{20171117-1}\rm 
We expect 
\beas
E\big[\Psi(\sfz,\sfx)\>3^{-1}{\sf qTor}[(\tti\sfz)^{\otimes3}]\psi_n\big]
-
E\big[\Psi(\sfz,\sfx){\mathfrak S}^{(3,0)}(\tti\sfz,\tti\sfx)%[(\tti\sfz)^{\otimes3}]
\big]\to0. 
\eeas 
However this does not mean that 
\beas 
E\big[\big|3^{-1}{\sf qTor}[(\tti\sfz)^{\otimes3}]\psi_n-{\mathfrak S}^{(3,0)}(\tti\sfz,\tti\sfx)\big|\big]
\to0.
\eeas
In fact, ${\mathfrak S}^{(3,0)}$ is not necessarily of third order in $\sfz$, 
as we will see in this paper. 
\end{rem}
}

{\colorr 
Define the random symbol  ${\mathfrak S}(\tti\sfz,\tti\sfx)$ by 
\beas 
{\mathfrak S}(\tti\sfz,\tti\sfx)
&=&
{\mathfrak S}^{(3,0)}(\tti\sfz,\tti\sfx)%[(\tti\sfz)^{\otimes3}]
+{\mathfrak S}^{(2,0)}_0(\tti\sfz,\tti\sfx)%[(\tti\sfz)^{\otimes2}]
+{\mathfrak S}^{(2,0)}(\tti\sfz,\tti\sfx)%[(\tti\sfz)^{\otimes2}]
\\&&
+{\mathfrak S}^{(1,1)}(\tti\sfz,\tti\sfx)%[(\tti\sfz)^{\otimes2}]
+{\mathfrak S}^{(1,0)}(\tti\sfz,\tti\sfx)%[\tti\sfz]
+{\mathfrak S}^{(0,1)}(\tti\sfz,\tti\sfx)%[\tti\sfx]
\\&&
{\colorg
+{\mathfrak S}^{(2,0)}_1(\tti\sfz,\tti\sfx)
+{\mathfrak S}^{(1,1)}_1(\tti\sfz,\tti\sfx).
}
\eeas
}

We will assume  the following hyporthesis:
\bd\im[[A\!\!]]
$u_n\in\bbD^{{\colorg 4},p}(\mfh\otimes\bbR^\sfd)$, 
$G_\infty\in\bbD^{3,p}(\bbR^\sfd\otimes_+\bbR^\sfd)$, 
$W_n,W_\infty,N_n\in\bbD^{3,p}(\bbR^\sfd)$, 
$X_n,X_\infty\in\bbD^{3,p}(\bbR^{\sfd_1})$ and 
$\psi_n\in\bbD^{2,p_1}(\bbR)$ for some $p$ and $p_1$ satisfying $5p^{-1}+p_1^{-1}\leq1$. 
\ed

{From the above argument, we obtain the  following second-order interpolation formula.} 
\begin{prop}\label{170806-1}
Under Condition $[A]$, 
\bea \label{inter}
\varphi_n(1;\psi_n)
&=&
\varphi_n(0;\psi_n)
%+r_n E\big[\Psi(\sfz,\sfx){\mathfrak S}(\tti\sfz,\tti\sfx) \psi_n\big]
+r_n {\colred E\big[\Psi(\sfz,\sfx){\mathfrak S}(\tti\sfz,\tti\sfx)\big]}
+R_n(\sfz,\sfx).
\eea
\end{prop}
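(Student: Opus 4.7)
The plan is to reconstruct the formula (\ref{inter}) by chaining together the two identities already derived in the text: the first-order differentiation formula of Lemma \ref{20160920-5} and the second-order expansion (\ref{20160920-3}). Since $\varphi_n(1;\psi_n)-\varphi_n(0;\psi_n)=\int_0^1\partial_\theta\varphi_n(\theta;\psi_n)\,d\theta$, the task is to show that after applying (\ref{20160922-1}) and then (\ref{20160920-3}) to each of the three auxiliary summands corresponding to $\mathcal{G}_n=G^{(1)}_n(\theta;\sfz,\sfx),\,G^{(2)}_n(\sfz),\,G^{(3)}_n(\sfz)$, integration in $\theta$ extracts exactly the leading contributions that define $\mathfrak{S}$, with the higher-order pieces collected into $R_n(\sfz,\sfx)$.

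First I would verify that Condition $[A]$ guarantees the regularity needed at each step. Lemma \ref{20160920-5} requires $\psi_n\in\bbD^{1,p_1}$ with $2p^{-1}+p_1^{-1}\le1$; when we iterate, we substitute $\psi_n$ by a product like $G^{(i)}_n\psi_n$ whose first Malliavin derivative involves one extra differentiation on $u_n$, $G_\infty$, $W_n$, $X_n$, $N_n$ (and, through $G^{(1)}_n$, on $\dotw_n$, $\dotx_n$). The compatibility condition $5p^{-1}+p_1^{-1}\le1$ in $[A]$, together with the assumed $\bbD^{3,p}$ (and $\bbD^{4,p}$ for $u_n$) smoothness, ensures that the resulting random variables lie in $\bbD^{1,p_2}$ as required by the second application of Lemma \ref{20160920-5} hidden inside (\ref{20160920-3}).

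Then I would carry out the bookkeeping explicitly: substitute (\ref{20160920-3}) for each of the three terms $\varphi_n(\theta;G^{(i)}_n\psi_n)$, integrate in $\theta$ on $[0,1]$, and read off the identities $\int_0^1\theta\,d\theta=\tfrac12$, $\int_0^1\tfrac12(1-\theta^2)\,d\theta=\tfrac13$, and $\int_0^1\theta\cdot\tfrac12(1-\theta^2)d\theta=\tfrac{1}{8}$. Combined with the definition (\ref{20170419-1})–(\ref{20170419-2}) of $G^{(2)}_n,G^{(3)}_n$ in terms of $r_n\,\tt{qTan}$ and $r_n\,\tt{mqTor}$, and the relation $\langle D\,\tt{qTan}[(\tti\sfz)^{\otimes2}],u_n[\tti\sfz]\rangle_\mfh=\tt{qTor}-\tt{mqTor}$ given just above (\ref{psi}), the fractions $\tfrac12$ and $\tfrac13$ appearing in the derivation match and the $\tt{mqTor}$ contribution coming through $G^{(2)}_n$ cancels the $\tt{mqTor}$ piece of $G^{(3)}_n$ to leave the clean coefficient $3^{-1}\tt{qTor}$ displayed in the text. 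The decomposition (\ref{eq1a}) separates $G^{(1)}_n$ into its $\theta$-independent part $\hat{G}^{(1)}_n$ and its purely $\tilde{N}$-type part $\check{G}^{(1)}_n$; the former contributes the $DW_\infty$ and $DX_\infty$ terms, while the latter gives the $r_n\dotw_n,r_nN_n,r_n\dotx_n$ terms and (through an extra IBP introduced in the computation) the $r_n\langle D\dotw_n,u_n\rangle$, $r_n\langle DN_n,u_n\rangle$, $r_n\langle D\dotx_n,u_n\rangle$ terms.

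Finally, I would replace each exact expectation on the left side of the definitions of $R^{(3)}_n,\ldots,R^{(12)}_n$ by the corresponding random symbol term $r_nE[\Psi\,\mathfrak{S}^{(\cdot,\cdot)}]$, picking up exactly $R^{(i)}_n(\sfz,\sfx)$ as the defect. Summing these defects gives $R_n(\sfz,\sfx)=\sum_{i=3}^{12}R^{(i)}_n$ and also absorbs the genuinely higher-order terms $R^{(3)}_n(\sfz,\sfx)$ produced by the remainder of the Taylor-type expansion (namely the double integral in $\theta_1,\theta_2$ inside $R^{(2)}_n$, and the $\langle D\psi_n,u_n\rangle$ term coming from the $\psi_n$ slot of Lemma \ref{20160920-5}). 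The main obstacle is not analytic but organizational: one must match the combinatorial coefficients between the three expansions of $G^{(1)}_n,G^{(2)}_n,G^{(3)}_n$, carefully distinguish the part of $G^{(1)}_n$ that is $O(1)$ (namely $\hat{G}^{(1)}_n$ at $\theta=0$, which reduces to $\langle DW_\infty,u_n\rangle+\langle DX_\infty,u_n\rangle$) from the $O(r_n)$ part, and confirm that all residuals are caught by some $R^{(i)}_n$. Once the coefficients line up as above, (\ref{inter}) follows directly.
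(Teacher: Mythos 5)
Your proposal reconstructs exactly the derivation the paper itself uses as the proof of Proposition \ref{170806-1}: iterate Lemma \ref{20160920-5} via (\ref{20160920-3}), integrate in $\theta$ to get the coefficients $\tfrac12$ and $\tfrac13$, use $\langle D\,{\tt qTan},u_n\rangle={\tt qTor}-{\tt mqTor}$ so that the $\tfrac13\langle DG^{(2)}_n,u_n\rangle_\mfh$ and $\tfrac13 G^{(3)}_n$ terms combine into $\tfrac13 r_n\,{\tt qTor}$, split $G^{(1)}_n$ by (\ref{eq1a}), and collect the defects $R^{(3)}_n,\dots,R^{(12)}_n$. This is correct and is essentially the same argument as the paper's (the only quibble is bookkeeping: half of each $r_n\langle D\dotw_n,u_n\rangle_\mfh$-type coefficient comes from the $\theta r_n$-terms already inside $\hat{G}^{(1)}_n(\theta)$ rather than from the IBP on $\check{G}^{(1)}_n$, but the totals agree).
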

\section{Asymptotic expansion for differentiable functions}\label{170813-1}
\subsection{\colorb Expansion formula for $E[f(Z_n,X_n)]$}
Let $f\in\cals(\bbR^{\check{\sfd}})$, the set of rapidly decreasing smooth functions. 
Let $\varsigma(\tti\sfz,\tti\sfx)=\sum_{k,m} c_{k,m}[(\tti\sfz)^{\otimes k}\otimes(\tti\sfx)^{\otimes m}]$ (finite sum) 
be a random symbol 
with $L^1$ coefficients $c_{k,m}$.  The factor $\psi_n$, if exists, can be included in $\varsigma$. 
Let $\zeta\sim N_\sfd(0,I_\sfd)$ be a random vector independent of $\calf$, defined in an extended the probability space, if necesssary. 
We denote by $\hat{f}$  the Fourier transform of $f$: 
\beas 
\hat{f}(\sfz,\sfx) &=& %(2\pi)^{-\check{\sfd}}
\int_{\bbR^{\check{\sfd}}}
f(z,x)e^{-\tti z\cdot\sfz-\tti x\cdot\sfx}dzdx.
\eeas
Then, we can  write
\bea  \nn
&&(2\pi)^{-\check{\sfd}}
\int_{\bbR^{\check{\sfd}}}\hat{f}(\sfz,\sfx)\varphi_n(\theta;\varsigma(\tti\sfz,\tti\sfx))
d\sfz d\sfx
\\ &=&   \label{eq2a}
E\bigg[\varsigma(\partial_z,\partial_x)
f\bigg(\theta M_n+\sqrt{1-\theta^2}G_\infty^{1/2}\zeta+W_n(\theta)
+\theta r_nN_n,X_n(\theta)\bigg)\bigg].
\eea
In particular, {\colorb for $\theta=0$,} we obtain 
\bea\label{170417-1}
{\colorb \rho_n^{(8)}(f)} 
&{\colorb :=}& 
(2\pi)^{-\check{\sfd}}
\int_{\bbR^{\check{\sfd}}}\hat{f}(\sfz,\sfx)R^{(8)}_n(\sfz,\sfx)d\sfz d\sfx
\nn\\&=&
E\big[\psi_n\big\langle DX_\infty[\partial_x],u_n[\partial_z]\big\rangle_\mfh 
f\big(G_\infty^{1/2}\zeta+W_\infty,X_\infty\big)\big]
\nn\\&&
-
r_n E\big[{\mathfrak S}^{(1,1)}{\colorr (\partial_z,\partial_x)}%[\partial_z^{\otimes2}]
f\big(G_\infty^{1/2}\zeta+W_\infty,X_\infty\big)\big]
\eea
and similar formulas for 
{\colorb 
\bea\label{170814-1}
\rho_n^{(i)}(f)
&:=& 
(2\pi)^{-\check{\sfd}}
\int_{\bbR^{\check{\sfd}}}\hat{f}(\sfz,\sfx)R^{(i)}_n(\sfz,\sfx)d\sfz d\sfx
\eea
}%$R^{(i)}_n$ 
for $i=5, 6, 7, 9$, $10$, {\colorg $11$ and $12$. 
For $i=3,4$, we define $\rho_n^{(i)}$ by (\ref{170814-1}). 
}
\begin{en-text}
{\colorb 
Let 
$\rho_n^{(2)}(f) = \sum_{i=3}^{10}\rho_n^{(i)}(f)$. 
}
\end{en-text}
{\colorb
Then 
\bea \label{eq3}
\sum_{i=3}^{{\colorg 12}}\rho_n^{(i)}(f) &=& (2\pi)^{-\check{\sfd}}\int_{\bbR^{\check{\sfd}}}\hat{f}(\sfz,\sfx)R_n(\sfz,\sfx)d\sfz d\sfx. 
\eea 
}
{\colorb 
Let 
\bea\label{20170504-1} 
\rho^{(2)}_n(f)
&=&
E[f(Z_n,X_n)(1-\psi_n)]-E[f(G_\infty^{1/2}\zeta+W_\infty,X_\infty)(1-\psi_n)]
\eea
and let 
\bea\label{20170504-2}
\rho^{(1)}_n(f)
&=&
\sum_{i=2}^{{\colorg 12}}\colorb \rho^{(i)}_n(f). 
%\\&&
%+(2\pi)^{-\check{\sfd}}\int_{\bbR^{\check{\sfd}}}\hat{f}(\sfz,\sfx)R_n(\sfz,\sfx)d\sfz d\sfx. 
\eea
}
Applying the second-order interpolation formula  (\ref{inter}), we can write
\beas
E[f(Z_n,X_n)] &=& E[f(Z_n,X_n)(1- \psi_n)] + E[f(Z_n,X_n)\psi_n]
\\&=& 
E[f(Z_n,X_n)(1- \psi_n)] +
(2\pi)^{-\check{\sfd}}
\int_{\bbR^{\check{\sfd}}}\hat{f}(\sfz,\sfx) \varphi_n(1; \psi_n)  d\sfz d\sfx 
\\&=& E[f(Z_n,X_n)(1- \psi_n)] 
\\&&+
(2\pi)^{-\check{\sfd}}
\int_{\bbR^{\check{\sfd}}}\hat{f}(\sfz,\sfx) \Big[ \varphi_n(0; \psi_n)   
+r_n {\colred E\big[\Psi(\sfz,\sfx){\mathfrak S}(\tti\sfz,\tti\sfx)\big]}+R_n(\sfz,\sfx) \Big]d\sfz d\sfx.
\eeas
Then, using {\colred (\ref{eq2a})} and (\ref{eq3})  leads to
\beas
E[f(Z_n,X_n)] &=&
E[f(Z_n,X_n)(1- \psi_n)] + E[ f\big(G_\infty^{1/2}\zeta+W_\infty,X_\infty\big) \psi_n]
\\&&+r_n {\colred E[ {\mathfrak S}(\partial_z,\partial_x)   f\big(G_\infty^{1/2}\zeta+W_\infty,X_\infty\big)]}
+\sum_{i=3}^{12}\rho_n^{(i)}(f).
\eeas
Finally, taking into account the definitions of $\rho^{(2)}_n(f)$ and $\rho^{(1)}_n(f)$ given in  (\ref{20170504-1}) and (\ref{20170504-2}), respectively, we get
\bea\label{20160926-1} 
E[f(Z_n,X_n)]
&=& 
E[ f\big(G_\infty^{1/2}\zeta+W_\infty,X_\infty\big)]+
r_n{\colred E[{\mathfrak S}(\partial_z,\partial_x)f\big(G_\infty^{1/2}\zeta+W_\infty,X_\infty\big)]}
+\rho^{(1)}_n(f)
\eea
for $f\in\cals(\bbR^{\check{\sfd}})$. 
%\end{theorem}

%\newpage
Let 
%\beas 
$\zeta_n(\theta)
=
\big(\theta M_n+\sqrt{1-\theta^2}G_\infty^{1/2}\zeta+W_n(\theta)
+\theta r_nN_n,X_n(\theta)\big)
$. %\eeas
In particular, $\zeta_n(0)=(G_\infty^{1/2}\zeta+W_\infty,X_\infty)=:\zeta_\infty(0)$. 
Let $\partialbs_z=\tti^{-1}\partial_z$ and $\partialbs_x=\tti^{-1}\partial_x$. 
 Among the summands in (\ref{20170504-2}), $\rho_n^{(2)}(f)$ is given by (\ref{20170504-1}). 
 A  precise expression of $\rho^{(3)}_n(f)$ is as follows
\beas 
 \rho^{(3)}_n(f)
&=&
\int_0^1 \bigg\{
E\big[\langle D\psi_n,u_n[\partial_z]\rangle_\mfh f(\zeta_n(\theta))\big]
+\theta   \rho^{{\colred (3,1)}}_n(f;\theta)
 +2^{-1}(1-\theta^2)  \rho^{{\colred (3,2)}}_n(f;\theta)
\\&&
+ \rho^{{\colred (3,3)}}_n(f;\theta)
 +2^{-1}(1-\theta^2)\theta 
E\big[\langle D(G^{(3)}_n(\partialbs_z)\psi_n),u_n[\partial_z]\rangle_\mfh f(\zeta_\infty(0))\big]
\\&&
\hspace{10mm}+\theta 
E\big[\langle D(\hat{G}^{(1)}_n(\theta;\partialbs_z,\partialbs_x)\psi_n),u_n[\partial_z]\rangle_\mfh f(\zeta_\infty(0))\big]
\bigg\}d\theta,
\eeas
where
\beas
 \rho^{(3,1)}_n(f;\theta) &=&
\int_0^\theta\int_0^{\theta_1}  
\bigg\{
E\bigg[\bigg\langle D\bigg(\big\langle D(G^{(2)}_n(\partialbs_z)\psi_n),u_n[\partial_z]\big\rangle_\mfh\bigg),u_n[\partial_z]\bigg\rangle_\mfh f(\zeta_n(\theta_2))\bigg]
\\&&
\hspace{20mm}+\theta_2 
E\big[G^{(2)}_n(\partialbs_z)\big\langle D(G^{(2)}_n(\partialbs_z)\psi_n),u_n[\partial_z]\big\rangle_\mfh f(\zeta_n(\theta_2))\big]
\\&&
\hspace{20mm}+2^{-1}(1-\theta_2^2)
E\big[G^{(3)}_n(\partialbs_z)\big\langle D(G^{(2)}_n(\partialbs_z)\psi_n),u_n[\partial_z]\big\rangle_\mfh f(\zeta_n(\theta_2))\big]
\\&&
\hspace{20mm}+
E\big[G^{(1)}_n(\theta_2;\partialbs_z,\partialbs_x)\langle D(G^{(2)}_n(\partialbs_z)\psi_n),u_n[\partial_z]\rangle_\mfh f(\zeta_n(\theta_2))\big]
\bigg\}d\theta_2
\\&&
\hspace{10mm}+ \int_0^\theta\bigg\{
\theta_1E\big[\psi_nG^{(2)}_n(\partialbs_z)G^{(2)}_n(\partialbs_z) f(\zeta_n(\theta_1))\big]
\\&&
\hspace{20mm}+2^{-1}(1-\theta_1^2)E\big[\psi_nG^{(3)}_n(\partialbs_z)G^{(2)}_n(\partialbs_z) f(\zeta_n(\theta_1))\big]
\\&&
\hspace{20mm}+E\big[\psi_nG^{(1)}_n(\theta_1;\partialbs_z,\partialbs_x)G^{(2)}_n(\partialbs_z) f(\zeta_n(\theta_1))\big]\bigg\}d\theta_1,
 \eeas

\beas
 \rho^{(3,2)}_n(f;\theta) &=&
\int_0^\theta\int_0^{\theta_1}  
\bigg\{
E\bigg[\bigg\langle D\bigg(\big\langle D(G^{(3)}_n(\partialbs_z)\psi_n),u_n[\partial_z]\big\rangle_\mfh\bigg),u_n[\partial_z]\bigg\rangle_\mfh f(\zeta_n(\theta_2))\bigg]
\\&&
\hspace{20mm}+\theta_2 
E\big[G^{(2)}_n(\partialbs_z)\big\langle D(G^{(3)}_n(\partialbs_z)\psi_n),u_n[\partial_z]\big\rangle_\mfh f(\zeta_n(\theta_2))\big]
\\&&
\hspace{20mm}+2^{-1}(1-\theta_2^2)
E\big[G^{(3)}_n(\partialbs_z)\big\langle D(G^{(3)}_n(\partialbs_z)\psi_n),u_n[\partial_z]\big\rangle_\mfh f(\zeta_n(\theta_2))\big]
\\&&
\hspace{20mm}+
E\big[G^{(1)}_n(\theta_2;\partialbs_z,\partialbs_x)\langle D(G^{(3)}_n(\partialbs_z)\psi_n),u_n[\partial_z]\rangle_\mfh f(\zeta_n(\theta_2))\big]
\bigg\}d\theta_2
\\&&
\hspace{10mm}+ \int_0^\theta\bigg\{
\theta_1E\big[\psi_nG^{(2)}_n(\partialbs_z)G^{(3)}_n(\partialbs_z) f(\zeta_n(\theta_1))\big]
\\&&
\hspace{20mm}+2^{-1}(1-\theta_1^2)E\big[\psi_nG^{(3)}_n(\partialbs_z)G^{(3)}_n(\partialbs_z) f(\zeta_n(\theta_1))\big]
\\&&
\hspace{20mm}+E\big[\psi_nG^{(1)}_n(\theta_1;\partialbs_z,\partialbs_x)G^{(3)}_n(\partialbs_z) f(\zeta_n(\theta_1))\big]\bigg\}d\theta_1,
\eeas
and
\beas
 \rho^{(3,3)}_n(f;\theta) &=&
\int_0^\theta\int_0^{\theta_1}  
\bigg\{
E\bigg[\bigg\langle D\bigg(\big\langle D(G^{(1)}_n(\theta;\partialbs_z,\partialbs_x)\psi_n),u_n[\partial_z]\big\rangle_\mfh\bigg),u_n[\partial_z]\bigg\rangle_\mfh f(\zeta_n(\theta_2))\bigg]
\\&&
\hspace{20mm}+\theta_2 
E\big[G^{(2)}_n(\partialbs_z)\big\langle D(G^{(1)}_n(\theta;\partialbs_z,\partialbs_x)\psi_n),u_n[\partial_z]\big\rangle_\mfh f(\zeta_n(\theta_2))\big]
\\&&
\hspace{20mm}+2^{-1}(1-\theta_2^2)
E\big[G^{(3)}_n(\partialbs_z)\big\langle D(G^{(1)}_n(\theta;\partialbs_z,\partialbs_x)\psi_n),u_n[\partial_z]\big\rangle_\mfh f(\zeta_n(\theta_2))\big]
\\&&
\hspace{20mm}+
E\big[G^{(1)}_n(\theta;\partialbs_z,\partialbs_x)\langle D(G^{(1)}_n(\theta_2;\partialbs_z,\partialbs_x)\psi_n),u_n[\partial_z]\rangle_\mfh f(\zeta_n(\theta_2))\big]
\bigg\}d\theta_2
\\&&
\hspace{10mm}+ \int_0^\theta\bigg\{
\theta_1E\big[\psi_nG^{(2)}_n(\partialbs_z)G^{(1)}_n(\theta;\partialbs_z,\partialbs_x) f(\zeta_n(\theta_1))\big]
\\&&
\hspace{20mm}+2^{-1}(1-\theta_1^2)E\big[\psi_nG^{(3)}_n(\partialbs_z)G^{(1)}_n(\theta;\partialbs_z,\partialbs_x) f(\zeta_n(\theta_1))\big]
\\&&
\hspace{20mm}+E\big[\psi_nG^{(1)}_n(\theta;\partialbs_z,\partialbs_x)G^{(1)}_n(\theta_1;\partialbs_z,\partialbs_x) f(\zeta_n(\theta_1))\big]\bigg\}d\theta_1.
\eeas
For $i=4,\dots, 12$, a precise expression for  $\rho_n^{(i)}(f)$ is given below
\beas
\rho_n^{(4)}(f) &=& 
r_n \bigg\{E\big[2^{-1}\dotw_n[\partial_z]\langle D\psi_n,u_n[\partial_z]\rangle_\mfh f(\zeta_\infty(0))\big]\bigg\}
\\&&
\hspace{10mm}+E\big[2^{-1}N_n[\partial_z]\langle D\psi_n,u_n[\partial_z]\rangle_\mfh f(\zeta_\infty(0))\big]\bigg\}
\\&&
\hspace{10mm}+E\big[2^{-1}\dotx_n[\partial_x]\langle D\psi_n,u_n[\partial_z]\rangle_\mfh f(\zeta_\infty(0))\big]\bigg\}{\colorb ,}
\eeas

\begin{en-text}
\beas
\rho_n^{(4)}(f) &=& 
r_n \bigg\{E\big[2^{-1}\psi_n\langle D\dotw_n[\partial_z],u_n[\partial_z]\rangle_\mfh f(\zeta_\infty(0))\big]\bigg\}
\\&&
\hspace{10mm}+E\big[2^{-1}\psi_n\langle DN_n[\partial_z],u_n[\partial_z]\rangle_\mfh f(\zeta_\infty(0))\big]\bigg\}
\\&&
\hspace{10mm}+E\big[2^{-1}\psi_n\langle D\dotx_n[\partial_x],u_n[\partial_z]\rangle_\mfh f(\zeta_\infty(0))\big]\bigg\}{\colorb ,}
\eeas
\end{en-text}
\beas 
\rho_n^{(5)}(f) &=& 
r_n \bigg\{
E\bigg[3^{-1}\psi_n{\colorg r_n^{-1}}\bigg\langle D\big\langle DM_n[\partial_z],u_n[\partial_z]\big\rangle_\mfh,u_n[\partial_z]\bigg\rangle_\mfh f(\zeta_\infty(0))\bigg]
\\&&
\hspace{10mm}-
E\big[{\mathfrak S}^{(3,0)}{\colorr (\partial_z,\partial_x)}%[(\partial_z)^{\otimes3}]
f(\zeta_\infty(0))\big]\bigg\}{\colorb ,}
\eeas
\beas 
{\colorb \rho_n^{(6)}(f)} &=& 
r_n \bigg\{E\big[2^{-1}\psi_n{\colorg r_n^{-1}}\big(\big\langle DM_n[\partial_z],u_n[\partial_z]\big\rangle_\mfh-G_\infty[(\partial_z)^2]\big) f(\zeta_\infty(0))\big]
\\&&
\hspace{10mm}-
E\big[{\mathfrak S}^{(2,0)}_0{\colorr (\partial_z,\partial_x)}%[(\partial_z)^{\otimes2}]
f(\zeta_\infty(0))\big]\bigg\}{\colorb ,}
\eeas
\beas 
{\colorb \rho_n^{(7)}(f)} &=& 
r_n \bigg\{E\big[\psi_n{\colorg r_n^{-1}}\big\langle DW_\infty[\tti\sfz],u_n[\tti\sfz]\big\rangle_\mfh f(\zeta_\infty(0))\big]
-
E\big[{\mathfrak S}^{(2,0)}{\colorr (\partial_z,\partial_x)}%[(\partial_z)^{\otimes2}]
f(\zeta_\infty(0))\big]\bigg\}{\colorb ,}
\eeas
\beas 
{\colorb \rho_n^{(8)}(f)} &=& 
r_n \bigg\{E\big[\psi_n{\colorg r_n^{-1}}\big\langle DX_\infty[\partial_x],u_n[\partial_z]\big\rangle_\mfh f(\zeta_\infty(0))\big]
-
E\big[{\mathfrak S}^{(1,1)}{\colorr (\partial_z,\partial_x)}%[\partial_z,\partial_x]
f(\zeta_\infty(0))\big]\bigg\}{\colorb ,}
\eeas
\beas 
{\colorb \rho_n^{(9)}(f)} &=& 
r_n \bigg\{E\big[\psi_n\>(\dotw_n[\partial_z]+N_n[\partial_z]) f(\zeta_\infty(0))\big]
-
E\big[{\mathfrak S}^{(1,0)}{\colorr (\partial_z,\partial_x)}%[\partial_z]
f(\zeta_\infty(0))\big]\bigg\},
\eeas
\beas 
{\colorb \rho_n^{(10)}(f)} &=& 
r_n \bigg\{E\big[\psi_n\dotx_n[\partial_x]f(\zeta_\infty(0))\big]
-
E\big[{\mathfrak S}^{(0,1)}{\colorr (\partial_z,\partial_x)}%[\partial_x]
f(\zeta_\infty(0))\big]\bigg\},
\eeas
{\colorg
\beas 
 \rho_n^{(11)}(f) &=& 
 r_n \bigg\{E\big[\psi_n
 \big\langle D\dotw_n[\partial_z]+DN_n[\partial_z],u_n[\partial_z]\big\rangle_\HH
 f(\zeta_\infty(0))\big]
-
E\big[{\mathfrak S}^{(2,0)}_1 (\partial_z,\partial_x)
f(\zeta_\infty(0))\big]\bigg\},
\eeas
and 
\beas 
 \rho_n^{(12)}(f) &=& 
 r_n \bigg\{E\big[\psi_n
 \big\langle D\dotx_n[\partial_x],u_n[\partial_z]\big\rangle_\HH
 f(\zeta_\infty(0))\big]
-
E\big[{\mathfrak S}^{(1,1)}_1 (\partial_z,\partial_x)
f(\zeta_\infty(0))\big]\bigg\}.
\eeas
}
%
%+E[f(Z_n,X_n)(1-\psi_n)]-E[f(\zeta_\infty(0))(1-\psi_n)]. 
%\eeas

{\colorr Let $\beta=\max\{{\colorg 7},\beta_0\}$, where $\beta_0$ is the degree in $(\sfz,\sfx)$ of 
the random symbol ${\mathfrak S}$.} 
\begin{thm}\label{20160924-1}
Suppose that Condition $[A\>]$ is satisfied. 
Then 
\beas
E[f(Z_n,X_n)]
&=& 
E[ f\big(G_\infty^{1/2}\zeta+W_\infty, X_\infty\big)]+
r_n{\colred E[{\mathfrak S}(\partial_z,\partial_x)f\big(G_\infty^{1/2}\zeta+W_\infty,X_\infty\big)]}
+\rho^{(1)}_n(f)
\eeas
for $f\in C_b^{{\colorr \beta}}(\bbR^{\check{\sfd}})$. 
\end{thm}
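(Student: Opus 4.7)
The identity has already been established for $f \in \mathcal{S}(\mathbb{R}^{\check{\mathsf{d}}})$: formula (\ref{20160926-1}) is precisely the conclusion of the theorem restricted to Schwartz functions, and it was obtained from the second-order interpolation formula of Proposition \ref{170806-1} by Fourier inversion. The entire task is therefore to extend this identity from $\mathcal{S}(\mathbb{R}^{\check{\mathsf{d}}})$ to $C_b^\beta(\mathbb{R}^{\check{\mathsf{d}}})$ by a density/approximation argument.

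The plan is as follows. Given $f \in C_b^\beta$, choose a sequence $f_k \in \mathcal{S}(\mathbb{R}^{\check{\mathsf{d}}})$ such that, for every multi-index $|\alpha|\leq\beta$, $\partial^\alpha f_k \to \partial^\alpha f$ pointwise and $\sup_k\|\partial^\alpha f_k\|_\infty \leq C_\alpha\|\partial^\alpha f\|_\infty$ for some constants $C_\alpha$. A standard construction is $f_k=(f\chi_k)*\phi_{1/k}$ with $\chi_k$ a smooth cutoff equal to $1$ on the ball of radius $k$ and supported in the ball of radius $2k$, and $\phi_{1/k}$ a standard mollifier. Apply (\ref{20160926-1}) with $f_k$ in place of $f$ and let $k\to\infty$. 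The convergences $E[f_k(Z_n,X_n)]\to E[f(Z_n,X_n)]$ and $E[f_k(G_\infty^{1/2}\zeta+W_\infty,X_\infty)]\to E[f(G_\infty^{1/2}\zeta+W_\infty,X_\infty)]$ follow directly from the dominated convergence theorem.

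For the leading correction, write $\mathfrak{S}(\partial_z,\partial_x) = \sum_\alpha \xi_\alpha \,\partial^\alpha$ as a finite sum, with $|\alpha|\leq\beta_0$ and coefficients $\xi_\alpha\in L^1(\Omega)$ (the coefficients of each constituent $\mathfrak{S}^{(j,\ell)}$ belong to $L^1(\Omega)$ by assumption, as was stipulated when the random symbols were introduced). Dominated convergence applied to $\xi_\alpha\,\partial^\alpha f_k(G_\infty^{1/2}\zeta+W_\infty,X_\infty)$ yields the limit. The remainder $\rho_n^{(1)}(f_k)\to\rho_n^{(1)}(f)$ is handled summand by summand for $i=2,\dots,12$: in each case the explicit formulas for $\rho_n^{(i)}(f)$ display it as a (possibly $\theta$-integrated) expectation of the form $E[\eta\cdot(P(\partial)f)(\zeta_n(\theta))]$, where $P(\partial)$ is a differential operator of order at most $\beta$ and $\eta\in L^1(\Omega)$ uniformly in $\theta\in[0,1]$; then dominated convergence again gives the limit.

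The main obstacle is bookkeeping: verifying the $L^1(\Omega)$-integrability of every random coefficient $\eta$ (uniformly in $\theta$) that appears multiplying a derivative of $f$ in the explicit expressions for $\rho_n^{(i)}(f)$. Such coefficients are polynomials in Malliavin derivatives of the ingredients $u_n$, $G_\infty$, $W_n$, $W_\infty$, $N_n$, $X_n$, $X_\infty$, $\psi_n$, with at most five factors in product (the highest-order terms arising from $\rho_n^{(3)}$, where one has triple products of the form $G_n^{(i)} G_n^{(j)} \psi_n$ together with two Malliavin inner products against $u_n[\tti\sfz]$). The Hölder budget $5p^{-1}+p_1^{-1}\leq1$ prescribed by Condition $[A]$ is precisely what is needed to place each such product in $L^1(\Omega)$. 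Once this has been verified for each $i=2,\dots,12$, the dominated convergence arguments go through and the identity extends to all $f\in C_b^\beta(\mathbb{R}^{\check{\mathsf{d}}})$, completing the proof of Theorem \ref{20160924-1}.
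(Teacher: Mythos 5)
Your proposal is correct and follows essentially the same route as the paper: the paper's (very terse) proof likewise observes that every term in (\ref{20160926-1}) is a finite sum of bounded signed measures applied to derivatives of $f$ of order at most $\beta$, and then extends from $\cals(\bbR^{\check{\sfd}})$ to $C_b^\beta(\bbR^{\check{\sfd}})$ by approximating $f$ with Schwartz functions (the footnote in the paper is exactly your mollification step). Your accounting of the $L^1$ coefficients via the H\"older budget $5p^{-1}+p_1^{-1}\leq 1$ matches the rationale behind Condition $[A]$, so the proof is complete as written.
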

\proof 
Since $\cals(\bbR^{\check{\sfd}})\ni f\mapsto E[f(Z_n,X_n)]$,  $E[ f\big(G_\infty^{1/2}\zeta+W_\infty, X_\infty\big)]$,
$E[{\mathfrak S}(\partial_z,\partial_x)f(G_\infty^{1/2}\zeta+W_\infty,X_\infty)]$ 
and $\rho^{(1)}_n(f)$ are 
{\colorg expressed as a sum of}
bounded signed measures {\colorg applied to the derivatives of $f$}, 
Equation (\ref{20160926-1}) holds for all $f\in C_b^{{\colorr \beta}}(\bbR^{\check{\sfd}})$ 
with the same expression for $\rho^{(1)}_n(f)$. 
\footnote{On each bounded ball, take a sequence $f_k\in\cals(\bbR^{\check{\sfd}})$ that 
converges to $f$ in $C_b^{{\colorr \beta}}$.  
The differentiability condition can usually be relaxed since not all {\colorb orders of} monomials appear in ${\mathfrak S}$. }
\qed

\begin{en-text}
\begin{rem}\rm 
\beas 
\bigg|(2\pi)^{-\check{\sfd}}
\int_{\bbR^{\check{\sfd}}}\hat{f}(\sfz,\sfx)\varphi_n(\theta;\varsigma(\tti\sfz,\tti\sfx))
d\sfz d\sfx\bigg|
&\leq&
%\|\psi_n\|_\infty
|\!|\!|\varsigma|\!|\!|_1
\|f\|_{C^\varsigma_b},
\eeas
where 
$|\!|\!|\varsigma|\!|\!|_1$ is the sum of $L^1$-norms of the coefficients of $\varsigma$, 
and $\|f\|_{C^\varsigma_b}$ is the sum of sup-norms of the derivatives of $f$ 
appearing in $\varsigma$. 
\end{rem}
\end{en-text}
\onelineskip
%\begin{rem}\rm 
In order to prove $\rho^{(1)}_n(f)=o(r_n)$,  
{\colorb we can apply either} stable convergence $A_n\to^{d_s}A_\infty$, 
$L^1$-convergence $\|A_n-A_\infty\|_1\to0$, 
or the integration-by-parts formula 
to evaluate the error terms of the form $E[A_nf(\zeta_n(\theta))]-E[A_\infty f(\zeta_n(\theta))]$ 
whether $A_\infty=0$ or not. 
{\colorb We will present an estimate for $\rho_n^{(1)}(f)$ in Section \ref{20170504-3}.} 
%\end{rem}

\subsection{\colorb Estimate of $\rho^{(1)}_n(f)$}\label{20170504-3}
{\colorb 
Define the following random symbols
\beas
{\colred {\mathfrak S}^{(3,0)}_n(\tti\sfz)} &=&{\mathfrak S}^{(3,0)}_n(\tti\sfz,\tti\sfx) 
\yeq 
\frac{1}{3}r_n^{-1}\bigg\langle D\big\langle DM_n[\tti\sfz],u_n[\tti\sfz]\big\rangle_\mfh,u_n[\tti\sfz]\bigg\rangle_\mfh
\>\equiv\>\frac{1}{3}{\sf qTor}[(\tti\sfz)^{\otimes3}],
\\
{\colred {\mathfrak S}^{(2,0)}_{0,n}(\tti\sfz)}
&=& 
{\mathfrak S}^{(2,0)}_{0,n}(\tti\sfz,\tti\sfx)
\yeq
\half r_n^{-1}G^{(2)}_n(\sfz)
\>=\>
\half r_n^{-1}\bigg(\big\langle DM_n[\tti\sfz],u_n[\tti\sfz]\big\rangle_\mfh-G_\infty[(\tti\sfz)^2]\bigg)
\>\equiv\>\half {\sf qTan}[(\tti\sfz)^{\otimes2}],
\\
{\colred {\mathfrak S}^{(2,0)}_n(\tti\sfz)}
&=&
{\mathfrak S}^{(2,0)}_n(\tti\sfz,\tti\sfx)
\yeq
r_n^{-1}\big\langle DW_\infty[\tti\sfz],u_n[\tti\sfz]\big\rangle_\mfh,
\\
{\mathfrak S}^{(1,1)}_n(\tti\sfz,\tti\sfx)
&=&
r_n^{-1}\big\langle DX_\infty[\tti\sfx],u_n[\tti\sfz]\big\rangle_\mfh,
\\
{\colred {\mathfrak S}^{(1,0)}_n(\tti\sfz)}
&=&
{\mathfrak S}^{(1,0)}_n(\tti\sfz,\tti\sfx)
\yeq
\dotw_n[\tti\sfz]+N_n[\tti\sfz],
\\
{\colred {\mathfrak S}^{(0,1)}_n(\tti\sfx)}
&=&
{\mathfrak S}^{(1,0)}_n(\tti\sfz,\tti\sfx)
\yeq
\dotx_n[\tti\sfx],
\\
{\colred {\colorg {\mathfrak S}^{(2,0)}_{1,n}(\tti\sfz)}}
&=&
{\colorg {\mathfrak S}^{(2,0)}_{1,n}(\tti\sfz,\tti\sfx)}
{\colorg \yeq}
{\colorg 
\bigg\langle D\dotw_n[\tti\sfz]+DN_n[\tti\sfz],u_n[\tti\sfz]\bigg\rangle_\HH, }
\\
{\colorg {\mathfrak S}^{(1,1)}_{1,n}(\tti\sfz,\tti\sfx)}
&{\colorg =}&
{\colorg 
\bigg\langle D\dotx_n[\tti\sfx],u_n[\tti\sfz]\bigg\rangle_\HH.
}
\eeas

\begin{rem}\rm 
As mentioned in Remark \ref{20171117-1}, 
the order of the random symbol ${\mathfrak S}^{(3,0)}(\tti\sfz,\tti\sfx)$ appearing as the limit  of the
corresponding sequence ${\mathfrak S}^{(3,0)}_n(\tti\sfz,\tti\sfx)$ does not necessarily coincide with 
that of the latter because ${\mathfrak S}^{(3,0)}(\tti\sfz,\tti\sfx)$ is determined by 
the action of ${\mathfrak S}^{(3,0)}_n(\tti\sfz,\tti\sfx)$ to $\Psi(\sfz,\sfx)$ under expectation. 
It is also the case for other symbols. 
\end{rem}

For $\HH$-valued tensors ${\sf S}=({\sf S}_{i})$ and ${\sf T}=({\sf T}_{j})$, 
$\langle{\sf S},{\sf T}\rangle_\HH$ denotes the tensor with components 
$(\langle{\sf S}_{i},{\sf T}_{j}\rangle_\HH)_{i,j}$. 

\bd\im[[B\!\!]] 
{\bf (i)}  
$u_n\in\bbD^{{\colorg 4},p}(\mfh\otimes\bbR^\sfd)$, 
$G_\infty\in\bbD^{3,p}(\bbR^\sfd\otimes_+\bbR^\sfd)$, 
$W_n,W_\infty,N_n\in\bbD^{3,p}(\bbR^\sfd)$, 
$X_n,X_\infty\in\bbD^{3,p}(\bbR^{\sfd_1})$ and 
$\psi_n\in\bbD^{2,p_1}(\bbR)$ for some $p$ and $p_1$ satisfying $5p^{-1}+p_1^{-1}\leq1$. 

\bd
\im[\hspace{-2mm}(ii)] The following estimates hold: %{\colorr still need to check!}
\bea\label{b1}
\|u_n\|_{{\colred 1},p} &=& O(1)
% $\rho[9]$ needs the derivative of $u_n$ (2017.11.17). 
\eea
\bea\label{b2}
\sum_{k=2,3}
\|G_n^{(k)}\|_{p/2} &=& O(r_n)%\qquad(k=2,3)
\eea
{\colorg 
\bea\label{b32}
\|\big\langle DG^{(2)}_n,u_n\big\rangle_\mfh\|_{p/3}
&=& O(r_n)%\qquad(k=2,3)
\eea
\bea\label{b33}
\|\big\langle DG^{(3)}_n,u_n\big\rangle_\mfh\|_{p/3}
&=& o(r_n)%\qquad(k=2,3)
\eea
}
\bea\label{b4}
\sum_{k=2,3}
\bigg\|\bigg\langle D\bigg(\big\langle DG^{(k)}_n,u_n\big\rangle_\mfh\bigg),u_n\bigg\rangle_\mfh\bigg\|_{p/4}
&=& o(r_n)%\qquad(k=2,3)
\eea
\bea\label{b5}
%W_n,X_n\text{ for }G_n^{(1)}
\sum_{{\sf A}=W_\infty[\sfz],X_\infty[\sfx]}
\big\|\langle D{\sf A},u_n\rangle_\mfh\big\|_p &=& O(r_n)
\eea
%\bea\label{b6}
%\big\|\langle DX_\infty[\sfx],u_n[\sfz]\rangle_\mfh\big\|_p &=& O(r_n)
%\eea
\bea\label{b7}
\sum_{{\sf A}=W_\infty,X_\infty}
\|\big\langle D\langle D{\sf A},u_n[\sfz]\rangle_\mfh,u_n[\sfz]\big\rangle_\mfh\|_{p/3}
&=& o(r_n)
\eea
%\bea\label{b8}
%\|\big\langle D\langle DX_\infty[\sfx],u_n[\sfz]\rangle_\mfh,u_n[\sfz]\big\rangle_\mfh\|_{p/3}
%&=& o(r_n)
%\eea
{\colorg 
\bea\label{b9}
\sum_{{\sf A}=W_\infty,X_\infty}
\bigg\|\bigg\langle D\bigg(\big\langle D\langle D{\sf A},u_n\rangle_\mfh,u_n\big\rangle_\mfh\bigg),u_n\bigg\rangle_\mfh\bigg\|_{p/4}
&=& o(r_n)
\eea
}
\bea\label{b10}
\|\dotw_n\|_{3,p}+\|N_n\|_{3,p}+\|\dotx_n\|_{3,p}&=&O(1)
\eea
{\colorg 
\bea\label{b10.1}
\sum_{{\sf B}=\dotw_n,N_n,\dotx_n}
\bigg\| \big\langle D\langle D{\sf B},u_n\rangle_\HH ,u_n\big\rangle_\HH\bigg\|_{p/3}&=& o(1)
\eea
\bea\label{b10.2}
\sum_{{\sf B}=\dotw_n,N_n,\dotx_n}
\bigg\| \bigg\langle D\big\langle D\langle D{\sf B},u_n\rangle_\HH ,u_n\big\rangle_\HH
 ,u_n\bigg\rangle_\HH\bigg\|_{p/4}&=& o(1)
\eea
}
\bea\label{b11}
\|1-\psi_n\|_{2,p_1}&=&o(r_n)
\eea
\im[\hspace{-2mm}(iii)] For every $\sfz\in\bbR^\sfd$ and $\sfx\in\bbR^{\sfd_1}$, 
it holds that 
\beas 
\lim_{n\to\infty}
E\big[\Psi(\sfz,\sfx){\mathfrak T}_n(\tti\sfz,\tti\sfx)\psi_n\big]
&=&
E\big[\Psi(\sfz,\sfx){\mathfrak T}(\tti\sfz,\tti\sfx)\big]
\eeas
for $({\mathfrak T}_n,{\mathfrak T})=({\mathfrak S}^{(3,0)}_n,{\mathfrak S}^{(3,0)})$, 
$({\mathfrak S}^{(2,0)}_{0,n},{\mathfrak S}^{(2,0)}_0)$, 
$({\mathfrak S}^{(2,0)}_n,{\mathfrak S}^{(2,0)})$, 
$({\mathfrak S}^{(1,1)}_n,{\mathfrak S}^{(1,1)})$, 
$({\mathfrak S}^{(1,0)}_n,{\mathfrak S}^{(1,0)})$,  \break
$({\mathfrak S}^{(0,1)}_n,{\mathfrak S}^{(0,1)})$, 
{\colorg 
$({\mathfrak S}^{(2,0)}_{1,n},{\mathfrak S}^{(2,0)}_1)$ and 
$({\mathfrak S}^{(1,1)}_{1,n},{\mathfrak S}^{(1,1)}_1)$. }
\ed
\ed
\onelineskip

We say that a family of random symbols 
$\{\varsigma^\lambda(\tti\sfz,\tti\sfx)=\sum_{k,m} c_{k,m}^\lambda[(\tti\sfz)^{\otimes k}\otimes(\tti\sfx)^{\otimes m}];\>\lambda\in\Lambda\}$ is uniformly integrable (u.i.)
if the degrees of polynomials are bounded %(so $c_{k,m}^\lambda$ 
and the family $\{c_{k,m}^\lambda;\>\lambda\in\Lambda\}$ of 
tensor-valued random variables 
is uniformly integrable for every $(k,m)$. 
Recall that $\beta=\max\{{\colorg 7},\beta_0\}$, where $\beta_0$ is the degree in $(\sfz,\sfx)$ of 
the random symbol ${\mathfrak S}$.

\begin{thm}    \label{thm2}
Suppose that Condition $[B]$ is fulfilled. Then 
$\rho_n^{(1)}(f) = o(r_n)$ for every $f\in C^\beta_b(\bbR^{\check{\sfd}})$. 
Moreover, 
\beas 
\sup_{f\in \calb}|\rho_n^{(1)}(f)| &=& o(r_n)
\eeas
for every bounded set $\calb$ in $C^{\beta+1}_b(\bbR^{\check{\sfd}})$. 
\end{thm}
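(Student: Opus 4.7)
The plan is to bound each of the eleven summands $\rho_n^{(i)}(f)$ in the decomposition (\ref{20170504-2}) separately. The summands split into two groups: $\rho_n^{(2)}$, $\rho_n^{(3)}$ and $\rho_n^{(4)}$ are treated by direct moment estimates using Condition $[B]$\,(ii) and H\"older's inequality, whereas $\rho_n^{(5)},\dots,\rho_n^{(12)}$ each have the form $r_n\bigl(E[{\mathfrak S}^{(\cdot)}_n(\partial_z,\partial_x)f(\zeta_\infty(0))\psi_n]-E[{\mathfrak S}^{(\cdot)}(\partial_z,\partial_x)f(\zeta_\infty(0))]\bigr)$ and are handled by combining Condition $[B]$\,(iii) with a weak-convergence argument for finite signed measures on $\bbR^{\check{\sfd}}$.

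\textbf{Direct estimates.} The term $\rho_n^{(2)}$ is immediately bounded by $2\|f\|_\infty\|1-\psi_n\|_{p_1}=o(r_n)$ by (\ref{b11}). For $\rho_n^{(4)}$, the explicit $r_n$ prefactor combines with $\|D\psi_n\|_{p_1}\leq\|1-\psi_n\|_{1,p_1}=o(r_n)$ and the boundedness of $u_n,\dotw_n,N_n,\dotx_n$ from (\ref{b1}) and (\ref{b10}) to give $O(r_n)\cdot o(r_n)\cdot\|f\|_{C_b^2}=o(r_n^2)$. For $\rho_n^{(3)}$, one expands the iterated integrals $\rho_n^{(3,1)},\rho_n^{(3,2)},\rho_n^{(3,3)}$ and observes that every resulting summand factors either as a product of two quantities taken from $\{G_n^{(1)},G_n^{(2)},G_n^{(3)}\}$---each $O(r_n)$ by (\ref{b2}) and (\ref{b5})---producing $O(r_n^2)$, or as a single triple-derivative $\langle D\langle D(G_n^{(k)}\psi_n),u_n\rangle,u_n\rangle$, which is $o(r_n)$ by (\ref{b4}), (\ref{b32}) and (\ref{b33}); the boundary contributions involving $D\psi_n$ or derivatives of $W_\infty,X_\infty,\dotw_n,N_n,\dotx_n$ are controlled by (\ref{b7}), (\ref{b9}), (\ref{b10.1}) and (\ref{b10.2}). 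H\"older's inequality with exponents governed by $5p^{-1}+p_1^{-1}\leq 1$ closes each estimate, yielding $|\rho_n^{(3)}(f)|\leq\|f\|_{C_b^\beta}\cdot o(r_n)$.

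\textbf{Condition $[B]$\,(iii) terms.} For $i=5,\dots,12$ write the polynomial random symbol as ${\mathfrak S}^{(\cdot)}_n(\tti\sfz,\tti\sfx)=\sum_{k,m} c_{k,m}^n\bigl[(\tti\sfz)^{\otimes k}\otimes(\tti\sfx)^{\otimes m}\bigr]$. Using the identity $E[\Psi(\sfz,\sfx)\cdot Y]=E[e^{\tti\sfz\cdot(G_\infty^{1/2}\zeta+W_\infty)+\tti\sfx\cdot X_\infty}Y]$ for $\calf$-measurable $Y$, Condition $[B]$\,(iii) translates into the pointwise (in $(\sfz,\sfx)$) convergence of the characteristic functions of the finite signed measures $\mu^{(i)}_{n,k,m}:=(c_{k,m}^n\psi_n-c_{k,m}^\infty)\,dP\circ\zeta_\infty(0)^{-1}$ to zero, after separating coefficients of the monomials $(\tti\sfz)^{\otimes k}\otimes(\tti\sfx)^{\otimes m}$. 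The $L^p$ bounds in $[B]$\,(ii) supply uniform total-variation bounds on these measures; combined with tightness of the law of $\zeta_\infty(0)$, a signed-measure version of L\'evy's continuity theorem yields weak convergence $\mu^{(i)}_{n,k,m}\rightharpoonup 0$. Hence $\Delta_n^{(i)}(f):=E[{\mathfrak S}^{(\cdot)}_n(\partial_z,\partial_x)f(\zeta_\infty(0))\psi_n]-E[{\mathfrak S}^{(\cdot)}(\partial_z,\partial_x)f(\zeta_\infty(0))]\to 0$ for every $f\in C_b^\beta(\bbR^{\check{\sfd}})$, giving $\rho_n^{(i)}(f)=o(r_n)$.

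\textbf{Uniformity and the main obstacle.} For $f$ ranging over a bounded set $\calb\subset C_b^{\beta+1}(\bbR^{\check{\sfd}})$, the estimates of $\rho_n^{(2)}, \rho_n^{(3)}, \rho_n^{(4)}$ are dominated by $\|f\|_{C_b^\beta}\cdot o(r_n)$ and are therefore already uniform. The genuine obstacle is upgrading the weak convergence $\mu^{(i)}_{n,k,m}\rightharpoonup 0$ to uniformity in $f\in\calb$. For each $(k,m)$ with $k+m\leq\beta_0$, the family $\{\partial_z^k\partial_x^m f:f\in\calb\}$ is bounded and equicontinuous in $C_b(\bbR^{\check{\sfd}})$---this is precisely where the extra derivative provided by $C_b^{\beta+1}$ is needed. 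By Arzel\`a--Ascoli it is relatively compact in $C(K)$ for any compact $K\subset\bbR^{\check{\sfd}}$; tightness of $\mu^{(i)}_{n,k,m}$ reduces $\int g\,d\mu^{(i)}_{n,k,m}\to 0$ to test functions restricted to a sufficiently large such $K$; and weak convergence is uniform on relatively compact subsets of $C(K)$. This yields $\sup_{f\in\calb}|\Delta_n^{(i)}(f)|\to 0$, and summing over $i=2,\dots,12$ gives $\sup_{f\in\calb}|\rho_n^{(1)}(f)|=o(r_n)$, as required.
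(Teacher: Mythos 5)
Your treatment of $\rho^{(2)}_n$, $\rho^{(4)}_n$ and (in outline) $\rho^{(3)}_n$ follows the paper's route, and your uniformity argument via equicontinuity and Arzel\`a--Ascoli plus tightness is also essentially the paper's. The genuine gap is in your handling of $\rho^{(i)}_n$ for $i=5,\dots,12$, at the step where you claim that Condition $[B]$ (iii) translates into pointwise convergence of the Fourier transforms of the coefficient-wise signed measures $\mu^{(i)}_{n,k,m}$ ``after separating coefficients of the monomials.'' Condition $[B]$ (iii) only gives convergence of the full polynomial pairing $E[\Psi(\sfz,\sfx){\mathfrak T}_n(\tti\sfz,\tti\sfx)\psi_n]\to E[\Psi(\sfz,\sfx){\mathfrak T}(\tti\sfz,\tti\sfx)]$ for each fixed $(\sfz,\sfx)$; you cannot isolate the contribution of a single monomial $(\tti\sfz)^{\otimes k}\otimes(\tti\sfx)^{\otimes m}$, because the quantities multiplying the monomials in this identity are themselves functions of $(\sfz,\sfx)$ (Fourier transforms of weighted laws), not constants. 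Worse, coefficient-wise convergence is actually \emph{false} in the paper's own examples: as Remark \ref{20171117-1} stresses, the degree of ${\mathfrak T}$ need not equal that of ${\mathfrak T}_n$. In Section \ref{170813-3}, ${\mathfrak S}^{(3,0)}_n$ is purely cubic in $\tti\sfz$ while its limit ${\mathfrak S}^{(3,0)}$ contains a nonzero $(\tti\sfz)^5$ term, so for $(k,m)=(5,0)$ your measure $\mu_{n,5,0}$ is the fixed nonzero image under $\zeta_\infty(0)$ of $-c^\infty_{5,0}\,dP$ and does not converge weakly to zero. The L\'evy-continuity argument built on that intermediate claim therefore collapses.

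The conclusion you want is nevertheless true, and the repair is to never decompose. For $f\in\cals(\bbR^{\csfd})$ write $\Delta^{(i)}_n(f)$ as $(2\pi)^{-\csfd}\int\hat f(\sfz,\sfx)\,E\big[\Psi(\sfz,\sfx)\big({\mathfrak T}_n(\tti\sfz,\tti\sfx)\psi_n-{\mathfrak T}(\tti\sfz,\tti\sfx)\big)\big]\,d\sfz\, d\sfx$ using the identity (\ref{170417-1}); the integrand converges to zero pointwise by $[B]$ (iii) and is dominated by $|\hat f|$ times a fixed polynomial in $|(\sfz,\sfx)|$ whose constants come from the uniform $L^1$ bounds on the coefficients, so dominated convergence gives $\Delta^{(i)}_n(f)\to0$ for Schwartz $f$. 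The extension from $\cals$ to $C^\beta_b$ then goes through the compactly supported cutoff $\chi$: approximate $\chi f$ in $C^\beta_b$ by Schwartz functions, and control the tail $(1-\chi)f$ uniformly in $n$ using the uniform integrability of the coefficient families, which you did correctly derive from $[B]$ (ii). With that substitution your uniformity argument over bounded sets of $C^{\beta+1}_b$ is sound.
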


\proof
The sequence $\{{\mathfrak S}^{(3,0)}_n; n\in \mathbb{N}\}$,  is u.i. from (\ref{b32}) %for $k=2$ 
and (\ref{b2}) for $k=3$. 
$\{{\mathfrak S}^{(2,0)}_{0,n};  n\in \mathbb{N}\}$ is u.i. from (\ref{b2}) for $k=2$. 
$\{{\mathfrak S}^{(2,0)}_n ; n\in \mathbb{N}\}$ and $\{{\mathfrak S}^{(1,1)}_n; n\in \mathbb{N}\}$ are u.i. from (\ref{b5}). 
$\{{\mathfrak S}^{(1,0)}_n; n\in \mathbb{N}\}$ and $\{{\mathfrak S}^{(0,1)}_n; n\in \mathbb{N}\}$ are u.i. from (\ref{b10}). 
{\colorg 
$\{{\mathfrak S}^{(2,0)}_{1,n}; n\in \mathbb{N}\}$ and $\{{\mathfrak S}^{(1,1)}_{1,n}; n\in \mathbb{N}\}$ are u.i. from (\ref{b1}) and (\ref{b10}).} 

We consider first the case of $\rho^{(8)}_n(f)$, given by
\beas
\rho^{(8)}_n(f)= r_n \left\{ E\big[\psi_n{\mathfrak S}^{(1,1)}_n(\partial_z,\partial_x)
f(\zeta_\infty(0))\big] - E\big[{\mathfrak S}^{(1,1)}(\partial_z,\partial_x)%[\partial_z^{\otimes2}]
f(\zeta_\infty(0))\big]\right\}.
\eeas
By $[B]$ (iii) and the formula (\ref{170417-1}), we obtain 
\bea\label{170417-2}
\lim_{n\to\infty} E\big[\psi_n{\mathfrak S}^{(1,1)}_n(\partial_z,\partial_x)
f(\zeta_\infty(0))\big]
&=&
E\big[{\mathfrak S}^{(1,1)}(\partial_z,\partial_x)%[\partial_z^{\otimes2}]
f(\zeta_\infty(0))\big]
\eea
for $f\in\cals(\bbR^{\check{\sfd}})$. 
Let $\chi:\bbR^{\check{\sfd}}\to[0,1]$ be a smooth function with a compact support. 
Since for $f\in C^\beta_b(\bbR^{\check{\sfd}})$, the function $\chi f$ is uniformly approximated 
in $C^\beta_b(\bbR^{\check{\sfd}})$ 
by some 
 function in $\cals(\bbR^{\check{\sfd}})$, we have 
\bea\label{170417-3}
\lim_{n\to\infty} \bigg|E\big[\psi_n{\mathfrak S}^{(1,1)}_n(\partial_z,\partial_x) 
(\chi f)(\zeta_\infty(0))\big]
-
E\big[{\mathfrak S}^{(1,1)}(\partial_z,\partial_x)%[\partial_z^{\otimes2}]
(\chi f)(\zeta_\infty(0))\big]\bigg|
&=&
0
\eea
for $f\in C^\beta_b(\bbR^{\check{\sfd}})$.  
Let $\calb^k$ be a bounded set in $C^k_b(\bbR^{\check{\sfd}})$. Let $\ep>0$. 
Due to the uniformly integrability of the family $\{{\mathfrak S}^{(1,1)}_n; n\in \mathbb{N} \}$,  we can write
\bea\label{170417-4}
\sup_{f\in\calb^\beta,n\in\bbN}
\bigg| E\big[\psi_n{\mathfrak S}^{(1,1)}_n(\partial_z,\partial_x) 
((1-\chi) f)(\zeta_\infty(0))\big]\bigg|
+\sup_{f\in\calb^\beta}\bigg|
E\big[{\mathfrak S}^{(1,1)}(\partial_z,\partial_x)%[\partial_z^{\otimes2}]
((1-\chi) f)(\zeta_\infty(0))\big]\bigg|
&<& 
\ep
\nn\\&&
\eea
if we choose $\chi$ satisfying $\chi=1$ on a sufficiently large compact set. 
Combining (\ref{170417-3}) and (\ref{170417-4}), we obtain 
\bea\label{170417-5}
\lim_{n\to\infty} \bigg|E\big[\psi_n{\mathfrak S}^{(1,1)}_n(\partial_z,\partial_x) 
f(\zeta_\infty(0))\big]
-
E\big[{\mathfrak S}^{(1,1)}(\partial_z,\partial_x)%[\partial_z^{\otimes2}]
f(\zeta_\infty(0))\big]\bigg|
&=&
0
\eea
for $f\in C^\beta_b(\bbR^{\check{\sfd}})$.  
Moreover, we have 
\bea\label{170417-6}
\lim_{n\to\infty} 
\sup_{f\in\calb^{\beta+1}}
\bigg|E\big[\psi_n{\mathfrak S}^{(1,1)}_n(\partial_z,\partial_x) 
(\chi f)(\zeta_\infty(0))\big]
-
E\big[{\mathfrak S}^{(1,1)}(\partial_z,\partial_x)%[\partial_z^{\otimes2}]
(\chi f)(\zeta_\infty(0))\big]\bigg|
&=&
0,
\eea
since 
{\colorg the functionals in $|\ |$ are equi-continuous in $f$ in $C^\beta(\bbR^{\check{\sfd}})$ and}
$\calb^{\beta+1}$ is relatively compact in $\calb^\beta$ if the domain is restricted 
to $\text{supp}( \chi)$.  
Therefore we showed that $\rho^{(8)}_n(f)=o(r_n)$ for every $f\in C^\beta_b(\bbR^{\check{\sfd}})$, and that 
$\sup_{f\in \calb^{\beta+1}}|\rho^{(8)}_n(f)|=o(r_n)$. 
Similarly, we obtain the same estimate for  $\rho^{(i)}_n(f)$ for  $i=5,\dots,12$. 

Moreover, we  have
$\sup_{f\in\calb^0}|\rho^{(2)}_n(f)|=o(r_n)$ from (\ref{b11}), and 
$\sup_{f\in\calb^2}|\rho^{(4)}_n(f)|=o(r_n)$ from (\ref{b1}), (\ref{b10}) and (\ref{b11}). 

{\colorg 
The tensor $\hat{G}^{(1)}_n(\theta,\cdot,\cdot)$ is denoted by $\hat{G}^{(1)}_n(\theta)$. 
The estimate 
$ 
\|\hat{G}^{(1)}_n(\theta)\|_{p/2} = O(r_n)
$
follows from (\ref{b5}), (\ref{b10}) and (\ref{b1}), and 
the estimate 
$ 
\|\check{G}^{(1)}_n\|_{p/2} = O(r_n)
$
follows from (\ref{b10}). Therefore 
\bea\label{esta} 
\|G^{(1)}_n(\theta)\|_{p/2} &=& O(r_n).
\eea
We obtain 
\bea\label{estb1} 
\big\|\big\langle D\hat{G}^{(1)}_n(\theta),u_n\big\rangle_\HH\big\|_{p/3} &=& o(r_n) 
\eea
from (\ref{b7}) and (\ref{b10.1}) for every $\theta$ (or $\theta=0,1$), and 
\bea\label{estb2} 
\big\|\big\langle D\check{G}^{(1)}_n,u_n\big\rangle_\HH\big\|_{p/2} &=& O(r_n)
\eea
from (\ref{b10}). 
Moreover, we have 
\beas
\bigg\|\bigg\langle D\big\langle D\hat{G}^{(1)}_n(\theta),u_n\big\rangle_\HH,u_n\bigg\rangle_\HH\bigg\|_{p/3} &=& o(r_n) 
\eeas
by (\ref{b9}) {\colorr and (\ref{b10.2})}, and 
\beas
\bigg\|\bigg\langle D\big\langle D\check{G}^{(1)}_n,u_n\big\rangle_\HH,u_n\bigg\rangle_\HH\bigg\|_{p/3} &=& o(r_n) 
\eeas
by {\colorr (\ref{b10.1}),} so that 
\bea\label{estc}
\bigg\|\bigg\langle D\big\langle DG^{(1)}_n(\theta),u_n\big\rangle_\HH,u_n\bigg\rangle_\HH\bigg\|_{p/3} &=& o(r_n) 
\eea

Let us investigate the order of $\rho^{(3)}_n(f)$. 
Denote by $\rho[i]$ ($i=1,\dots,24$) 
the 24 linear functionals of $f$ appearing in the expression of $\rho^{(3)}_n(f)$.  
Though not explicitly mentioned in what follows, 
Condition (\ref{b11}) is used every time in estimation of $\rho[i]$ to ensure either 
$\|\psi_n\|_{p_1}=O(1)$, $\|D\psi_n\|_{p_1}=o(r_n)$ or $\|D^2\psi_n\|_{p_1}=o(r_n)$. 
The estimate $\rho[1]=o(r_n)$ follows from (\ref{b1}) (and (\ref{b11})).
{\color {black} The term $\rho[2]$  corresponds to the first term in the expression of $\rho^{(3,1)}_n$; then}
$\rho[2]=o(r_n)$ from (\ref{b4}), (\ref{b2}), (\ref{b32}) and (\ref{b1}) with the aid of the Leibniz rule; 
$\rho[3]=O(r_n^2)$ from (\ref{b2}), (\ref{b32}) and (\ref{b1}); 
$\rho[4]=O(r_n^2)$ from (\ref{b2}), (\ref{b32}) and (\ref{b1}); 
$\rho[5]=O(r_n^2))$ from (\ref{esta}), (\ref{b32}) and (\ref{b1}); 
$\rho[6]=O(r_n^2)$ from (\ref{b2}); 
$\rho[7]=O(r_n^2)$ from (\ref{b2}); 
$\rho[8]=O(r_n^2)$ from (\ref{esta}) and (\ref{b2}); 
$\rho[9]=o(r_n)+o(r_n)o(r_n)+O(r_n)o(r_n)=o(r_n)$ from (\ref{b4}), (\ref{b33}), (\ref{b1}) and (\ref{b2}); 
$\rho[10]=o(r_n^2)+o(r_n^3)=o(r_n^2)$ from (\ref{b33}), (\ref{b2}) and (\ref{b1}); 
$\rho[11]=o(r_n^2)+o(r_n^3)=o(r_n^2)$ from (\ref{b33}), (\ref{b2}) and (\ref{b1}); 
$\rho[12]=o(r_n^2)+o(r_n^3)=o(r_n^2)$ from (\ref{esta}), (\ref{b33}), (\ref{b2}) and (\ref{b1}); 
$\rho[13]=O(r_n^2)$ from (\ref{b2}); 
$\rho[14]=O(r_n^2)$ from (\ref{b2}); 
$\rho[15]=O(r_n^2)$ from (\ref{esta}) and (\ref{b2}); 
$\rho[16]=o(r_n)+O(r_n)o(r_n)+O(r_n)o(r_n)=o(r_n)$ from (\ref{estc}), (\ref{estb1}), (\ref{estb2}), (\ref{esta}) and (\ref{b1}); 
$\rho[17]=O(r_n^2)+o(r_n^3)=O(r_n^2)$ from (\ref{b2}), (\ref{estb1}), (\ref{estb2}), (\ref{esta}) and (\ref{b1}); 
$\rho[18]=O(r_n^2)+o(r_n^3)=O(r_n^2)$ from (\ref{b2}), (\ref{estb1}), (\ref{estb2}), (\ref{esta}) and (\ref{b1}); 
$\rho[19]=O(r_n^2)+o(r_n^3)=O(r_n^2)$ from (\ref{esta}), (\ref{estb1}), (\ref{estb2}) and (\ref{b1}); 
$\rho[20]=O(r_n^2)$ from (\ref{b2}) and (\ref{esta}); 
$\rho[21]=O(r_n^2)$ from (\ref{b2}) and (\ref{esta}); 
$\rho[22]=O(r_n^2)$ from (\ref{esta}); 
$\rho[23]=o(r_n)+o(r_n^2)=o(r_n)$ from (\ref{b33}), (\ref{b2}) and (\ref{b1}); 
$\rho[24]=o(r_n)+o(r_n^2)=o(r_n)$ from (\ref{estb1}) and (\ref{b1}). 
We remark that some $\rho[i]$'s involve a product of five $L^p$ variables besides $\psi_n$ or its derivative, 
and that the seventh derivative of $f$ appears in $\rho[11]$. 
These estimates give $\sup_{f\in\calb^7}|\rho^{(3)}_n(f)|=o(r_n)$. 
This completes the proof. 
}
\qed
\vspace*{5mm}

\begin{en-text}
%$5/p+1/p_1=1$
\beas 
&&
\|u_n\|_p = O(1)
\\&&
\|G_n^{(k)}\|_{1,p/2} = O(r_n^{1/2})\qquad(k=2,3)
\\&&
.........
\eeas
Then 
\beas
&&
\|u_n\|_p = O(1)
\\&&
\|G_n^{(k)}\|_{p/2} = o(r_n^{1/2})\qquad(k=1,2,3)
\\&&
\|\big\langle DG^{(k)}_n(\sfz_1),u_n[\sfz_2]\big\rangle_\mfh\|_{p/3}
= o(r_n^{1/2})\qquad(k=1,2,3)
\\&&
\bigg\|\bigg\langle D\bigg(\big\langle DG^{(k)}_n(\sfz_1),u_n[\sfz_2]\big\rangle_\mfh\bigg),u_n[\sfz_3]\bigg\rangle_\mfh\bigg\|_{p/4}
= o(r_n)\qquad(k=1,2,3)
\\&&
%W_n,X_n\text{ for }G_n^{(1)}
\big\|\langle DW_\infty[\sfz_1],u_n[\sfz_2]\rangle_\mfh\big\|_p = o(r_n)
\\&&
\big\|\langle DX_\infty[\sfz_1],u_n[\sfz_2]\rangle_\mfh\big\|_p = o(r_n)
\\&&
\|1-\psi_n\|_{2,p_1}=o(r_n)
\eeas
\end{en-text}
}

%%%%%%%%%%%%%%%%%%%%%%%%%%%%%%%
\section{A functional of a fractional Brownian motion}\label{170813-2}
 
In this section, we shall consider a functional of 
a fractional Brownian motion (fBm) with Hurst parameter $H\in (0,1)$ on the time interval $[0,1]$. {\color {black} The fBm is}
a centered Gaussian process $B=\{B_{t}, t\in [0,1] \}$ defined on 
a probability space $(\Omega,\calf,P)$ with covariance
function
\[
R_H(t,s)\yeq
E[B_{s}B_{t}]\yeq \frac{1}{2}\left( t^{2H}+s^{2H}-|t-s|^{2H}\right).
\]
The process $B$ is a standard Brownian motion for $H=\frac 12$.
Denote by 
$\mathcal{E}$ the set of step functions on $[0,1] $. 
Then it is possible to introduce an inner product {\color {black} in} $\mathcal{E}$ such that 
\begin{equation*}
\left\langle {\mathbf{1}}_{[0,t]},{\mathbf{1}}_{[0,s]}\right\rangle _{%
\EuFrak H}=E[B_{s}B_{t}].
\end{equation*}%
Let $\|\cdot\|_\mfh=\langle\cdot,\cdot\rangle_\mfh^{1/2}$. 
Hilbert space $\EuFrak H$ is defined as the closure of $\mathcal{E}$ with respect to $\|\cdot\|_\mfh$. 

It is known that the mapping $\mathbf{1}_{[0,t]}\mapsto B_{t}$ can be extended to a
linear isometry between $\EuFrak H$ and the Gaussian space
spanned by $B$ in $L^2=L^2(\Omega,\calf,P)$.
We denote this isometry by $\phi \mapsto B(\phi )$. 
The process $\{B(\phi ), \phi \in \EuFrak H\}$ is  an isonormal Gaussian process.
We refer to \cite{Nualart2006} for a detailed account on the basic properties of {\color {black} the fBm. Assume again that $\mathcal{F}$ is the $\sigma$-field generated by $B$}.

In the case $H>\frac 12$, the space  $\EuFrak H$ contains 
the linear space $|\mfh|$ of all measurable functions $\varphi: [0,1] \rightarrow \mathbb{R}$ satisfying 
\beas\label{20170504-7}
\int_0^1 \int_0^1 |\varphi(s)| |\varphi(t)| |t-s| ^{2H-2} dsdt <\infty. 
\eeas
In this case, the inner product $\left\langle \varphi, \phi  \right\rangle _{\EuFrak H} $ is represented by 
\begin{equation}  \label{eq7}
\left\langle \varphi, \phi  \right\rangle _{\EuFrak H} =
H(2H-1)  \int_0^1 \int_0^1  \varphi(s)   \phi(t) |t-s| ^{2H-2} dsdt
\end{equation}
for $\varphi,\phi\in|\mfh|$. 
Furthermore, $L^{\frac 1H}([0,1])$ is continuously embedded into $\EuFrak H$. 
The following lemma provides useful formulas for the inner product in the Hilbert space $\HH$.

\begin{lemme}\label{20170616-1}  {\bf (i)} Let $H \not = \frac 12$. For any piecewise continuous function $\varphi$ on $[0,1]$,
 the inner product  $ \langle \varphi ,   \mathbf{1}_{[0,s]}\rangle_{\HH}$ is given by
 \bea \nn
\langle \varphi ,   \mathbf{1}_{[0,s]}\rangle_{\HH}
&=&
\int_0^1  \varphi(t) \frac {\partial R_H} {\partial t} (t,s) dt
\\&=&  \label{hk1}
 \int_0^1  \varphi(t) H\big\{
t^{2H-1}-|t-s|^{2H-1}{\rm sign}(t-s)
\big\} dt. 
\eea 
 \bd 
 \im[(ii)]  Let $H \not = \frac 12$. For  any piecewise continuous function $\varphi$ on $[0,1]$ and $\psi\in C_b^1({\colred [0,1]})$, 
 \bea 
 \langle \varphi ,  \psi \rangle_{\HH} &=&  \nn
 \int_0^1  \varphi(t) \bigg\{\frac {\partial R_H} {\partial t} (t,1)\psi(1)
- \int_0^1 \frac {\partial R_H} {\partial t} (t,s) \psi'(s) ds\bigg\}dt
\\&=& \nn
\int_0^1\varphi(t)H\bigg\{t^{2H-1}\psi(0)+(1-t)^{2H-1}\psi(1)
\\&&  \label{hk11}
+\int_0^1|t-s|^{2H-1}{\rm sign}(t-s)\psi'(s)ds\bigg\}dt.
 \eea
 \im[(iii)] Let $0<H<1/2$, $0<a<b<1$. Then 
 for any piecewise continuous function $\varphi$ on $[0,1]$, 
 \beas 
 \big|\langle \varphi,\mathbf{1}_{[a,b]}\rangle_\HH\big|
 &\leq& 
 \|\varphi\|_\infty(b-a)^{2H}.
 \eeas
 \ed
  \end{lemme}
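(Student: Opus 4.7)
The plan is to prove parts (i), (ii), (iii) in sequence, using (i) as the backbone.

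For part (i), I would begin with step functions $\varphi = \mathbf{1}_{[0,t]}$, where the identity $\langle \mathbf{1}_{[0,t]}, \mathbf{1}_{[0,s]}\rangle_{\HH} = R_H(t,s)$ is built into the definition of $\HH$. Writing $R_H(t,s) = \int_0^t \partial_u R_H(u,s)\,du$ produces exactly the claimed integral representation for $\varphi = \mathbf{1}_{[0,t]}$. By linearity this extends to all step functions, and then to piecewise continuous $\varphi$ by a uniform approximation argument: for $H \neq 1/2$, differentiating the covariance $R_H(t,s)=\frac12(t^{2H}+s^{2H}-|t-s|^{2H})$ gives $\partial_t R_H(t,s) = H\{t^{2H-1} - |t-s|^{2H-1}\,\mathrm{sign}(t-s)\}$, which is integrable in $t$ uniformly in $s\in[0,1]$, so the right-hand side of \eqref{hk1} depends continuously on $\varphi$ in sup-norm, matching the continuity of the inner product against $\mathbf{1}_{[0,s]}\in\HH$.

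For part (ii), I would use the representation
\[
\psi(s) \;=\; \psi(1) \;+\; \int_0^1 (-\psi'(u))\,\mathbf{1}_{[0,u]}(s)\,du,
\]
which identifies $\psi$ with $\psi(1)\cdot\mathbf{1}_{[0,1]} - \int_0^1 \psi'(u)\,\mathbf{1}_{[0,u]}\,du$ as an element of $\HH$. Pairing with $\varphi$, applying part (i) inside the $u$-integral, and invoking Fubini yields the first equality of \eqref{hk11}. For the second (explicit) form, I would substitute $\partial_t R_H(t,1) = H\{t^{2H-1}+(1-t)^{2H-1}\}$, and handle the $Ht^{2H-1}\psi'(s)$ piece by using $\int_0^1\psi'(s)\,ds = \psi(1)-\psi(0)$; after cancellation the $t^{2H-1}$ term regroups into the stated $\psi(0)$ coefficient while $H(1-t)^{2H-1}$ carries the $\psi(1)$ coefficient.

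For part (iii), I would apply (i) to $\mathbf{1}_{[a,b]} = \mathbf{1}_{[0,b]} - \mathbf{1}_{[0,a]}$ to obtain
\[
\langle \varphi, \mathbf{1}_{[a,b]}\rangle_{\HH}
\;=\; H\int_0^1 \varphi(t)\bigl[\,|t-a|^{2H-1}\mathrm{sign}(t-a) - |t-b|^{2H-1}\mathrm{sign}(t-b)\,\bigr]\,dt,
\]
and bound by $\|\varphi\|_\infty$ times the $L^1$-norm of the bracketed kernel. Splitting $[0,1]$ into $(0,a)$, $(a,b)$, $(b,1)$, determining the sign of the kernel on each piece (note it is positive on $(a,b)$ and negative on the complement for $H<1/2$), and integrating elementary powers $u^{2H-1}$ gives a closed-form expression involving $(b-a)^{2H}$, $b^{2H}-a^{2H}$, and $(1-a)^{2H}-(1-b)^{2H}$. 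The key input is the sub-additivity of $s\mapsto s^{2H}$ valid for $H<1/2$, i.e.\ $b^{2H}\le a^{2H}+(b-a)^{2H}$ and $(1-a)^{2H}\le(1-b)^{2H}+(b-a)^{2H}$, which controls the boundary contributions by $(b-a)^{2H}$ and yields the stated bound.

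The main obstacle I anticipate is the bookkeeping in part (iii): one has to track the sign of $|t-a|^{2H-1}\mathrm{sign}(t-a)-|t-b|^{2H-1}\mathrm{sign}(t-b)$ piecewise, and then use sub-additivity in just the right place — without it, the integrated kernel would grow in absolute value like $1$ rather than like $(b-a)^{2H}$. The argument crucially exploits $H<1/2$; for $H>1/2$ sub-additivity fails and one only obtains the Cauchy--Schwarz bound $(b-a)^H\|\varphi\|_{\HH}$, which is a different estimate.
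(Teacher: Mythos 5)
Your parts (i) and (ii) follow exactly the paper's route: (i) is the step-function approximation built on the defining identity $\langle\mathbf{1}_{[0,t]},\mathbf{1}_{[0,s]}\rangle_\HH=R_H(t,s)$, and (ii) uses the same identification $\psi=\psi(1)\mathbf{1}_{[0,1]}-\int_0^1\psi'(s)\mathbf{1}_{[0,s]}\,ds$ followed by (i) and Fubini; your algebra recovering the second display of (\ref{hk11}) checks out. The paper disposes of (iii) with the phrase ``simple calculus with (i)'', and your kernel computation is the natural way to carry that out.

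One concrete point, however: your bookkeeping in (iii) does not deliver the constant $1$. Writing $k(t)=|t-a|^{2H-1}\mathrm{sign}(t-a)-|t-b|^{2H-1}\mathrm{sign}(t-b)$, the middle piece alone gives $H\int_a^b\bigl[(t-a)^{2H-1}+(b-t)^{2H-1}\bigr]dt=(b-a)^{2H}$ exactly, while sub-additivity shows each outer piece is nonnegative and at most $\tfrac12(b-a)^{2H}$ (for instance $H\int_0^a|k(t)|\,dt=\tfrac12\bigl[a^{2H}+(b-a)^{2H}-b^{2H}\bigr]$). So your argument proves $|\langle\varphi,\mathbf{1}_{[a,b]}\rangle_\HH|\le 2\|\varphi\|_\infty(b-a)^{2H}$, not the stated bound; and no argument can do better than $H\|k\|_{L^1}$, since taking $\varphi$ to be the sign of $k$ (piecewise constant, hence admissible) attains it and, by strict sub-additivity, exceeds $(b-a)^{2H}$ whenever $0<a<b<1$. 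The inequality should therefore carry a constant such as $2$ or $C_H$. This is immaterial for the paper, which only ever invokes (iii) for order-of-magnitude estimates of the form $O((b-a)^{2H})$, but you should not claim that the sub-additivity step ``yields the stated bound'' with constant $1$.
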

  \proof 
{\color {black} Approximating $\varphi$ by  step functions we can derive (i). 
For (ii),  using (i) we can write}
\beas 
 \langle \varphi ,  \psi \rangle_{\HH} &=& 
 \bigg\langle \varphi, \psi(1)\mathbf{1}_{[0,1]}-\int_0^1\mathbf{1}_{[0,s]}\psi'(s)ds\bigg\rangle_\HH
\\&=&
\psi(1) \int_0^1  \varphi(t) \frac {\partial R_H} {\partial t} (t,1) dt
- \int_0^1\int_0^1  \varphi(t) \frac {\partial R_H} {\partial t} (t,s) \psi'(s) dsdt.
\eeas
Simple calculus with (i) gives (iii). 
\qed

In what follows, we shall write 
\begin{equation}\label{e:ch}
 c_H=  \sqrt{H\Gamma(2H)}, \quad H\in (0,1).
\end{equation}
Notice that  $c_{\frac12}  =\frac{1}{\sqrt{2}}$ and for $H>1/2$, $c_H =\sqrt{H(2H-1)\Gamma (2H-1)}$.

We will consider the sequence  of random variables $Z_n =\delta(u_n)$,  $n\geq 1$, where
 \[
u_n(t)= n^H t^{n}B_{t} \mathbf{1}_{[0,1]} (t).
\]
The following provides the convergence in law of the sequence $Z_n$.
For the Brownian motion case, this result goes back to Peccati and Yor \cite{peccatiyor2004}. 
For $H>\frac 12$, it was proved by  Peccati and Tudor \cite{peccati2008stable} and a rate of convergence in the total variation distance was established {\color {black} in}   \cite{nourdin2016quantitative}.   For $H<\frac 12$, the convergence {\color {black} in law of $Z_n$} it is a consequence of our asymptotic expansion proved below. 
\begin{prop}    \label{prop4.1}
The sequence $Z_n$ converges stably in law to $\zeta \sqrt{G_\infty} $, where $G_\infty= c^2_H B^2_1$ and $\zeta$ is a $N(0,1)$ random variable, independent of $\{B_t, t \in [0,1]\}$.
\end{prop}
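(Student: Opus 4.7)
The plan is to verify the hypotheses of the stable central limit theorem for Skorohod integrals due to Nourdin and Nualart, namely the $L^1$-convergence
\[
G_n \;:=\; \langle u_n,DZ_n\rangle_\mfh \;\longrightarrow\; G_\infty \;=\; c_H^{\,2}\,B_1^{\,2},
\]
together with uniform boundedness of $u_n$ in an appropriate Malliavin--Sobolev norm. Once these are established, the stable convergence of $Z_n=\delta(u_n)$ to $\zeta\sqrt{G_\infty}$, with $\zeta\sim N(0,1)$ independent of the underlying fractional Brownian motion, follows immediately.

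To compute $DZ_n$, I use the commutation relation $D_r\delta(u_n)=u_n(r)+\delta(D_r u_n)$. Since $D_r u_n(t) = n^H t^n \mathbf{1}_{[0,t]}(r)$ is deterministic, the second summand reduces to the first-chaos element $v_n(r)=n^H B\bigl(t^n\mathbf{1}_{[r,1]}(t)\bigr)$. Hence
\[
G_n \;=\; \|u_n\|_\mfh^{\,2} + \langle v_n, u_n\rangle_\mfh,
\]
and the problem reduces to the asymptotic analysis of these two Hilbert-space inner products.

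The key mechanism is a localization phenomenon: the factor $t^n$ concentrates mass in a window of width $O(1/n)$ near $t=1$. Under the change of variables $t=1-u/n$, $s=1-v/n$, we have $n^H(1-u/n)^n\to e^{-u}$ and $B_{1-u/n}\to B_1$ on compact sets. For $H>1/2$, formula (\ref{eq7}) together with dominated convergence gives
\[
\|u_n\|_\mfh^{\,2} \;\longrightarrow\; H(2H-1)\,B_1^{\,2}\int_0^\infty\!\!\int_0^\infty e^{-u-v}|u-v|^{2H-2}\,du\,dv \;=\; c_H^{\,2}\,B_1^{\,2},
\]
using $H(2H-1)\Gamma(2H-1)=H\Gamma(2H)=c_H^{\,2}$. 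The cross term $\langle v_n,u_n\rangle_\mfh$ vanishes in $L^1$ because $v_n(r)$ has variance of order $n^{2H}(1-r)^{2H+1}$, which is negligible on the effective window $1-r=O(1/n)$ where $u_n(r)$ is concentrated.

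The main obstacle is the case $H<1/2$: the Hilbert space $\mfh$ consists of distributions rather than functions, and the kernel representation (\ref{eq7}) is no longer available. One must instead invoke Lemma~\ref{20170616-1}(ii) to recast the inner product as a boundary term plus a regular integral, and use the H\"older-type estimate in Lemma~\ref{20170616-1}(iii) to control inner products of indicator functions. The same limit $c_H^{\,2}B_1^{\,2}$ emerges, with the factor $B_1^{\,2}$ now arising from the $\psi(1)$-boundary contribution in Lemma~\ref{20170616-1}(ii). Once $G_n\to G_\infty$ in $L^1$ is established, the uniform bound $\sup_n\|u_n\|_{\bbD^{2,2}(\mfh)}<\infty$ is straightforward since $u_n$ lies in the first Wiener chaos and its higher Malliavin derivatives are deterministic; applying the stable CLT for Skorohod integrals then yields the claim.
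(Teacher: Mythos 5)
Your route is genuinely different from the paper's. The paper does not prove Proposition \ref{prop4.1} directly: for $H=\frac12$ and $H>\frac12$ it cites Peccati--Yor and Peccati--Tudor, and for $H<\frac12$ it obtains the convergence as a by-product of the asymptotic expansion established afterwards. You instead propose a self-contained verification of the hypotheses of the stable CLT for Skorohod integrals of Nourdin and Nualart \cite{nourdin2010central}. The skeleton is sound: the identity $\langle DZ_n,u_n\rangle_\HH=\|u_n\|_\HH^2+\Phi_n$ is exactly (\ref{20170430-2}), your scaling $t=1-u/n$ correctly produces $H(2H-1)\Gamma(2H-1)B_1^2=c_H^2B_1^2$ for $H>\frac12$, and the regularity of $u_n$ is immediate since it is first-chaos-valued with deterministic derivative. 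This is cleaner than the paper's indirection and recovers in passing the quasi-tangent computations the paper only performs later (Sections \ref{170805-5} and \ref{170805-7}).

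There are, however, two concrete gaps. First, the Nourdin--Nualart theorem requires, besides $\langle u_n,DZ_n\rangle_\HH\to G_\infty$ in $L^1$ and boundedness of $u_n$ in $\bbD^{2,2}(\HH)$, the condition $\langle u_n,h\rangle_\HH\to0$ in $L^1$ for every $h\in\HH$; this is precisely what upgrades convergence in law to \emph{stable} convergence, which is the content of the proposition. You never check it (it does hold here, e.g. $\langle u_n,\mathbf{1}_{[0,s]}\rangle_\HH=O(n^{H-1})$ for $H>\frac12$, but it must be stated). Second, your dismissal of the cross term is wrong as stated: $\mathrm{Var}(v_n(r))=n^{2H}\|t^n\mathbf{1}_{[r,1]}\|_\HH^2$ is of order $n^{2H}(1-r)^{2H}$, hence $O(1)$ --- not $o(1)$ --- on the window $1-r\asymp 1/n$, and a crude Cauchy--Schwarz bound on $\langle v_n,u_n\rangle_\HH$ likewise only yields $O_P(1)$. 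One actually needs the decomposition $B_t=B_1+(B_t-B_1)$ and the second-moment computations of Section \ref{170805-5}, which give $\Phi_n=O_P(n^{H-1})$. Relatedly, for $H<\frac12$ the assertion that ``the same limit emerges'' carries all the weight: the kernel (\ref{eq7}) is unavailable there, and the paper's own treatment (Section \ref{170805-7}) proceeds by the pathwise integration by parts $\int_t^1 s^n\,dB_s=B_1-t^nB_t-n\int_t^1B_ss^{n-1}ds$, which cancels $\|u_n\|_\HH^2$ altogether and isolates the dominant term $n^{2H}B_1^2\|t^n\|_\HH^2\to c_H^2B_1^2$ via Lemma \ref{170722-1}; your plan of computing $\|u_n\|_\HH^2$ and the cross term separately would have to be replaced by an argument of this kind.
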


 In the setting of Section \ref{20160813-1}, the variables are now  
$Z_n = M_n$, $W_n=W_\infty=0$,   $X_n=X_\infty =0$,
  $\psi_n=1$ and 
$G_\infty=c_H^2B_1^2$.
  We are interested in investigating  
the asymptotic behavior of the three basic terms:  modified quasi torsion,   quasi tangent and quasi torsion. 
  We denote by $C_H$ a generic constant depending on $H$, that may vary in different lines. 

Consider first the case of the modified quasi torsion.

\noindent {\it (i) Case $H=\frac 12$}.  We have  $G_\infty= \frac 12 B_1^2$ and $D_t G_\infty = B_1 \mathbf{1}_{[0,1]}(t)$. Therefore,
\beas
G_n^{(3)} &=&  {\color {black}  \langle  DG_\infty , u_n \rangle_{\HH} }=\sqrt{n} B_1 \int_0^1   t^n B_t dt.
\eeas
As a consequence, taking  $r_n=n^{-1/2}$, we obtain  
\bea \label{hk2}
{\sf mqTor}  &=&\sqrt{n} G_n^{(3)} \overset{L^p} {\rightarrow} B_1^2,
\eea
for all $p\ge 2$.

\medskip
\noindent {\it (ii) Case $H\not = \half$}. We have $ G_\infty= c_H^2 B_1^2$,  where $c_H$ is the constant defined in  (\ref{e:ch})  and 
$D_t G_\infty= 2c_H^2 B_1 \mathbf{1}_{[0,1]}(t)$. 
Applying  (\ref{hk1}) yields
 \beas
 \langle DG_\infty, u_n \rangle_{\HH} 
&=& 
2c_H^2n^{H}B_1\langle t^nB_t{\bf 1}_{[0,1]}(t),{\bf 1}_{[0,1]}(t)\rangle_\HH
\\&=& 
2c_H^2n^{H}B_1\int_0^1 B_t t^nH\big\{t^{{\colred 2H-1}}+(1-t)^{2H-1}\big\}dt.
\eeas
Using the decomposition $B_1B_t= B_1^2 + B_1(B_t-B_1)$, we can write
\beas
 \langle DG_\infty, u_n \rangle_{\HH} 
&=&
2c_H^2n^{H}B_1^2\int_0^1t^nH\big\{t^{{\colred 2H-1}}+(1-t)^{2H-1}\big\}dt
\\&&
+2c_H^2n^{H}B_1\int_0^1t^n(B_t-B_1)H\big\{t^{{\colred 2H-1}}+(1-t)^{2H-1}\big\}dt
\\&=&
2c_H^2 B_1^2{\colred H}\bigg\{ {\colred n^{H-1}} +n^H\frac{\Gamma(n+1)\Gamma(2H)}{\Gamma(n+1+2H)}\bigg\}+R_n,
 \eeas
 where $\|R_n \|_p= O(n^{-1})+O(n^{-2H})$ for all $p\ge 2$.
 As a consequence, we obtain the following results:
 
 \noindent
 If $H>\frac 12$ we take $r_n= n^{H-1}$ and
 \bea \label{hk3}
 {\sf mqTor} &=&  r_n^{-1}  G_n^{(3)} \overset{L^p} {\rightarrow}  {\colred 2}
 H^2\Gamma(2H)B_1^2.
 \eea
 
  \noindent
 If $H<\frac 12$ we take $r_n= n^{-H}$ and
 \bea  \label{hk4}
 {\sf mqTor} =  r_n^{-1}  G_n^{(3)} \overset{L^p} {\rightarrow}  2{\colred H^2}
 \Gamma(2H)^2B_1^2.
 \eea
 
Notice that {\colred the limit is discontinuous at $H=\frac 12$.} %and is continuous from the left. 
With there preliminaries, we can {\color {black} now}  proceed to deduce the asymptotic expansion for  $E[f(Z_n)]$ {\colred without classification for $H$}.
%%%%
\begin{en-text}
\medskip
\noindent {\it (ii) Case $H\not = \half$}. We have $ G_\infty= c_H^2 B_1^2$,  where $c_H$ is the constant defined in  (\ref{e:ch})  and 
$D_t G_\infty= 2c_H^2 B_1 \mathbf{1}_{[0,1]}(t)$. 
Applying  (\ref{hk1}) yields
 \beas
 \langle DG_\infty, u_n \rangle_{\HH} 
&=& 
2c_H^2n^{H}B_1\langle t^nB_t{\bf 1}_{[0,1]}(t),{\bf 1}_{[0,1]}(t)\rangle_\HH
\\&=& 
2c_H^2n^{H}B_1\int_0^1 B_t t^nH\big\{t^{2n-1}+(1-t)^{2H-1}\big\}dt.
\eeas
Using the decomposition $B_1B_t= B_1^2 + B_1(B_t-B_1)$, we can write
\beas
 \langle DG_\infty, u_n \rangle_{\HH} 
&=&
2c_H^2n^{H}B_1^2\int_0^1t^nH\big\{t^{2n-1}+(1-t)^{2H-1}\big\}dt
\\&&
+2c_H^2n^{H}B_1\int_0^1t^n(B_t-B_1)H\big\{t^{2n-1}+(1-t)^{2H-1}\big\}dt
\\&=&
2c_H^2 B_1^2\bigg\{ \frac H3 n^{H-1} +n^H\frac{\Gamma(n+1)\Gamma(2H)}{\Gamma(n+1+2H)}\bigg\}+R_n,
 \eeas
 where $\|R_n \|_p= O(n^{-1})+O(n^{-2H})$ for all $p\ge 2$.
 As a consequence, we obtain the following results:
 
 \noindent
 If $H>\frac 12$ we take $r_n= n^{H-1}$ and
 \bea \label{hk3}
 {\sf mqTor} &=&  r_n^{-1}  G_n^{(3)} \overset{L^p} {\rightarrow}  \frac{2}{3}H^2\Gamma(2H)B_1^2.
 \eea
 
  \noindent
 If $H<\frac 12$ we take $r_n= n^{-H}$ and
 \bea  \label{hk4}
 {\sf mqTor} =  r_n^{-1}  G_n^{(3)} \overset{L^p} {\rightarrow}  2H\Gamma(2H)^2B_1^2.
 \eea
 
Notice that   the limit is discontinuous at $H=\frac 12$ and is continuous from the left. With there preliminaries, we can not proceed to deduce the asymptotic expansion for  $E[f(Z_n)]$.
\end{en-text}
%%%%

\subsection{Brownian motion case} %(2017.04.18)
 
 We will analyze the asymptotic behavior of the quasi  tangent and the quasi torsion, which are the main ingredients in the asymptotic  expansions. 
\subsubsection{Quasi tangent}\label{170805-2} 
Let us now establish  the asymptotic  behavior of the quasi tangent, defined by
\[
{\sf qTan}=\sqrt{n} G_n^{(2)}=\sqrt{n} \left( \langle DZ_n, u_n \rangle_{\HH} -G_\infty \right).
\]
We have, for $s\in [0,1]$,
\[
D_sZ_n=\sqrt{n}  s^n B_s + \sqrt{n} \int_s^1 t^n dB_t.
\]
Therefore,
\[
 \langle DZ_n, u_n \rangle_{\HH}= n \int_0^1 s^{2n} B_s^2 ds + n \int_0^1 s^n B_s \left( \int_s^1 t^n dB_t \right) ds.
 \]
 Then, 
 \begin{eqnarray*}
 G^{(2)}_n&=&  \langle DZ_n, u_n \rangle_{\HH} -\frac 12 B_1^2 \\
  &=&
  n \int_0^1 s^{2n} (B_s^2-B_1^2) ds + B_1^2 \left( \int_0^1 ns^{2n} ds -\frac 12 \right) \\
  &&+n \int_0^1 s^n (B_s-B_1) \left( \int_s^1 t^n dB_t \right) ds + n B_1 \int_0^1 s^n \left( \int_s^1 t^n dB_t \right) ds. 
 \end{eqnarray*}
 Using the decomposition $(B_s^2-B_1^2) =(B_s-B_1)^2- 2B_1 (B_1-B _s)$, yields
  \begin{eqnarray*}
 G^{(2)}_n&=& n \int_0^1 s^{2n}(B_s-B_1)^2ds
 {\colorb \>-\>}2nB_1 \int_0^1 s^{2n}(B_1-B_s)ds -\frac {B_1^2} {4n+2}\\
 &&
 {\colorb \>-\>}n \int_0^1 s^n (B_1-B_s) \left( \int_s^1 t^n dB_t \right) ds + B_1 \frac n{n+1} \int_0^1t^{2n+1} dB_t\\
 &=& Z_{n,1} + Z_{n,2} + Z_{n,3},
  \end{eqnarray*}
  where
   \begin{eqnarray*}
  Z_{n,1} &=&n \int_0^1 s^{2n}[(B_1-B_s)^2- (1-s)]ds 
  {\colred -}n \int_0^1 s^n \left[ (B_1-B_s) \left( \int_s^1 t^n dB_t \right) -\frac {1-s^{n+1}}{n+1} \right] ds , \\
  Z_{n,2} &=& -\frac { n^2} {2(n+1)^2 (2n+1)} -\frac {B_1^2} {4n+2}, \\
  Z_{n,3}&=& - \frac { n} {( n+1) (2n+1)} B_1 \int_0^1 t^{2n+1} dB_t.
  \end{eqnarray*}
  The term $Z_{n,1}$ belongs to the second Wiener chaos and  it can be written as a double stochastic integral:
   \begin{eqnarray*}
   Z_{n,1} &=&  \int_0^1 2ns^{2n} \left( \int_{[s,1]^2} dB_r dB_u \right) ds 
  {\colred -} \int_0^1 ns^{n} \left( \int_{[s,1]^2} (r^n+u^n)dB_r dB_u \right) ds= I_2(f_n),
     \end{eqnarray*}
     where
%     \[
%     f_n(r,u)= \frac {2n}{2n+1} \min(r,u)^{2n+1}+ \frac n{n+1}  \min(r,u)^{n+1} \max(r,u)^n.
%     \]
      \beas
     f_n(r,u)&=& \frac{n}{2n+1} \min(r,u)^{2n+1}- \frac{n}{2(n+1)}  \min(r,u)^{n+1} (r^n+u^n).
     \eeas
     It is not difficult to check that $n^2\|f_n \|_{\mfh^{\otimes2}}^2$ converges to a constant as $n$ tends to infinity. 
     Therefore, $\|Z_{n,1}\|_2 =O(n^{-1})$.
     Clearly, we also have  $\|Z_{n,2}\|_2=O(n^{-1})$. 
Finally,  $\|Z_{n,3}\|_2=O(n^{-3/2})$. 
Consequently, 
$\|\sqrt{n}G_n^{(2)}\|_2=O(n^{-1/2})$, and hence the effect of the quasi tangent disappears 
in the limit, that is, ${\mathfrak S}^{(2,0)}_0=0$.

\begin{en-text}
$Z_{n,3}$ converges in law to $B_1 \eta$, where $\eta$ is a  $N(0,1)$-random variable independent of the process $B$.
     As a consequence, we have proved the following result.
     
     \begin{prop}
     The random variable $\sqrt{n} G^{(2)}_n$ converges in law to  $B_1 \eta$, where $\eta$ is a  $N(0,1)$-random variable independent of the process $B$.
     \end{prop}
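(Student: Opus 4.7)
The plan is to exploit the decomposition $G^{(2)}_n = Z_{n,1}+Z_{n,2}+Z_{n,3}$ established just above and show that, under the $\sqrt{n}$ normalization, only $Z_{n,3}$ can supply a nondegenerate limit. For the second chaos piece $Z_{n,1}=I_2(f_n)$ with the explicit symmetrized kernel displayed above, I would apply the Wiener--Ito isometry $\|Z_{n,1}\|_2^2=2\|f_n\|_{\HH^{\otimes 2}}^2$ and reduce the four pairings to elementary Beta-type integrals of the form $\int_0^1 u^{an+b}du$; the leading contributions telescope and yield $\|f_n\|_{\HH^{\otimes 2}}^2 = O(n^{-2})$, so that $\sqrt{n}Z_{n,1}\to 0$ in $L^2$. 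The bound $\|Z_{n,2}\|_2 = O(n^{-1})$ is immediate from the explicit formula.

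The core of the argument concerns $Z_{n,3}=-\tfrac{n}{(n+1)(2n+1)}B_1\,\xi_n$ with $\xi_n:=\int_0^1 t^{2n+1}dB_t$. Since the pair $(B,\xi_n)$ is jointly Gaussian with $\operatorname{Var}(\xi_n)=1/(4n+3)$, and the covariance $\langle\mathbf{1}_{[0,s]},t^{2n+1}\rangle_{\HH}=s^{2n+2}/(2n+2)$ decays exponentially relative to the standard deviation of $\xi_n$ for any fixed $s\in[0,1)$, I would normalize $\widetilde\xi_n:=\xi_n/\sqrt{\operatorname{Var}(\xi_n)}$ and invoke the standard stable-convergence principle (in the spirit of \cite{peccati2008stable,nourdin2016quantitative}): a centered Gaussian sequence whose covariances with a generating family of $\HH$ vanish in the limit converges stably to an $N(0,1)$ variable independent of the ambient Gaussian $\sigma$-algebra. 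Combining this stable convergence of $\widetilde\xi_n$ with the deterministic prefactor and the $B$-measurable factor $B_1$ would produce a limit of the form $B_1\eta$ with $\eta\sim N(0,1)$ independent of $B$.

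The hard part is matching the scaling. A direct moment count gives $\|Z_{n,3}\|_2 = \Theta(n^{-3/2})$, hence $\sqrt{n}Z_{n,3}\to 0$ in $L^2$ and each of the three pieces $\sqrt{n}Z_{n,i}$ vanishes in the limit---a fact that is actually consistent with the conclusion $\|\sqrt{n}G_n^{(2)}\|_2 = O(n^{-1/2})$ reached just before the statement. To recover a nontrivial $B_1\eta$ limit under the claimed $\sqrt{n}$ scaling, the plan would have to exhibit a delicate interaction between $Z_{n,1}$ and $Z_{n,3}$ at order $n^{-1/2}$ that the crude chaos-isometry bound misses, or else reinterpret the scaling as $n^{3/2}$; controlling the joint law of $I_2(f_n)$ and $\xi_n$ sharply enough to decide between these alternatives is the analytic heart of the argument.
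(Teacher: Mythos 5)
Your scaling diagnosis is correct, and it is the whole story: the proposition cannot be proved because it is false as stated. You have verified $\|Z_{n,1}\|_2=O(n^{-1})$, $\|Z_{n,2}\|_2=O(n^{-1})$ and $\|Z_{n,3}\|_2=\Theta(n^{-3/2})$, whence $\|\sqrt{n}\,G_n^{(2)}\|_2=O(n^{-1/2})\to 0$; so $\sqrt{n}\,G_n^{(2)}$ converges in law to the constant $0$, not to the nondegenerate mixture $B_1\eta$. There is no hidden interaction between $Z_{n,1}$ and $Z_{n,3}$ to look for at order $n^{-1/2}$: all three pieces are already controlled in $L^2$ below that order, and an $L^2$ bound closes the question. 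In the source, this proposition sits inside an excluded \texttt{en-text} block whose only justification is the one-line assertion that ``$Z_{n,3}$ converges in law to $B_1\eta$'' --- itself false without renormalization --- and the live text replaces it by exactly the conclusion you reached, namely $\|\sqrt{n}\,G_n^{(2)}\|_2=O(n^{-1/2})$ and hence ${\mathfrak S}^{(2,0)}_0=0$. So do not try to repair the proof; the statement itself has to be repaired.

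The salvageable content is the stable limit you sketched for the third piece alone. Writing $Z_{n,3}=-\tfrac{n}{(n+1)(2n+1)}B_1\xi_n$ with $\xi_n=\int_0^1 t^{2n+1}\,dB_t$, one has $\mathrm{Var}(\sqrt{n}\,\xi_n)=\tfrac{n}{4n+3}\to\tfrac14$ while $\langle \mathbf{1}_{[0,s]},t^{2n+1}\rangle_{\mfh}=s^{2n+2}/(2n+2)=o(n^{-1/2})$ uniformly in $s\in[0,1]$, so $\sqrt{n}\,\xi_n$ converges stably to $\tfrac12\eta$ with $\eta\sim N(0,1)$ independent of $B$; consequently $n^{3/2}Z_{n,3}\Rightarrow -\tfrac14 B_1\eta$ (equivalently $\tfrac14 B_1\eta$ by symmetry). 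But even at the scale $n$ the quasi tangent $G_n^{(2)}$ is dominated by $Z_{n,1}+Z_{n,2}$, so no choice of normalization turns $G_n^{(2)}$ itself into $B_1\eta$. The correct takeaway for the expansion --- and the only fact the paper uses --- is that the quasi tangent is $o(r_n)=o(n^{-1/2})$ and contributes nothing to the random symbol.
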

     
     However, the tangent does not contribute the the asymptotic expansion and the corresponding symbol $G^{2,0}_0$ vanishes. Indeed,
     \[
     \sqrt{n} E\left[\Psi(z) B_1 \int_0^1 t^{2n+1} dB_t \right] =\sqrt{n} E\left[  \int_0^1 D_t \left( \exp(-\frac 12 z^2G_\infty) B_1 \right) t^{2n+1} dt \right] =O(n^{-0.5}).
     \]
\end{en-text}

     \medskip
     \noindent
\subsubsection{Quasi torsion}\label{170805-3}
Let us first recall the definition of the quasi torsion
    \[
 {\sf qTor} =\sqrt{n}    \langle D \langle DZ_n, u_n \rangle_{\HH}, u_n \rangle_{\HH} =\sqrt{n}  \langle DG^{(2)}_n, u_n \rangle_{\HH} +\sqrt{n}  \langle DG_\infty, u_n \rangle_{\HH}.
     \]
Since $G_n^{(2)}$ is in the second chaos, it follows that 
$\sqrt{n}\|\|DG_n^{(2)}\|_\mfh\|_2=O(n^{-1/2})$ from 
$\sqrt{n}\|G_n^{(2)}\|_2=O(n^{-1/2})$ in Section \ref{170805-2}. 
Therefore, from (\ref{hk2}) we deduce
\beas 
\sqrt{n}\langle D \langle DZ_n, u_n \rangle_{\HH}, u_n \rangle_{\HH}
&\overset {L^p} {\rightarrow}& 
B_1^2
\eeas
for all $p\ge 2$.
Thus we obtain ${\mathfrak S}^{(3,0)}=3^{-1}B_1^2$.

 \subsubsection{Asymptotic expansion}
From the computations in Sections \ref{170805-2} and \ref{170805-3}, we deduce that conditions (\ref{b1}), (\ref{b2}), (\ref{b32}), (\ref{b33}) and (\ref{b4}) are satisfied for all $p\ge 2$. Thus, taking $\psi_n=1$,  assumption [B] holds and we can apply  Theorems  \ref{20160924-1} and \ref{thm2}. In this way, we obtain
\bea  \nn
E[f(Z_n)] &=& E[f(G_\infty^{1/2}\zeta)]+\frac{1}{\sqrt{n}}E[{\mathfrak S}^{(3,0)}(\partial_z^3) f(G_\infty^{1/2}\zeta)] 
+\rho_n^{(1)}(f) \\
&=&
  E[f(\frac 12 |B_1| \zeta)]+\frac{1}{3\sqrt{n}}E[B_1^2 f ^{(3)}(\frac 12 |B_1| \zeta)] 
+\rho_n^{(1)}(f)
\label{20170504-6} 
\eea
for $f\in C^3_b(\bbR)$, 
where   $\zeta\sim N(0,1)$  is independent of $B_1$, and  $\rho_n^{(1)}(f)=o(n^{-\frac 12})$.
\onelineskip

\begin{en-text}
The second term on the right-hand side is the modified quasi torsion 
$G_n^{(3)}$ investigated in (i), 
      it suffices to analyze the term  $\langle DG^{(2)}_n, u_n \rangle_{\HH} $. We can write
        \[
        \langle DG^{(2)}_n, u_n \rangle_{\HH} =\langle DF_{n,1}, u_n \rangle_{\HH}  +\langle DF_{n,2}, u_n \rangle_{\HH}  +\langle DF_{n,3}, u_n \rangle_{\HH}  .
        \]
        Then, 
        \[
\|  \langle DF_{n,1}, u_n \rangle_{\HH} \|_p\le  \| \|  F_{n,1}\|_{\HH}\|_{2p} \| \| u_n \|_{\HH}  \|_{2p} = \| F_{n,1}\|_{1,2p} \| u_n \|_{\HH}  \|_{2p}.
     \]
   Because $F_{n,1}$ is in as fixed chaos, $ \| F_{n,1}\|_{1,2p}$ is bounded by a constant times $ \| F_{n,1}\|_{2p}$, which is of order  $n^{-1}$ and $ \| u_n \|_{\HH}  \|_{2p}$ is bounded by a constant. The same argument applies to  $\langle DF_{n,2}, u_n \rangle_{\HH} $. Finally, writing $c_n=\frac {4n^2+3n} {( n+1)(2n+2)} $, we obtain
   \begin{eqnarray*}
   \langle DF_{n,3}, u_n \rangle_{\HH} &=& c_n\langle DB_1, u_n \rangle_{\HH}\left( \int_0^1 t^{2n+1} dB_t\right) +c_n B_1 \left\langle D\left( \int_0^1 t^{2n+1} dB_t\right), u_n \right \rangle _{\HH} \\
   &=&c_n \sqrt{n} \int_0^1 t^n B_t dt \left( \int_0^1 t^{2n+1} dB_t\right)+ c_nB_1 \sqrt{n} \int_0^1 t^{2n+1} B_t dt \\
   &=& A_{1,n} + A_{2,n}.
   \end{eqnarray*}
   The term $ A_{1,n} $ is of order  $n^{-1}$ and it does not contribute to the limit, but
   \[
   \sqrt{n}  A_{2,n}
   \]
   converges in $L^p$ to  $B_1^2$.  In conclusion,  the quasi torsion $\sqrt{n}      \langle D \langle DF_n, u_n \rangle_{\HH}, u_n \rangle_{\HH}$  converges in $L^p$ to  $2B_1^2$.
\end{en-text}

 \subsection{Fractional Brownian motion. Case $H>\frac 12$}% Revised 2017.05.04
 
 Recall that in that case $r_n= n^{H-1}$.

\subsubsection{Quasi tangent}\label{170805-5}
 We are going to establish the convergence in law of the tangent and show that it  does not contribute to the asymptotic expansion.  We have, for $s\in [0,1]$,
\[
D_sZ_n= n^H  s^n B_s +  n^H \int_s^1 t^n dB_t.
\]
Therefore,
\bea\label{20170430-2}
 \langle DZ_n, u_n \rangle_{\HH}=   \|u_n\|_{\HH}^2 + n^H 
 \bigg\langle u_n,  \int_{\cdot}^1  t^n dB_t  \bigg\rangle_{\HH}
 =:  \|u_n\|_{\HH}^2+ \Phi_n,
 \eea
 and the  quasi tangent  {\sf qTan} is given by
 \[
  {\sf qTan}= n^{1-H} 
 G^{(2)}_n=  n^{1-H} \left( \|u_n\|_{\HH}^2- c_H^2 B_1^2 +   \Phi_n\right).
 \]
 
 Let 
\bea\label{20170420-1}
 \sigma_{H,1}^2&=& 2 H^2(2H-1)^2\int_{[0,1]^4} |\log y_1 -\log x_1|^{2H-2}   |\log y_2-\log x_2|^{2H-2}  \nn\\
 &&\times 
 ( | \log y_1|^{2H} + | \log y_2|^{2H} - |\log y_1 -\log y_2 |^{2H} ) dx_1dx_2 dy_1 dy_2.
  \eea
and 
\bea\label{20170420-2} 
   \sigma^2_{H,2} =     H^3(2H-1)^3 \int_{[0,1]^2} |1-s_1  |^{2H-2} |1-s_2 |^{2H-2} ds_1 ds_2  \int_{[0,1]^2} | \log x_1 -\log x_2 |^{2H-2} dx_1 dx_2.
\eea

 \begin{prop}
 For the term $ \|u_n\|_{\HH}^2$ we have
 \begin{equation} \label{eq1}
n^{H} (  \|u_n\|_{\HH}^2 - c_H^2 B_1^2) \overset{\mathcal{L}} {\rightarrow}  \sigma_{H,1} B_1 \zeta
\end{equation}
where $\zeta$ is a $N(0,1)$-random variable  independent of $B$. On the other hand,
 \begin{equation} \label{eq2}
n^{1-H}  \Phi_n  \overset{\mathcal{L}} {\rightarrow}  \sigma_{H,2} B_1 \zeta,
\end{equation}
where $\zeta$ is a $N(0,1)$-random variable  independent of $B$. As a consequence, taking into account that $H>\frac 12$, we obtain
\bea\label{20170430-1}
 {\sf qTan\>=\>}n^{1-H} G^{(2)}_n \overset{\mathcal{L}} {\rightarrow}   \sigma_{H,2} B_1 \zeta.
\eea
 \end{prop}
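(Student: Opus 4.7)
The plan is to establish (4.8) and (4.9) by the same unifying principle—a localization near $s=1$ via the change of variables $s=e^{-x/n}$ that converts the concentrating weight $s^n$ into an integrable factor $e^{-x}$ on the positive real line—and then to derive (4.10) as an immediate corollary by comparing rates.

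For (4.8), I would start from
\[
\|u_n\|_\HH^2 \;=\; H(2H-1) n^{2H} \int_0^1\!\int_0^1 s^n t^n B_s B_t |s-t|^{2H-2}\,ds\,dt,
\]
substitute $(s,t)=(e^{-x/n},e^{-y/n})$, and use $|e^{-x/n}-e^{-y/n}|^{2H-2} = n^{-(2H-2)}|x-y|^{2H-2}(1+o(1))$ on compacts with an integrable majorant that exploits the decay $e^{-x-y}$. After cancellation of the prefactor, this rewrites $\|u_n\|_\HH^2$ as $H(2H-1)\int_0^\infty\!\int_0^\infty e^{-x-y} B_{e^{-x/n}} B_{e^{-y/n}} |x-y|^{2H-2}\,dx\,dy$ up to a negligible remainder. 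Introducing the rescaled increment $Y_n(x):=n^H(B_1-B_{e^{-x/n}})$, the identity $B_{e^{-x/n}}B_{e^{-y/n}}-B_1^2=-n^{-H}B_1(Y_n(x)+Y_n(y))+n^{-2H}Y_n(x)Y_n(y)$ splits the integral: the constant part integrates (via $\int\!\int e^{-x-y}|x-y|^{2H-2}\,dx\,dy=\Gamma(2H)/(2H-1)$) to $c_H^2 B_1^2$, which cancels; the quadratic part is $O(n^{-2H})$ and vanishes at the $n^H$ scale. A direct Gaussian covariance computation shows $\mathrm{Cov}(B_1,Y_n(x))=Hxn^{H-1}+o(n^{H-1})\to 0$ (since $H<1$) and $\mathrm{Cov}(Y_n(x),Y_n(y))\to\tfrac12(x^{2H}+y^{2H}-|x-y|^{2H})$, so $(Y_n,B_1)$ converges jointly in law to $(Y,B_1)$ with $Y$ an independent fBm of parameter $H$. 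Dominated convergence (using the local integrability of $|x-y|^{2H-2}$ for $H>\tfrac12$) then yields
\[
n^H\bigl(\|u_n\|_\HH^2 - c_H^2 B_1^2\bigr)\;\overset{\mathcal L}{\longrightarrow}\;-H(2H-1)B_1\!\int_0^\infty\!\!\int_0^\infty e^{-x-y}(Y(x)+Y(y))|x-y|^{2H-2}\,dx\,dy,
\]
which, conditionally on $B_1$, is centered Gaussian, hence equal in law to $\sigma_{H,1}B_1\zeta$. Reverting the substitution $(x,y)\mapsto(-\log y_1,-\log y_2)$ in the variance identifies $\sigma_{H,1}^2$ with (4.4).

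For (4.9), I would decompose $\Phi_n = n^H\bigl\langle u_n(\cdot),\int_\cdot^1 r^n dB_r\bigr\rangle_\HH$ via the product formula $B_s\cdot\int_t^1 r^n dB_r = I_2(\mathbf 1_{[0,s]}\,\tilde\otimes\, r^n\mathbf 1_{[t,1]}) + \langle\mathbf 1_{[0,s]},r^n\mathbf 1_{[t,1]}\rangle_\HH$, writing $\Phi_n=\Phi_n^{(2)}+\Phi_n^{(0)}$. The deterministic term $\Phi_n^{(0)}$, computed using (4.6), is of strictly smaller order than $n^{H-1}$ and is negligible. For the second-chaos part, the same change of variable $(s,t)=(e^{-x/n},e^{-y/n})$ together with a similar substitution on the integration variable of the Wiener integral $\int_t^1 r^n dB_r$ reveals that $n^{1-H}\Phi_n^{(2)}$ factorizes asymptotically as a product of $B_1$ (produced by $B_s\approx B_1$ in the region of concentration) and a Gaussian functional asymptotically independent of $B_1$. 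The variance (4.5) arises as the tensor product of two kernels: the factor $|1-s|^{2H-2}$ from the $\HH$-kernel between $B_t-B_1$ and the indicator $\mathbf 1_{[t,1]}$ near $t=1$, and the factor $|\log x_1-\log x_2|^{2H-2}$ from the original $|s_1-s_2|^{2H-2}$ weight after the logarithmic substitution. I would close this by either invoking a Nualart–Peccati fourth-moment criterion conditionally on $B_1$, or by using the stable convergence framework of Peccati–Tudor for multiple integrals.

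Claim (4.10) is then immediate: for $H>\tfrac12$,
\[
n^{1-H}\bigl(\|u_n\|_\HH^2 - c_H^2 B_1^2\bigr) \;=\; n^{1-2H}\cdot n^H\bigl(\|u_n\|_\HH^2 - c_H^2 B_1^2\bigr) \;=\; n^{1-2H}\cdot O_P(1) \;\to\; 0
\]
by (4.8), so that $n^{1-H}G_n^{(2)} = n^{1-H}\Phi_n + o_P(1)$ converges to $\sigma_{H,2}B_1\zeta$ by (4.9). The principal technical obstacle is the stable (joint) convergence with $B_1$ needed to certify the mixed-normal form of both limits: one must carefully show asymptotic independence of the rescaled Gaussian fluctuations from $B_1$, and justify uniform-integrability passage to the limit across the singular kernel $|x-y|^{2H-2}$, for which the exponential weight $e^{-x-y}$ and the condition $H>\tfrac12$ are essential.
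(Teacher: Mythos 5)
Your proposal is correct and follows essentially the same strategy as the paper: isolate the part of $B_sB_t-B_1^2$ that is linear in the increment $B_\cdot-B_1$ (discarding the quadratic part by an $L^{1/H}$/Minkowski estimate), factor out $B_1$, establish the Gaussian limit of the remaining first-chaos quantity together with asymptotic independence from $B$ via covariance computations, and obtain (\ref{20170430-1}) from the rate comparison $n^{1-H}=n^{1-2H}\cdot n^{H}$ with $H>\tfrac12$. The differences are executional: you localize via $s=e^{-x/n}$ and prove a process-level convergence of $Y_n(x)=n^H(B_1-B_{e^{-x/n}})$ followed by dominated convergence through the singular kernel, whereas the paper substitutes $s^{n+1}=x$ (an equivalent localization, which is how the logarithms enter $\sigma_{H,1}^2$ and $\sigma_{H,2}^2$) and verifies directly that the variance of the integrated Gaussian functional converges and that its covariance with each $B_t$ vanishes; likewise, for $\Phi_n$ the paper decomposes $B_t=B_1+(B_t-B_1)$ rather than splitting off the deterministic chaos component first, but both reduce to the same first-chaos CLT. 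One caution: your first-listed closing device for (\ref{eq2}), a ``fourth-moment criterion conditionally on $B_1$,'' does not apply as stated, since the limit $\sigma_{H,2}B_1\zeta$ is a mixed normal, not Gaussian, and the second-chaos sequence $n^{1-H}\Phi_n^{(2)}$ has no Gaussian limit; your fallback of factoring out $B_1$ and proving a genuine first-chaos CLT for $n^{1-H}\tilde{\Phi}_{n,2}$ with decorrelation from $B$ is the argument that works, and it is exactly the paper's.
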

 
 \noindent
 {\it Proof:}  We first show (\ref{eq1}). We can write
     \begin{eqnarray*}
     \|u_n\|_{\HH}^2 - c_H^2 B_1^2 
     %&=& H(2H-1) n^{2H} \int_0^1 \int_0^1 t^n s^n B_tB_s |t-s|^{2H-2} dsdt\\
     &=& H(2H-1) \left( n^{2H}  \int_0^1 \int_0^1 t^n s^n B_tB_s |t-s|^{2H-2} dsdt -\Gamma(2H-1) B_1^2 \right) \\
     &=& H(2H-1) n^{2H}  \int_0^1 \int_0^1 t^n s^n [B_tB_s  -B_1^2] |t-s|^{2H-2} dsdt \\
     &&+  H(2H-1) \left( n^{2H}  \int_0^1 \int_0^1 t^n s^n |t-s|^{2H-2} dsdt -\Gamma(2H-1)  \right) 
      B_1^2 \\
     &=:& A_{n,1} + A_{n,2} B_1^2.
     \end{eqnarray*}
     The term $A_{n,2}$ is a deterministic term bounded by $Cn^{-1}$,  therefore it does not contribute to the limit. In order to handle the term $A_{n,1}$ we make the decomposition
     \[
     B_sB_t -B_1^2= B_1(B_t-B_1) + (B_s-B_1)( B_t-B_1) + B_1( B_s-B_1).
     \]
We claim that the product $ (B_s-B_1)( B_t-B_1) $ does not contribute to the limit of $n^ H A_{n,1}$.  In fact,
     \begin{eqnarray*}
&& H(2H-1) n^{2H}\int_0^1 \int_0^1 s^n t^n |B_s-B_1| |B_t-B_1| |t-s|^{2H-2} dsdt 
=n^{2H}  \| s^n |B_s-B_1| \|_{\HH}^2 \\
&& \qquad  \le   C_H n^{2H} \| s^n |B_s-B_1| \|_{L^{1/H}}^2  
=C_H  n^{2H}\left( \int_0^1 s^{\frac nH} |B_s-B_1|^{\frac 1H}  ds\right) ^{2H}.
     \end{eqnarray*}
     By Minkowski's inequality,  the expectation of this quantity is estimated as {\color {black} follows}
\beas &&
     C_Hn^{2H}\left\|\int_0^1s^{\frac{n}{H}}|B_s-B_1|^{\frac{1}{H}}ds\right\|_{L^{2H}(\Omega)}^{2H}
     \leq
     C_Hn^{2H}\left(\int_0^1s^{\frac{n}{H}}\left\||B_s-B_1|^{\frac{1}{H}}\right\|_{L^{2H}(\Omega)}ds\right)^{2H}
     \\&\leq&
 C_H  n^{2H}  \left( \int_0^1 s^{\frac nH} (1-s) ds \right)^{2H} \le C'_H n^{-2H}.
\eeas
Therefore, it suffices to consider the term
 \[
 \tilde{A}_{n,1} =2B_1H(2H-1) n^{2H}  \int_0^1 \int_0^1 t^n s^n (B_s-B_1) |t-s|^{2H-2} dsdt 
 =: B_1 A_{n,3},
 \]
 and to show that $n^HA_{n,3}$ converges in law to a Gaussian random variable with mean zero and variance $\sigma^2_{H,1}$ independent of $\{ B_t, t\in [0,1]\}$. This a consequence of the following two facts:
 \begin{itemize}
 \item[(i)]  $E(n^{2H} A_{n,3}^2) \rightarrow \sigma^2_{H,1}$. 
 \item[(ii)]    $E(n^{H}  A_{n,3} B_t ) \rightarrow 0$, for any $t\in [0,1]$.
 \end{itemize}
 The proof of (i) is based on the computation of the limit of the following quantity
      \begin{eqnarray*}
 && 4H^2 (2H-1)^2 n^{6H} \int_{[0,1]^4} s_1^n t_1^n s_2^n t_2^n E[(B_{t_1}-B_1)(B_{t_2} -B_1)] |t_1-s_1| ^{2H-2}
 |t_2-s_2| ^{2H-2} ds_1ds_2 dt_1 dt_2 \\
  &&=2H^2 (2H-1)^2 n^{6H} \int_{[0,1]^4} s_1^n t_1^n s_2^n t_2^n\\
  &&\quad \times  ( |1-t_1|^{2H} + |1-t_2|^{2H} - |t_1-t_2|^{2H} ) |t_1-s_1| ^{2H-2}
 |t_2-s_2| ^{2H-2} ds_1ds_2 dt_1 dt_2 .
 \end{eqnarray*}
 This limit can be evaluated using the change of variables $ s_1^{n+1} =x_1$,  $ s_2^{n+1} =x_2$, $t_1^{n+1} =y_1$
 and $  t_2^{n+1} = y_2$, which leads
 to   the representation (\ref{20170420-1}) of $\sigma_{H,1}^2$.
%       \begin{eqnarray*}
% \sigma_{H,1}^2&=& 2 H^2(2H-1)^2\int_{[0,1]^4} |\log y_1 -\log x_1|^{2H-2}   |\log y_2-\log x_2|^{2H-2}  \\
%&&\times 
% ( |1-\log y_1|^{2H} + |1-\log y_2|^{2H} - |\log y_1 -\log y_2 |^{2H} ) dx_1dx_2 dy_1 dy_2.
%  \end{eqnarray*}
  The proof of (ii) can be done in a similar way.  This concludes the proof of (\ref{eq1}).
  
  For (\ref{eq2}), we can write
\bea  \nn
  \Phi_n&=& H(2H-1) n^{2H} \int_0^1 \int_0^1 t^n B_t \left(\int_s^1 \theta ^n dB_{\theta} \right) |t-s|^{2H-2} dsdt\\\nn
  &=&   H(2H-1) n^{2H} \int_0^1 \int_0^1 t^n (B_t-B_1) \left(\int_s^1 \theta ^n dB_{\theta} \right) |t-s|^{2H-2} dsdt\\ \nn
  &&\quad +  H(2H-1) B_1 n^{2H} \int_0^1 \int_0^1 t^n \left(\int_s^1 \theta ^n dB_{\theta} \right) |t-s|^{2H-2} dsdt\\   
  &{\color {black}=:}&  \Phi_{n,1} +  \Phi_{n,2}.  \label{gh1}
\eea
    We first show that $\Phi_{n,1}$ does not contribute to the limit:
        \begin{eqnarray*}
        |\Phi_{n,1}| &=& n^{2H} \left \langle t^n (B_t-B_1), \int_{\cdot} ^1 \theta^n dB_\theta  \right \rangle_{\HH} \\
       & \le&  n^{2H} \|  t^n (B_t-B_1)\|_{\HH} \left \| \int_{\cdot} ^1 \theta^n dB_\theta  \right\|_{\HH}\\
       &\le&  C_Hn^{2H} \|  t^n (B_t-B_1)\|_{L^{\frac 1H}} \left \| \int_{\cdot} ^1 \theta^n dB_\theta  \right\|_{L^{\frac 1H}}\\
       &=& C_H n^{2H}  \left[\int_0^1 t^{\frac nH} |B_t-B_1|^{\frac 1H} dt \int_0^1 | \int_s^1 \theta ^n dB_\theta |^{\frac 1H} ds
       \right]^H.
  \end{eqnarray*}
  Then, taking expectation and using Minkowski's inequality, we get
  \begin{en-text}
        \begin{eqnarray*}
        E      |{\colorb \Phi}_{n,1}|  
        &\le & 
        {\colorb 
        C_h n^{2H} \left\|\int_0^1t^{\frac{n}{H}}|B_t-B_1|^{\frac{1}{H}}dt\right\|_{L^{2H}(\Omega)}^H
        \left\|\int_0^1\left|\int_0^1\theta^ndB_\theta\right|^{\frac{1}{H}}ds\right\|_{L^{2H}(\Omega)}^H
        }
        \\&\le & 
        C_h n^{2H}   \left( \int_0^1 t^{\frac nH} (1-t) dt \right) ^H \left( \int_0^1  \left| E \left( \int_s^1 \theta^n dB_\theta \right)^2  \right| ^{\frac 1 {2H}} ds \right)^H \\
         &\le & C_H  \left( \int_0^1   \| \theta^n  \mathbf{1}_{[s,1]} \|_{\HH} ^{\frac 1H} ds \right)^H  \\
         &\le & C_H \left( \| \theta^n \|_{L^{\frac 1H}} ^{\frac 1H} \right) ^H\\
         &\le & C_H n^{-H},
        \end{eqnarray*}
\end{en-text}
        \begin{eqnarray*}
        E      [| \Phi_{n,1}| ^2]
        &\le &   
        C_H n^{4H} \left\|\int_0^1t^{\frac{n}{H}}|B_t-B_1|^{\frac{1}{H}}dt\right\|_{L^{4H}(\Omega)}^{2H}
        \left\|\int_0^1\left|\int_s^1\theta^ndB_\theta\right|^{\frac{1}{H}}ds\right\|_{L^{4H}(\Omega)}^{2H}
        \\&\le & 
        C_H n^{4H}   \left( \int_0^1 t^{\frac nH} (1-t) dt \right) ^{2H} 
        \left( \int_0^1  \left\| \int_s^1 \theta^n dB_\theta  \right\|_{L^4(\Omega)} ^{\frac 1 {H}} ds \right)^{2H} \\
         &\le & C_H  \left( \int_0^1   \| \theta^n  \mathbf{1}_{[s,1]} \|_{\HH} ^{\frac 1H} ds \right)^{2H}  \\
         &\le & C_H \left( \| \theta^n \|_{L^{\frac 1H}} ^{\frac 1H} \right) ^{2H}\\
         &\le & C_H n^{-2H},
        \end{eqnarray*}
        and  $n^{1-H}         \| \Phi_{n,1}\|_2 $ converges to zero as $n$ tends to infinity.
        Finally, it suffices to consider the term
        \[
         \Phi_{n,2} = B_1 \tilde{\Phi}_{n,2},
        \]
        where
        \bea
        \tilde{ \Phi}_{n,2} &=&  H(2H-1)n^{2H} \int_0^1 \int_0^1 t^n \left(\int_s^1 \theta ^n dB_{\theta} \right) |t-s|^{2H-2} dsdt. \label{gh2}
        \eea
        We claim that  $n^{1-H}         \tilde{ \Phi}_{n,2}$ converges in law to a Gaussian random variable with zero mean and variance $\sigma^2_{H,2}$ independent of $\{B_t, t\in [0,1]\}$. 
        This is a consequence of the following two facts:
 \begin{itemize}
 \item[(i)]  $E(n^{2-2H} \tilde{ \Phi}_{n,2}^2) \rightarrow \sigma^2_{H,2}$. 
 \item[(ii)]    $E(n^{1-H} \tilde{ \Phi}_{n,2} B_t ) \rightarrow 0$, for any $t\in [0,1]$.
 \end{itemize}
We first show (i):
        \begin{eqnarray*}
E(n^{2-2H} \tilde{ \Phi}_{n,2}^2) &=& H^2(2H-1)^2n^{2+2H}  \int_{[0,1]^4} t_1^n t_2 ^n E\left[ \left(\int_{s_1} ^1 \theta^n dB_\theta \right) \left(\int_{s_2} ^1 \theta^n dB_\theta \right) \right] \\
&&\times |t_1 -s_1|^{2H-2} |t_2-s_2 |^{2H-2} ds_1dt_1 ds_2 dt_2\\
&=& H^3(2H-1)^3n^{2+2H}  \int_{[0,1]^4} t_1^n t_2 ^n  \int_{s_1}^1 \int_{s_2} ^1  \theta_1^n \theta_2^n 
|\theta_1-\theta_2|^{2H-2} d \theta_1 d \theta_2\\
&&\times |t_1 -s_1|^{2H-2} |t_2-s_2 |^{2H-2} ds_1dt_1 ds_2 dt_2.
\end{eqnarray*}
 Using the change of variable $t_1^{n+1} =y_1$, $t_2^{n+1} =y_2$, 
 $ \theta_1^{n+1} =x_1$ and $ \theta_2^{n+1} = x_2$, we can show that this quantity  converges to 
 $\sigma_{H,2}^2$ given in (\ref{20170420-2}).
%\[  
%   \sigma^2_{H,2} =     H^3(2H-1)^3 \int_{[0,1]^2} |1-s_1  |^{2H-2} |1-s_2 |^{2H-2} ds_1 ds_2  \int_{[0,1]^2} | \log x_1 -\log x_2 |^{2H-2} dx_1 dx_2.
%        \]
The proof of (ii) can be done in a similar way.
\qed\halflineskip

In spite of the preceding proposition, the   quasi tangent does not contribute to the  asymptotic expansion derived in the last section. In fact, 
the convergence (\ref{20170430-1}), together with uniform integrability, gives 
$\lim_{n\to\infty}E[\Psi(\sfz)\>{\sf qTan}]=0$, that is, 
${\mathfrak S}^{(2,0)}_0=0$. More strongly,  using the duality relationship between the Skorohod integral and the derivative operator (IBP formula),  we   can show this fact directly for $\Phi_{n,2}$ as follows: 
 \begin{eqnarray*}
&& n^{1-H} 
\Bigg|
E\left[\Psi (\sfz) n^{2H} B_1\int_0^1 \int_0^1 t^n \left(\int_s^1 \theta ^n dB_{\theta} \right) |t-s|^{2H-2} dsdt\right]
\Bigg|
\\
&&=n^{1+H} \Bigg|E\left[ \int_0^1 \int_0^1 t^n  |t-s|^{2H-2}   \left \langle  D_\theta\left(B_1\exp(-\frac 12 \sfz^2 c_H^2B^2_1   ) \right) , {\bf 1}_{[s,1]} ( \theta^n) \right\rangle_{\HH}  
    dsdt  \right]\Bigg|  \\
    && \le Cn^{H-1}.
 \end{eqnarray*}
By (\ref{20170430-1}), ${\sf qTan}$ never converges to zero in probability. 
Thus $D{\sf qTan}$ potentially has some effect at the rate $n^{1-H}$.

 \medskip
 \noindent
\subsubsection{Quasi torsion}\label{170805-6} 
  By (\ref{20170430-2}), we have
     \beas
      {\sf qTor} &=& 
    n^{1-H}  \big\langle D \langle DZ_n, u_n \rangle_{\HH}, u_n \big\rangle_{\HH} 
     \\&=&
    n^{1-H} \big(   \big\langle 
      D\big( \|u_n \|^2_{\HH} - c_H^2 B_1^2\big), u_n \big\rangle_{\HH} +  \langle D\Phi_n, u_n \rangle_{\HH}+\langle DG_\infty, u_n \rangle_{\HH}\big).
     \eeas
    Notice that
    \beas
     \|  \| u_{{\colred n}}\|_{\HH} \|_2  &\le& C_H n^H\| \| t^{{\colred n}} B_t \|_{L^{1/H}} \|_2 \le C'_H,
     \eeas
     which implies that  $ \|  \| u_{{\colred n}}\|_{\HH} \|_p$ is uniformly bounded for any $p\ge 2$. On the other hand, the computations in the  previous section imply
     $\| \|  D \left(  \|u_n \|^2_{\HH} - c_H^2 B_1^2 \right) \|_{\HH}\|_{p}=O(n^{-H})$ for any $p\ge 2$. Therefore,
          $\| n^{1-H}    \big\langle 
      D\big( \|u_n \|^2_{\HH} - c_H^2 B_1^2\big), u_n \big\rangle_{\HH} \|_p=O(n^{1-2H})$ and this term does not contribute to the limit. 

Consider   the term  $\langle D\Phi_n, u_n \rangle_{\HH} $.  Using the decomposition (\ref{gh1}), we can write
        \[
        \langle D\Phi_n, u_n \rangle_{\HH} =\langle D\Phi_{n,1}, u_n \rangle_{\HH}  +\langle D\Phi_{n,2}, u_n \rangle_{\HH}.
        \]
The term $\langle D\Phi_{n,1}, u_n \rangle_{\HH}$ does not contribute to the limit 
 since $\Phi_{n,1}$ is in the second chaos and $\|n^{1-H}\Phi_{n,1}\|_{2}\to0$.
As for  $\langle D\Phi_{n,2}, u_n \rangle_{\HH}$, we can write
\[
\langle D\Phi_{n,2}, u_n \rangle_{\HH}=\langle DB_1, u_n \rangle_{\HH} \tilde{\Phi}_{n,2} +
B_1\langle D\tilde{\Phi}_{n,2}, u_n \rangle_{\HH},
\]
where $\tilde{\Phi}_{n,2}$ is defined in (\ref{gh2}).
%%%%%
The term  $\langle DB_1, u_n \rangle_{\HH} \tilde{\Phi}_{n,2} $ does not contribute to the limit at the rate $n^{1-H}$ 
 in $L^p$, $p\ge 2$ due to the computations in the previous section. On the other hand, for the second term we can write
 \begin{eqnarray*}
 n^{1-H} B_1\langle D\tilde{\Phi}_{n,2}, u_n \rangle_{\HH}  &=&
  n^{1+H} H (2H-1)  B_1  \int_0^1 \int_0^1  t^n |t-s|^{2H-2}   \langle   u_n(\xi) ,   \theta^n {\bf 1}_{[s,1]} (\theta) \rangle_{\HH}  dsdt\\
  &=& n^{1+2H} H^2(2H-1)^2 B_1 \\
  &&\times \int_{[0,1]^4}  t^n |t-s|^{2H-2}B_\xi  \xi^n \theta^n |\xi-\theta|^{2H-2} {\bf 1}_{[s,1]} (\theta) d\xi d\theta dsdt\\
  &=&   C_n B_1^2 + H^2 (2H-1)^2 \Delta_n,
  \end{eqnarray*}
  where
  \[
  C_n=n^{1+2H} H^2(2H-1)^2 \int_{[0,1]^4}  t^n |t-s|^{2H-2}  \xi^n \theta^n |\xi-\theta|^{2H-2} {\bf 1}_{[s,1]} (\theta) d\xi d\theta dsdt,
  \]
  and
  \[
  \Delta_n=  n^{1+2H}  B_1\int_{[0,1]^4}  t^n |t-s|^{2H-2}(B_\xi- B_1)  \xi^n \theta^n |\xi-\theta|^{2H-2} {\bf 1}_{[s,1]} (\theta) d\xi d\theta dsdt.
  \]
 % It is easy to check that $\| \Delta_n \|_p \le C n^{1-2H}$, so this term  does not contribute to the limit. On the other hand,
  With the change of variables  $t^{n+1} =x$, $\theta^{n+1} =y$, $\xi^{n+1} =z$,  we obtain
\beas
  \lim_{n\rightarrow \infty}  C_n &=&
 H^2 (2H-1)\int_{[0,1]^2}   | \log y -\log z |^{2H-2} dydz%=: c^2_{H,3} . 
\\&=&
H^2 (2H-1)\Gamma(2H-1)\>=\>c_H^2H. 
\eeas
\noindent
On the other hand, it is easy to check that $\| \Delta_n \|_p \le C  n^{-1}$, %n^{1-2H}$, 
so this term  does not contribute to the limit. 
  In conclusion,  taking into account  ({\ref{hk3}), the quasi torsion 
  $ {\sf qTor}=n^{1-H}     \langle D \langle DZ_n, u_n \rangle_{\HH}, u_n \rangle_{\HH}$ converges in $L^p$ to  
 $3c_H^2HB_1^2$ for all $p\ge 2$. In other words, ${\mathfrak S}^{(3,0)}=c_H^2HB_1^2$ 
for $H>1/2$, which is discontinuous at $H=1/2$.  In this way, we obtain the expansion
\bea  \label{expan2}
E[f(Z_n)] &=&   E[f( c_H |B_1| \zeta)]+ n^{H-1}E[ c_H^2 H B_1^2 f ^{(3)}( c_H |B_1| \zeta)]
+\rho_n^{(1)}(f),
\eea
for $f\in C^3_b(\bbR)$,  where $\zeta\sim N(0,1)$  is independent of $B_1$,
Again, from  the computations in Sections \ref{170805-5} and \ref{170805-6}, we deduce that conditions (\ref{b1}), (\ref{b2}), (\ref{b32}), (\ref{b33}) and (\ref{b4}) are satisfied for all $p\ge 2$. Thus, taking $\psi_n=1$,  assumption [B] holds and by Theorem \ref{thm2} $\rho_n^{(1)}(f)=o(n^{H-1})$.

\subsection{Fractional Brownian motion. Case $H<\frac 12$}% (2017.04.20)}

\begin{en-text}  
  As a consequence,
   \[
 \langle t^n ,  1\rangle_{\HH} = \frac H{n+2H} + \frac {nc_{n,H}}2.
 \]
\end{en-text}

  Let 
 $ c_{n,H}=\frac {\Gamma(2H+1) \Gamma(n)}{ \Gamma(n+2H+1)} $. 
 We need the following preliminary result.

 \begin{lemme}\label{170722-1}  The norm  $\| t^n \|_{\HH} ^2$ is given by
 \[
  \| t^n \|_{\HH} ^2=  \frac {n^2+ 2nH}{2n+2H}  c_{n,H}.
  \]
%    where  $ c_{n,H}=\frac {\Gamma(2H+1) \Gamma(n)}{ \Gamma(n+2H+1)} $.
  \end{lemme}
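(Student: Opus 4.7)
The natural tool is Lemma \ref{20170616-1}(ii), which, for any piecewise continuous $\varphi$ and any $\psi\in C_b^1([0,1])$, gives an explicit integral representation of $\langle\varphi,\psi\rangle_\HH$ in terms of the kernel $H|t-s|^{2H-1}\mathrm{sign}(t-s)$ and two boundary terms. Since both factors are $C^1_b$ we may apply it with $\varphi(t)=\psi(t)=t^n$. Using $\psi(0)=0$, $\psi(1)=1$ and $\psi'(s)=ns^{n-1}$, the formula reduces to
\[
\|t^n\|_\HH^2 \;=\; H\!\int_0^1 t^n(1-t)^{2H-1}\,dt \;+\; nH\!\int_0^1\!\!\int_0^1 t^n s^{n-1}|t-s|^{2H-1}\mathrm{sign}(t-s)\,ds\,dt,
\]
so the task splits into evaluating a boundary term $A$ and a double integral $B$.

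For $A$, the integral is the Beta function $B(n+1,2H)$, and using $\Gamma(2H+1)=2H\,\Gamma(2H)$ together with the definition of $c_{n,H}$, I expect
\[
A \;=\; H\,\frac{\Gamma(n+1)\Gamma(2H)}{\Gamma(n+1+2H)} \;=\; \tfrac12\, n\, c_{n,H}.
\]
For $B$, I split $\{t>s\}$ and $\{s>t\}$ to remove the sign, then on each piece substitute $s=tu$ (resp.\ $t=sv$) to factor out a Beta integral and an independent power of the remaining variable. On the first region this yields $\tfrac{B(n,2H)}{2n+2H}$, and on the second region $\tfrac{B(n+1,2H)}{2n+2H}$. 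Taking the difference and pulling out $\Gamma(n)\Gamma(2H)/\Gamma(n+1+2H)$ via the identity $\Gamma(n+1+2H)=(n+2H)\Gamma(n+2H)$, the bracket collapses to exactly $2H$, so that
\[
B \;=\; \frac{nH\,\Gamma(n)\Gamma(2H+1)}{(2n+2H)\,\Gamma(n+1+2H)} \;=\; \frac{nH\,c_{n,H}}{2n+2H}.
\]

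Adding $A$ and $B$ and factoring $n\,c_{n,H}$ out gives
\[
\|t^n\|_\HH^2 \;=\; n\,c_{n,H}\left(\tfrac12+\tfrac{H}{2n+2H}\right) \;=\; \frac{n^2+2nH}{2n+2H}\,c_{n,H},
\]
as claimed. The only real obstacle is bookkeeping: making sure the Beta evaluations on the two triangles combine via the telescoping identity $(n+2H)-n=2H$ to produce the factor $\Gamma(2H+1)$ needed to match $c_{n,H}$. Everything else is elementary Gamma-function algebra.
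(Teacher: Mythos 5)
Your computation is correct, and it reaches the stated identity by a genuinely different route from the paper. The paper starts from the isometry $\|t^n\|_{\HH}^2=E\bigl[\bigl(\int_0^1 t^n\,dB_t\bigr)^2\bigr]$, rewrites the Wiener integral by pathwise integration by parts as $B_1-n\int_0^1 t^{n-1}B_t\,dt$, and then expands the square using the covariance $R_H(t,s)=\tfrac12(t^{2H}+s^{2H}-|t-s|^{2H})$; the resulting three moments reduce to the same Beta/Gamma algebra you carry out. You instead apply Lemma~\ref{20170616-1}~(ii) directly with $\varphi=\psi=t^n$ (legitimate: $t^n$ is $C^1_b$ on $[0,1]$ and $\psi(0)=0$ for $n\ge 1$), which lands you immediately on the boundary term $A=HB(n+1,2H)=\tfrac12 n\,c_{n,H}$ and the signed double integral $B=\tfrac{nH}{2n+2H}\bigl(B(n,2H)-B(n+1,2H)\bigr)=\tfrac{nH}{2n+2H}c_{n,H}$; I checked both evaluations and the telescoping $B(n,2H)-B(n+1,2H)=c_{n,H}$, and the sum is indeed $\tfrac{n^2+2nH}{2n+2H}c_{n,H}$. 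What each approach buys: yours is slightly more streamlined (two integrals instead of three second moments, and no need for the unexplained intermediate value $\tfrac{n+H}{n+2H}-\tfrac n2 c_{n,H}$ that the paper quotes for $E[B_1\int_0^1 nt^{n-1}B_t\,dt]$), but it inherits the hypothesis $H\neq\tfrac12$ of Lemma~\ref{20170616-1} — harmless here since the lemma is invoked only in the $H<\tfrac12$ subsection — whereas the paper's covariance computation is valid verbatim for every $H\in(0,1)$.
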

  
    \noindent
  {\it Proof:} We can write
 \begin{eqnarray*}
\| t^n \|_{\HH} ^2&=& E\left[  \left(\int_0^1 t^n dB_t \right)^2\right]\\
 &=& E\left[ \left( B_1- \int_0^1 nt^{n-1} B_t dt\right) ^2\right]   \\
 &=& 1- 2 \left( \frac {n+H}{n+2H} - \frac n2 c_{n,H} \right)+ n^2 \int_0^1 \int_0^1 t^{n-1} s^{n-1} E(B_tB_s)dsdt .
 \end{eqnarray*}
 We have
 \begin{eqnarray*}
n^2 \int_0^1 \int_0^1 t^{n-1} s^{n-1} E(B_tB_s)dsdt 
 &=&\frac {n^2}2\int_0^1 \int_0^1 t^{n-1} s^{n-1} (t^{2H} + s^{2H} -|t-s|^{2H})dsdt  \\
 &=& \frac n{n+2H} - \frac { n^2}{2n+2H} c_{n,H}
 \end{eqnarray*}
Therefore
 \[
 \| t^n \|_{\HH} ^2=  \frac {n^2+ 2nH}{2n+2H}  c_{n,H}= \frac n{2(n+H)}  \frac{ \Gamma(2H+1)\Gamma(n) }{\Gamma(n+2H)}.
 \]
\qed

 As a consequence, 
 \beas 
 \lim_{n\rightarrow \infty} n^{2H}  \| t^n \|_{\HH} ^2 &=&\frac 12\Gamma(2H+1) = c_H^2,
 \eeas
 where $c_H$ is the constant introduced in (\ref{e:ch}).

 \medskip
 \noindent
\subsubsection{Quasi tangent}\label{170805-7}
Recall  that $r_n= n^{-H}$ and the  quasi tangent is defined by
 \[
{\sf qTan} = n^{H} G^{(2)}_n = n^{H} \left( \langle DZ_n, u_n \rangle_{\HH} - G_{\infty}\right) .
 \]
 We know that
 \begin{equation} \label{y2}
  \langle DZ_n, u_n \rangle_{\HH}= \|u_n \|_{\HH}^2 +  n^{2H} \langle t^n B_t, \int_t^1 s^n dB_s\rangle_{\HH}.
  \end{equation}
  The inner product in the Hilbert space $\HH$ is more involved than in the case $H>\frac 12$, and it is convenient to rewrite the stochastic integral  $ \int_t^1 s^n dB_s$ using integration by parts: % It is possible to check this formula by computing $E[\delta({\bf 1}_{[t,1]}(\cdot)\cdot^n)G]$. 
  \begin{equation}  \label{y1}
   \int_t^1 s^n dB_s=B_1 - t^n B _t -n \int_t^1 B_s s^{n-1} ds.
   \end{equation}
   Substituting (\ref{y1}) into (\ref{y2}) yields
 \begin{eqnarray*}
  \langle DZ_n, u_n \rangle_{\HH} &=& \|u_n \|_{\HH}^2 +  n^{2H} \langle t^n B_t,  B_1 - t^n B _t -n \int_t^1 B_s s^{n-1} ds\rangle_{\HH}\\
  &=&n^{2H} \langle t^n B_t,  B_1  -n \int_t^1 B_s s^{n-1} ds\rangle_{\HH}\\
  &=& n^{2H}B_1 \langle t^n B_t, 1 \rangle_{\HH} - n^{2H+1}\int_0^1 \langle t^n B_t ,\mathbf{1}_{[0,s]}(t) \rangle_{\HH} s^{n-1} B_s ds.
  \end{eqnarray*}
  Putting $B_t= (B_t-B_1) +B_1$ and $B_tB_s= (B_t-B_1)(B_s-B_1)+B_1(B_t-B_1)+ B_1( B_s-B_1)+B_1^2$, we obtain
  \begin{eqnarray*}
   \langle DZ_n, u_n \rangle_{\HH} &=& 
   n^{2H}B_1^2 \left(  \langle t^n, 1 \rangle_{\HH} - n\int_0^1 \langle t^n ,\mathbf{1}_{[0,s]}(t) \rangle_{\HH} s^{n-1}  ds \right) \\
   &&+ n^{2H} B_1 \langle t^n(B_t-B_1), 1 \rangle_{\HH}  \\
   &&- n^{2H+1}B_1\int_0^1 \langle t^n (B_t-B_1) ,\mathbf{1}_{[0,s]}(t) \rangle_{\HH} s^{n-1}   ds \\
      &&- n^{2H+1}B_1\int_0^1 \langle t^n   ,\mathbf{1}_{[0,s]}(t) \rangle_{\HH} s^{n-1}  (B_s-B_1) ds \\
         &&- n^{2H+1}\int_0^1 \langle t^n (B_t-B_1)  ,\mathbf{1}_{[0,s]}(t) \rangle_{\HH} s^{n-1}  (B_s-B_1) ds. \\
      \end{eqnarray*}
      Actually, we can combine the second and third terms   and last two terms as follows:
       \begin{eqnarray*}
   \langle DZ_n, u_n \rangle_{\HH} &=& 
   n^{2H}B_1^2 \left(  \langle t^n, 1 \rangle_{\HH} - n\int_0^1 \langle t^n ,\mathbf{1}_{[0,s]}(t) \rangle_{\HH} s^{n-1}  ds \right) \\
   &&+ n^{2H} B_1 \langle t^n(B_t-B_1), t^n \rangle_{\HH}  \\
      &&- n^{2H+1} \int_0^1 \langle t^n  B_t  ,\mathbf{1}_{[0,s]}(t) \rangle_{\HH} s^{n-1}  (B_s-B_1) ds \\
         &=:& \sum_{i=1} ^3 A_{i,n}.
      \end{eqnarray*}
          The dominant term in the  limit will be $A_{1,n}$, which can be expressed as
      \[
      A_{1,n}=  n^{2H}B_1^2 \left(  \langle t^n, 1 \rangle_{\HH} - n\int_0^1 \langle t^n ,\mathbf{1}_{[0,s]}(t) \rangle_{\HH} s^{n-1}  ds \right)
      = n^{2H} B_1^2 \| t^n \|_{\HH}^2,
      \]
  {\color {black} and} converges to $G_\infty=H \Gamma(2H) B_1^2$ as $n$ tends to infinity by Lemma \ref{170722-1}. It is not difficult to check, using formulas (\ref{hk1}) and (\ref{hk11}), that  the other two  terms converge to zero in  $L^2$  as $n$ tends to infinity.
  
  We are going to show that the quasi tangent  does not contribute to the asymptotic expansion.  From the preceding computations, we deduce 
  \[
  {\sf qTan} = n^H  B_1^2[ n^{2H}  \|t^n\|^2_{\HH} -  c_H^2] + \sum_{i=2} ^3 n^H A_{i,n}.
  \]
  We examine each term of this expression as follows:
  
  \medskip
  \noindent
  {\bf (i) }  For the first term,   by Lemma \ref{170722-1},  we have
  \[
\tilde{A}_{1,n}:= n^H  B_1^2[ n^{2H}  \|t^n\|^2_{\HH} -  c_H^2]  = B_1^2 n^H \frac 12 \Gamma(2H+1) \left[\frac {n^{2H+1}}{(n+H)}  \frac{ \Gamma(n) }{\Gamma(n+2H)} - \1  \right],
  \]
  which converges to zero.
  
    \medskip
  \noindent
  {\bf (ii) } For the second term, by Lemma \ref{20170616-1} (ii), we have
  \begin{eqnarray}\label{170719-1}
 n^H A_{2.n}&=&n^{3H} B_1 \langle t^n(B_t-B_1), t^n \rangle_{\HH} 
 \nn\\
    &=&
 n^{3H}HB_1\int_0^1t^n(B_t-B_1)(1-t)^{2H-1}dt
 \nn\\&&
+n^{3H}HB_1\int_0^1t^n(B_t-B_1)\int_0^1|t-s|^{2H-1}{\rm sign}(t-s)ns^{n-1}dsdt.
  \end{eqnarray}
We claim that
  \bea\label{170719-2}
  \lim_{n\rightarrow \infty}   n^HE[ \Psi(z) A_{2.n}]=0.
  \eea
  In fact, integrating by parts, the factor $B_t-B_1$ produces a term of the form 
  $|t-1|^{2H}$ due to Lemma \ref{20170616-1} (iii), and then  we have 
  \[
  \int_0^1 t^n(1-t)^{4H-1}dt \simleq  n^{-4H}, 
  \]
hence the first term on the right-hand side of (\ref{170719-1}) converges to $0$. 
For two sequences numbers $a_n$ and $b_n$, 
$a_n\simleq b_n$ means that there exists a positive constant $C$ independent of $n$ such that 
$a_n\leq C b_n$ for all $n\in\bbN$.  
{\color {black} For the second term we apply the integration-by-parts formula} to $B_t-B_1$ 
as well as Lemma \ref{20170616-1} (iii) and Lemma \ref{170720-1} below 
to obtain the bound 
\beas 
\int_0^1t^n(1-t)^{2H}\int_0^1|t-s|^{2H-1}ns^{n-1}dsdt
&=&
O(n^{-4H}). 
\eeas
Therefore the second term on the right-hand side of (\ref{170719-1}) 
converges to $0$, which proves (\ref{170719-2}). }\\
\begin{en-text}\\&=&
\int_0^1t^n(1-t)^{2H}\int_0^t(t-s)^{2H-1}ns^{n-1}dsdt
+\int_0^1t^n(1-t)^{2H}\int_t^1(s-t)^{2H-1}ns^{n-1}dsdt
\\&=&
nB(n,2H)\int_0^1t^{2n+2H-1}(1-t)^{2H}dt
\\&&+\int_0^1t^n(1-t)^{2H}\bigg\{
\frac{(1-t)^{2H}n}{2H}-\int_t^1\frac{(s-t)^{2H}}{2H}n(n-1)s^{n-2}ds
\bigg\}dt
\\&\leq&
nB(n,2H)\int_0^1t^{2n+2H-1}(1-t)^{2H}dt
+\frac{n}{2H}\int_0^1t^n(1-t)^{4H}dt
\\&\leq&
nB(n,2H)B(2n+2H,2H+1)
+\frac{n}{2H}B(n+1,4H+1)
\\&\simleq&
n^{-4H}. 
\eeas
\end{en-text}
%%
%  \[
%  \int_0^1  \int_0^1t^n  |t-s| ^{4H-1} ns^{n-1}  dsdt \le C n^{-4H}.
%  \]
\begin{lemme}\label{170720-1}
Let $\alpha,\beta,\mu\in(-1,\infty)$ and $\nu\in[0,\infty)$. 
Let 
\beas
B(\alpha,\beta,\mu,\nu) &=& 
B(\mu+\nu+\beta+2,\alpha+1)B(\beta+1,\nu+1)
+\frac{1}{\beta+1}B(\mu+1,\alpha+\beta+2). 
\eeas
Then
\bd
\im[(i)] 
$\displaystyle
\int_0^1\int_0^1 t^\mu(1-t)^\alpha |t-s|^\beta s^\nu dsdt
\leq 
B(\alpha,\beta,\mu,\nu). 
$
\im[(ii)] For fixed $\alpha$ and $\beta$, 
it holds that 
\beas
B(\alpha,\beta,\mu,\nu)
&\sim&
\frac{\Gamma(\alpha+1)\Gamma(\beta+1)}{(\mu+\nu)^{\alpha+1}\nu^{\beta+1}}
+\frac{\Gamma(\alpha+\beta+2)}{(\beta+1)\mu^{\alpha+\beta+2}}
\eeas 
as $\mu,\nu\to\infty$. 
\ed
\end{lemme}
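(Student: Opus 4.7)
My plan is to treat (i) and (ii) as essentially elementary Beta-function computations, with the only real work being to split the singularity in $|t-s|^\beta$ correctly in (i).

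For (i), I would first carry out the inner integral in $s$ by splitting at $s=t$:
\beas
\int_0^1|t-s|^\beta s^\nu ds
&=& \int_0^t(t-s)^\beta s^\nu ds+\int_t^1(s-t)^\beta s^\nu ds.
\eeas
On $[0,t]$ the change of variable $s=tu$ gives the exact expression $t^{\mu+\beta+\nu+1}\cdot B(\nu+1,\beta+1)$ after multiplying by $t^\mu$. On $[t,1]$ I use the trivial bound $s^\nu\le 1$ (since $\nu\ge0$) to get $(1-t)^{\beta+1}/(\beta+1)$. Multiplying by $t^\mu(1-t)^\alpha$ and integrating in $t$ via Fubini then reduces both summands to ordinary Beta integrals:
\beas
\int_0^1 t^{\mu+\beta+\nu+1}(1-t)^\alpha dt &=& B(\mu+\nu+\beta+2,\alpha+1),\\
\int_0^1 t^\mu(1-t)^{\alpha+\beta+1}dt &=& B(\mu+1,\alpha+\beta+2),
\eeas
yielding exactly the definition of $B(\alpha,\beta,\mu,\nu)$. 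The condition $\alpha,\beta,\mu,\nu>-1$ (with $\nu\ge0$ only needed for the second bound) ensures convergence of every integral.

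For (ii), I would invoke the standard Stirling-type asymptotic $B(a+c,b)=\Gamma(a+c)\Gamma(b)/\Gamma(a+b+c)\sim \Gamma(b)a^{-b}$ as $a\to\infty$ with $b,c$ fixed. Applied to the three Beta factors this gives $B(\mu+\nu+\beta+2,\alpha+1)\sim \Gamma(\alpha+1)(\mu+\nu)^{-(\alpha+1)}$, $B(\beta+1,\nu+1)\sim \Gamma(\beta+1)\nu^{-(\beta+1)}$, and $B(\mu+1,\alpha+\beta+2)\sim \Gamma(\alpha+\beta+2)\mu^{-(\alpha+\beta+2)}$. Multiplying and summing gives the claimed equivalent. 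Note the statement does not claim that the two summands are of the same order — they generically are not — so no delicate cancellation analysis is required; one simply reports each summand's leading behaviour.

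There is essentially no obstacle: the only subtle point is to use $s^\nu\le 1$ rather than keeping $s^\nu$ in the second piece of the $s$-integral (the reason the Lemma yields an inequality rather than an equality in (i)), and to observe that (ii) permits both terms to appear in the asymptotic without requiring a common order in $(\mu,\nu)$. The form of the bound is precisely tailored so that, in the application to the integrals $\int_0^1 t^n(1-t)^{2H}\int_0^1|t-s|^{2H-1}ns^{n-1}ds\,dt$ arising in the preceding computation, one takes $\alpha=2H$, $\beta=2H-1$, $\mu=n$, $\nu=n-1$, and both summands in $B(\alpha,\beta,\mu,\nu)$ are of order $n^{-4H}$, matching the claim used above.
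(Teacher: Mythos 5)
Your proof is correct and follows essentially the same route as the paper: split the $s$-integral at $s=t$, evaluate the $s<t$ piece exactly as a product of Beta functions, and bound the $s>t$ piece by $(1-t)^{\beta+1}/(\beta+1)$ before integrating in $t$. The only (immaterial) difference is that you obtain the second bound via $s^\nu\le 1$, whereas the paper integrates by parts in $s$ and drops a nonnegative term; both give the same estimate, and your treatment of (ii), which the paper dismisses as obvious, is the standard $B(a,b)\sim\Gamma(b)a^{-b}$ asymptotic correctly applied.
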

\begin{proof}
First, 
\beas 
\int_0^1t^\mu(1-t)^\alpha\int_0^t(t-s)^\beta s^\nu dsdt
&=&
\int_0^1t^{\mu+\nu+\beta+1}(1-t)^\alpha  dt\int_0^1(1-v)^\beta v^\nu dv
\\&=&
B(\mu+\nu+\beta+2,\alpha+1)B(\beta+1,\nu+1). 
\eeas
Next, 
\beas 
\int_0^1t^\mu(1-t)^\alpha\int_t^1(s-t)^\beta s^\nu dsdt
&=&
\int_0^1t^\mu(1-t)^\alpha\bigg\{
\frac{(1-t)^{\beta+1}}{\beta+1}
-\int_t^1\frac{(s-t)^{\beta+1}}{\beta+1}\nu s^{\nu-1}ds\bigg\}dt
\\&\leq&
\frac{1}{\beta+1}\int_0^1t^\mu(1-t)^{\alpha+\beta+1}dt
\\&=&
\frac{1}{\beta+1}B(\mu+1,\alpha+\beta+2). 
\eeas
%Lemma \ref{170720-1} 
{\color {black} Property (i) follows from these inequalities  and  (ii) is obvious. }
\end{proof}
\medskip
\noindent
{\bf (iii) } The third   term also does not produce contribution. 
By Lemma \ref{20170616-1} (i) and 
Lemma \ref{20170616-1} (iii) after integration-by-parts in $B_s-B_1$, 
we estimate $n^HE[\Psi(z)A_{3,n}]$ by
\beas&&
Cn^{3H+1}{\colred \int_0^1}\int_0^{{\colred 1}}  t^n ( t^{2H-1} + |t-s|^{2H-1}) s^{n-1}(1-s)^{2H}ds{\colred dt}
\\&\simleq&
n^{3H+1}\big\{B(2H,0,n-1,n+2H-1)+B(2H,2H-1,n-1,n)\big\}
\\&\simleq&
n^{H-1}+n^{-H}. 
\eeas

In this way, we have proved that ${\sf qTan}$ has no contribution in the limit, that is, 
${\mathfrak S}^{(2,0)}_0=0$.

\medskip
\noindent
\subsubsection{Quasi torsion}\label{170805-8}
The  quasi torsion can be written as
\[
{\sf qTor} = n^H \langle  D\langle DZ_n, u_n \rangle_{\HH} ,u_n \rangle_{\HH} = n^H(\langle DG^{(2)}_n , u_n\rangle_{\HH} + \langle DG_\infty, u_n \rangle_{\HH}).
\]
Let us show that $n^H \langle DG^{(2)}_n , u_n\rangle_{\HH} $ does not contribute   to the  asymptotic expansion.
 First, 
\beas 
\big\|\langle D\tilde{A}_{1,n},u_n\rangle_\HH\big\|_p
&=& 
\big\|\langle D(B_1^2),u_n\rangle_\HH\big\|_p\times o(1)\yeq o(1)
\eeas
for $p\ge 2$. 
We have 
\bea\label{170815-1} 
\big\|\big\langle DB_s,u_n\big\rangle_\HH \big\|_p
&=&
%\big\langle {\bf 1}_{[0,s]}(t),n^Ht^nB_t\big\rangle_\HH 
%\\&=&
\bigg\|n^H\int_0^1HB_tt^n\{t^{2H-1}-|t-s|^{2H-1}{\rm sign}(t-s)\}dt\bigg\|_p
\yeq
O(n^{H-1}). 
\eea
Therefore by (\ref{170719-1}), 
$\big\|\langle n^HDA_{2,n},u_n\big\rangle_\HH \big\|_p=O(n^{4H-2})=o(1)$. 
For the term $A_{3,n}$ we can write for any $p\ge 2$, 
using {\colred Lemma \ref{20170616-1} (i),} (\ref{170815-1}) {\colred and Lemma \ref{170720-1},} 
\beas
\big\|n^H  \langle DA_{3,n}, u_n \rangle_{\HH} \big\|_p
& \le & 
C\sup_{s\in [0,t] }  \big\|\langle DB_s,u_n\rangle_\HH\big\|_p  n^{3H+1}
{\colred \int_0^1}\int_0^{{\colred 1}}  t^n ( t^{2H-1} + |t-s|^{2H-1}) s^{n-1}ds{\colred dt}
\\&=&  O(n^{4H-2})+O(n^{2H-1})=o(1).
\eeas
Thus we have proved that  $\big\|n^H \langle DG^{(2)}_n,u_n\rangle_\HH\big\|_p=o(1)$.  

  Therefore,  taking into account  ({\ref{hk4}), the quasi torsion 
  $ {\sf qTor}$ converges in $L^p$ to  
 $2{\colred H} \Gamma(2H) c_H^2 B_1^2$ for all $p\ge 2$. In other words, ${\mathfrak S}^{(3,0)}=\frac 23  {\colred H}\Gamma(2H) c_H^2B_1^2$ 
for $H<1/2$.

\subsubsection{Asymptotic expansion}
  In this way, we obtain the expansion
\bea  \label{expan2}
E[f(Z_n)] &=&   E\big[f( c_H |B_1| \zeta)\big]
+ n^{-H}E\bigg[ \frac 23 {\colred H}c_H^2 \Gamma(2H) B_1^2 f ^{(3)}( c_H |B_1| \zeta)\bigg]
+\rho_n^{(1)}(f),
\eea
for $f\in C^3_b(\bbR)$,  where $\zeta\sim N(0,1)$  is independent of $B_1$,
Again, from  the computations in Sections \ref{170805-5} and \ref{170805-6}, we deduce that conditions (\ref{b1}), (\ref{b2}), (\ref{b32}), (\ref{b33}) and (\ref{b4}) are satisfied for all $p\ge 2$. Thus, taking $\psi_n=1$,  assumption [B] holds and by Theorem \ref{thm2} $\rho_n^{(1)}(f)=o(n^{-H})$.

%%%%%%%%%%%%%%%%%%%%%%%%%%%%%%%
\section{Quadratic form of a Brownian motion with predictable weights}\label{170813-3}
In this section, we consider a quadratic form of a Brownian motion 
with  predictable weights and 
show that the asymptotic expansion formula for the Skorohod integral 
reproduces the {\color {black}results} obtained in \cite{yoshida2013martingale,yoshida2012asymptotic}. 

\subsection{Quadratic form with random weights and $\mfh$-derivatives}
For a one-dimensional standard Brownian motion $B=\{B_t, t\in[0,1]\}$, let 
\beas 
Z_n&=& \sqrt{n}\sum_{j=1}^na_\tjm\int_\tjm^\tj\int_\tjm^tB_sdB_t
\>=\>  \sqrt{n}\sum_{j=1}^n2^{-1}a_\tjm\big\{(B_\tj-B_\tjm)^2-n^{-1}\big\},
\eeas
where $t_j=j/n$, 
$a_t=a(B_t)$ and  $a$ is an infinitely differentiable function with derivatives of moderate growth ($g$ has moderate growth if
$|g(x)| \le c \exp(\alpha |x|)$ for some constants $c>0$ and $0\le \alpha<2$).
%$a\in C^\infty_p(\bbR)$, the space of 
That is, $Z_n=\delta(u_n)$ with 
\beas 
u_n(t) &=& \sqrt{n}\sum_{j=1}^na_\tjm(B_t-B_\tjm){\bf 1}_{I_j}(t),
\eeas
where $I_j=[\tjm,\tj)$. In this situation $Z_n=M_n$ and  we have $W_n=W_\infty=0$, $N_n=0$, $X_n=X_\infty =0$ and $\psi=1$. 

Let 
\beas 
q_j&=&(B_\tj-B_\tjm)^2-n^{-1}=2\int_\tjm^\tj\int_\tjm^tdB_sdB_t.
\eeas 
We have 
\bea\label{20161117-1}
\langle DZ_n,u_n\rangle_\mfh
&=&
\int_0^1 \bigg\{ \sqrt{n}\sum_{j=1}^n2^{-1} D_ta_\tjm \>q_j+\sqrt{n}\sum_{j=1}^na_\tjm(B_\tj-B_\tjm){\bf 1}_{I_j}(t)\bigg\}
\nn\\&&
\quad\times \sqrt{n}\sum_{k=1}^na_\tkm(B_t-B_\tkm){\bf 1}_{I_k}(t)dt
\eea
and
\bea\label{20161117-2}
D_s\langle DZ_n,u_n\rangle_\mfh
&=&
\int_0^1 \bigg\{ \sqrt{n}\sum_{j=1}^n2^{-1} D_sD_ta_\tjm \>q_j
+\sqrt{n}\sum_{j=1}^nD_ta_\tjm(B_\tj-B_\tjm){\bf 1}_{I_j}(s)\bigg\}
\nn\\&&
\quad+\sqrt{n}\sum_{j=1}^nD_sa_\tjm(B_\tj-B_\tjm){\bf 1}_{I_j}(t)
+\sqrt{n}\sum_{j=1}^na_\tjm{\bf 1}_{I_j}(s){\bf 1}_{I_j}(t)
\bigg\}
\nn\\&&
\quad\times \sqrt{n}\sum_{k=1}^na_\tkm(B_t-B_\tkm){\bf 1}_{I_k}(t)dt
\nn\\&&
+\int_0^1 \bigg\{ \sqrt{n}\sum_{j=1}^n2^{-1} D_ta_\tjm \>q_j+\sqrt{n}\sum_{j=1}^na_\tjm(B_\tj-B_\tjm){\bf 1}_{I_j}(t)\bigg\}
\nn\\&&
\quad\times\bigg\{\sqrt{n}\sum_{k=1}^nD_sa_\tkm(B_t-B_\tkm){\bf 1}_{I_k}(t)
+\sqrt{n}\sum_{k=1}^na_\tkm{\bf 1}_{[\tkm,t)}(s){\bf 1}_{I_k}(t)\bigg\}dt.
\eea

It is known that in this example,
$
G_\infty = \half\int_0^1a_t^2dt
$
and  $r_n =n^{-1/2}$. Then,
\beas 
D_sG_\infty &=& \int_0^1(D_sa_t)a_tdt.
\eeas

\subsection{Quasi torsion}
We shall  {\color {black} study the asymptotic behavior} of the eight terms appearing in the expression 
of $\langle\langle DZ_n,u_n\rangle_\mfh,u_n\rangle_\mfh$ corresponding to (\ref{20161117-2}). 

\bd\im[(i)]  The first term is
\beas 
\cali_1 &=& 
\int_0^1\int_0^1\sqrt{n}\sum_{j=1}^nD_sD_ta_\tjm q_j\times\sqrt{n}\sum_{k=1}^n
a_\tkm(B_t-B_\tkm){\bf 1}_{I_k}(t)dt
\\&&
\times \sqrt{n}\sum_{\ell=1}^na_\tlm(B_s-B_\tlm){\bf 1}_{I_\ell}(s)ds.
\eeas
\ed
We investigate the rate of $E[\Psi(\sfz,\sfx)\cali_1]$. 
The factor $n^{1.5}$ comes from three $\sqrt{n}$. 
It suffices to consider the terms for which $k\vee\ell<j$; otherwise the term vanishes due to $D_sD_ta_\tjm$. 
The number of terms in the sum $\sum_j$ is of order $n^1$. 
The number of terms in the sum $\sum_{k.\ell}$ for $k=\ell$ and $k\vee\ell<j$ is $O(n^1)$, and 
each $B_t-B_\tkm$ ($=B_s-B_\tlm$) or its $\mfh$-derivative contributes  {\color {black} $O(n^{-0.5})$} in $L^p$-norm. 
By the IBP formula for $q_j$ we
get a factor $n^{-2}$.
So that the partial sum in $E[\Psi(\sfz,\sfx)\cali_1]$ for $k=\ell$ is $O(n^{-1.5})$ since 
both $ds$ and $dt$-integrals give $O(n^{-1})$. 
For the partial sum in $E[\Psi(\sfz,\sfx)\cali_1]$ for $k\not=\ell$ is also $O(n^{-1.5})$, 
since the consecutive IBP formulas (i.e., duality) for $B_t-B_\tkm$ and $B_s-B_\tlm$ gives the rate $O(n^{-2})$. 
Thus, we obtain $E[\Psi(\sfz,\sfx)\cali_1]=O(n^{-1.5})$, or $\sqrt{n}E[\Psi(\sfz,\sfx)\cali_1]=O(n^{-1})$. 
This means $\sqrt{n}E[\Psi(\sfz,\sfx)\cali_1]$ is negligible in the expansion. 
Table \ref{tablei1} summarizes how the orders of the partial sums were obtained. 
\begin{table}[htb]
  \begin{center}
  \caption{$E[\Psi(\sfz,\sfx)\cali_1]$}
\begin{tabular}{|c|c|c|c|c|c|c|c||c|} \hline
    $n$ & $\sum_j(k\vee\ell<j)$ & $\sum_{k,\ell}(k=\ell)$ & $ds$ &$dt$&IBP by $q_j$&$B_t-B_\tkm$&$B_s-B_\tlm$&order\\ \hline
    $1.5$  & $1$ & $1$ & $-1$&$-1$&$-2$ &$-0.5$&$-0.5$&$-1.5$\\ \cline{3-9}
     &    \\ \cline{3-9}
     &  & $\sum_{k,\ell}(k\not=\ell)$ & $ds$ &$dt$&IBP by $q_j$&IBP by $B_t-B_\tkm$&IBP by $B_s-B_\tlm$&order \\ \cline{3-9}
      &  & $2$ & $-1$ & $-1$ &$-2$ &$-1$&$-1$&$-1.5$ \\  \hline
  \end{tabular}  \label{tablei1}\end{center}
\end{table}

\bd\im[(ii)]  The second term can be written as
\beas 
\cali_2 &=& 
\int_0^1\int_0^1\sqrt{n}\sum_{j=1}^nD_ta_\tjm (B_\tj-B_\tjm){\bf 1}_{I_j}(s)\times\sqrt{n}\sum_{k=1}^n
a_\tkm(B_t-B_\tkm){\bf 1}_{I_k}(t)dt
\\&&
\times \sqrt{n}\sum_{\ell=1}^na_\tlm(B_s-B_\tlm){\bf 1}_{I_\ell}(s)ds. 
\eeas
\ed
Only terms with $k<j=\ell$  remain due to the product 
${\bf 1}_{I_k}(t)D_ta_\tjm{\bf 1}_{I_j}(s){\bf 1}_{I_\ell}(s)$. 
Table \ref{tablei2} shows 
%the sum of the terms with $j=\ell\not=k$ can remain in the expansion. 
$E[\Psi(\sfz,\sfx)\cali_2]=O(n^{-0.5})$, as explained more precisely below. 
\begin{table}[htb]
  \begin{center}
  \caption{$E[\Psi(\sfz,\sfx)\cali_2]$}
\begin{tabular}{|c|c|c|c|c|c|c||c|} \hline
    $n$ & $\sum_{j,\ell}(j=\ell)$ & $\sum_k(k<\ell)$ & $ds$ &$dt$&IBP by $B_t-B_\tkm$&$(B_s-B_\tlm)(B_\tj-B_\tjm)$&order\\ \hline
    $1.5$  & $1$ & $1$ & $-1$&$-1$&$-1$ &$-1$&$-0.5$\\  \hline
  \end{tabular}  \label{tablei2}\end{center}
\end{table}
\begin{en-text}
\begin{table}[htb]
  \begin{center}
  \caption{$E[\Psi(\sfz,\sfx)\cali_2]$}
\begin{tabular}{|c|c|c|c|c|c|c||c|} \hline
    $n$ & $\sum_{j,\ell}(j=\ell)$ & $\sum_k(k\not=\ell)$ & $ds$ &$dt$&IBP by $B_t-B_\tkm$&$(B_s-B_\tlm)(B_\tj-B_\tjm)$&order\\ \hline
    $1.5$  & $1$ & $1$ & $-1$&$-1$&$-1$ &$-1$&$-0.5$\\ \cline{3-8}
     &    \\ \cline{3-8}
     &  & $\sum_k(k=\ell)$ & $ds$ &$dt$&$B_t-B_\tkm$&$(B_s-B_\tlm)(B_\tj-B_\tjm)$&order \\ \cline{3-8}
      &  & $0$ & $-1$ & $-1$ &$-0.5$ &$-1$&$-1$ \\  \hline
  \end{tabular}  \label{tablei2}\end{center}
\end{table}
\end{en-text}

The contribution of $\sqrt{n}E[\Psi(\sfz,\sfx)\cali_2]$ is evaluated as follows. $A_n\equiv^a B_n$ means $A_n-B_n=o(1)$ as $n\to\infty$. 
By It\^o's formula, 
\bea\label{20161123-1} 
(B_\tj-B_\tjm)(B_s-B_\tjm) &=& 
(B_\tj-B_s)(B_s-B_\tjm)+2\int_\tjm^s\int_\tjm^tdB_rdB_t+(s-\tjm)
\eea
for $s\in I_j$. 
As already mentioned, only the terms with $k<j=\ell$ contribute the result. 
Applying the IBP formula for the first two terms of the right-hand side of (\ref{20161123-1}), we obtain 
\beas 
\sqrt{n}E[\Psi(\sfz,\sfx)\cali_2] 
&\equiv^a&
E\bigg[\Psi(\sfz,\sfx)
\int_0^1\int_0^1n^2\sum_{j=1}^na_\tjm D_ta_\tjm (B_\tj-B_\tjm)(B_s-B_\tjm){\bf 1}_{I_j}(s)
\\&&
\times\sum_{k:k<j}
a_\tkm(B_t-B_\tkm){\bf 1}_{I_k}(t)dtds\bigg]
\\&\equiv^a&
E\bigg[\Psi(\sfz,\sfx)
\int_0^1\int_0^1n^2\sum_{j=1}^na_\tjm D_ta_\tjm (s-\tjm){\bf 1}_{I_j}(s)
\\&&
\times\sum_{k:k<j}
a_\tkm(B_t-B_\tkm){\bf 1}_{I_k}(t)dtds\bigg]. 
\eeas
The IBP formula  for $B_t-B_\tkm=\delta({\bf 1}_{[\tkm,t]})$ yields 
\beas 
\sqrt{n}E[\Psi(\sfz,\sfx)\cali_2] 
&\equiv^a&
\sum_j\sum_{k:k<j}\int_{s\in I_j}\int_{t\in I_k}n^2 (s-\tjm)
E\bigg[\int_{r\in[\tkm,t]} D_r\big\{
\Psi(\sfz,\sfx)(D_ta_\tjm) a_\tjm a_\tkm\big\}dr\bigg]dtds
\\&\equiv^a&
\int_{0}^1\half\int_{0}^s\half
E\bigg[D_t\big\{\Psi(\sfz,\sfx)(D_ta_s) a_s a_t\big\}\bigg]dtds, 
\eeas
where 
\beas 
D_t\big\{\Psi(\sfz,\sfx)(D_ta_s) a_s a_t\big\} &=& 
\lim_{r\up t}D_r\big\{\Psi(\sfz,\sfx)(D_ta_s) a_s a_t\big\}. 
\eeas
Therefore, 
\bea\label{20161124-1}
\sqrt{n}E[\Psi(\sfz,\sfx)\cali_2] 
&\equiv^a&
%\frac{1}{4}\int_{s=0}^1\int_{t=0}^sE\bigg[D_t\big\{\Psi(\sfz,\sfx)(D_ta_s) a_s a_t\big\}\bigg]dtds
%\\&\equiv^a&
\frac{1}{4}\int_{0}^1\int_{0}^s
E\bigg[(D_t\Psi(\sfz,\sfx))(D_ta_s) a_s a_t
+\Psi(\sfz,\sfx) (D_tD_ta_s) a_s a_t
\nn\\&&
+\Psi(\sfz,\sfx)(D_ta_s)^2a_t
\bigg]dtds.
\eea

\bd\im[(iii)]  The third term is given by
\beas 
\cali_3 &=& 
\int_0^1\int_0^1\sqrt{n}\sum_{j=1}^nD_sa_\tjm (B_\tj-B_\tjm){\bf 1}_{I_j}(t)\times\sqrt{n}\sum_{k=1}^n
a_\tkm(B_t-B_\tkm){\bf 1}_{I_k}(t)dt
\\&&
\times \sqrt{n}\sum_{\ell=1}^na_\tlm(B_s-B_\tlm){\bf 1}_{I_\ell}(s)ds.
\eeas
\ed
By symmetry, it is easy to see 
$
\sqrt{n}E[\Psi(\sfz,\sfx)\cali_3] = \sqrt{n}E[\Psi(\sfz,\sfx)\cali_2],
$
and hence the limit is the same as (\ref{20161124-1}).

\bd\im[(iv)]   Consider now the fourth term given by
\beas 
\cali_4 &=& 
\int_0^1\int_0^1\sqrt{n}\sum_{j=1}^na_\tjm {\bf 1}_{I_j}(s){\bf 1}_{I_j}(t)\times\sqrt{n}\sum_{k=1}^na_\tkm(B_t-B_\tkm){\bf 1}_{I_k}(t)dt
\\&&
\times \sqrt{n}\sum_{\ell=1}^na_\tlm(B_s-B_\tlm){\bf 1}_{I_\ell}(s)ds.
\eeas
\ed
Table \ref{tablei4} suggests that $\sqrt{n}E[\Psi(\sfz,\sfx)\cali_4]$ remains.  Indeed,
\begin{table}[htb]
  \begin{center}
  \caption{$E[\Psi(\sfz,\sfx)\cali_4]$}
\begin{tabular}{|c|c|c|c|c||c|} \hline
    $n$ & $\sum_{j,k,\ell}(j=k=\ell)$ &  $ds$ &$dt$&$(B_t-B_\tkm)(B_s-B_\tlm)$&order\\ \hline
    $1.5$  & $1$ &  $-1$&$-1$&$-1$ &$-0.5$
          \\  \hline
  \end{tabular}  \label{tablei4}\end{center}
\end{table}

The contribution of this term is given by
\beas 
\sqrt{n}E[\Psi(\sfz,\sfx)\cali_4] 
&\equiv^a& 
E\bigg[\Psi(\sfz,\sfx)\int_0^1\int_0^1n^2\sum_{j=1}^na_\tjm^3\big((t\wedge s)-\tjm\big){\bf 1}_{I_j}(s){\bf 1}_{I_j}(t)dtds\bigg]
\\&\equiv^a& 
\frac{1}{3}\int_0^1E[\Psi(\sfz,\sfx) a_t^3]dt. 
\eeas

\bd\im[(v)]  The fifth term is
\beas 
\cali_5 &=& 
\int_0^1\int_0^1\sqrt{n}\sum_{j=1}^n2^{-1}D_ta_\tjm q_j
\times\sqrt{n}\sum_{k=1}^nD_sa_\tkm(B_t-B_\tkm){\bf 1}_{I_k}(t)dt
\\&&
\times \sqrt{n}\sum_{\ell=1}^na_\tlm(B_s-B_\tlm){\bf 1}_{I_\ell}(s)ds.
\eeas
\ed
Notice that $\cali_5$ looks like $\cali_1$ but they are slightly different from each other. 
Only the terms satisfying $\ell<k<j$ remain due to $D_ta_\tjm$ and $D_sa_\tkm$. 
\begin{table}[htb]
  \begin{center}
  \caption{$E[\Psi(\sfz,\sfx)\cali_5]$}
\begin{tabular}{|c|c|c|c|c|c|c||c|} \hline
    $n$ & $\sum_{j,k,\ell}(\ell<k<j)$ &  $ds$ &$dt$&IBP by $q_j$&IBP by $B_t-B_\tkm$&IBP by $B_s-B_\tlm$&order\\ \hline
    $1.5$  & $3$ &  $-1$&$-1$&$-2$ &$-1$&$-1$&$-1.5$
          \\  \hline
  \end{tabular}  \label{tablei5}\end{center}
\end{table}
According to Table \ref{tablei5}, we see 
$\sqrt{n}E[\Psi(\sfz,\sfx)\cali_5]=O(n^{-1})$ and is negligible. 

\bd\im[(vi)]  The sixth term is given by
\beas 
\cali_5 &=& 
\int_0^1\int_0^1\sqrt{n}\sum_{j=1}^n2^{-1}D_ta_\tjm q_j
\times\sqrt{n}\sum_{k=1}^na_\tkm {\bf 1}_{[\tkm,t]}(s){\bf 1}_{I_k}(t)dt
\\&&
\times \sqrt{n}\sum_{\ell=1}^na_\tlm(B_s-B_\tlm){\bf 1}_{I_\ell}(s)ds.
\eeas
\ed
Thanks to the product ${\bf 1}_{[\tkm,t]}(s){\bf 1}_{I_k}(t){\bf 1}_{I_\ell}(s)D_ta_\tjm$, only  the terms satisfying $k=\ell<j$  remain.  By table \ref{tablei6} 
below   $\sqrt{n}E[\Psi(\sfz,\sfx)\cali_6]=O(n^{-1})$ and this term  is negligible. 
\begin{table}[htb]
  \begin{center}
  \caption{$E[\Psi(\sfz,\sfx)\cali_6]$}
\begin{tabular}{|c|c|c|c|c|c|c||c|} \hline
    $n$ & $\sum_j$&$\sum_{k,\ell}(\ell=k<j)$ &  $ds$ &$dt$&IBP by $q_j$&IBP by $B_s-B_\tlm$&order\\ \hline
    $1.5$  & $1$ &  $1$&$-1$&$-1$ &$-2$&$-1$&$-1.5$
          \\  \hline
  \end{tabular}  \label{tablei6}\end{center}
\end{table}
\bd\im[(vii)]  Consider the seventh term given by
\beas 
\cali_7 &=& 
\int_0^1\int_0^1\sqrt{n}\sum_{j=1}^na_\tjm (B_\tj-B_\tjm){\bf 1}_{I_j}(t)
\times\sqrt{n}\sum_{k=1}^nD_sa_\tkm(B_t-B_\tkm){\bf 1}_{I_k}(t)dt
\\&&
\times \sqrt{n}\sum_{\ell=1}^na_\tlm(B_s-B_\tlm){\bf 1}_{I_\ell}(s)ds.
\eeas
\ed
Due to the product ${\bf 1}_{I_\ell}(s)D_sa_\tkm{\bf 1}_{I_j}(t){\bf 1}_{I_k}(t)$, 
only the terms satisfying $\ell<k=j$ contribute to the sum. 
Then it turns out that $\cali_7$ is the same as $\cali_2$. 
Therefore $\sqrt{n}E[\Psi(\sfz,\sfx)\cali_7]=\sqrt{n}E[\Psi(\sfz,\sfx)\cali_2]$ and 
the limit is given by (\ref{20161124-1}).

\bd\im[(viii)]  Finally, the last term is
\beas 
\cali_8 &=& 
\int_0^1\int_0^1\sqrt{n}\sum_{j=1}^na_\tjm (B_\tj-B_\tjm){\bf 1}_{I_j}(t)\times\sqrt{n}\sum_{k=1}^na_\tkm{\bf 1}_{[\tkm,t]}(s){\bf 1}_{I_k}(t)dt
\\&&
\times \sqrt{n}\sum_{\ell=1}^na_\tlm(B_s-B_\tlm){\bf 1}_{I_\ell}(s)ds.
\eeas
\ed
{\color {black} It suffices to  consider } the case $j=k=\ell$. 
Table \ref{tablei4} says that $\sqrt{n}E[\Psi(\sfz,\sfx)\cali_8]$  {\color {black} contributes to the limit.}
\begin{table}[htb]
  \begin{center}
  \caption{$E[\Psi(\sfz,\sfx)\cali_8]$}
\begin{tabular}{|c|c|c|c|c||c|} \hline
    $n$ & $\sum_{j,k,\ell}(j=k=\ell)$ &  $ds$ &$dt$&$(B_\tj-B_\tjm)(B_s-B_\tlm)$&order\\ \hline
    $1.5$  & $1$ &  $-1$&$-1$&$-1$ &$-0.5$
          \\  \hline
  \end{tabular}  \label{tablei8}\end{center}
\end{table}

More precisely, following a procedure  quite similar to that of $\cali_4$,  we obtain
\beas 
\sqrt{n}E[\Psi(\sfz,\sfx)\cali_8] 
&\equiv^a& 
E\bigg[\Psi(\sfz,\sfx)\int_0^1\int_0^1n^2\sum_{j=1}^na_\tjm^3\big(s-\tjm\big){\bf 1}_{[\tjm,t]}(s){\bf 1}_{I_j}(t)dtds\bigg]
\\&\equiv^a& 
\frac{1}{6}\int_0^1 E\big[\Psi(\sfz,\sfx)a_t^3\big]dt. 
\eeas

Now, from (i)-(viii) and 
\beas 
\Psi(\sfz,\sfx) &=& \exp\bigg(\frac{1}{4}\int_0^1a_s^2ds(\tti\sfz)^2\bigg),
\eeas
we obtain 
\beas &&
E\big[\Psi(\sfz,\sfx){\mathfrak S}^{(3,0)}{\colorr (\tti\sfz,\tti\sfx)}\big] 
\\&=& 
\lim_{n\to\infty}
\frac{\sqrt{n}}{3}E\bigg[\Psi(\sfz,\sfx)\bigg\langle D\big\langle DM_n,u_n\big\rangle_\mfh,u_n
\bigg\rangle_\mfh\psi_n(\tti\sfz)^3\bigg]
\\&=&
\frac{1}{3}\bigg\{
\frac{3}{4}\int_{s=0}^1\int_{t=0}^s
E\bigg[(D_t\Psi(\sfz,\sfx))(D_ta_s) a_s a_t
+\Psi(\sfz,\sfx) (D_tD_ta_s) a_s a_t
+\Psi(\sfz,\sfx)(D_ta_s)^2a_t
\bigg]dtds(\tti\sfz)^3
\\&&
+\bigg(\frac{1}{3}+\frac{1}{6}\bigg)\int_0^1E[\Psi(\sfz,\sfx) a_t^3]dt(\tti\sfz)^3\bigg\}
\\&=&
\frac{1}{8}E\bigg[\Psi(\sfz,\sfx)\int_0^1 a_t \bigg(\int_t^1(D_ta_s)a_sds\bigg)^2dt\bigg](\tti\sfz)^5
\\&&
+\frac{1}{4}E\bigg[\Psi(\sfz,\sfx)\int_0^1a_t\int_t^1\big\{(D_tD_ta_s)a_s+(D_ta_s)^2\big\}dsdt\bigg](\tti\sfz)^3
\\&&
+\frac{1}{6}E\bigg[\Psi(\sfz,\sfx)\int_0^1a_t^3dt\bigg](\tti\sfz)^3. 
\eeas
It should be remarked that the three terms on the right-hand side of the above equation 
correspond to $\calc_2$, $\calc_3$ and $\calc_1$ of \cite{yoshida2013martingale}, pp. 917-918, respectively. 
We remark that two random symbols with the same adjoint action are considered  equivalent. 

Obviously ${\mathfrak S}^{(2,0)}={\mathfrak S}^{(1,1)}={\mathfrak S}^{(1,0)}={\mathfrak S}^{(0,1)}=0$ 
in the present situation, and moreover we will {\color {black} show that} ${\mathfrak S}^{(2,0)}_0=0$ in Section \ref{20161124-2}. 
Consequently, 
\beas 
{\mathfrak S}^{(3,0)}(\tti\sfz,\tti\sfx)
&=& 
\frac{1}{8}\int_0^1 a_t \bigg(\int_t^1(D_ta_s)a_sds\bigg)^2dt(\tti\sfz)^5
\\&&
+\frac{1}{4}\int_0^1a_t\int_t^1\big\{(D_tD_ta_s)a_s+(D_ta_s)^2\big\}dsdt(\tti\sfz)^3
%\\&&
+\frac{1}{6}\int_0^1a_t^3dt(\tti\sfz)^3
\eeas
and 
\beas 
r_n^{-1}\big({\mathfrak S}_n(\tti\sfz,\tti\sfx)-1\big) &=& {\mathfrak S}^{(3,0)}(\tti\sfz,\tti\sfx).
\eeas
This random symbol is equivalent to the full random symbol $\sigma(\tti\sfz,\tti\sfx)$ of 
\cite{yoshida2013martingale}, p. 918 with $a$ replaced by $a/2$. 
For the quadratic form of a Brownian motion, ${\mathfrak S}^{(3,0)}$ provides both the adapted random symbol and the anticipative random symbol, 
in other words, the quasi torsion includes the tangent as well as the torsion. 
In this way, we found that the quasi torsion reproduces the asymptotic expansion of 
 the quadratic form of a Brownian motion. 

\subsection{Quasi tangent}\label{20161124-2}
For the quasi tangent, we have 
\beas 
\langle DZ_n,u_n\rangle_\mfh-G_\infty
&=&
\int_0^1\bigg\{\sqrt{n}\sum_{j=1}^n2^{-1}D_ta_\tjm q_j+\sqrt{n}\sum_{j=1}^na_\tjm(B_\tj-B_\tjm){\bf 1}_{I_j}(t)\bigg\}
\\&&
\times\sqrt{n}\sum_{k=1}^na_\tkm(B_t-B_\tkm){\bf 1}_{I_k}(t)dt-\half\int_0^1a_t^2dt
\\&=&
{\mathfrak G}_1+{\mathfrak G}_2+{\mathfrak G}_3,
\eeas
where
\beas
{\mathfrak G}_1 
&=&
\int_0^1n\sum_{j=1}^n2^{-1}D_ta_\tjm q_j
\times\sum_{k=1}^na_\tkm(B_t-B_\tkm){\bf 1}_{I_k}(t)dt,
\eeas
\beas 
{\mathfrak G}_2 
&=&
\int_0^1\bigg\{n\sum_{j=1}^na_\tjm^2 (B_t-B_\tjm)^2{\bf 1}_{I_j}(t)-2^{-1}a_t^2\bigg\}dt,
\eeas
and 
\beas 
{\mathfrak G}_3
&=&
\int_0^1n\sum_{j=1}^na_\tjm^2 (B_\tj-B_t)(B_t-B_\tjm){\bf 1}_{I_j}(t)dt.
\eeas

We shall investigate these terms. 

\bd\im[(i)]  For ${\mathfrak G}_1$,
Table \ref{tableg1} shows $\sqrt{n}E[\Psi(\sfz,\sfx){\mathfrak G}_1]=O(n^{-0.5})$ and 
it is negligible in the asymptotic expansion.  \ed
\begin{table}[htb]
  \begin{center}
  \caption{$E[\Psi(\sfz,\sfx){\mathfrak G}_1]$}
\begin{tabular}{|c|c|c|c|c|c||c|} \hline
    $n$ & $\sum_j$ & $\sum_k(k<j)$ &$dt$&IBP by $q_j$&IBP by $B_t-B_\tkm$&order\\ \hline
    $1$  & $1$ & $1$ & $-1$&$-2$ &$-1$&$-1$\\  \hline
  \end{tabular}  \label{tableg1}\end{center}
\end{table}

\bd\im[(ii)] For ${\mathfrak G}_2$, applying It\^o's formula, we have ${\mathfrak G}_2={\mathfrak G}_2'+{\mathfrak G}_2''$, where  \ed
\beas 
{\mathfrak G}_2' 
&=&
\sum_{j=1}^n\int_{I_j} na_\tjm^2 
\int_\tjm^t\int_\tjm^s2dB_rdB_s\>dt
\eeas
and 
\beas 
{\mathfrak G}_2''
&=&
\half\sum_{j=1}^n\int_{I_j}\big(a_\tjm^2-a_t^2\big)
{\bf 1}_{I_j}(t)dt.
\eeas
Table \ref{tableg2prime} shows $\sqrt{n}E[\Psi(\sfz,\sfx){\mathfrak G}_2']=O(n^{-0.5})$ and 
it is negligible in the asymptotic expansion. 
We should remark that $\sqrt{n}{\mathfrak G}_2'$ is $\dotc_n$ of \cite{yoshida2013martingale} 
and it has non-trivial limit distribution though the expectation $\sqrt{n}E[\Psi(\sfz,\sfx){\mathfrak G}_2']$ asymptotically vanishes.  
We see ${\mathfrak G}_2''$ of of $O(n^{-1})$ in $L^2$, and it is also negligible. 
Consequently, ${\mathfrak G}_2$ is negligible in the asymptotic expansion. 
\begin{table}[htb]
  \begin{center}
  \caption{$E[\Psi(\sfz,\sfx){\mathfrak G}_2']$}
\begin{tabular}{|c|c|c|c||c|} \hline
    $n$ & $\sum_j$  &$dt$&IBP by $\int_\tjm^t\int_\tjm^sdB_rdB_s$&order\\ \hline
    $1$  & $1$ & $-1$ &$-2$&$-1$\\  \hline
  \end{tabular}  \label{tableg2prime}\end{center}
\end{table}
 \bd\im[(iii)] Finally, for ${\mathfrak G}_3$
Table \ref{tableg3} shows $\sqrt{n}E[\Psi(\sfz,\sfx){\mathfrak G}_3]=O(n^{-0.5})$ and we can neglect it.  \ed
\begin{table}[htb]
  \begin{center}
  \caption{$E[\Psi(\sfz,\sfx){\mathfrak G}_3]$}
\begin{tabular}{|c|c|c|c|c||c|} \hline
    $n$ & $\sum_j$  &$dt$&IBP by $B_\tj-B_t$&IBP by $B_t-B_\tkm$&order\\ \hline
    $1$  &  $1$ & $-1$&$-1$ &$-1$&$-1$\\  \hline
  \end{tabular}  \label{tableg3}\end{center}
\end{table}

As a consequence of these observations, 
${\mathfrak S}^{(2,0)}_0=0$, %This is what we called ``tangent'' in our discussion. 
%However, all the second-order terms appear in ${\mathfrak S}^{(3,0)}$.
i.e., the quasi tangent has no effect in the asymptotic expansion. 
However, the effect of the tangent already appeared in that of the quasi torsion.

%%%%%%%%%%%%%%%%%%%%%%%%%%%%%%%
 \section{Quadratic form of a fractional Brownian motion with random weights}\label{170813-4}
 % (2017.04.20 Later make sections ``q-Torsion'' and ``q-Tangent'')}}
\subsection{Weighted quadratic variation}
 Let $B=\{B_t, t\in [0,1]\}$ be a fractional Brownian motion with Hurst parameter $H\in (\frac 14, \frac 34)$. We are interested in the following sequence of weighted quadratic variations:
 \[
 Z_n= n^{2H-\frac 12} \sum_{j=1}^n a_{t_{j-1}}  ((\Delta  B_{j,n})^2- n^{-2H}),
 \]
 where $t_j= j/n$, $a_t =a(B_t)$ and $a$ is a function such that $a$ and all its derivatives up to some order $N$ have moderate growth. We use the notation  $\Delta B_{j,n} = B_{j/n} - B_{(j-1)/n}$.
 It is known (see, for instance \cite{nourdin2010central,nourdin2016quantitative}) that for this example the limit variance $G_\infty$ is given by
 \[
 G_\infty= 2c^2_H \int_0^1 a(B_s)^2 ds,
 \]
 where 
 \bea
 c^2_H&=& \sum_{k=-\infty} ^\infty \rho_H(k)^2 \label{ch}
 \eea
 {\colorg with 
 \beas 
 \rho_H(k) &=& 
 \half\big(|k+1|^{2H}-2|k|^{2H}+|k-1|^{2H}\big).
 \eeas
 }
 
 Set $I_j= [t_{j-1}, t_j)$.  Then,
 \[
 a_{t_{j-1}} (\Delta_{j,n} B)^2= \delta(  a_{t_{j-1}}\Delta B_{j,n} \mathbf{1}_{ I_j}) +  a_{t_{j-1}}n^{-2H} + \Delta B_{j,n} \langle D a_{t_{j-1}}, \mathbf{1}_{ I_j} \rangle_{\HH}.
 \]
 Therefore, we obtain the decomposition
 \[
 Z_n = \delta(u_n) + W_n:= M_n +r_nN_n,
 \]
 where
 \[
 u_n(t)=   n^{2H-\frac 12} \sum_{j=1}^n  a_{t_{j-1}}  \Delta B_{j,n} \mathbf{1}_{ I_j}(t)
 \]
 and
 \[
 r_nN_n=    n^{2H-\frac 12} \sum_{j=1}^n  \Delta B_{j,n} \langle D a_{t_{j-1}}, \mathbf{1}_{ I_j}\rangle_{\HH}.
 \]
{\colorb 
       Set
      \bea \label{yu3}
      \Psi(\sfz)= \exp\left(  -\sfz^2c_H^2 \int_0^1 a^2(B_s) ds \right).
      \eea
 }\noindent
In this example, we take $W_n=W_\infty=0$, $X_n=X_\infty=0$ and $\psi=1$.  We are going to study the quasi torsion and the {\colred quasi} tangent of the Skorohod integral $M_n= \delta(u_n)$.  In this example there will be also a contribution to the asymptotic expansion coming from the perturbation term $N_n$. The scaling factor $r_n$ will be taken as $r_n=n^{2H-\frac{3}{2}}$ when $H\in(\half,\frac{3}{4})$, and 
$r_n=n^{\half-2H}$ when $H\in\left(\frac{1}{4},\half\right)$, respectively.  This choice of $r_n$ is motivated by  the rate of convergence
\beas
|E[\varphi(Z_n)- E[\varphi(\zeta G_\infty)]| &\le& C_{f,H} \max_{1\le i \le 5} \| \varphi^{(i)} \|_\infty   r_n^{-1}
\eeas
 obtained in \cite{nourdin2016quantitative} for $\varphi \in  C^5_b(\mathbb{R})$, where $\zeta$ is $N(0,1)$.  
 
 \medskip
 \noindent
\subsection{Quasi torsion}
We recall that
\beas
{\sf qTor} &=& r_n^{-1}\langle D   \langle DM_n, u_n \rangle_{\HH}, u_n \rangle_{\HH}.
\eeas
 Set $q_j= (\Delta B_{j,n})^2- n^{-2H} = I_2(  \mathbf{1}_{ I_j} ^{\otimes 2})$. We have
 \begin{eqnarray*}
 \langle DM_n, u_n \rangle_{\HH} &=& n^{4H-1} \Bigg  \langle    \sum_{j=1}^n  \left[( (D a_{t_{j-1}} )q_j +   2  a_{t_{j-1}}  \Delta B_{j,n}   \mathbf{1}_{ I_j}  \right],
  \sum_{k=1}^n  a_{t_{k-1}} \Delta  B_{k,n}   \mathbf{1}_{ I_k}  \Bigg  \rangle_{\HH}- r_n \langle DN_n, u_n \rangle_{\HH} \\
  &=& \Phi_{n,1} - r_n  \langle DN_n, u_n \rangle_{\HH}.
 \end{eqnarray*}

\medskip
\noindent
{\bf (A)}  We first study the contribution of the term $\langle D \Phi_{n,1} , u_n \rangle_{\HH}$ in the asymptotic expansion. We have
  \begin{eqnarray*}
 && 
  \langle D \Phi_{n,1} , u_n \rangle_{\HH}%\langle D\langle D \Phi_{n,1} , u_n \rangle_{\HH} , u_n \rangle_{\HH} 
 = n^{6H-\frac 32}  \sum_{j,k,\ell=1}^n  \Big[ (D_rD_s  a_{t_{j-1}}) q_j +   2 (D_s  a_{t_{j-1}})  \Delta B_{j,n}  \mathbf{1}_{ I_j}(r)\\
 &&
 \qquad+2 (D_r  a_{t_{j-1}}) \Delta B_{j,n}   \mathbf{1}_{ I_j} (s) + 2 a_{t_{j-1}}  \mathbf{1}_{ I_j} (s)  \mathbf{1}_{ I_j} (r)    \Big] *
 [   a_{t_{k-1}} \Delta  B_{k,n}   \mathbf{1}_{ I_k} (s) ]*[a_{t_{\ell-1}} \Delta  B_{\ell,n}   \mathbf{1}_{ I_\ell} (r) ]  \\
   &&\qquad + n^{6H-\frac 32}  \sum_{j,k,\ell=1}^n   \Big[ (D_s  a_{t_{j-1}})q_j +   2  a_{t_{j-1}}  \Delta B_{j,n}   \mathbf{1}_{ I_j} (s)  \Big]* \Big[ (D_r a_{t_{k-1}}) \Delta B_{k,n}  \mathbf{1}_{ I_k} (s) +
    a_{t_{k-1}}   \mathbf{1}_{ I_k} (r) \mathbf{1}_{ I_k} (s) \Big] \\
 &&\qquad \qquad   *[ a_{t_{\ell-1}} \Delta  B_{\ell,n}   \mathbf{1}_{ I_\ell} (r)] ,
  \end{eqnarray*}
 where the product  $A*B$ means that whenever we found repeated variables in $A$ and $B$, we compute the corresponding inner product in $\HH$. We have a total of eight terms, that we denote by
 \beas
 \langle D \Phi_{n,1} , u_n \rangle_{\HH}&=& \sum_{i=1}^8 \mathcal{I}_i.
 \eeas
 We are interested in the  asymptotic behavior of $r_n^{-1}E[ \Psi(\sfz) \mathcal{I}_i]$ for $i=1,\dots, 8$.
 
 \medskip
 \noindent
 {\bf (i)} The first term is
 \[
 \mathcal{I}_1=n^{6H-\frac 32}  \sum_{j,k,\ell=1}^n a''_{t_{j-1}} a_{t_{k-1}} a_{t_{\ell-1}}  q_j   \Delta  B_{k,n} \Delta  B_{\ell,n}  \alpha_{t_{j-1},k} \alpha_{t_{j-1},\ell},
 \]
 where we have used the notation $\alpha_{t,k} = \langle  \mathbf{1}_{[0,t]}, \mathbf{1}_{I_k} \rangle_{\HH}$.  
 We can make the decomposition
 \[
 \Delta  B_{k,n} \Delta  B_{\ell,n}=  I_2(\mathbf{1}_{I_k} \otimes \mathbf{1}_{I_\ell}) + \beta_{k,\ell},
 \]
 where  $\beta_{k,\ell} =\langle \mathbf{1}_{I_k}, \mathbf{1}_{I_\ell} \rangle_{\HH}$. Integrating by parts shows that the contribution of
 $ I_2(\mathbf{1}_{I_k} \otimes \mathbf{1}_{I_\ell}) $ is of order lower than that of $ \beta_{k,\ell}$. In this way, it suffices to consider the term
  \[
 \mathcal{I}_{1,0}=n^{6H-\frac 32}  \sum_{j,k,\ell=1}^n a''_{t_{j-1}} a_{t_{k-1}} a_{t_{\ell-1}}  q_j   \beta_{k,\ell}  \alpha_{t_{j-1},k} \alpha_{t_{j-1},\ell}{\colred .}
 \]
  In this case, the factors of the above expressions have  the following orders of convergence:
 \begin{itemize}
 \item First factor: $n^{6H-\frac 32} $
 \item  The  IPB formula  for $q_j$ produces a factor $n^{-2(2H\wedge 1)}$, due to part (a) of Lemma \ref{lem1} below.
 \item   The terms   $| \alpha_{t_{j-1},k} \alpha_{t_{j-1},\ell}|$ are bounded by $C_Hn ^{-2(2H\wedge 1)}$ by part (a) of Lemma \ref{lem1} below.
 \item  $\sum_{k,\ell=1}^n  | \beta_{k,\ell}|  \le C_H n^{(1-2H) \vee 0}$ due to part (c)  of Lemma \ref{lem1}.
 \item Finally we get a factor $n$ from the sum in $j$.
 \end{itemize}
 Therefore, the order of this term is $n^{6H -\frac 12 -4(2H\wedge 1) +(1-2H) \vee 0}$. For $H>\frac 12$ this gives $n^{6H-\frac 92}$  and for
 $H<\frac 12$, we obtain the order $n^{\frac 12-4H}$. In both cases, when $H\to \frac 12$, we obtain $n^{-1.5}$ as in the Brownian motion case. 
 We remark that $6H-\frac{9}{2}<2H-\frac{3}{2}$ if $\frac 12 < H<\frac{3}{4}$, and 
$\half-4H<\half-2H$ if $\frac{1}{4} <H< \frac 12$. 
Therefore, this term will not contribute to the limit.
 
 \medskip
 \noindent
 {\bf  (ii)}  The second term is equal to
\[
 \mathcal{I}_2 = 2n^{6H-\frac 32}   \sum_{j,k,\ell=1}^n   a_{t_{k-1}} a'_{t_{j-1}}  a_{t_{\ell-1}}   \Delta B_{j,n}   \Delta B_{k,n}   \Delta B_{\ell,n}   \beta_{j,\ell} \alpha_{t_{j-1},k},
\]
where $\beta_{j,\ell} =\langle \mathbf{1}_{I_j}, \mathbf{1}_{I_\ell} \rangle_{\HH}$.
As before, we can replace the product $ \Delta B_{j,n} \Delta B_{\ell,n} $ by $\beta_{j,\ell}$, and we have to deal with the term
\[
 \mathcal{I}_{2,0} = 2n^{6H-\frac 32}   \sum_{j,k,\ell=1}^n   a_{t_{k-1}} a'_{t_{j-1}}  a_{t_{\ell-1}}     \Delta B_{k,n}      \beta_{j,\ell}^2 \alpha_{t_{j-1},k}.
\]

 We get the following contributions:
 \begin{itemize}
 \item First factor: $n^{6H-\frac 32} $
 \item  The  IPB formula  for $\Delta B_{k,n} $ produces a factor $n^{-(2H\wedge 1)}$, due to part (a) of Lemma \ref{lem1} below.
 \item  $\sum_{k=1}^n  |\alpha_{t_{j-1},k}| \le C_H$ due to part   (b)  of Lemma \ref{lem1}, and $\sum_{j,\ell=1}^n \beta_{j,\ell}^2\le C_Hn^{1-4H}$ by part (d) of Lemma \ref{lem1}.
 \end{itemize}
      Therefore, the order of this term is $n^{2H-\frac 12 -(2H\wedge 1)}= n^{-[(\frac 32-2H)   \wedge\half]}$, which in the Brownian case gives $n^{-\frac 12}$.
      From this result we deduce that this term will not contribute to the asymptotic expansion if $H<\frac 12$. 
     The contribution of  $n^{ \frac 32-2H} E[\Psi(\sfz) \mathcal{I}_2]$ when $H>\frac 12$ is evaluated as follows. 
     
Define a measure $\mu_n$ on $[0,1]^2$ by 
\beas 
\mu_n &=& \sum_{j,\ell=1}^nn^{-1+4H}\beta_{j,\ell}^2\>\delta_{(t_{j-1},t_{\ell-1})}.
\eeas
Then 
\bea  \label{yu2}
\int_{[0,1]^2}\varphi(t,s)\mu_n(dt,ds) &\to& c_H^2\int_{[0,1]}\varphi(t,t)dt
\eea
as $n\to\infty$ for every $\varphi\in C([0,1]^2)$, where 
$c_H^2$ is defined in (\ref{ch}). 
Notice that 
  \bea  \label{yu1}
\sup_{\tau\in [0,1], 1\le k\le n}  \left| \int_0^\tau \left(  n\int_{I_k} |r-s|^{2H-2} dr   - |t_{k-1} -s|^{2H-2}\right)ds \right|  \le C n^{1-2H} \rightarrow 0,
\eea
as $n$ tends to infinity. 
We can write  
\beas 
     n^{\frac 32-2H} E[\Psi(\sfz) \mathcal{I}_2]
     &=&
2  n^{4H}E\left[\Psi(\sfz)
        \sum_{j,k,\ell=1}^n   a_{t_{k-1}} a'_{t_{j-1}}  a_{t_{\ell-1}}   \beta_{j,\ell}^2   \Delta B_{k,n}     \alpha_{t_{j-1},k}\right]  +o(1)
\\&=&
2n\sum_{k=1}^n \int_{[0,1]^2}\mu_n(dt,dt')
E\big[\big\langle D\{\Psi(\sfz)a_{t_{k-1}}a'_ta_{t'}\},{\bf 1}_{I_k}\big\rangle_\HH\big]\alpha_{t,k}+o(1)
\\&=&
2\alpha_H^2n^{-1}\sum_{k=1}^n \int_{[0,1]^2}\mu_n(dt,dt')
E\bigg[\int_0^1\int_{t_{k-1}} ^{t_k} D_{s'}\{\Psi(\sfz)a_{t_{k-1}}a'_ta_{t'} \}|r-s'|^{2H-2}dr ds'\bigg]
\\&&
\times
\int_{[0,1]}{\bf 1}_{[0,t]}(s)|t_{k-1}-s|^{2H-2}ds+o(1)
\eeas
Using  (\ref{yu1}) and (\ref{yu2}), we obtain
\beas 
     n^{\frac 32-2H} E[\Psi(\sfz) \mathcal{I}_2]
     &=&
2\alpha_H^2 \int_{[0,1]^2}\mu_n(dt,dt')
\bigg\{\int_{[0,1]}dr 
E\bigg[\int_{[0,1]}D_{s'}\{\Psi(\sfz)a_ra'_ta_{t'}|r-s'|^{2H-2}ds'\bigg]
\\&&
\times
\int_{[0,1]}{\bf 1}_{[0,t]}(s)|r-s|^{2H-2}ds\bigg\}+o(1)
\\&\to&
2\alpha_H^2c_H^2 \int_{[0,1]}dt
\bigg\{\int_{[0,1]}dr 
E\bigg[\int_{[0,1]}D_{s'}\{\Psi(\sfz)a_ra'_ta_t|r-s'|^{2H-2}ds'\bigg]
\\&&
\times
\int_{[0,1]}{\bf 1}_{[0,t]}(s)|r-s|^{2H-2}ds\bigg\}
\\&=&
 C_{H,3}  \int_{[0,1]^4}   E\left[ D_{s'}  \left(  \Psi(\sfz) a(B_r) D_s (a^2(B_t)) \right) \right] |r-s|^{2H-2} |r-s'|^{2H-2}  dsds'dr dt, 
\eeas
where $\alpha_H = H(2H-1)$ and 
\beas
        C_{H,3} &=&   \alpha_H^2\sum_{j=-\infty} ^{+\infty}   \rho_H(j)^2 .
\eeas

%%
%\beas
%&= &   \sum_{j,k,\ell=1}^n   \rho_H(j-\ell)^2 E\left[ \langle D\left[ \Psi(\sfz)
%   a(B_{t_{k-1}})   \langle D[a^2(B_{t_{j-1}})], \mathbf{1}_{I_k} \rangle_{\HH}  \right]    , \mathbf{1}_{I_k}  \rangle_{\HH}\right]\\
%   &=&\alpha_H^2 \sum_{j,k,\ell=1}^n   \rho_H(j-\ell)^2
% \int_{[0,1]^2}   \int_{I_k} \int_{I_k}
%   E\left[ D_{s'} \left[ \Psi(\sfz)
%   a(B_{t_{k-1}})   D_s[a^2(B_{t_{j-1}})] \right] \right] \\
%   && \times |r-s|^{2H-2} |r'-s'|^{2H-2}  dsds'dr dr',
%\eeas

%     Since the series $\sum_{\ell =-\infty} ^\infty \rho_H(\ell)^2$ is convergent, it suffices to compute the  limit of
%     \[
%     \alpha_H^2 c_H^2 \sum_{j,k=1}^n    
% \int_{[0,1]^2}   \int_{I_k} \int_{I_k}
%   E\left[ D_{s'} \left[ \Psi(\sfz)
%   a(B_{t_{k-1}})   D_s[a^2(B_{t_{j-1}})] \right] \right] 
%  |r-s|^{2H-2} |r'-s'|^{2H-2}  dsds'dr dr',
%  \]
%  where we recall that $c_H^2= \sum_{\ell =-\infty} ^\infty \rho_H(\ell)^2$. 
% As a consequence, the above quantity is equivalent in the limit to
%\beas
%  \alpha_H^2 c_H^2 \sum_{j,k=1}^n    \frac 1{n^2}
% \int_{[0,1]^2}    
%   E\left[ D_{s'} \left[ \Psi(\sfz)
%   a(B_{t_{k-1}})   D_s[a^2(B_{t_{j-1}})] \right] \right] 
%  |t_{k-1}-s|^{2H-2} |t_{k-1}-s'|^{2H-2}  dsds',
%\eeas
%%

\begin{en-text}       
b) The case $H<\frac 12$ is more involved.  Taking into account the definition of $\psi(\sfz)$ given in (\ref{yu3}) and integrating by parts, we obtain
We have 
\beas 
n^{1/2}E[\Psi(\sfz)\cali_2]
&=&
2n^{6H-1}\sum_{j,k,\ell=1}^nE\big[\Psi(\sfz)a_{t_{k-1}}a'_{t_{j-1}}a_{t_{\ell-1}}\Delta B_{k,n}
\beta_{j,\ell}^2\alpha_{t_{j-1},k}\big]+o(1)
\\&=&
2n^{6H-1}\sum_{j,k,\ell=1}^nE\big[
\big\langle D\big\{\Psi(\sfz)a_{t_{k-1}}a'_{t_{j-1}}a_{t_{\ell-1}}\big\},{\bf 1}_{I_k}\big\rangle_\HH\big]
\beta_{j,\ell}^2\alpha_{t_{j-1},k}+o(1)
\\&=&
{\mathfrak G}_{n,1}+{\mathfrak G}_{n,2}+{\mathfrak G}_{n,3}+o(1),
\eeas
where
\beas 
{\mathfrak G}_{n,1} 
&=& 
-\sfz^2c_H^2n^{6H-1}\sum_{j,k,\ell=1}^nE\bigg[\Psi(\sfz)\int_0^1(a^2)'_s\alpha_{s,k}ds\  
a_{t_{k-1}}a'_{t_{j-1}}a_{t_{\ell-1}}\bigg]\beta_{j,\ell}^2\alpha_{t_{j-1},k},
\eeas
\beas 
{\mathfrak G}_{n,2} 
&=& 
2n^{6H-1}\sum_{j,k,\ell=1}^nE\bigg[\Psi(\sfz) 
a'_{t_{k-1}}a'_{t_{j-1}}a_{t_{\ell-1}}\bigg]\alpha_{t_{k-1},k}\beta_{j,\ell}^2\alpha_{t_{j-1},k}
\eeas
and 
\beas 
{\mathfrak G}_{n,3} 
&=& 
2n^{6H-1}\sum_{j,k,\ell=1}^nE\bigg[\Psi(\sfz) 
a_{t_{k-1}}\big(a''_{t_{j-1}}a_{t_{\ell-1}}\alpha_{t_{j-1},k}
+a'_{t_{j-1}}a'_{t_{\ell-1}}\alpha_{t_{\ell-1},k}\big)
\bigg]\beta_{j,\ell}^2\alpha_{t_{j-1},k}.
\eeas

We claim that ${\mathfrak G}_{n,1}\to0$, as $n\rightarrow \infty$. 
In fact,   the inequalities  $\max_{1\le k \le  n} \sup_{s\in [0,1]} | \alpha_{s,k} | \le C_H n^{-2H}$, by Lemma \ref{lem1} (a),  
  $\max{1\le j \le n}\sum_{k=1}^n |\alpha _{ t_{j-1},k}| \le C_H$ by Lemma \ref{lem1} (b) and
  $\sum_{j,\ell=1}^n\beta_{j,\ell}^2 \le C_H n^{1-4H}$  by Lemma \ref{lem1} (d),
  imply that  
\beas
|{\mathfrak G}_{n,1}|
&\simleq&
n^{6H-2}  n^{-2H} n^{1-4H} 
= n^{-1}.
\eeas

Moreover, ${\mathfrak G}_{n,3}\to0$. 
Indeed,
\beas
\max_{1\le j\le n} \sum_{k=1}^n \alpha_{t_{j-1}, k} ^2 
 &=&
\max_{1\le j \le n} \frac 14 n^{-4H}\sum_{k=1}^n   \left( k^{2H} -(k-1)^{2H} + |k-j|^{2H} - | k-j+1|^{2H} \right)^2
\\&\simleq&
n^{-4H}\sum_{k=1}^n k^{2(2H-1)}
\>=\> O(n^{-1})
\eeas
for $H>1/4$, and also the Schwarz inequality gives 
\beas
\max_{1\le j,\ell \le n }\sum_{k=1}^n\alpha_{t_{j-1},k}\alpha_{t_{\ell-1},k}=O(n^{-1})
\eeas
for $H>1/4$. Therefore, 
\beas 
|{\mathfrak G}_{n,3}|
&\simleq&
n^{6H-2}\sum_{j,\ell=1}^n\beta_{j,\ell}^2
\yeq O(n^{2H-1}) 
\eeas
due to Lemma \ref{lem1} (d). 

Since 
\beas 
\alpha_{t_{k-1},k} &=& 
\langle {\bf 1}_{[0,t_{k-1}]},{\bf 1}_{I_k}\rangle_\HH
\yeq E[B_{t_{k-1}}(B_{t_k}-B_{t_{k-1}})]
\\&=& 
\half n^{-2H}\big\{\big(k^{2H}-(k-1)^{2H}\big)-1\big\}, 
\eeas
the contribution of 
the term $k^{2H}-(k-1)^{2H}$ on the right-hand side 
is of order $k^{2H-1}$ when $H<1/2$. Thus we have, using Lemma \ref{lem1} (a), (b) and (d),
\beas &&
\bigg|n^{6H-1}\sum_{j,k,\ell=1}^n E\bigg[\Psi(\sfz) 
a'_{t_{k-1}}a'_{t_{j-1}}a_{t_{\ell-1}}\bigg]n^{-2H}\big(k^{2H}-(k-1)^{2H}\big)\beta_{j,\ell}^2\alpha_{t_{j-1},k}\bigg|
\\&\simleq&
n^{6H-1}\bigg[\sum_{k=1}^nn^{-4H}k^{2(2H-1)}\bigg]^\half
\max_{1\le j \le n}\bigg[\sum_{k=1}^n\alpha_{t_{j-1},k}^2\bigg]^\half\sum_{j,\ell=1}^n\beta_{j,\ell}^2
\\&=&
O(n^{6H-1}\times n^{-\half}\times n^{-H}\times n^{1-4H})
\yeq 
O(n^{H-\frac 12}).
\eeas
By using this and $\alpha_{t,k}=\int_{t_{k-1}}^{t_k}\frac{\partial R_H}{\partial r}(t,r)dr$, we obtain 
\beas 
\lim_{n\to\infty}{\mathfrak G}_{n,2}
&=& 
-\lim_{n\to\infty}\sum_{k=1}^n\int_{[0,1]^2}\mu_n(dt,dt')
E\big[\Psi(\sfz)a'_{t_{k-1}}a'_ta_{t'}\big] \int_{t_{k-1}}^{t_k}\frac{\partial R_H}{\partial r}(t,r)dr
\\&=& 
-\lim_{n\to\infty}\int_{[0,1]^2}\mu_n(dt,dt')
\int_0^1 
E\big[\Psi(\sfz)a'_ra'_ta_{t'}\big] \frac{\partial R_H}{\partial r}(t,r)dr
\\&=&
-c_H^2\int_{[0,1]^2}E\big[\Psi(\sfz)a'_ra'_ta_t\big] \frac{\partial R_H}{\partial r}(t,r)drdt. 
\eeas

After all that, we obtained 
{\color{black}{
\beas
   \lim_{n\rightarrow \infty}   \sqrt{n}  E[\Psi(\sfz) \mathcal{I}_2] 
   &=&
-\frac {c^2_H} 2 E\left[  \Psi(\sfz) \int_{[0,1]^2} a'(B_{t}) (a^2)'(B_r) \frac {\partial R_H }{\partial t} (r,t)  drdt \right].
\eeas
}}
}
\end{en-text}

     \medskip
     \noindent
     {\bf (iii)} By symmetriy, the third term is analogous to the second one and  produces the same contribution.

    \medskip
     \noindent
     {\bf (iv)}    The fourth term is given by
     \[
     \mathcal{I}_4=2n^{6H-\frac 32}  \sum_{j,k,\ell=1}^n  a_{t_{j-1}} a_{t_{k-1}} a_{t_{\ell-1}} \beta_{j,k} \beta_{j,\ell} \Delta B_{k,n} \Delta B_{\ell,n}.
     \]
We can make the decomposition
     \[
      \Delta B_{k,n} \Delta B_{\ell,n}= I_2(\mathbf{1}_{I_k} \otimes \mathbf{1}_{I_\ell}) + \beta_{k,\ell}.
      \]
           By integration by parts the contribution of   $I_2(\mathbf{1}_{I_k} \otimes \mathbf{1}_{I_\ell})$ is of lower order than that of
           $\beta_{k,\ell}$. In this way, it suffices to consider the term
           \[
     \mathcal{I}_{4,0}=2n^{6H-\frac 32}  \sum_{j,k,\ell=1}^n   a_{t_{j-1}} a_{t_{k-1}} a_{t_{\ell-1}} \beta_{j,k} \beta_{j,\ell}  \beta_{k,\ell}.
     \]
  From part (f) of Lemma \ref{lem1}, we see that  this term is bounded in $L^p$ by 
   $n^{2H-\frac 32}$ if $H>\frac 12$ and by $n^{-\frac 12}$ if $H<\frac 12$. This clearly implies that
   \bea\label{e11}
   \lim_{n\rightarrow \infty}  n^{\frac 12-2H}  \mathcal{I}_4 &=& 0
\eea
   if $H<\frac 12$, in $L^p$ for all $p\ge 2$.  On the other hand, 
we claim that, if $H>\frac 12$ we also have the following  convergence is   $L^p$ for all $p\ge 2$, and, as a consequence, this term produces no contribution.
\bea\label{e1}
   \lim_{n\rightarrow \infty}  n^{\frac 32-2H}  \mathcal{I}_4 &=& 0.
\eea
 {\it Proof of {\rm (\ref{e1}):}} We need to show that
\beas
   \lim_{n\rightarrow \infty}       n^{4H}   \sum_{j,k,\ell=1}^n  a_{t_{j-1}} a_{t_{k-1}} a_{t_{\ell-1}}\beta_{j,k} \beta_{j,\ell}  \beta_{k,\ell}
   &=&0
\eeas
in $L^p$. 
 We can write
 \begin{eqnarray*}
 && n^{4H}   \sum_{j,k,\ell=1}^n  a_{t_{j-1}} a_{t_{k-1}} a_{t_{\ell-1}}\beta_{j,k} \beta_{j,\ell}  \beta_{k,\ell}\\
&&\quad  =n^{-2H}   \sum_{j,k,\ell=1}^n   a(B_{\frac{j-1}n})    a(B_{\frac{k-1}n}) a(B_{\frac{\ell-1}n}) \rho_H(j-k) \rho_H(j-\ell) \rho_H(k-\ell)\\
& &\quad = n^{-2H}  \sum_{ {1\le j \le n \atop 1\le j+i \le n} \atop {1\le j+h\le n}}   a(B_{\frac{j-1}n})    a(B_{\frac{j+i-1}n}) a(B_{\frac{j+h-1}n}) \rho_H(i) \rho_H(h) \rho_H(i-h)
\end{eqnarray*}
and it suffices to show that 
\bea\label{170729-1}
\lim_{n\to\infty}
n^{1-2H}\sum_{j,h:1\leq i< h\leq n}\big|\rho_H(i) \rho_H(h) \rho_H(i-h)\big|
&=&0.
\eea
By the inequality $\sup_{i\geq1}|\rho(i)|/i^{2H-2}<\infty$, we have 
\beas 
n^{1-2H}\sum_{j,h:1\leq i< h\leq n}\big|\rho_H(i) \rho_H(h) \rho_H(i-h)\big|
&\simleq&
n^{1-2H}\sum_{j,h:1\leq i< h\leq n}i^{2H-2}h^{2H-2}(h-i)^{2H-2}
\\&=&
n^{1-2H}\sum_{h=1}^n h^{6H-5}\sum_{i=1}^{h-1}(i/h)^{2H-2}(1-i/h)^{2H-2}/h
\\&\simleq&
B(2H-1,2H-1)n^{1-2H+(6H-4)_+}. 
\eeas
The last term is $O(n^{4H-3})$ when $2/3\leq H<3/4$, and 
$O(n^{1-2H})$ when $1/2<H<2/3$. This gives (\ref{170729-1}) and hence (\ref{e1}). 

\begin{en-text}
b) Suppose now that $H<\frac 12$. In that case, from part (f) of Lemma \ref{lem1}, we see that the order of this term is $n^{-1/2}$. 
Then the contribution of $n^{-1/2}\cali_4$ is asymptotically 
 \begin{eqnarray*}
 && 2n^{6H-1}   \sum_{j,k,\ell=1}^n  a_{t_{j-1}} a_{t_{k-1}} a_{t_{\ell-1}}\beta_{j,k} \beta_{j,\ell}  \beta_{k,\ell}\\
%&&\quad  =2n^{-1}   \sum_{j,k,\ell=1}^n   a(B_{\frac{j-1}n})    a(B_{\frac{k-1}n}) a(B_{\frac{\ell-1}n}) \rho_H(j-k) \rho_H(j-\ell) \rho_H(k-\ell)\\
& &\quad = 2n^{-1}  \sum_{ {1\le j \le n \atop 1\le j+i \le n} \atop {1\le j+h\le n}}   a(B_{\frac{j-1}n})    a(B_{\frac{j+i-1}n}) a(B_{\frac{j+h-1}n}) \rho_H(i) \rho_H(h) \rho_H(i-h).
\end{eqnarray*}
First, if we replace $ a(B_{\frac{j+i-1}n})$ by  $a(B_{\frac{j-1}n})$ and $a(B_{\frac{j+h-1}n})$ by  $a(B_{\frac{j-1}n})$, we make  errors of order $(i/n)^H$ and $(h/n)^H$, respectively. These errors produce a zero limit. In fact,
\[
\lim_{n\rightarrow \infty}  n^{-H}  \sum_{|i|, |h| \le n}   |\rho_H(i) \rho_H(h) \rho_H(i-h)|  |i|^H =0.
\]
To show this, set $\rho^{(n)}_H(j)= |\rho_H(j)| \mathbf{1}_{|j|\le n}$. Then,
\begin{eqnarray*}
 \sum_{|i|, |h| \le n}   |\rho_H(i) \rho_H(h) \rho_H(i-h)|  |i|^H &=& \langle  \rho^{(n)}_H |\cdot|^H, \rho_H^{(n)} *\rho_H^{(n)} \rangle_{\ell^2(\mathbb{Z})} \\
 &\le & \| \rho^{(n)}_H |\cdot|^H| \|_{\ell^2(\mathbb{Z})} \| \rho_H^{(n)} *\rho_H^{(n)} \|_{\ell^2(\mathbb{Z})} \\
 &\le & C n^{(3H -\frac 32)_+}  \|\rho_H^{(n)} \| ^2_{\ell^{4/3}(\mathbb{Z})}  \\
 &\le & Cn^{(3H -\frac 32)_+} n^{(4H-\frac 52)_+}
 \\&=& O(1) 
 \end{eqnarray*}
when $H<1/2$. 
Therefore, we have a contribution of the form
 \[
 \lim_{n\rightarrow \infty}  2n^{6H-1}  \sum_{j,k,\ell=1}^n  a_{t_{j-1}} a_{t_{k-1}} a_{t_{\ell-1}} \beta_{j,k} \beta_{j,\ell}  \beta_{k,\ell}
 {\color{black}
 \yeq 2C_{H,2} \int_0^1 a^3(B_t) dt,
 }
\]
 {\color{black}
where
\[
C_{H,2}=  \sum_{i,h=-\infty}^{+\infty} \rho_H(i) \rho_H(h) \rho_H(i-h).
\]
Remark that $C_{H,2}$ is finite because $\sum_{i=-\infty}^\infty|\rho_H(i)|<\infty$ when $H<1/2$ and 
$|\rho_H(i-h)|\leq1$. 
}  
\end{en-text}

         \medskip
         \noindent
         {\bf (v)} The fifth term is
         \[
         \mathcal{I}_5=n^{6H-\frac 32}  \sum_{j,k,\ell=1}^n   a'_{t_{j-1}}a'_{t_{k-1} } a_{t_{\ell-1}}  q_j  \alpha_{t_{j-1},k} \alpha_{t_{k-1},\ell} \Delta B_{k,n} \Delta B_{\ell,n}.
         \]
         As before, we replace $\Delta B_{k,n} \Delta B_{\ell,n}$  by $\beta_{k,\ell}$ and it suffices to study the term
          \[
         \mathcal{I}_{5,0}=n^{6H-\frac 32}  \sum_{j,k,\ell=1}^n   a'_{t_{j-1}}a'_{t_{k-1} } a_{t_{\ell-1}}    q_j \alpha_{t_{j-1},k} \alpha_{t_{k-1},\ell}\beta_{k,\ell}.
                  \]
        We get the following contributions:
        \begin{itemize}
        \item The first factor $n^{6H-\frac 32}$.
        \item Integration by parts for $q_j$ produces $n^{-2(2H\wedge 1)}$.
        \item  $|\alpha_{t_{j-1},k} \alpha_{t_{k-1},\ell}|$ is bounded by $C_H n^{-2(2H\wedge 1)}$, due to Lemma \ref{lem1} (a).
        \item $\sum_{k,\ell} |\beta_{k,\ell}|\le C_H n^{(1-2H)\vee 0}$   due to Lemma \ref{lem1} (c).
        \item A factor $n$ comes from the sum on $j$.
        \end{itemize}
         All together gives the order $n^{6H -\frac 12 -4(2H\wedge 1) +(1-2H)\vee 0}$, which does not produce any contribution.
         
                \medskip
         \noindent
         {\bf (vi)} The  sixth term is
         \[
         \mathcal{I}_6=n^{6H-\frac 32}  \sum_{j,k,\ell=1}^n a'_{t_{j-1}}  a_{t_{k-1}} a_{t_{\ell-1}} q_j \alpha_{t_{j-1},k} \beta_{k,\ell} \Delta B_{\ell,n}.
         \]
                 We get the following contributions:
        \begin{itemize}
        \item The first factor $n^{6H-\frac 32}$.
        \item Integration by parts for $q_j$  and   $\Delta B_{\ell,n}$  produces $n^{-3(2H\wedge 1)}$.
        \item  $|\alpha_{t_{j-1},k}|$  is bounded by $C_H n^{-(2H\wedge 1)}$,  due to Lemma \ref{lem1} (a).
        \item $\sum_{k,\ell} |\beta_{k,\ell}|\le C_H n^{(1-2H)\vee 0}$, due to Lemma \ref{lem1} (c).
        \item A factor $n$ comes from the sum on $j$.
        \end{itemize}
        We get the same order as for the fifth term and no contribution.
        
            \medskip
     \noindent
     {\bf (vii)}  The seventh term is given by
          \[
     \mathcal{I}_7=2n^{6H-\frac 32}  \sum_{j,k,\ell=1}^n  a_{t_{j-1}} a'_{t_{k-1}} a_{t_{\ell-1}}  \Delta B_{j,n} \Delta B_{k,n} \Delta B_{\ell,n}  \alpha_{t_{k-1}, \ell} \beta_{j,k}
     \]
     Replacing  $ \Delta B_{j,n} \Delta B_{k,n}$ by $\beta_{j,k}$ it suffices to consider the term
        \[
     \mathcal{I}_{7,0}=2n^{6H-\frac 32}  \sum_{j,k,\ell=1}^n  a_{t_{j-1}} a'_{t_{k-1}} a_{t_{\ell-1}}    \Delta B_{\ell,n}  \alpha_{t_{k-1}, \ell} \beta_{j,k}^2.
     \]  
       We get the following contributions:
        \begin{itemize}
        \item The first factor $n^{6H-\frac 32}$.
        \item Integration by parts for     $\Delta B_{\ell,n}$  produces $n^{-(2H\wedge 1)}$.
        \item  $\sum_{\ell=1}^n|\alpha_{t_{j-1},\ell}|$  is bounded by $C_H $, by  Lemma \ref{lem1} (b).
        \item $\sum_{j,k} \beta_{j,k}^2\le C_H n^{ 1-4H}$,  by  Lemma \ref{lem1} (d).
           \end{itemize}
As a consequence, this term has the order $n^{2H-\frac 32 -(2H\wedge 1)}$, and its contribution is %analogous to 
{\colorb the same as} 
that of (ii); {\colorb replace indices $j$, $k$ and $\ell$ by $\ell$, $j$ and $k$, respectively.}

    \medskip
     \noindent
     {\bf (viii)}    The eighth term is given by
     \[
     \mathcal{I}_8=    2n^{6H-\frac 32}  \sum_{j,k,\ell=1}^n a_{t_{j-1}} a_{t_{k-1}} a_{t_{\ell-1}} \Delta B _{j,n} \Delta B_{\ell,n} \beta_{j,k} \beta_{k,\ell}.
     \]  
      Replacing $\Delta B _{j,n} \Delta B_{\ell,n} $ by $\beta_{j,\ell}$, is suffices to consider the reduced term
      \[
     \mathcal{I}_{8,0}=    2n^{6H-\frac 32}  \sum_{j,k,\ell=1}^n a_{t_{j-1}} a_{t_{k-1}} a_{t_{\ell-1}}  \beta_{j,\ell} \beta_{j,k} \beta_{k,\ell}.
     \]    
      By Lemma \ref{lem1} part (f), this term is of order $n^{2H-\frac 32 -(2H\wedge 1)}$. Its contribution is  %analogous to 
      {\colorb the same as} that of term (iv).
      
      \medskip
      \noindent
      {\bf (B)}
   One can show by a similar argument that
      \[
      \lim_{n\rightarrow \infty}  \langle D \langle D N_n, u_n \rangle_{\HH}, u_n \rangle_{\HH} =0,
      \]
      in  $L^p$, for all $p\ge 2$.

      \medskip
%     Set
%      \[
%      \Psi(\sfz)= \exp\left(  -\sfz^2c_H^2 \int_0^1 a^2(B_s) ds \right).
%      \]
      In conclusion, we obtain the following results on the random symbol 
            ${\mathfrak S}^{(3,0)}(\tti \sfz)$:

     \medskip
     \noindent
     {\bf Case $H>\frac 12$} We have proved that
     \begin{eqnarray*}
      E [\Psi(\sfz)  \mathfrak{S}^{3} (\tti\sfz)] 
     & =& \lim_{n\rightarrow \infty} \frac 13 n^{\frac 32 -2H} E\left[ \Psi(\sfz) \left \langle D\langle DM_n, u_n \rangle_{\HH}, u_n \right\rangle_{\HH} (\tti\sfz)^3 \right] \\
     &=&   C_{H,3} (\tti\sfz)^3 \int_{[0,1]^4}   E\left[ D_{s'}  \left(  \Psi(\sfz) a(B_r) D_s (a^2(B_t)) \right) \right] |r-s|^{2H-2} |r-s'|^{2H-2}  dsds'dr  dt.
     \end{eqnarray*}
Taking into account that $\alpha_H \int_0^t  |r-s|^{2H-2} ds =\frac {\partial R_H} {\partial r}(t,r)$, we can write the above expression as follows:
          \begin{eqnarray*}
     && E [\Psi(\sfz)  \mathfrak{S}^{3} (\tti\sfz)] \\
          &&=  c_H^4 (\tti\sfz)^5 \int_{[0,1]^3}   E\left[   \Psi(\sfz) (a^2) '(B_\theta) a(B_r)  (a^2)'(B_t)   \right]    \frac {\partial R_H} {\partial r}(\theta,r)\frac {\partial R_H} {\partial r}(t,r)  dt   d\theta dr  \\
           &&\quad +   H  {\colorg c_H^2} (\tti\sfz)^3 \int_{[0,1]^2}   E\left[  \Psi(\sfz)  a'(B_r)  (a^2)'(B_t) \right]  r^{2H-1} \frac {\partial R_H} {\partial r}(t,r)  dt  dr \\
           &&\quad +     {\colorg c_H^2} (\tti\sfz)^3 \int_{[0,1]^2}   E\left[  \Psi(\sfz)  a(B_r)  (a^2)''(B_t) \right]   \left(\frac {\partial R_H} {\partial r}(t,r) \right )^2dt  dr. \\
     \end{eqnarray*}

         \medskip
     \noindent
     {\bf Case $H<\frac 12$}  We have obtained, that in this case, $\mathfrak{S}^{3} (\tti\sfz)=0$.

  %    \begin{eqnarray*}
    %  E [\Psi(\sfz)  \mathfrak{S}^{3} (\tti\sfz)] 
     %& =& \lim_{n\rightarrow \infty} \frac 13  \sqrt{n} E\left[ \Psi(\sfz) \left \langle D\langle DM_n, u_n \rangle_{\HH}, u_n \right\rangle_{\HH} (\tti\sfz)^3 \right] \\
     %&&=  -\frac {c^2_H} 2  (\tti\sfz)^3E\left[  \Psi(\sfz) \int_{[0,1]^2} 
     %(a' (a^2)') (B_r) 
     %{\colorg a'(B_t) (a^2)' (B_r) }
     %\frac {\partial R_H }{\partial t} (r,t) drdt \right]  
     %&&\quad  +\frac 43C_{H,2}(\tti\sfz)^3  \int_0^1 a^3(B_t) dt.
     %\end{eqnarray*}
 
     %
     \medskip
     \noindent
   \subsection{Quasi tangent}
     
     For the quasi tangent we have
     \begin{eqnarray*}
     \langle DM_n, u_n \rangle_{\HH} - G_\infty &=& n^{4H-1} \left  \langle    \sum_{j=1}^n  \left[( D a_{t_{j-1}}) q_j +   2  a_{t_{j-1}}  \Delta B_{j,n}   \mathbf{1}_{ I_j}  \right],
      \sum_{k=1}^n  a_{t_{k-1}} \Delta  B_{k,n}   \mathbf{1}_{ I_k}  \right  \rangle_{\HH}\\
      &&-  2c_H^2 \int_0^1 a^2 (B_s) ds\\
      &=& G_{n,1} + G_{n,2} + G_{n,3},
     \end{eqnarray*}
     where
     \[
     G_{n,1}= n^{4H-1}  \sum_{j,k=1}^n\langle   (D a_{t_{j-1}}) q_j , a_{t_{k-1}} \Delta  B_{k,n}   \mathbf{1}_{ I_k} \rangle_{\HH}
     =n^{4H-1}  \sum_{j,k=1}^n a'_{t_{j-1}}  a_{t_{k-1}} \alpha_{t_{j-1},k} q_j  \Delta  B_{k,n} ,
     \]
     \[
     G_{n,2}= 2n^{4H-1}  \sum_{j,k=1}^n  a_{t_{j-1}} a_{t_{k-1}} [\Delta B_{j,n}\Delta  B_{k,n}  -\beta_{j,k}] \beta_{j,k}
     \]
     and
     \[
      G_{n,3}= 2n^{4H-1}  \sum_{j,k=1}^n  a_{t_{j-1}} a_{t_{k-1}} \beta_{j,k}^2-2c_H^2 \int_0^1 a^2 (B_s) ds.
      \]
     Integrating by parts the terms $q_j$ and $ \Delta  B_{k,n} $ and using that, by Lemma \ref{lem1} (b),  $ \sum_{k=1}^n |\alpha_{t_{j-1},k} |$ is bounded by a constant not depending on $n$, we obtain:
       \[
   |E[ \Psi(\sfz)G_{n,1}] |\le  Cn^{4H-1+1-3(2H\wedge 1)} =  Cn^{4H-3(2H\wedge 1)} .
     \]
     For $H>\frac 12$, we get $4H-3$ which is faster than $2H-\frac 32$ 
     {\colorb because $H<3/4$}, and for $H<\frac 12$, we obtain $-2H$ which is faster than $\frac 12-2H$.
     
     For the term $G_{n,2}$, {\colorb by using integration-by-parts to $\Delta B_{j,n}\Delta  B_{k,n}  -\beta_{j,k}$ 
     and Lemma \ref{lem1} (c),} we get
     \[
        |E[ \Psi(\sfz)G_{n,2}] |\le  Cn^{4H-1-2(2H\wedge 1)+ (1-2H) \vee 0},
     \]
     which produces the same rates as before.
     
     Finally,
          \begin{eqnarray*}
      G_{n,3}&=& 2n^{4H-1}  \sum_{j,k=1}^n  a_{t_{j-1}} a_{t_{k-1}} \beta_{j,k}^2-2c_H^2 \int_0^1 a^2 (B_s) ds \\
      &&=\frac 2n \sum_{j,k=1}^n  a(B_{t_{j-1}}) a(B_{t_{k-1}})\rho_H^2(j-k)-2c_H^2 \int_0^1 a^2 (B_s) ds\\
      &&=\frac 2n \sum_ {1\le j \le n, 1\le j+i \le n}  a(B_{t_{j-1}}) a(B_{t_{j+i-1}})\rho_H^2(i)-2c_H^2 \int_0^1 a^2 (B_s) ds.
      \end{eqnarray*} 
\begin{en-text}
      Replacing  $a(B_{t_{j+i-1}})$ by $a(B_{t_{j-1}})$  in the summation produces an error of the order $(i/n)^H$, which gives a total error of $n^{(5H-3)_+-H}$, which is faster than
      $n^{2H-\frac 32}$ if $H>\frac 12$ or than $n^{-\frac 12}$ if $H<\frac 12$. So, it suffices to consider the term
      \[
      \frac 1n \sum_ {1\le j \le n, 1\le j+i \le n}  a^2(B_{t_{j-1}})  \rho_H^2(i)-c_H^2 \int_0^1 a^2 (B_s) ds,
      \]
      which converges to zero at a rate faster than $n^{2H-\frac 32}$ if $H>\frac 12$ or than $n^{-\frac 12} $ if $H<\frac 12$.
\end{en-text}
Replacing $a(B_{t_{j+i-1}})$ by $a(B_{t_{j-1}})$ with integration-by-parts, we have the error 
\beas&&
E\left[\Psi(\sfz)\left\{
\frac{1}{n} \sum_ {1\le j \le n, 1\le j+i \le n}  a(B_{t_{j-1}}) 
\big(a(B_{t_{j+i-1}})-a(B_{t_{j-1}})\big)\rho_H^2(i)
\right\}\right]
\\&=&
\frac{1}{n}\sum_ {1\le j \le n, 1\le j+i \le n} 
E\left[\left\langle D\left\{\Psi(\sfz)
a(B_{t_{j-1}}) \int_0^1a'\big(B_{t_{j-1}}+\theta(B_{t_{j+i-1}}-B_{t_{j-1}})\big)d\theta\right\},
{\bf 1}_{[t_{j-1},t_{j+i-1}]}
\right\rangle
\right]\rho_H^2(i)
\\&\simleq&
\sum_{i=1}^n (i/n)^{(2H)\wedge1}\rho_H^2(i)
\\&\simleq&
n^{-2H\wedge1+(2H\wedge1+2(2H-2)+1)_+}. 
\eeas
In the case $1/2<H<3/4$, 
this converges to zero at the rate $n^{4H-3}$, that is faster than $n^{2H-3/2}$, and 
in the case $1/4<H<1/2$, 
at the rate $n^{-2H}$, that is faster than $n^{\frac 12 -2H}$. 
Let $\ep\in(0,1)$. Divide the sum $\sum_ {1\le j \le n}$ into 
$\sum_{n^\ep+1\leq j\leq n-n^\ep}$ and the rest. 
Since the convergence $c_H^2=\lim_{I\up\bbZ}\sum_{i\in I}\rho(i)^2$ is monotone, 
the order of the $L^p$-norm of 
\beas 
\frac{1}{n} \sum_ {1\le j \le n, 1\le j+i \le n}  a(B_{t_{j-1}})^2 \rho_H(i)^2
-\frac{1}{n} \sum_ {1\le j \le n} a(B_{t_{j-1}})^2c_H^2
\eeas
is not greater than 
\beas
\left(c_H^2-\sum_{-n^\ep\leq i\leq n^\ep}\rho_H(i)^2\right)+n^{-1+\ep}
&\simleq&
n^{(4H-3)\ep}+n^{-1+\ep}. 
\eeas
Then, in case $H\in(1/2,3/4)$, we can find $\ep\in(1/2,1)$ such that 
$n^{-(3-4H)\ep}+n^{-1+\ep}=o(n^{2H-3/2})$. 
In case $H\in(1/4,1/2)$, it is possible to find $\ep\in(0,1/2)$ such that 
$n^{-(3-4H)\ep}+n^{-1+\ep}=o(n^{\frac 12-2H})$. 
Once again by the Taylor formula and integration-by-parts, we see 
\beas 
E\left[\Psi(\sfz)\left\{
\frac{1}{n} \sum_ {1\le j \le n}  a(B_{t_{j-1}})^2-\int_0^1 a^2 (B_s) ds
\right\}\right]
&=&
O(n^{-(2H)\wedge1}),
\eeas
which is $o(n^{2H-3/2})$ for $H>1/2$, and $o(n^{\frac 12-2H})$ for $H\in(1/4,1/2)$. 
Therefore, $G_{n,3}$ has no contribution. 

In conclusion, the quasi tangent has no effect in the asymptotic expansion, that is, 
${\mathfrak S}^{(2,0)}_0=0$.

%%%%%%%%%%%%%%%%%%%%%%%%%%
%%%%%%%%%%%%%%%%%%%%%%%%%%
%%%%%%%%%%%%%%%%%%%%%%%%%%
%%% ORIGINAL %%%%%%%%%%%%%%%%%
\begin{en-text}
\subsection{Perturbation term}
In this subsection we study the contribution of the term $N_n$ to the asymptotic expansion.
More precisely we will compute the random symbol ${\mathfrak G}^{1} (\tti \sfz):= {\mathfrak G}^{(1,0)} (\tti \sfz,\tti \sfx)$.  The action of this symbol is defined by
\beas
E[ \Psi(\sfz) {\mathfrak G}^{1} (\tti \sfz)] =\lim_{n\rightarrow \infty}  E[ \Psi(z) N_n (\tti \sfz)].
\eeas

\medskip
\noindent
{\bf (i)} Case $H>\frac 12$. We recall that
\beas
N_n &=& n \sum_{j=1}^n  \Delta B_{j,n} \langle D a_{t_{j-1}}, \mathbf{1}_{ I_j}\rangle_{\HH}.
\eeas
Integrating by parts yields
\beas
E[ \Psi(z) N_n (\tti \sfz)]
&=&  n \sum_{j=1}^n E[ \langle D   \langle \Psi(\sfz)D a_{t_{j-1}}, \mathbf{1}_{ I_j}\rangle_{\HH},  {\bf 1}_{I_j} \rangle_{\HH} (\tti\sfz)\\
&=& n \sum_{j=1}^n E[ \Psi(\sfz) a''(B_{t_{j-1}}) ]   \alpha^2_{t_{j-1}, j} (\tti\sfz) \\
&&- nc_H^2 \sum_{j=1}^n\int_0^1    E[   \Psi(\sfz)    (a^2)'(B_s) a_{t_{j-1}}] \alpha_{t_{j-1},j}  \alpha_{s,j} ds (\tti\sfz)\\
&&:=G_{1,n} +G_{2,n}.
\eeas
We know that
\beas
 \alpha^2_{t_{j-1}, j} = \frac 12 n^{-4H} ( j^{2H} -(j-1)^{2H} -1)^2.
 \eeas
Expanding the square $( j^{2H} -(j-1)^{2H} -1)^2$ it turns out that the terms   $-2(j^{2H} -(j-1)^{2H})$ and $1$ do not tribute to the limit. So it suffices to consider the term
\beas
&& \frac 12 n^{1-4H}  \sum_{j=1}^n E[ \Psi(\sfz) a''(B_{t_{j-1}}) ]    ( j^{2H} -(j-1)^{2H})^2
\\ &&= Hn^{1-4H}  \sum_{j=1}^n E[ \Psi(\sfz) a''(B_{t_{j-1}}) ]     j^{2(2H-1)} +o(1) \\
&&=H E\left[ \Psi(\sfz) \int_0^1 a''(B_s) ds \right]  s^{4H-2} ds + o(1).
 \eeas
The term $G_{2,n}$ converges to zero because, by Lemma \ref{lem1} (a) and (b), we can write
\beas
G_{2,n} \simleq n^{1-2H}  \sup_{s\in [0,1]}\sum_{j=1}^n |\alpha_{s,j} |  \le n^{1-2H} C_H.
\eeas
Therefore, we have proved that
\beas
E[ \Psi(\sfz) {\mathfrak G}^{1} (\tti \sfz)]  =(\tti\sfz) H E\left[ \Psi(\sfz) \int_0^1 a''(B_s) ds \right]  s^{4H-2} ds.
\eeas

\medskip
\noindent
{\bf (ii)} Case $H<\frac 12$. In that case,
\beas
N_n &=& n^{4H-1} \sum_{j=1}^n  \Delta B_{j,n} \langle D a_{t_{j-1}}, \mathbf{1}_{ I_j}\rangle_{\HH}.
\eeas
We proceed as before, but in that case the dominating term  $ \alpha^2_{t_{j-1}, j} $ is the constant $1$ and we obtain
\beas
E[ \Psi(\sfz) {\mathfrak G}^{1} (\tti \sfz)]  =(\tti\sfz)  E\left[ \Psi(\sfz) \int_0^1 a''(B_s) ds \right]   ds.
\eeas
\end{en-text}
%%%%%%%%%%%%%%%%%%%%%%%%%%
%%%%%%%%%%%%%%%%%%%%%%%%%%
%%%%%%%%%%%%%%%%%%%%%%%%%%

\subsection{Perturbation term}
In this subsection we study the contribution of the term $N_n$ to the asymptotic expansion.
More precisely we will compute the random symbol ${\mathfrak G}^{1} (\tti \sfz)$.  The action of this symbol is defined by
\beas
E[ \Psi(\sfz) {\mathfrak G}^{1} (\tti \sfz)] =\lim_{n\rightarrow \infty}  E[ \Psi(z) N_n (\tti \sfz)].
\eeas

\medskip
\noindent
{\bf (i)} Case $H>\frac 12$. We recall that
\beas
N_n &=& n \sum_{j=1}^n  \Delta B_{j,n} \langle D a_{t_{j-1}}, \mathbf{1}_{ I_j}\rangle_{\HH}.
\eeas
Integrating by parts yields
\beas
E[ \Psi(z) N_n (\tti \sfz)]
&=&  n \sum_{j=1}^n E[ \langle D   \langle \Psi(\sfz)D a_{t_{j-1}}, \mathbf{1}_{ I_j}\rangle_{\HH},  {\bf 1}_{I_j} \rangle_{\HH} (\tti\sfz)\\
&=& n \sum_{j=1}^n E[ \Psi(\sfz) a''(B_{t_{j-1}}) ]   \alpha^2_{t_{j-1}, j} (\tti\sfz) \\
&&{\colred +}  {\color {black} \frac n 2}c_H^2 \sum_{j=1}^n\int_0^1    E[   \Psi(\sfz)    (a^2)'(B_s) {\colred a'_{t_{j-1}}}] \alpha_{t_{j-1},j}  \alpha_{s,j} ds (\tti\sfz)^{\colred 3}\\
&&:=G_{1,n} +G_{2,n}.
\eeas
We know that
\bea\label{20171230-1}
 \alpha^2_{t_{j-1}, j} = \frac{1}{{\colred 4}} n^{-4H} ( j^{2H} -(j-1)^{2H} -1)^2.
 \eea
Expanding the square $( j^{2H} -(j-1)^{2H} -1)^2$ it turns out that the terms   $-2(j^{2H} -(j-1)^{2H})$ and $1$ do not tribute to the limit. 
So, {\colred for $G_{1,n}$} it suffices to consider the term
\beas
&& \frac{1}{{\colred 4}} n^{1-4H}  \sum_{j=1}^n E[ \Psi(\sfz) a''(B_{t_{j-1}}) ]    ( j^{2H} -(j-1)^{2H})^2
\\ &&= {\color {black} H^2}n^{1-4H}  \sum_{j=1}^n E[ \Psi(\sfz) a''(B_{t_{j-1}}) ]     j^{2(2H-1)} +o(1) \\
&&={\color {black} H^2} E\bigg[ \Psi(\sfz) \int_0^1 a''(B_s) s^{4H-2} {\colred ds  \bigg] }+ o(1).
 \eeas
{\color {black} The term $G_{2,n}$ converges to zero because, by {\colred (\ref{20171230-1}) and}
Lemma \ref{lem1} %(a) and 
(b), we can write
\[
G_{2,n} \simleq n^{1-2H} \sup_{s\in [0,1]} \sum_{j=1} ^\infty |\alpha_{s,j}| \le C_H n^{1-2H}.
\]
}
%{\colred 
%For $G_{2,n}$, we have 
%\beas &&
%n\sum_{j=1}^n\int_0^1E[\Psi(\sfz)(a^2)'_sa'_{t_{j-1}}]\alpha_{t_{j-1},j}\alpha_{s,j}ds
%\\&=&
%H^2(2H-1)\sum_{j=1}^n\int_0^1E[\Psi(\sfz)(a^2)'_sa'_{t_{j-1}}](j/n)^{2H-1}
%\bigg(\int_{t=t_{j-1}}^{t_j}\int_{r=0}^s|t-r|^{2H-2}drdt\bigg)ds
%\\&=&
%H^2(2H-1)\int_0^1\int_0^1E[\Psi(\sfz)(a^2)'_sa'_t]t^{2H-1}
%\bigg(\int_{0}^s|t-r|^{2H-2}dr\bigg)dtds+o(1)
%\\&=&
%H^2\int_0^1\int_0^1E[\Psi(\sfz)(a^2)'_sa'_t]t^{2H-1}
%\big\{t^{2H-1}+|s-t|^{2H-1}\text{sign}(s-t)\big\}dtds+o(1).
%\eeas
%
Therefore, we have proved that
{\color {black}
\[
E[ \Psi(\sfz) {\mathfrak G}^{1} (\tti \sfz)]  =
  H^2 E\left[ \Psi(\sfz)\>  \int_0^1 a''(B_s)  s^{4H-2} ds\right] (\tti\sfz).
\]}

%+
%E\bigg[ \Psi(\sfz)\> 
%(\tti\sfz)^3c_H^2
%H^2\int_0^1\int_0^1(a^2)'_sa'_tt^{2H-1}
%\big\{t^{2H-1}+|s-t|^{2H-1}\text{sign}(s-t)\big\}dtds\bigg]. 
%\eeas
%}

\medskip
\noindent
{\bf (ii)} Case $H<\frac 12$. In that case,
\beas
N_n &=& n^{4H-1} \sum_{j=1}^n  \Delta B_{j,n} \langle D a_{t_{j-1}}, \mathbf{1}_{ I_j}\rangle_{\HH}.
\eeas
We proceed as before, but in that case the dominating term 
{\colred in} $ \alpha^2_{t_{j-1}, j} $ {\colred in $G_{1,n}$} is the constant $1$ and we obtain
\beas
E[ \Psi(\sfz) {\mathfrak G}^{1} (\tti \sfz)]  
&=&
E\left[ \Psi(\sfz) \>{\colred \frac{1}{4}}(\tti\sfz)  \int_0^1 a''(B_s) ds \right]   ds.
\eeas
%{\colred Indeed, 
%the term $G_{2,n}$ converges to zero because, by Lemma \ref{lem1} (a) and (b), we can write
%\beas
%|G_{2,n}| &\simleq& n^{2H-1}  \sup_{s\in [0,1]}\sum_{j=1}^n |\alpha_{s,j} |  \le n^{2H-1} C_H.
%\eeas
%}

We can also show that
\beas
\lim_{n\rightarrow \infty}  E[ \Psi(z)  \langle DN_n ,u_n \rangle_{\HH} ]=0.
\eeas

\begin{rem}\rm 
The functional $\Psi(\sfz)$ should be replaced by $\Psi(\sfz)\psi_n$ when we need a truncation $\psi_n$. 
However, the above arguments are essentially unchanged because 
$\|1-\psi_n\|_{{\ell},p}$ would converge to zero much faster than the total error we found. 
\end{rem}

The following lemma is in Nourdin, Nualart and Peccati \cite{nourdin2016quantitative}.

     \bigskip
     \begin{lemme} \label{lem1}
\label{lem1} Let $0<H<1$ and $n\geq 1$. We have, for some constant $C_H$,
\begin{itemize}
\item[(a)] $\left|  \alpha_{t,k} \right| \leqslant n^{-(2H\wedge 1)}$ for any $t\in
\lbrack 0,1]$ and $k=1,\dots, n$.
\item[(b)] $\sup_{t\in \lbrack 0,1]}\sum_{k=1}^{n}\left|  \alpha_{t,k}\right| \leq C_H$.
\item[(c)] $\sum_{k,j=1}^{n}\left|  \beta_{j,k}\right| \leq C_H n^{(1-2H)\vee 0}$.
\item[(d)]   If $H<\frac 34$, then  $\sum_{k,j=1}^{n} \beta_{j,k} ^{2}\leq C_Hn^{1-4H}$.
 \item[(e)]    $\sum_{k,j=1}^{n}\left|  \beta_{k,l} \beta_{j,l} \right| \leq C_{H}n^{-(4H \wedge 2)}$  for any $l=1,\dots, n$.
 \item[(f)]   If $H<\frac 34$, then $\sum_{k,j=1}^{n}\left|  \beta_{k,l} \beta_{j,l}  \beta_{j,k}\right| \leq C_H n^{-4H -(2H\wedge 1)}$ for any $l=1,\dots, n$.
\end{itemize}
\end{lemme}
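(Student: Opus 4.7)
The strategy is to exploit the explicit closed forms of the two building blocks. Using $R_H(t,s)=\tfrac12(t^{2H}+s^{2H}-|t-s|^{2H})$, one has
\[
\alpha_{t,k}=\tfrac12\bigl[t_k^{2H}-t_{k-1}^{2H}+|t-t_{k-1}|^{2H}-|t-t_k|^{2H}\bigr],\qquad \beta_{j,k}=n^{-2H}\rho_H(j-k),
\]
together with the classical asymptotic $|\rho_H(m)|\leq C_H(|m|\vee1)^{2H-2}$ for all $m\in\mathbb Z$. Every statement in the lemma is reduced to a deterministic summation estimate against these two expressions, with a bifurcation according to whether the exponent $x\mapsto x^{2H}$ is H\"older (case $H\leq 1/2$) or $C^1$ with bounded derivative on compacts (case $H>1/2$).

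For (a) I would estimate each of the two increments in the formula for $\alpha_{t,k}$ separately: when $H\leq 1/2$ the subadditivity $|a^{2H}-b^{2H}|\leq|a-b|^{2H}$ gives the bound $n^{-2H}$, and when $H>1/2$ the mean value theorem gives $O(n^{-1})$, which combines to $O(n^{-(2H\wedge1)})$. For (b) the two telescoping groups $\sum_k(t_k^{2H}-t_{k-1}^{2H})$ and $\sum_k\bigl||t-t_{k-1}|^{2H}-|t-t_k|^{2H}\bigr|$ must be handled: the first is exactly $1$, while for the second one splits the sum at the unique interval containing $t$, noting that on each of the two resulting regions the signs inside the absolute values are constant, so the sum telescopes to $|t^{2H}|+|1-t|^{2H}\leq 2$.

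Parts (c) and (d) follow quickly from the representation $\beta_{j,k}=n^{-2H}\rho_H(j-k)$, reducing them to one-dimensional sums $n^{-2H}\cdot n\cdot\sum_{|i|\leq n}|\rho_H(i)|^q$ for $q=1,2$. The asymptotic $|\rho_H(i)|\sim H(2H-1)|i|^{2H-2}$ gives $\sum_{|i|\leq n}|\rho_H(i)|\asymp n^{(2H-1)\vee0}\vee 1$ (logarithm at $H=1/2$ being harmless through the constant), yielding $n^{(1-2H)\vee0}$ in (c); for (d) one notes that $\rho_H^2$ is summable on $\mathbb Z$ precisely when $H<3/4$, giving the bound $n\cdot n^{-4H}=n^{1-4H}$. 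For (e) one factors $\sum_{j,k}|\beta_{k,l}\beta_{j,l}|=\bigl(\sum_k|\beta_{k,l}|\bigr)^2$ and reuses the same one-dimensional sum, squared, producing $(n^{-(2H\wedge1)})^2=n^{-(4H\wedge 2)}$.

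The only genuinely non-routine item is (f), which is a triangular convolution sum
\[
n^{-6H}\sum_{u,v}|\rho_H(u)|\,|\rho_H(v)|\,|\rho_H(v-u)|\mathbf 1_{|u|,|v|\leq n}=n^{-6H}\langle|\rho_H^{(n)}|,|\rho_H^{(n)}|\ast|\rho_H^{(n)}|\rangle.
\]
For $H\leq 1/2$ the sequence $\rho_H$ is summable, so Young's inequality with exponents $(1,\infty,1)$ gives $O(1)$ and the bound $n^{-6H}=n^{-4H-2H}$. For $1/2<H<3/4$ the sharp exponents come from Young's inequality $\ell^p\ast\ell^q\hookrightarrow\ell^r$ with $1/p+1/q=1+1/r$ applied to the truncated sequence $|\rho_H^{(n)}|$, whose $\ell^p$ norm grows like $n^{(2H-2)+1/p}_+$; this is precisely the step I expect to be the main obstacle, since choosing $p,q,r$ optimally to match the target exponent $n^{-4H-1}$ requires a careful bookkeeping that discriminates between whether the relevant $\ell^p$ norms are finite in the limit or must be tracked in $n$. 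A cleaner alternative I would try first is to bound $|\rho_H(v-u)|$ by $C_H(|v-u|\vee1)^{2H-2}$ directly and evaluate the resulting double sum as a Riemann sum for the integral $\iint_{[0,1]^2}|u|^{2H-2}|v|^{2H-2}|u-v|^{2H-2}\,du\,dv$, which is finite exactly in the regime $H<3/4$ and yields the claimed rate after reinstating the factor $n^{-6H}\cdot n^{6H-3-(2H\wedge1)+ \text{boundary corrections}}$.
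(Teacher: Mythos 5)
The paper does not actually prove this lemma; it imports it verbatim from \cite{nourdin2016quantitative}, so there is no in-text argument to compare yours against. On its own terms your plan is sound: the reduction of everything to the closed forms $\alpha_{t,k}=\frac12[t_k^{2H}-t_{k-1}^{2H}+|t-t_{k-1}|^{2H}-|t-t_k|^{2H}]$ and $\beta_{j,k}=n^{-2H}\rho_H(j-k)$ with $|\rho_H(m)|\le C_H(|m|\vee 1)^{2H-2}$ is the standard route, and your treatments of (a), (b), (d) and (e) are complete as sketched. Two points deserve correction. First, in (c) you cannot wave away the logarithm at $H=\tfrac12$ ``through the constant'' --- $\log n$ is not $O(1)$ and would violate the claimed bound $C_Hn^{(1-2H)\vee 0}=C_H$; the correct observation is that $\rho_{1/2}(i)=0$ for $i\neq 0$, so the one-dimensional sum is exactly $1$ there, and for $H\neq\tfrac12$ the power-law bound suffices with an $H$-dependent constant.

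Second, and more substantively, in (f) your ``cleaner alternative'' rests on the assertion that $\iint_{[0,1]^2}|u|^{2H-2}|v|^{2H-2}|u-v|^{2H-2}\,du\,dv$ is finite exactly when $H<\tfrac34$. That is wrong: the singularity at the origin forces $3(2H-2)>-2$, i.e.\ the integral converges iff $H>\tfrac23$, and the threshold $\tfrac34$ in the lemma comes from the summability of $\rho_H^2$ at large lags, not from this integral. The Riemann-sum normalization therefore only applies for $H>\tfrac23$ and produces $n^{6H-4}$, not $n^{6H-3-(2H\wedge1)}$. The repair is the purely discrete estimate
\begin{equation*}
\sum_{1\le u<v\le n}u^{2H-2}v^{2H-2}(v-u)^{2H-2}
=\sum_{v=1}^{n}v^{6H-5}\cdot\frac1v\sum_{u=1}^{v-1}\Big(\frac uv\Big)^{2H-2}\Big(1-\frac uv\Big)^{2H-2}
\le C_H\,n^{(6H-4)_+},
\end{equation*}
which is exactly the computation the paper itself carries out when proving (\ref{170729-1}); since $(6H-4)_+\le 2H-1$ precisely when $\tfrac12<H<\tfrac34$, this yields $n^{-6H}\cdot n^{(6H-4)_+}\le n^{-4H-1}$ in that range, while for $H\le\tfrac12$ your Young-inequality bound with exponents $(1,1,\infty)$ already gives $n^{-6H}=n^{-4H-2H}$. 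With these two fixes the proposal is a complete proof.
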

\onelineskip

%%%%%%%%%%%%%%%%%%%%%%%%%%%%%%%
\section{Asymptotic expansion for measurable functions}\label{170813-5}
{\colorb

Let $\ell=\check{\sfd}+8$ and denote by 
$\beta_x$ the maximum degree in $x$ of ${\colred {\mathfrak S}.}$ %[Estimate $\sum_{i=5}^{12}R^{(i)}_n$]. 
We denote by $\sigma_F$ the Malliavin covariance matrix of a multivariate functional $F$ 
and write $\Delta_F=\det\sigma_F$. 
{\colorr Let $\sfd_2=(\ell+\beta_x-7)\vee\big(2[(\sfd_1+2)/2]+2[(\beta_x+1)/2]\big)$, 
where $[x]$ is the maximum integer not larger than $x$.}
We consider the following condition. 
%%%
\bd\im[[C\!\!]] 
{\bf (i)}  
$u_n\in\bbD^{{\colorr \ell+1},\infty}(\mfh\otimes\bbR^\sfd)$, 
%$G_\infty\in\bbD^{\ell\vee{\colorr (\ell+\beta_x-7)\vee \sfd_2},\infty}(\bbR^\sfd\otimes_+\bbR^\sfd)$, 
$G_\infty\in\bbD^{\ell{\colorr\vee \sfd_2},\infty}(\bbR^\sfd\otimes_+\bbR^\sfd)$, 
$W_n,N_n\in\bbD^{{\colorr \ell},\infty}(\bbR^\sfd)$, 
%$W_\infty\in\bbD^{{\colorr \ell\vee(\ell+\beta_x-7)\vee \sfd_2},
$W_\infty\in\bbD^{{\colorr \ell\vee \sfd_2},
\infty}(\bbR^\sfd)$, 
$X_n\in\bbD^{{\colorr \ell},\infty}(\bbR^{\sfd_1})$, 
%$X_\infty\in\bbD^{{\colorr \ell\vee(\ell+\beta_x-6)\vee(\sfd_2+1)},\infty}(\bbR^{\sfd_1})$. 
$X_\infty\in\bbD^{{\colorr \ell\vee(\sfd_2+1)},\infty}(\bbR^{\sfd_1})$. 
%{\colorr [Construct  $\psi_n\in\bbD^{\ell-2,\infty}(\bbR)$. ]}
%

\bd
\im[\hspace{-2mm}(ii)] For every $p>1$, the following estimates hold: 
\bea\label{c1}
%\|u_n\|_{{\colorr \ell-2},p} &=& O(1)
\|u_n\|_{{\colorr \ell},p} &=& O(1) % $\ell$ for bouindedness of $\ell-2$-derivative of $\psi_n$
\eea
\bea\label{c22}
\|G_n^{(2)}\|_{{\colorr \ell-2},p} &=& O(r_n)%\|G_n^{(2)}\|_{\ell-1,p} &=& O(r_n)%\qquad(k=2,3)
\eea
\bea\label{c23}
\|G_n^{(3)}\|_{{\colorr \ell-2},p} &=& O(r_n)%\qquad(k=2,3)
\eea
\bea\label{c33}
\|\big\langle DG^{(3)}_n,u_n\big\rangle_\mfh\|_{{\colorr \ell-1},p}
&=& o(r_n)%\qquad(k=2,3)
\eea
\bea\label{c42}
\bigg\|\bigg\langle D\bigg(\big\langle DG^{(2)}_n,u_n\big\rangle_\mfh\bigg),u_n\bigg\rangle_\mfh\bigg\|_{\ell-3,p}
&=& o(r_n)%\qquad(k=2,3)
\eea
\bea\label{c5}
%W_n,X_n\text{ for }G_n^{(1)}
\sum_{{\sf A}=W_\infty,X_\infty}
\big\|\langle D{\sf A},u_n\rangle_\mfh\big\|_{\ell-1,p} &=& O(r_n)
\eea
%\bea\label{b6}
%\big\|\langle DX_\infty[\sfx],u_n[\sfz]\rangle_\mfh\big\|_p &=& O(r_n)
%\eea
\bea\label{c7}
\sum_{{\sf A}=W_\infty,X_\infty}
\|\big\langle D\langle D{\sf A},u_n\rangle_\mfh,u_n\big\rangle_\mfh\|_{\ell-2,p}
&=& o(r_n)
\eea
%\bea\label{b8}
%\|\big\langle D\langle DX_\infty[\sfx],u_n[\sfz]\rangle_\mfh,u_n[\sfz]\big\rangle_\mfh\|_{p/3}
%&=& o(r_n)
%\eea
\bea\label{c10}
\|\dotw_n\|_{\ell-1,p}+\|N_n\|_{\ell-1,p}+\|\dotx_n\|_{\ell-1,p}&=&O(1)
\eea
\bea\label{c10.1}
\sum_{{\sf B}=\dotw_n,N_n,\dotx_n}
\bigg\| \big\langle D\langle D{\sf B},u_n\rangle_\HH ,u_n\big\rangle_\HH\bigg\|_{\ell-2,p}&=& o(1)
\eea
%
%
%!TEX encoding = UTF-8 Unicode
\im[\hspace{-2mm}(iii)] 
For each pair $({\mathfrak T}_n,{\mathfrak T})=({\mathfrak S}^{(3,0)}_n,{\mathfrak S}^{(3,0)})$, 
$({\mathfrak S}^{(2,0)}_{0,n},{\mathfrak S}^{(2,0)}_0)$, 
$({\mathfrak S}^{(2,0)}_n,{\mathfrak S}^{(2,0)})$, 
$({\mathfrak S}^{(1,1)}_n,{\mathfrak S}^{(1,1)})$, 
$({\mathfrak S}^{(1,0)}_n,{\mathfrak S}^{(1,0)})$, 
$({\mathfrak S}^{(0,1)}_n,{\mathfrak S}^{(0,1)})$, 
{\colorg 
$({\mathfrak S}^{(2,0)}_{1,n},{\mathfrak S}^{(2,0)}_1)$ and 
$({\mathfrak S}^{(1,1)}_{1,n},{\mathfrak S}^{(1,1)}_1)$,} 
the following conditions are satisfied. 
\bd
\im[(a)] ${\mathfrak T}$ is polynomial random symbol the coefficients of which are in 
$\bbD^{\csfd+\beta_x+1,1+}=\bigcup_{p>1}\bbD^{\csfd+\beta_x+1,p}$. 
\im[(b)] 
For some $p>1$, there exists a polynomial random symbol $\bar{\mathfrak T}_n$ that has $L^p$ coefficients and  
the same degree as ${\mathfrak T}$, 
\beas 
E\big[\Psi(\sfz,\sfx){\mathfrak T}_n({\colred\tti}\sfz,{\colred\tti}\sfx)\big]
&=& E\big[\Psi(\sfz,\sfx)\bar{\mathfrak T}_n({\colred\tti}\sfz,{\colred\tti}\sfx)\big]
\eeas
and 
$\bar{\mathfrak T}_n\to{\mathfrak T}$ in $L^p$. 
\ed

\begin{en-text}
For every $\sfz\in\bbR^\sfd$ and $\sfx\in\bbR^{\sfd_1}$, 
it holds that 
\beas 
\lim_{n\to\infty}
{\colorg 
E\big[\Psi(\sfz,\sfx){\mathfrak T}_n(\tti\sfz,\tti\sfx)%\psi_n
\big]}
&=&
E\big[\Psi(\sfz,\sfx){\mathfrak T}(\tti\sfz,\tti\sfx)\big]
\eeas
for $({\mathfrak T}_n,{\mathfrak T})=({\mathfrak S}^{(3,0)}_n,{\mathfrak S}^{(3,0)})$, 
$({\mathfrak S}^{(2,0)}_{0,n},{\mathfrak S}^{(2,0)}_0)$, 
$({\mathfrak S}^{(2,0)}_n,{\mathfrak S}^{(2,0)})$, 
$({\mathfrak S}^{(1,1)}_n,{\mathfrak S}^{(1,1)})$, 
$({\mathfrak S}^{(1,0)}_n,{\mathfrak S}^{(1,0)})$, 
$({\mathfrak S}^{(0,1)}_n,{\mathfrak S}^{(0,1)})$, 
{\colorg 
$({\mathfrak S}^{(2,0)}_{1,n},{\mathfrak S}^{(2,0)}_1)$ and 
$({\mathfrak S}^{(1,1)}_{1,n},{\mathfrak S}^{(1,1)}_1)$.}
Moreover, the coefficients of their random symbols are in $\bbD^{\csfd+\beta_x+1,\infty}$. %$\bbD^{\ell+\beta_x-7,\infty}$. 
\end{en-text}
%. %

\im[\hspace{-2mm}(iv)] 
\hspace{0.5mm} {\bf (a)} $G_\infty^{-1}\in L^{\infty-}$. 
\bd\im[(b)]
There exist $c\in(-1,0)\cup(0,1)$ and $\kappa>0$ such that 
\beas 
%\sup_{\theta\in(\theta_0,1]}
P\big[\Delta_{(c M_n+W_\infty,X_\infty)}<s_n\big] &=& O(r_n^{1+\kappa})
\eeas
%Moreover, 
%\beas 
%P\big[\Delta_{X_\infty}<s_n\big] &=& O(r_n^{1+\kappa}){\colorr remove}
%\eeas
for some positive random variables $s_n\in\bbD^{\ell-2,\infty}$ satisfying 
$\sup_{n\in\bbN}(\|s_n^{-1}\|_p+\|s_n\|_{\ell-2,p})<\infty$ for every $p>1$. 
\im[(c)] There exists $\kappa_1>0$ such that 
\beas 
\sum_{{\sf A}=DW_\infty,DX_\infty}\big\|\langle DM_n,{\sf A}\rangle_\HH\big\|_p &=& O(r_n^{\kappa_1})
\eeas
for every $p>1$. 
\ed
\ed
\ed
\onelineskip
%%%
\begin{rem}\rm 
(i) The index $\ell+1$ of $u_n$ comes from (\ref{c42}). 
(ii) Intuitively, %the uniform non-degeneracy condition for $M_n+\theta^{-1}W_\infty$ suggests 
[C] (iv) (c) is 
a kind of orthogonality between $M_n$ and $(W_\infty,X_\infty)$. 
It is natural because $M_n$ converges stably in most statistical problems. 
We will give a slightly different formulation of the problem later. 
(iii) The degree of ${\mathfrak T}_n$ and ${\mathfrak T}$ may be different. 
That $\bar{\mathfrak T}_n\to{\mathfrak T}$ in $L^p$ means 
$L^p$ convergence of all random coefficients. 
(iv) [C] (iv) (b) ensures non-degeneracy of $X_\infty$, that is, 
$\Delta_{X_\infty}^{-1}\in L^{\infty-}$. 
\end{rem}
\halflineskip

\begin{rem}\rm
Condition [C] (iii) is a sufficient condition for coming results. We can replace [C] (iii) by 
\bd
\im[[C\!\!]] (iii)$^\flat$ 
For pairs of polynomial random symbols $({\mathfrak T}_n,{\mathfrak T})=({\mathfrak S}^{(3,0)}_n,{\mathfrak S}^{(3,0)})$, 
$({\mathfrak S}^{(2,0)}_{0,n},{\mathfrak S}^{(2,0)}_0)$, 
$({\mathfrak S}^{(2,0)}_n,{\mathfrak S}^{(2,0)})$, 
$({\mathfrak S}^{(1,1)}_n,{\mathfrak S}^{(1,1)})$, 
$({\mathfrak S}^{(1,0)}_n,{\mathfrak S}^{(1,0)})$, 
$({\mathfrak S}^{(0,1)}_n,{\mathfrak S}^{(0,1)})$, 
{\colorg 
$({\mathfrak S}^{(2,0)}_{1,n},{\mathfrak S}^{(2,0)}_1)$ and 
$({\mathfrak S}^{(1,1)}_{1,n},{\mathfrak S}^{(1,1)}_1)$,}
the coefficients of ${\mathfrak T}$ are in $\bbD^{\csfd+\beta_x+1,1+}$, %$\bbD^{\ell+\beta_x-7,\infty}$.  
and  
\beas 
\lim_{n\to\infty}
{\colorg 
\partialbs^\alpha E\big[\Psi(\sfz,\sfx){\mathfrak T}_n(\tti\sfz,\tti\sfx)%\psi_n
\big]}
&=&
\partialbs^\alpha E\big[\Psi(\sfz,\sfx){\mathfrak T}(\tti\sfz,\tti\sfx)\big]
\eeas
for 
every $(\sfz,\sfx)\in\bbR^\csfd$ and $\alpha\in\bbZ_+^\csfd$. 
\ed
\end{rem}
\halflineskip

Define $\xi_n$ by 
{\colred 
\bea\label{170810-1} 
\xi_n &=& \frac{3s_n}{2s_n+12\Delta_n}
+\frac{e_n}{s_n^2}
+\frac{f_n}{\Delta_{X_\infty}^2}
\eea
}
\begin{en-text}
\bea\label{170810-1} 
\xi_n &=& \frac{3s_n}{2s_n+12\Delta_n}+r_n|\langle D\dotw_n,D\dotw_n\rangle_\HH|^2
+r_n|\langle DN_n,DN_n\rangle_\HH|^2
+r_n|\langle D\dotx_n,D\dotx_n\rangle_\HH|^2
\nn\\&&
+r_n^{-\kappa_1}|\langle DM_n,DW_\infty\rangle_\HH|^2
+r_n^{-\kappa_1}|\langle DM_n,DX_\infty\rangle_\HH|^2
+\frac{e_n}{s_n^2}+\frac{3s_n}{2s_n+12\Delta_{X_\infty}}
\eea
\end{en-text}
for $\Delta_n=\Delta_{(cM_n+W_\infty,X_\infty)}$. 
The functionals $e_n$ {\colred and $f_n$} will be specified later. 
%where $c_0$ is a positive constant. 
Let $\psi\in C^\infty(\bbR;[0,1])$ such that $\psi(x)=1$ for $|x|\leq1/2$ and 
$\psi(x)=0$ for $|x|\geq1$. 
Let $\psi_n=\psi(\xi_n)$. Then $\sup_{n\in\bbN}\|\psi_n\|_{\ell-2,p}<\infty$ for every $p>1$.

Denote by $\phi(z;\mu,\Sigma)$ the density function of the normal distribution 
with mean vector $\mu$ and covariance matrix $\Sigma$. 
{\colred We write ${\mathfrak S}_n=1+r_n{\mathfrak S}$. }
Define the function $p_n(z,x)$ by 
\beas 
p_n(z,x) 
&=& 
%E\bigg[\phi(z;W_\infty,G_\infty)\delta_x(X_\infty)\bigg]
%\\&&
%+
E\bigg[{\mathfrak S}_n(\partial_z,\partial_x)^*\bigg\{
\phi(z;W_\infty,G_\infty)\delta_x(X_\infty)\bigg\}\bigg],
\eeas
where $\delta_x(X_\infty)$ is Watanabe's delta function, i.e., the pull-back of 
the delta function $\delta_x$ by $X_\infty$. 
See \cite{IkedaWatanabe1989} for the notion of generalized Wiener functionals and Watanabe's delta function. 
The operation of the adjoint $\varsigma(\partial_z,\partial_x)^*$ 
for a random polynomial symbol $\varsigma(\tti\sfz,\tti\sfx)=\sum_\alpha c_\alpha(\tti\sfz,\tti\sfx)^\alpha$ 
is defined by 
\beas
E\bigg[\varsigma(\partial_z,\partial_x)^*\bigg\{
\phi(z;W_\infty,G_\infty)\delta_x(X_\infty)\bigg\}\bigg]
&=&
\sum_\alpha (-\partial_z,-\partial_x)^\alpha E\bigg[c_\alpha\phi(z;W_\infty,G_\infty)\delta_x(X_\infty)\bigg]. 
\eeas
The function $p_n(z,x)$ is well defined under [C]. %\koko
%Different random symbols can produce the same function $p_n$. 

Given positive numbers $M$ and $\gamma$, denote by $\cale(M,\gamma)$ 
the set of measurable functions 
$f:\bbR^{\check{\sfd}}\to\bbR$ satisfying 
$|f(z,x)|\leq M(1+|z|+|x|)^\gamma$ for all $(z,x)\in\bbR^{\check{\sfd}}$. 
We intend to approximate the joint distribution of $(Z_n,X_n)$ by the density function $p_n(z,x)$. 
The error of the approximation is evaluated by 
the supremum of 
\beas 
\Delta_n(f) 
&=&
\bigg| E\big[f(Z_n,X_n)\big]-\int_{\bbR^{\check{\sfd}}} f(z,x)p_n(z,x)dzdx\bigg|
\eeas
in $f\in\cale(M,\gamma)$.

For $\check{Z}_n=(Z_n,X_n)$, we write $\check{Z}_n^\alpha=Z_n^{\alpha_1}X_n^{\alpha_2}$ for 
$\alpha=(\alpha_1,\alpha_2)\in\bbZ_+^\sfd\times\bbZ_+^{\sfd_1}=\bbZ_+^{\check{\sfd}}$. 
Define $\hat{g}^\alpha_n(\sfz,\sfx)$ by 
\beas 
\hat{g}^\alpha_n(\sfz,\sfx)
&=&
E\big[\psi_n\check{Z}^\alpha_n\exp\big(Z_n[\tti\sfz]+X_n[\tti\sfx]\big)\big]
\eeas
for $\sfz\in\bbR^\sfd$ and $\sfx\in\bbR^{\sfd_1}$. 
Define $g_n^\alpha(z,x)$ by 
\beas
g_n^\alpha(z,x) 
&=& 
\frac{1}{(2\pi)^{\check{\sfd}}}\int_{\bbR^{\check{\sfd}}}
\exp\big(-z[\tti\sfz]-x[\tti\sfx]\big)\>\hat{g}^\alpha_n(\sfz,\sfx)d\sfz d\sfx
\eeas
if the integral exists. 

In the notation of Section \ref{170805-10}, 
\beas 
\hat{g}^\alpha_n(\sfz,\sfx) &=& E\big[e^{\lambda_n(1;\sfz,\sfx)} \psi_n\check{Z}^\alpha_n\big]
\yeq
\varphi_n\big(1,\sfz,\sfx;\psi_n\check{Z}^\alpha_n\big)
\yeq \varphi_n\big(1;\psi_n\check{Z}^\alpha_n\big).
\eeas
Moreover, 
\beas 
\hat{g}^\alpha_n(\sfz,\sfx) &=& \partialbs^\alpha\hat{g}^0_n(\sfz,\sfx)
\yeq \partialbs^\alpha\varphi_n(1;\psi_n)
\eeas
for $\alpha\in\bbZ_+^{\check{\sfd}}$, where 
$\partialbs^\alpha=\partialbs_\sfz^{\alpha_1}\partialbs_\sfx^{\alpha_2}$.

Let 
\beas 
h^0_n(z,x) 
&=&
E\big[\psi_n\phi(z;W_\infty,G_\infty)\delta_x(X_\infty)\big]
\\&&
+{\colred r_n}E\big[
%\big({\mathfrak S}_n(\partial_z,\partial_x)-1\big)^*
{\colred{ \mathfrak S}(\partial_z,\partial_x)^*}
\big\{\phi(z;W_\infty,G_\infty)\delta_x(X_\infty)\big\}\big]
\eeas
and let 
\beas 
h^\alpha_n(z,x) 
&=&
(z,x)^\alpha h^0_n(z,x)
\eeas
for $\alpha\in\bbZ_+^{\check{\sfd}}$. 
Let 
\beas 
\hat{h}^\alpha_n(\sfz,\sfx) 
&=& 
\int_{\bbR^{\check{\sfd}}}e^{z[\tti\sfz]+x[\tti\sfx]}h^\alpha(z,x)dzdx.
\eeas
Then 
\beas
\hat{h}^\alpha_n(\sfz,\sfx) 
&=&
\partialbs^\alpha E\big[\Psi(\sfz,\sfx)\psi_n\big]
+{\colred r_n}
\partialbs^\alpha E\big[\Psi(\sfz,\sfx)
{\colred {\mathfrak S}(\sfz,\sfx)}
%({\mathfrak S}_n(\sfz,\sfx)-1)
\big].
\eeas

Let 
$ \Lambda_n(d)=\{u\in\bbR^d;|u|\leq {\colorr r_n^{-q}}\}$, {\colorr where $q\in(0,1/2)$. 
Let $\theta=\half-q$. }

\begin{lemme}\label{170810-5}
Suppose that $[C]$ is fulfilled. Then 
\bd
\im[(a)] For each $(\sfz,\sfx)\in\bbR^\csfd$ and $\alpha\in\bbZ_+^{\check{\sfd}}$, 
\bea\label{170811-41} 
\hat{g}^\alpha_n(\sfz,\sfx)-\hat{h}^\alpha_n(\sfz,\sfx)
&=& o(r_n). 
\eea
\im[(b)] %{\colorr There exists $\ep>0$ such that,}
For every $\alpha\in\bbZ_+^{\check{\sfd}}$, 
\bea\label{170810-7}
\sup_n\sup_{(\sfz,\sfx)\in\Lambda_n(\csfd)}|(\sfz,\sfx)|^{\check{\sfd}+1}
r_n^{-1}\big|\hat{g}^\alpha_n(\sfz,\sfx)-\hat{h}^\alpha_n(\sfz,\sfx)\big|
&<& \infty.
\eea
\ed
\end{lemme}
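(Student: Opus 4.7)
The starting point is the second-order interpolation formula from Proposition~\ref{170806-1}. Since $\hat g_n^\alpha(\sfz,\sfx)=\partialbs^\alpha\varphi_n(1;\psi_n)$ and $\hat h_n^\alpha(\sfz,\sfx)=\partialbs^\alpha\varphi_n(0;\psi_n)+r_n\,\partialbs^\alpha E[\Psi(\sfz,\sfx)\,{\mathfrak S}(\tti\sfz,\tti\sfx)]$, Proposition~\ref{170806-1} yields the identity
\begin{equation*}
\hat g_n^\alpha(\sfz,\sfx)-\hat h_n^\alpha(\sfz,\sfx)\;=\;\partialbs^\alpha R_n(\sfz,\sfx),\qquad R_n=\sum_{i=3}^{12}R_n^{(i)}.
\end{equation*}
Hence both (a) and (b) reduce to appropriate estimates of the derivatives $\partialbs^\alpha R_n^{(i)}$.

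For (a), I plan to mimic the estimation scheme used in the proof of Theorem~\ref{thm2}, now applied to the differentiated remainders. Each $\partialbs_{\sfz_j}$ or $\partialbs_{\sfx_k}$ acting on $e^{\lambda_n(\theta;\sfz,\sfx)}$ (or on $\Psi(\sfz,\sfx)$) produces polynomial weights in the components of $\theta M_n+W_n(\theta)+\theta r_n N_n$, $X_n(\theta)$, and the linear factor $G_\infty\sfz$ coming from the Gaussian term. With $(\sfz,\sfx)$ fixed, these polynomial weights are bounded in $L^p$ uniformly in $n$ by virtue of [C](i)(ii); the bookkeeping of Theorem~\ref{thm2} applied to the resulting terms, combined with [C](iii), delivers $\partialbs^\alpha R_n^{(i)}(\sfz,\sfx)=o(r_n)$ pointwise.

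For (b), the required decay $|(\sfz,\sfx)|^{-(\csfd+1)}$ uniformly on the expanding ball $\Lambda_n(\csfd)$ is obtained by Malliavin integration by parts against the interpolated vector $V_n(\theta)=(\theta Z_n+(1-\theta)W_\infty,\,\theta X_n+(1-\theta)X_\infty)$ for $R_n^{(3)}$ and $R_n^{(4)}$ (where $\theta$ varies over $[0,1]$), and against $(W_\infty,X_\infty)$ for $R_n^{(5)},\dots,R_n^{(12)}$, whose dependence on $(\sfz,\sfx)$ enters only through $\Psi(\sfz,\sfx)$. Trading each monomial factor $(\tti\sfz)^\mu(\tti\sfx)^\nu$ in the expansion of $\partialbs^\alpha R_n^{(i)}$ against an IBP with respect to the corresponding component of $V_n(\theta)$ (respectively $(W_\infty,X_\infty)$), and iterating $\csfd+1$ times, produces a factor $|(\sfz,\sfx)|^{-(\csfd+1)}$ together with inverse Malliavin-covariance factors. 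The cut-off $\psi_n$ defined through $\xi_n$ in (\ref{170810-1}) controls $\Delta_n^{-1}$ and $\Delta_{X_\infty}^{-1}$ on $\{\psi_n\neq 0\}$, so these inverse factors are $L^p$-bounded. The contribution of $\{\psi_n\neq 1\}$ is harmless because $P[\psi_n\neq 1]=O(r_n^{1+\kappa})$ by [C](iv)(b).

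The main obstacle is carrying out the IBP step uniformly in $\theta\in[0,1]$. The Malliavin covariance of $V_n(\theta)$ interpolates between $\sigma_{(W_\infty,X_\infty)}$ at $\theta=0$ and $\sigma_{(Z_n,X_n)}$ at $\theta=1$, and a common lower bound must be extracted on the support of $\psi_n$. One bridges the two regimes by writing
\begin{equation*}
V_n(\theta)=\bigl((1-\theta)W_\infty+\theta\,c\,M_n,\,(1-\theta)X_\infty+\theta X_n\bigr)+\bigl(\theta(Z_n-cM_n),\,0\bigr),
\end{equation*}
so that the leading part is a direct analogue of $(cM_n+W_\infty,X_\infty)$ from [C](iv)(b) and the perturbation is negligible thanks to the near-orthogonality estimate [C](iv)(c) $\|\langle DM_n,DW_\infty\rangle_\HH\|_p+\|\langle DM_n,DX_\infty\rangle_\HH\|_p=O(r_n^{\kappa_1})$ together with the differentiability controls on $\dotw_n,N_n,\dotx_n$ in [C](ii). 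The yet-unspecified positive variables $e_n$ and $f_n$ in $\xi_n$ must be chosen so that these perturbation terms are dominated on $\{\psi_n\neq 0\}$, making $\det\sigma_{V_n(\theta)}$ bounded below by a constant multiple of $s_n$ uniformly in $\theta\in[0,1]$. Once this is in place, the IBP-transformed integrands are polynomial random symbols whose coefficients satisfy estimates of the same nature as in (a), and the final bound (\ref{170810-7}) follows.
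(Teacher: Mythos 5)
Your reduction to $\hat g^\alpha_n-\hat h^\alpha_n=\partialbs^\alpha R_n$ and the general strategy (IBP for frequency decay, truncation by $\psi_n$ for non-degeneracy, $P[\psi_n\neq 1]=O(r_n^{1+\kappa})$ for the cutoff error) match the paper. But part (b) of your plan has a genuine gap in how the integration by parts is organized over $\theta$. You propose to integrate by parts against $V_n(\theta)=(\theta Z_n+(1-\theta)W_\infty,\,\theta X_n+(1-\theta)X_\infty)$ uniformly over $\theta\in[0,1]$, and against $(W_\infty,X_\infty)$ for $R_n^{(5)},\dots,R_n^{(12)}$. This cannot work: at $\theta=0$ (and for small $\theta$) the vector degenerates to $(W_\infty,X_\infty)$, and nothing in $[C]$ gives non-degeneracy of $W_\infty$ — indeed $W_\infty=0$ in every example of the paper, so $\sigma_{V_n(0)}$ is singular and no amount of choosing $e_n,f_n$ in $\xi_n$ can rescue a lower bound on $\det\sigma_{V_n(\theta)}$ uniform down to $\theta=0$. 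The missing idea is the two-regime split used in the paper: for $\theta$ bounded away from $1$ one gets the decay in $\sfz$ \emph{for free} from the Gaussian factor $\exp\big(2^{-1}(1-\theta^2)G_\infty[(\tti\sfz)^{\otimes2}]\big)$ together with $G_\infty^{-1}\in L^{\infty-}$ (no IBP in the $\sfz$-directions at all), and only performs IBP with respect to $X_n(\theta)$ (resp.\ $X_\infty$ for the limit symbols), whose non-degeneracy does follow from $[C]$(iv)(b); IBP against the full vector $\check M_n(\theta)$ is used only on $\theta\in(\theta_0,1]$ with $\theta_0=|c|$, where the $M_n$-block keeps the Malliavin covariance non-degenerate.

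Your bridging decomposition for the regime near $\theta=1$ is also not tenable as stated: the ``perturbation'' $\theta(Z_n-cM_n)=\theta\big((1-c)M_n+W_n+r_nN_n\big)$ contains $(1-c)\theta M_n$, which is $O(1)$, not negligible; only $W_n-W_\infty=r_n\dotw_n$ and $r_nN_n$ are small. The correct reduction to $[C]$(iv)(b) is not additive but goes through the determinant: writing $\Delta_{\check M_n(\theta)}$ with the block $\theta^2\sigma_{M_n}+\sigma_{W_\infty}$, using monotonicity of the determinant in the positive-semidefinite order to replace $\theta^2\sigma_{M_n}$ by $c^2\sigma_{M_n}$ for $\theta\ge|c|$, and absorbing the cross terms $\langle DM_n,DW_\infty\rangle_\HH$, $\langle DM_n,DX_\infty\rangle_\HH$ into an $O(r_n^{(1\wedge\kappa_1)/2})$ remainder via $[C]$(iv)(c); it is the coefficients of that remainder (and of the analogous expansion of $\Delta_{X_n(\theta)}$ around $\Delta_{X_\infty}$) that define $e_n$ and $f_n$. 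Without these two ingredients — the Gaussian damping for $\theta$ away from $1$ and the determinant-monotonicity reduction for $\theta\ge|c|$ — the uniform bound (\ref{170810-7}) on $\Lambda_n(\csfd)$ does not follow from your argument.
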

\proof 
First we estimate 
\beas 
\varphi_n(\theta,\sfz,\sfx;\Xi)
&=& 
E\big[e^{\lambda_n(\theta,\sfz,\sfx)}\Xi]
\eeas
for $\Xi\in\bbD^{{\sf k},\infty}=\cap_{p>1}\bbD^{{\sf k},p}$, ${\sf k}\leq\csfd+6$. %for a positive integer ${\ell}$. 
%Denote by $\sigma_{F}$ the Malliavin covariance matrix of a vector-valued functional $F$, 
%and let $\Delta_F=\det\sigma_{F}$. 
Let 
\beas 
\check{M}_n(\theta) &=& \big(M_n+\theta^{-1}W_n(\theta)+r_nN_n,X_n(\theta)\big)
\eeas
for $\theta\in(0,1]$. 
%Let $\ep$ be a positive number. 
Suppose that there exists $\theta_0\in[0,1)$ such that 
\bea\label{170806-5} 
%\sup_{\theta\in\big(\sqrt{1-r_n^\ep},1\big]}
\sup_{\theta\in(\theta_0,1]}
E\bigg[\Delta_{\check{M}_n(\theta)}^{-p}
{\bf 1}_{\{\sum_{j=1}^{\sf k}\|D^j\Xi\|_{\mfh^{\otimes j}}>0\}}
\bigg] &<& \infty
\eea
and that 
\bea\label{170806-6}
%\sup_{\theta\in\big(0,\sqrt{1-r_n^\ep}\big]}
\sup_{\theta\in(0,\theta_0]}
E\bigg[\Delta_{X_n(\theta)}^{-p}
{\bf 1}_{\{\sum_{j=1}^{\sf k}\|D^j\Xi\|_{\mfh^{\otimes j}}>0\}}\bigg] 
&<& \infty
\eea
for every $p>1$. 

By definition, 
\beas 
\varphi_n(\theta,\sfz,\sfx;\Xi) &=& E\big[e^{\check{M}_n{\colred (\theta)}[\theta\tti\sfz,\tti\sfx]}
e^{2^{-1}(1-\theta^2)G_\infty[(\tti\sfz)^{\otimes2}]}
\>\Xi]. 
\eeas
Use 
$M_n, W_n, W_\infty, N_n, N_\infty\in\bbD^{{{\sf k}+1},\infty}(\bbR^{\sfd})$, 
$X_n, X_\infty\in\bbD^{{{\sf k}+1},\infty}(\bbR^{\sfd_1})$, and 
$\det G_\infty^{-1}\in L^{\infty-}$,  
then 
with the IBP-formula ${\sf k}$-times with respect to $\check{M}_n(\theta)$, 
%where ${\sf k}$ is  number not greater than $\ell$, 
we obtain 
\beas 
\big|\varphi_n(\theta,\sfz,\sfx;\Xi)\big|
&\leq& 
\big|(\theta\sfz,\sfx)\big|^{-{\sf k}}
E\bigg[\exp\bigg(-\half(1-\theta^2)G_\infty[z^{\otimes2}]\bigg)\times |A_n(\theta,\sfz;\Xi)|\bigg]
%\>\leq\>
%\big|(\theta\sfz,\sfx)\big|^{-{\sf k}}E\big[ |A(\theta,\sfz;\Xi)|\big]
\eeas
for $\theta\in(\theta_0,1]$, 
the functional 
$A_n(\theta,\sfz;\Xi)$ is linear in $\Xi$, and 
the expectation on the right-hand side is dominated by a polynomial in 
\beas 
\|G_\infty^{-1}\|_{{\colorg p}},\>
\|G_\infty\|_{{\sf k},p},\>\|\check{M}_n(\theta)\|_{{\sf k}+1,p},\>
\|\Delta_{\check{M}_n(\theta)}^{-1}
{\bf 1}_{\{\sum_{j=1}^{\sf k}\|D^j\Xi\|_{\HH^{\otimes j}}>0\}}
\|_p,\>\|\Xi\|_{{\sf k},p}
\eeas
for some $p>1$ 
uniformly in $\theta\in(0,1]$, $(\sfz,\sfx)\in\bbR^{\check{\sfd}}$ and $n\in\bbN$. 
For it, we may add an independent Gaussian variable to $\check{M}_n(\theta)$ and shrink its variance after 
integration-by-parts. 
Here we remark that 
\beas 
\sup_{\theta,\sfz}\bigg[
\exp\bigg(-\half(1-\theta^2)G_\infty[z^{\otimes2}]\bigg)\times
\|(1-\theta^2)D^{j_1}G_\infty[z^{\otimes2}]\|_{\mfh^{\otimes j_1}}\cdots
\|(1-\theta^2)D^{j_m}G_\infty[z^{\otimes2}]\|_{\mfh^{\otimes j_m}}\bigg]
&\in& L^{\infty-},
\eeas
which is a consequence of $L^p$ integrability of $G_\infty^{-1}$ for sufficiently large $p$. 
[This estimate is possible only when $(\sfz,\sfx)$ appears with factor $1-\theta^2$. 
Otherwise, even though the non-degeneracy of $G_\infty$ is used, the factor $(1-\theta^2)^{-1}$ 
would appear and the estimation failed for $\theta$ near $1$. ] 
Therefore
\beas
\sup_{\theta\in(\theta_0,1]}\big|\varphi_n(\theta,\sfz,\sfx;\Xi)\big|
&\simleq& 
\big|(\sfz,\sfx)\big|^{-{\sf k}}
\eeas

For $\theta\in(0,\theta_0)$, %$\theta\leq\sqrt{1-r_n^\ep}$, 
we use nondegeneracy of $X_n(\theta)$. 
Applying ingeration-by-parts with respect to $X_n(\theta)$ to 
\beas 
\varphi_n(\theta,\sfz,\sfx;\Xi) &=& 
E\bigg[e^{X_n(\theta)[\tti\sfx]}
\exp\bigg(2^{-1}(1-\theta^2)G_\infty[(\tti\sfz)^{\otimes2}]
+{\theta M_n[\tti\sfz]+W_n(\theta)[\tti\sfz]+\theta r_nN[\tti\sfz]}\bigg)\>\Xi\bigg], 
\eeas
we obain 
\beas 
(\tti\sfx)^{\alpha_2}\varphi_n(\theta,\sfz,\sfx;\Xi) &=& 
E\big[
e^{\lambda_n(\theta;\sfz,\sfx)}B_{n,\alpha_2}(\theta,\sfz;\Xi)\big]
\eeas
for some function $B_{n,\alpha_2}(\theta,\sfz;\Xi)$ for 
$\theta\in(0,\theta_0)$ and 
$\alpha_2\in\bbZ_+^{\sfd_1}$ with $|\alpha_2|={\sf k}$. 
For every $L>0$, 
the $L^1$-norm of the functional $B_{n,\alpha_2}(\theta,\sfz;\Xi)$ is dominated by 
a polynomial of 
\beas &&
\|X_n(\theta)\|_{{\sf k}+1,p}, \quad
\|\Delta_{X_n(\theta)}^{-1}{\bf 1}_{\{\sum_{j=1}^{\sf k}\|D^j\Xi\|_{\HH^{\otimes j}}>0\}}\|_p,\quad
%\exp\bigg(2^{-1}(1-\theta^2)G_\infty[(\tti\sfz)^{\otimes2}]\bigg),\quad
\|G_\infty^{-1}\|_p,\quad
{\colred \|G_\infty\|_{{\sf k},p},\quad}
\\&&
\|M_n\|_{{\sf k},p},\quad
\|W_n(\theta)\|_{{\sf k},p},\quad
\|N_n\|_{{\sf k},p},\quad
\|\Xi\|_{{\sf k},p},\quad
(1+|\sfz|)^{-L}
\eeas
uniformly in $\theta\in(0,\theta_0)$ and $n\in\bbN$, if we take a sufficiently large $p$. 
Therefore we have 
\beas
\big|\varphi_n(\theta,\sfz,\sfx;\Xi) \big|
&\simleq& 
|(\sfz,\sfx)|^{-{\sf k}}
\eeas
uniformly in $\theta\in(0,\theta_0)$, $(\sfz,\sfx)\in\bbR^{\check{\sfd}}$ and $n\in\bbN$. 
Consequently, we obtained 
\bea\label{170806-8} 
\sup_{\theta\in(0,1]}\big|\varphi_n(\theta,\sfz,\sfx;\Xi)\big|
&\simleq& 
\big|(\sfz,\sfx)\big|^{-{\sf k}}
\eea
uniformly in $(\sfz,\sfx)\in\bbR^{\check{\sfd}}$ and $n\in\bbN$, 
under the assumptions (\ref{170806-5}) and (\ref{170806-6}).

For $\theta\geq|c|$, 
\beas 
\Delta_{\check{M}_n(\theta)} &=& 
\Delta_{(M_n+\theta^{-1}W_\infty,X_\infty)}
+r_nd_n^*(\theta),
\eeas
where $\{d_n^*(\theta);\theta\in[|c|,1],n\in\bbN\}$ is a family of functionals bounded in $\bbD^{\csfd+6,\infty}$. 
Moreover, 
\beas&&
\Delta_{(M_n+\theta^{-1}W_\infty,X_\infty)}
\\&=& 
\theta^{-2d}\det
\left[\begin{array}{cc}
\langle \theta DM_n,\theta DM_n\rangle_\HH+\langle DW_\infty,DW_\infty\rangle_\HH & \langle DX_\infty,DW_\infty\rangle_\HH\\
\langle DW_\infty,DX_\infty\rangle_\HH& \langle DX_\infty,DX_\infty\rangle_\HH
\end{array}
\right]
+r_n^{(1\wedge\kappa_1)/2}{\colred \dot{d}_n(\theta)}
\\&\geq& 
\det
\left[\begin{array}{cc}
\langle c DM_n,c DM_n\rangle_\HH+\langle DW_\infty,DW_\infty\rangle_\HH & \langle DX_\infty,DW_\infty\rangle_\HH\\
\langle DW_\infty,DX_\infty\rangle_\HH& \langle DX_\infty,DX_\infty\rangle_\HH
\end{array}
\right]
+r_n^{(1\wedge\kappa_1)/2}{\colred \dot{d}_n(\theta)}
\\&=&
\Delta_{(cM_n+W_\infty,X_\infty)}+r_n^{(1\wedge\kappa_1)/2}\tilde{d}_n(\theta)
\eeas
for $\theta\in[|c|,1]$, where {\colred $\dot{d}_n(\theta)$ and} $\tilde{d}_n(\theta)$ are 
functionals in $\bbD^{\csfd+6,\infty}$ such that 
\beas 
\sup_{\theta\in[0,1],n\in\bbN}r_n^{-(1\wedge\kappa_1)/2}
{\colred \bigg\{\|\dot{d}_n(\theta)\|_{\csfd+6,p}+}\|\tilde{d}_n(\theta)\|_{\csfd+6,p}
{\colred \bigg\}}
&<&\infty
\eeas 
for every $p>1$. 
Consequently, 
\bea\label{170810-2} 
\Delta_{\check{M}_n(\theta)} &\geq& 
\Delta_{(cM_n+W_\infty,X_\infty)}+r_n^{(1\wedge\kappa_1)/2}d^{**}_n(\theta)
\eea
for $\theta\in[|c|,1]$ and $n\in\bbN$. 
The functional $d^{**}_n(\theta)$ is defined by $d_n^*(\theta)$ and $\tilde{d}_n(\theta)$. 
We define $e_n$ as the sum of squares of the coefficient of the polynomial $d^{**}_n(\theta)$ in $\theta$ and $\theta^{-1}$. 
Then $\|e_n\|_{\csfd+6,p}=O(r_n^{1\wedge\kappa_1})$ for every $p>1$. 
{\colred 
On the other hand, we have an expansion
\beas 
\Delta_{X_n(\theta)} 
&=& 
\Delta_{X_\infty}\big(1+r_n^{1/2}\Delta_{X_\infty}^{-1}\hat{d}_n(\theta)\big)
\eeas
with a functional $\hat{d}_n(\theta)$ such that 
all coefficients of this polynomial in $\theta$ are of $O(r_n^{1/2})$ in $L^{\infty-}$. 
Let $f_n$ be the sum of squares of the coefficients of $\hat{d}_n(\theta)$.
}
By [C] (iv) (b) and the definition of $\psi_n$, 
considering the event $\{\xi_n>1/2\}\supset\{\psi_n<1\}$, we have 
\bea\label{r1}
\|1-\psi_n\|_{\csfd+6,1+2^{-1}\kappa}=O(r_n^{p_1}) 
\eea
for $p_1=(1+\kappa)/(1+2^{-1}\kappa)>1$. 
If $\xi_n\leq1$, then $\inf_{\theta\in[|c|,1]}\Delta_{\check{M}_n(\theta)}\geq s_n/13$ 
and {\colred $\inf_{\theta\in[0,1]}\Delta_{X_n(\theta)}\geq \Delta_{X_\infty}/2$} for large $n$. 
Thus the conditions (\ref{170806-5}) and (\ref{170806-6}) are ensured 
and hence the estimate (\ref{170806-8}) is available 
for various functionals $\Xi$ having a factor related to $\psi_n$, as we will see below. 
%On the other hand, the probability of the event $\{\xi_n\leq1/2\}\subset\{\psi_n=1\}$ is of $1-o(r_n)$.  
Condition (\ref{c1}) gives $L^{\infty-}$-boundedness of $\csfd+6$ derivatives of $\sigma_{M_n}$. 

Condition (\ref{c22}) with (\ref{c1}) implies 
\bea\label{c32}
\|\big\langle DG^{(2)}_n,u_n\big\rangle_\mfh\|_{{\colorr \csfd+5},p}%{{\colorr \ell-3},p}
&=& O(r_n)%{\colorr dasu}%\qquad(k=2,3)
\eea
[This is the only place where the $L^p$ boundedness of $D^{\ell-2}G_n^{(2)}$ is required. 
That is, we will only need that $\|G^{(2)}_n\|_{\ell-3,p}=O(r_n)$ and (\ref{c32}) 
in what follows. ] 
Condition (\ref{c33}) implies 
\bea\label{c43}
\bigg\|\bigg\langle D\bigg(\big\langle DG^{(3)}_n,u_n\big\rangle_\mfh\bigg),u_n\bigg\rangle_\mfh\bigg\|_{\csfd+6,p}
&=& o(r_n)%{\colorr deru}%\qquad(k=2,3)
\eea
for every $p>1$ under (\ref{c1}). 
Condition (\ref{c7}) implies 
\bea\label{c9}
\sum_{{\sf A}=W_\infty,X_\infty}
\bigg\|\bigg\langle D\bigg(\big\langle D\langle D{\sf A},u_n\rangle_\mfh,u_n\big\rangle_\mfh\bigg),u_n\bigg\rangle_\mfh\bigg\|_{\csfd+5,p}
&=& o(r_n)%{\colorr deru}
\eea
under (\ref{c1}). 
Moreover, 
Condition (\ref{c10.1}) implies 
\bea\label{c10.2}
\sum_{{\sf B}=\dotw_n,N_n,\dotx_n}
\bigg\| \bigg\langle D\big\langle D\langle D{\sf B},u_n\rangle_\HH ,u_n\big\rangle_\HH
 ,u_n\bigg\rangle_\HH\bigg\|_{\csfd+5,p}&=& o(1)%{\colorr deru}
\eea
under (\ref{c1}). 

The estimate 
$\|\hat{G}^{(1)}_n(\theta)\|_{\csfd+5,p} = O(r_n)$ for every $p>1$ 
follows from (\ref{c5}), (\ref{c10}) and (\ref{c1}). 
% and the assumptions that $W_\infty\in \bbD^{\csfd+6,\infty}(\bbR^\sfd)$ 
%and $X_\infty\in \bbD^{\csfd+6,\infty}(\bbR^{\sfd_1})$. 
The estimate 
$ \|\check{G}^{(1)}_n\|_{\csfd+5,p} = O(r_n)$ for every $p>1$ 
follows from (\ref{c10}), therefore 
\bea\label{cesta} 
\|G^{(1)}_n(\theta)\|_{\csfd+5,p} &=& O(r_n)
\eea
for every $p>1$. 
We obtain 
\bea\label{cestb1} 
\big\|\big\langle D\hat{G}^{(1)}_n(\theta),u_n\big\rangle_\HH\big\|_{\csfd+5,p} &=& o(r_n) 
\eea
for every $p>1$ 
from (\ref{c7}) and (\ref{c10.1}). 
Estimate 
\bea\label{cestb2} 
\big\|\big\langle D\check{G}^{(1)}_n,u_n\big\rangle_\HH\big\|_{\csfd+5,p} &=& O(r_n)
\eea
for every $p>1$ follows from (\ref{c10}). 

We have 
\beas
\bigg\|\bigg\langle D\big\langle D\hat{G}^{(1)}_n(\theta),u_n\big\rangle_\HH,u_n\bigg\rangle_\HH\bigg\|_{\csfd+5,p} &=& o(r_n) 
\eeas
by (\ref{c9}) and (\ref{c10.2}). 
Follows the estimate 
\beas
\bigg\|\bigg\langle D\big\langle D\check{G}^{(1)}_n,u_n\big\rangle_\HH,u_n\bigg\rangle_\HH\bigg\|_{\csfd+5,p} &=& o(r_n) 
\eeas
from (\ref{c10.1}), so that 
\bea\label{cestc}
\bigg\|\bigg\langle D\big\langle DG^{(1)}_n(\theta),u_n\big\rangle_\HH,u_n\bigg\rangle_\HH\bigg\|_{\csfd+5,p} &=& o(r_n) 
\eea

Since $\varphi_n(0;\psi_n)=E[\Psi(\sfz,\sfx)\psi_n]$, Proposition \ref{170806-1} gives 
\beas 
\hat{g}^\alpha_n(\sfz,\sfx)-\hat{h}^\alpha_n(\sfz,\sfx)
&=&
\partialbs^\alpha R_n(\sfz,\sfx)\yeq \sum_{i=3}^{12}\partialbs^\alpha R^{(i)}_n(\sfz,\sfx).
\eeas
We shall show 
\bea\label{170806-2}
\sup_n\sup_{(\sfz,\sfx)\in\Lambda(\csfd)}
|(\sfz,\sfx)|^{\csfd+1}r_n^{-1}\big|\partialbs^\alpha R^{(i)}_n(\sfz,\sfx)\big|
&<& \infty
\eea
for $i=3,...,12$. 

We remind the representation of $R^{(3)}_n(\sfz,\sfx)=\rho^{(3)}_n(f)$ 
with $f(z,x)=\exp(z[\tti\sfz]+x[\tti\sfx])$. 
There appear 24 terms in this expression and we name them $R[i]$ ($i=1,...,24$). 
We will repeatedly use the inequality (\ref{170806-8}) based on 
integration-by-parts (IBP) to estimate $\partialbs^\alpha R[i]$. 
It should be noted that the factors $(1-\theta^2)G_\infty[\tti\sfz,\cdot]$ comes out but 
they are controlled by $\exp\big(2^{-1}(1-\theta^2)G_\infty[(\tti\sfz)^{\otimes2}]\big)$ if non-degeneracy of $G_\infty$ is used, 
as already mentioned. 
%In this step, boundedness of $\bbD_{\ell,p}$-norms of $W_n$, $N_n$ and $X_n$ in $\bbD_{\ell+1,p}$ is used\footnote{$\psi_n$ requires $(\ell+1)$-th derivatives. } 
%but it is a consequence of (\ref{c10}). 
Estimates of $R[i]$ are as follows. 
\bi
\im $R[1]$. The estimate 
$\sup_{(\sfz,\sfx)\in\Lambda_n(\csfd)}|(\sfz,\sfx)|^{\csfd+1}|\partialbs^\alpha R[1]|=o(r_n)$ follows from $\csfd+2$ times IBP, (\ref{r1}) and (\ref{c1}). 

\im $R[9]$. 
There are three components $R[9,j]$ $(j=1,2,3)$ of $R[9]$ 
corresponding to the decomposition 
\bea\label{170808-1}
\bigg\langle D\big\langle D(G(\sfz)\psi_n),u_n[\tti\sfz]\big\rangle_\HH,u_n[\tti\sfz]\bigg\rangle_\HH
&=&
\bigg\langle D\big\langle DG(\sfz),u_n[\tti\sfz]\big\rangle_\HH,u_n[\tti\sfz]\bigg\rangle_\HH\psi_n
\nn\\&&
+2\big\langle DG(\sfz),u_n[\tti\sfz]\big\rangle_\HH\big\langle D\psi_n,u_n[\tti\sfz]\big\rangle_\HH
\nn\\&&
+G(\sfz)\bigg\langle D\big\langle D\psi_n,u_n[\tti\sfz]\big\rangle_\HH,u_n[\tti\sfz]\bigg\rangle_\HH
\eea
for $G=G^{(3)}_n$. 
The estimate 
$\sup_{(\sfz,\sfx)\in\Lambda_n(\csfd)}|(\sfz,\sfx)|^{\csfd+1}|\partialbs^\alpha R[9,1]|=o(r_n)$ follows from 
{\coloro $\csfd+6$} IBP, (\ref{c43}) and (\ref{r1}). 
Since $\|\big\langle DG^{(3)}_n(\sfz),u_n[\tti\sfz]\big\rangle_\HH\|_{\csfd+4,p}\simleq o(r_n)|\sfz|^4$ 
by (\ref{c33}) and $|(\sfz,\sfx)|\leq r_n^{-q}\leq r_n^{-1/2}$, we may deal with $|(\sfz,\sfx)|^3$ for $R[9,2]$. 
Apply $\csfd+4$ IBP , (\ref{r1}) and (\ref{c1}) 
to obtain $\sup_{(\sfz,\sfx)\in\Lambda_n(\csfd)}|(\sfz,\sfx)|^{\csfd+1}|\partialbs^\alpha R[9,2]|=O(r_n^{p_1})$. 
Similarly, we use $\|G^{(3)}_n(\sfz)\|_{\csfd+4,p}\simleq O(r_n)|\sfz|^3$ by (\ref{c23}), 
$\csfd+4$ IBP, (\ref{r1}) and (\ref{c1}) to obtain $\sup_{(\sfz,\sfx)\in\Lambda_n(\csfd)}|(\sfz,\sfx)|^{\csfd+1}|\partialbs^\alpha R[9,3]|=O(r_n^{p_1})$. 
Thus, $\sup_{(\sfz,\sfx)\in\Lambda_n(\csfd)}|(\sfz,\sfx)|^{\csfd+1}|\partialbs^\alpha R[9]|=o(r_n)$. 

\im $R[2]$. 
We take a way similar to $R[9]$ to show $\sup_{(\sfz,\sfx)\in\Lambda_n(\csfd)}|(\sfz,\sfx)|^{\csfd+1}|\partialbs^\alpha R[2]|=o(r_n)$. 
$R[2,j]$ $(j=1,2,3)$ are defined by (\ref{170808-1}) for $G=G^{(2)}_n$. 
Apply $\csfd+5$ IBP to $R[2,1]$ with (\ref{c42}) and (\ref{r1}). 
$\csfd+3$ IBP to $R[2,2]$ with (\ref{c32}), (\ref{r1}) and (\ref{c1}). 
$\csfd+3$ IBP to $R[2,3]$ with (\ref{c22}), (\ref{r1}) and (\ref{c1}). 

\im $R[16]$. In the same way as for $R[2]$, we can show 
$\sup_{(\sfz,\sfx)\in\Lambda_n(\csfd)}|(\sfz,\sfx)|^{\csfd+1}|\partialbs^\alpha R[16]|=o(r_n)$. 
In this case, decomposing $R[16]$ into $R[16,j]$ $(j=1,2,3)$ by (\ref{170808-1}) for $G=G^{(1)}_n$, 
we apply $\csfd+5$ IBP to $R[16,1]$ with (\ref{cestc}) and (\ref{r1}). 
$\csfd+3$ IBP to $R[16,2]$ with (\ref{cestb1}), (\ref{cestb2}), (\ref{r1}) and (\ref{c1}). 
$\csfd+3$ IBP to $R[	16,3]$ with (\ref{cestc}), (\ref{r1}) and (\ref{c1}). 

\im $R[23]$. 
There are two terms $R[23,i]$ ($i=1,2$) for the decomposition 
\bea\label{170808-5} 
F\big\langle D\big(G\psi_n\big),u_n[\tti\sfz]\big\rangle_\HH
&=&
F\big\langle (DG),u_n[\tti\sfz]\big\rangle_\HH\psi_n
+\ FG\big\langle D\psi_n,u_n[\tti\sfz]\big\rangle_\HH
\eea
%\bea\label{170808-5} 
%\big\langle D\big(G\psi_n\big),u_n[\tti\sfz]\big\rangle_\HH
%&=&
%\big\langle (DG),u_n[\tti\sfz]\big\rangle_\HH\psi_n
%+\ G\big\langle D\psi_n,u_n[\tti\sfz]\big\rangle_\HH
%\eea
for $F=1$ and $G=G^{(3)}_n(\sfz)$. 
Apply $\csfd+5$ IBP, (\ref{c33}) and (\ref{r1}) to $R[23,1]$, and 
$\csfd+3$ IBP, (\ref{c23}) and (\ref{r1}) to $R[23,2]$. 
Then we obtain $\sup_{(\sfz,\sfx)\in\Lambda_n(\csfd)}|(\sfz,\sfx)|^{\csfd+1}|\partialbs^\alpha R[23]|=o(r_n)$. 

\im $R[24]$. There are two terms $R[24,i]$ ($i=1,2$) 
according to (\ref{170808-5}) for $F=1$ and $G=\hat{G}^{(1)}_n(\theta;\sfz,\sfx)$. 
To $R[24,1]$, use $\csfd+4$ IBP with (\ref{cestb1}) and (\ref{r1}). 
To $R[24,2]$, $\csfd+2$ IBP with (\ref{cesta}) and (\ref{r1}). 
Then $\sup_{(\sfz,\sfx)\in\Lambda_n(\csfd)}|(\sfz,\sfx)|^{\csfd+1}|\partialbs^\alpha R[24]|=o(r_n)$. 

\im $R[11]$. By the decomposition (\ref{170808-5}) 
%\bea\label{170808-5} 
%F\big\langle D\big(G\psi_n\big),u_n[\tti\sfz]\big\rangle_\HH
%&=&
%F\big\langle (DG),u_n[\tti\sfz]\big\rangle_\HH\psi_n
%+\ FG\big\langle D\psi_n,u_n[\tti\sfz]\big\rangle_\HH
%\eea
for $F=G=G^{(3)}_n(\sfz)$, 
we have two terms $R[11,i]$ ($i=1,2$) as the components of $R[11]$. 
Apply $\csfd+6$ IBP with (\ref{c33}) and (\ref{r1}) to $R[11,1]$. 
Apply $\csfd+4$ IBP with (\ref{c23}) and (\ref{r1}) to $R[11,2]$. 
Then we have $\sup_{(\sfz,\sfx)\in\Lambda_n(\csfd)}|(\sfz,\sfx)|^{\csfd+1}|\partialbs^\alpha R[11]|=o(r_n)$. 

\im $R[10]$. This case is similar to $R[11]$. 
There appear two terms $R[10,i]$ ($i=1,2$) by (\ref{170808-5}) 
for $F=G^{(2)}_n(\sfz)$ and $G=G^{(3)}_n(\sfz)$.  
Then we obtain 
$\sup_{(\sfz,\sfx)\in\Lambda_n(\csfd)}|(\sfz,\sfx)|^{\csfd+1}|\partialbs^\alpha R[10]|=o(r_n)$ 
by applying $\csfd+5$ IBP with (\ref{c33}), (\ref{c22}) and (\ref{r1}) to $R[10,1]$, and 
by $\csfd+3$ IBP with (\ref{c22}), (\ref{c23}) and (\ref{r1}) to $R[10,2]$. 

\im $R[12]$. This case is similar to $R[10]$. 
There appear two terms $R[10,i]$ ($i=1,2$) by (\ref{170808-5}) 
for $F=G^{(1)}_n(\theta;\sfz,\sfx)$ and $G=G^{(3)}_n(\sfz)$.  
Then 
$\sup_{(\sfz,\sfx)\in\Lambda_n(\csfd)}|(\sfz,\sfx)|^{\csfd+1}|\partialbs^\alpha R[12]|=o(r_n)$. 
For that, apply $\csfd+5$ IBP with (\ref{c33}), (\ref{cesta}) and (\ref{r1}) to $R[12,1]$, and 
by $\csfd+3$ IBP with (\ref{cesta}), (\ref{c23}) and (\ref{r1}) to $R[12,2]$. 

\im $R[4]$. %This is similar to the case $R[11]$
There appear two terms $R[4,i]$ ($i=1,2$) by (\ref{170808-5}) 
for $F=G^{(3)}_n(\sfz)$ and $G=G^{(2)}_n(\sfz)$. 
We obtain 
 $\sup_{(\sfz,\sfx)\in\Lambda_n(\csfd)}|(\sfz,\sfx)|^{\csfd+1}|\partialbs^\alpha R[4,1]|=O(r_n^{1+2\theta})$ 
 by applying $\csfd+5$ IBP with (\ref{c32}) and (\ref{r1}). 
 In this case, the factor $|(\sfz,\sfx)|^2$ is evaluated by $r_n^{-2q}$. 
Apply $\csfd+3$ IBP with (\ref{c22}), (\ref{c23}) and (\ref{r1}) to $R[4,2]$ to obtain 
$\sup_{(\sfz,\sfx)\in\Lambda_n(\csfd)}|(\sfz,\sfx)|^{\csfd+1}|\partialbs^\alpha R[4,2]|=O(r_n^{p_1})$, therefor 
$\sup_{(\sfz,\sfx)\in\Lambda_n(\csfd)}|(\sfz,\sfx)|^{\csfd+1}|\partialbs^\alpha R[4]|=O(r_n^{p_2})$, 
where $p_2=p_1\wedge(1+2\theta)$. 

\im $R[3]$. This is similar to the case $R[4]$. 
There appear two terms $R[3,i]$ ($i=1,2$) by (\ref{170808-5}) 
for $F=G=G^{(2)}_n(\sfz)$. 
We obtain 
$\sup_{(\sfz,\sfx)\in\Lambda_n(\csfd)}|(\sfz,\sfx)|^{\csfd+1}|\partialbs^\alpha R[3]|=O(r_n^{p_2})$ 
by applying $\csfd+4$ IBP with (\ref{c32}) and (\ref{r1}) to $R[3,1]$, and 
by $\csfd+2$ IBP with (\ref{c22}) and (\ref{r1}) to $R[3,2]$. 

\im $R[5]$. Similar to the case $R[3]$. 
There appear two terms $R[5,i]$ ($i=1,2$) by (\ref{170808-5}) 
for $F=G^{(1)}_n(\theta;\sfz,\sfx)$ and $G=G^{(2)}_n(\sfz)$. 
We obtain 
$\sup_{(\sfz,\sfx)\in\Lambda_n(\csfd)}|(\sfz,\sfx)|^{\csfd+1}|\partialbs^\alpha R[5]|=O(r_n^{p_2})$ 
by applying $\csfd+4$ IBP with (\ref{c32}), (\ref{cesta}) and (\ref{r1}) to $R[3,1]$, and 
by $\csfd+2$ IBP with (\ref{c22}), (\ref{cesta}) and (\ref{r1}) to $R[3,2]$. 

\im $R[18]$. Similar to $R[4]$. 
There are two terms $R[18,i]$ ($i=1,2$) by (\ref{170808-5}) 
for $F=G^{(3)}_n(\sfz)$ and $G=G^{(1)}_n(\theta;\sfz,\sfx)$. 
Then 
$\sup_{(\sfz,\sfx)\in\Lambda_n(\csfd)}|(\sfz,\sfx)|^{\csfd+1}|\partialbs^\alpha R[18]|=O(r_n^{p_2})$ follows from 
$\csfd+5$ IBP with (\ref{cestb1}), (\ref{cestb2}), (\ref{c23}) and (\ref{r1}) to $R[18,1]$, and also 
$\csfd+3$ IBP with (\ref{cesta}), (\ref{c23}) and (\ref{r1}) to $R[18,2]$. 

\im $R[17]$. Similar to $R[18]$. 
There are two terms $R[17,i]$ ($i=1,2$) by (\ref{170808-5}) 
for $F=G^{(2)}_n(\sfz)$ and $G=G^{(1)}_n(\theta;\sfz,\sfx)$. 
Then 
$\sup_{(\sfz,\sfx)\in\Lambda_n(\csfd)}|(\sfz,\sfx)|^{\csfd+1}|\partialbs^\alpha R[17]|=O(r_n^{p_2})$ follows from 
$\csfd+4$ IBP with (\ref{cestb1}), (\ref{cestb2}), (\ref{c22}) and (\ref{r1}) to $R[17,1]$, as well as  
$\csfd+2$ IBP with (\ref{cesta}), (\ref{c22}) and (\ref{r1}) to $R[17,2]$. 

\im $R[19]$. Similar to $R[17]$. 
Two terms $R[19,i]$ ($i=1,2$) by (\ref{170808-5}) 
for $F=G^{(1)}_n(\theta';\sfz,\sfx)$ and $G=G^{(1)}_n(\theta;\sfz,\sfx)$. 
Then 
$\sup_{(\sfz,\sfx)\in\Lambda_n(\csfd)}|(\sfz,\sfx)|^{\csfd+1}|\partialbs^\alpha R[19]|=O(r_n^{p_2})$, which follows from 
$\csfd+4$ IBP with (\ref{cestb1}), (\ref{cestb2}), (\ref{cesta}) and (\ref{r1}) to $R[19,1]$, as well as  
$\csfd+2$ IBP with (\ref{cesta}) and (\ref{r1}) to $R[19,2]$. 

\im $R[14]$. One factor $|(\sfz,\sfx)|$ is cancelled by $r_n^{1/2}$ offered by $\big(G^{(3)}_n(\sfz)\big)^2$. 
We apply $\csfd+6$ IBP with (\ref{c23}) and (\ref{r1})
to obtain $\sup_{(\sfz,\sfx)\in\Lambda_n(\csfd)}|(\sfz,\sfx)|^{\csfd+1}|\partialbs^\alpha R[14]|=O(r_n^{3/2})$. 

\im $R[13]$. Similar to $R[14]$. 
Cancelling one factor $|(\sfz,\sfx)|$ by $r_n^{1/2}$ in $G^{(2)}_n(\sfz)G^{(3)}_n(\sfz)$, 
we apply $\csfd+5$ IBP with (\ref{c22}), (\ref{c23}) and (\ref{r1}) 
to show $\sup_{(\sfz,\sfx)\in\Lambda_n(\csfd)}|(\sfz,\sfx)|^{\csfd+1}|\partialbs^\alpha R[13]|=O(r_n^{3/2})$. 

\im $R[7]$. It is essentially the same as $R[13]$. Therefore 
$\sup_{(\sfz,\sfx)\in\Lambda_n(\csfd)}|(\sfz,\sfx)|^{\csfd+1}|\partialbs^\alpha R[7]|=O(r_n^{3/2})$. 

\im $R[15]$. Similar to $R[14]$. 
Cancelling one factor $|(\sfz,\sfx)|$ by $r_n^{1/2}$ in $G^{(1)}_n(\theta;\sfz,\sfx)G^{(3)}_n(\sfz)$, 
we apply $\csfd+5$ IBP with (\ref{cesta}), (\ref{c23}) and (\ref{r1}) 
to show $\sup_{(\sfz,\sfx)\in\Lambda_n(\csfd)}|(\sfz,\sfx)|^{\csfd+1}|\partialbs^\alpha R[15]|=O(r_n^{3/2})$. 

\im $R[21]$. Essentially same as $R[15]$, therefore 
$\sup_{(\sfz,\sfx)\in\Lambda_n(\csfd)}|(\sfz,\sfx)|^{\csfd+1}|\partialbs^\alpha R[21]|=O(r_n^{3/2})$. 

\im $R[6]$. One factor cancellation and $\csfd+4$ IBP with (\ref{c22}) and (\ref{r1}) 
give $\sup_{(\sfz,\sfx)\in\Lambda_n(\csfd)}|(\sfz,\sfx)|^{\csfd+1}|\partialbs^\alpha R[6]|=O(r_n^{3/2})$. 

\im $R[8]$. Similarly to $R[6]$, 
$\csfd+4$ IBP with (\ref{cesta}), (\ref{c22}) and (\ref{r1}) gives 
$\sup_{(\sfz,\sfx)\in\Lambda_n(\csfd)}|(\sfz,\sfx)|^{\csfd+1}|\partialbs^\alpha R[8]|=O(r_n^{3/2})$. 

\im $R[20]$. This is essentially equivalent to $R[8]$. 
$\sup_{(\sfz,\sfx)\in\Lambda_n(\csfd)}|(\sfz,\sfx)|^{\csfd+1}|\partialbs^\alpha R[20]|=O(r_n^{3/2})$. 

\im $R[22]$. Apply $\csfd+4$ IBP with (\ref{cesta}) and (\ref{r1}) to obtain 
$\sup_{(\sfz,\sfx)\in\Lambda_n(\csfd)}|(\sfz,\sfx)|^{\csfd+1}|\partialbs^\alpha R[22]|=O(r_n^{3/2})$. 
\ei
In conclusion, 
\bea\label{17-811-21}
\sup_{(\sfz,\sfx)\in\Lambda(\csfd)}
|(\sfz,\sfx)|^{\csfd+1}\big|\partialbs^\alpha R^{(i)}_n(\sfz,\sfx)\big|
&=& 
o(r_n)
\eea
for $i=3$
and in particular (\ref{170806-2}) is valid for $i=3$. 

It is possible to obtain 
$\sup_{(\sfz,\sfx)\in\Lambda(\csfd)}
|(\sfz,\sfx)|^{\csfd+1}\big|\partialbs^\alpha R^{(i)}_n(\sfz,\sfx)\big|=O(r_n^{p_1})$, 
and hence 
(\ref{170806-2}) for $i=4$ with $\csfd+1$ IBP, (\ref{c10}) and (\ref{r1}). 
Each $R^{(i)}_n(\sfz,\sfx)$ $(i=5,...,12)$ is a difference of two terms. 
We will estimate each term separately. 
Apply ${\coloro \csfd+\beta_x+1}$ IBP with respect to $X_\infty$ 
to $\varsigma$ and use $\exp\big(2^{-1}G_\infty[(\tti\sfz)^{\otimes2}]\big)$ 
with non-degeneracy of $G_\infty$ (without IBP) 
to obtain 
\beas 
\sup_{(\sfz,\sfx)\in\bbR^{\csfd}}
|(\sfz,\sfx)|^{\csfd+1}
\big|E\big[\Psi(\sfz,\sfx){\mathfrak \varsigma}({\colred\tti}\sfz,{\colred\tti}\sfx)]\big|
&<&
\infty
\eeas
for $\varsigma={\mathfrak S}^{(3,0)}, {\mathfrak S}_{0}^{(2,0)}, 
{\mathfrak S}^{(2,0)}, {\mathfrak S}^{(1,1)}, {\mathfrak S}^{(1,0)}, {\mathfrak S}^{(0,1)}, 
{\mathfrak S}_{1}^{(2,0)}$ and ${\mathfrak S}_{1}^{(1,1)}$. 
Remark that we need 
$X_\infty\in\bbD^{\csfd+\beta_x+2,\infty}(\bbR^{\sfd_1})$, 
$W_\infty\in\bbD^{\csfd+\beta_x+1,\infty}(\bbR^\sfd)$ and 
$G_\infty\in\bbD^{\csfd+\beta_x+1,\infty}(\bbR^\sfd\otimes_+\bbR^\sfd)$ 
in this procedure. 
To estimate each first term, 
$\csfd+4=\ell-4$ IBP with respect to $X_\infty$ 
is sufficient because the degree of each $\varsigma$ is not greater than three. 
For that, Conditions (\ref{c1}), (\ref{c22}), (\ref{c23}), (\ref{c5}) and (\ref{c10}) work 
together with the factor $\exp\big(2^{-1}G_\infty[(\tti\sfz)^{\otimes2}]\big)$ 
with non-degeneracy of $G_\infty$. 
In this way, we obtain 
\beas 
\sup_n\sup_{(\sfz,\sfx)\in\bbR^{\csfd}}
|(\sfz,\sfx)|^{\csfd+1}
\big|E\big[\Psi(\sfz,\sfx)\varsigma_n({\colred\tti}\sfz,{\colred\tti}\sfx)]\big|
&<&
\infty
\eeas
for $\varsigma_n={\mathfrak S}_n^{(3,0)}, {\mathfrak S}_{0,n}^{(2,0)}, 
{\mathfrak S}_n^{(2,0)}, {\mathfrak S}_n^{(1,1)}, {\mathfrak S}_n^{(1,0)}, {\mathfrak S}_n^{(0,1)}, 
{\mathfrak S}_{1,n}^{(2,0)}$ and ${\mathfrak S}_{1,n}^{(1,1)}$. 
Consequently, (\ref{170806-2}) was verified for $i=5,...,12$. 

Furthermore,  [C] (iii)$^\flat$ and (\ref{r1}) gives $\partialbs^\alpha R^{(i)}_n(\sfz,\sfx)=o(r_n)$ for $i=5,...,12$. 
%Suppose that $\phi_n\in C^\infty(\bbR^\sfd\otimes\bbR^\sfd)\times\bbR^\csfd;[0,1])$ satisfy $\phi_n=1$ on 
Now it suffices to show that [C] (iii) implies [C] (iii)$^\flat$. 
For $\eta>0$, let 
\beas 
F^\eta_n(\sfz',\sfx')
&=&
E\big[\Psi(\sfz',\sfx')\psi\big(\eta(|G_\infty|+|W_\infty|+|X_\infty|)\big)
\bar{{\mathfrak T}}_n({\colred\tti}\sfz',{\colred\tti}\sfx')\big]
\qquad ((\sfz',\sfx')\in\bbC^\csfd)
\eeas
for $n\in\bbN\cup\{\infty\}$, where $\bar{{\mathfrak T}}_\infty={\mathfrak T}$. 
$F^\eta_n$ are analytic functions of $(\sfz',\sfx')$ for $\eta>0$ and $n\in\bbN\cup\{\infty\}$. 
Let 
\beas 
F^0_n(\sfz,\sfx)
&=&
E\big[\Psi(\sfz,\sfx)
\bar{{\mathfrak T}}_n({\colred\tti}\sfz,{\colred\tti}\sfx)\big]
\qquad ((\sfz,\sfx)\in\bbR^\csfd)
\eeas
for $n\in\bbN\cup\{\infty\}$. 
%Let $(\sfz,\sfx)\in\bbR^\csfd$. 
Then $\partialbs^\alpha F^\eta_n$ on $\bbR^\csfd$ is explicitly expressed by 
\beas 
\partialbs^\alpha F^\eta_n(\sfz,\sfx)
&=& 
E\big[\partialbs^\alpha\{\Psi(\sfz,\sfx)
\bar{{\mathfrak T}}_n({\colred\tti}\sfz,{\colred\tti}\sfx)\}
\psi\big(\eta(|G_\infty|+|W_\infty|+|X_\infty|)\big)
\big]
\eeas
for $\eta\geq0$, $n\in\bbN\cup\{\infty\}$ and $(\sfz,\sfx)\in\bbR^\csfd$. 
Remark that the differential operator $\partialbs^\alpha$ is in the real domain, so that 
this equation is valid even for $\eta=0$. 
On the other hand, $\partial_{(\sfz',\sfx')}^\alpha F^0(\sfz',\sfx')$ is not defined. 
Fix $(\sfz,\sfx)\in\bbR^\csfd$ and $\alpha\in\bbZ_+^\csfd$. Let $\ep>0$. 
Then there exists $\eta>0$ such that 
\bea\label{170812-1} 
\sup_{n\in\bbN\cup\{\infty\}}\sum_{a=0,\alpha}|\partialbs^aF^\eta_n(\sfz,\sfx)-\partialbs^aF^0_n(\sfz,\sfx)|&<&\ep
\eea
For $\eta>0$, the gap $F_n^\eta(\sfz',\sfx')-F_\infty^\eta(\sfz',\sfx')\to0$ locally uniformly as $n\to\infty$ 
because the coefficients of $\bar{\mathfrak T}_n-{\mathfrak T}$ converge to zero in $L^p$ for some $p>1$. 
Chauchy's integral formula for multivariate analytic functions ensures the convergence 
\bea\label{170812-2}
\partialbs^\alpha F^\eta_n(\sfz,\sfx) \to \partialbs^\alpha F^\eta_\infty(\sfz,\sfx)\qquad(n\to\infty)
\eea
for every $\eta>0$. 
Then (\ref{170812-1}) and (\ref{170812-2}) give 
$
\limsup_{n\to\infty}\big|\partialbs^\alpha F^0_\infty(\sfz,\sfx)-\partialbs^\alpha F^0_n(\sfz,\sfx)\big|
< 2\ep,
$
and hence 
\beas 
\lim_{n\to\infty}\partialbs^\alpha F^0_n(\sfz,\sfx)
&=&\partialbs^\alpha F^0_\infty(\sfz,\sfx).
\eeas 
By the equality in [C] (iii) (b), we obtain 
\beas 
\lim_{n\to\infty}
\partialbs^\alpha E\big[\Psi(\sfz,\sfx){\mathfrak T}_n({\colred\tti}\sfz,{\colred\tti}\sfx)\big]
&=& 
\partialbs^\alpha E\big[\Psi(\sfz,\sfx)\mathfrak T({\colred\tti}\sfz,{\colred\tti}\sfx)\big], 
\eeas
that is [C] (iii)$^\flat$. 
This completes the proof of Lemma \ref{170810-5}. 
\qed\onelineskip

The following is a slightly different set of conditions. % for [C]. 

\bd
\im[[C$^\natural$\!\!]]
{\bf (i)}  [C] (i) holds. 
\bd
\im[\hspace{-2mm}(ii)] (\ref{c1}), (\ref{c22}), (\ref{c23}), (\ref{c5}) and (\ref{c10}) holds for every $p>1$. 
Furthermore, there exists a positive constant $\kappa$ such that 
the following estimates hold: %{\colorr still need to check!}
\bea\label{c33n}
\|\big\langle DG^{(3)}_n,u_n\big\rangle_\mfh\|_{{\colorr \ell-1},p}
&=& O(r_n^{1+\kappa})%\qquad(k=2,3)
\eea
\bea\label{c42n}
\bigg\|\bigg\langle D\bigg(\big\langle DG^{(2)}_n,u_n\big\rangle_\mfh\bigg),u_n\bigg\rangle_\mfh\bigg\|_{\ell-3,p}
&=& O(r_n^{1+\kappa})%\qquad(k=2,3)
\eea
\bea\label{c7n}
\sum_{{\sf A}=W_\infty,X_\infty}
\|\big\langle D\langle D{\sf A},u_n\rangle_\mfh,u_n\big\rangle_\mfh\|_{\ell-2,p}
&=& O(r_n^{1+\kappa})
\eea
\bea\label{c10.1n}
\sum_{{\sf B}=\dotw_n,N_n,\dotx_n}
\bigg\| \big\langle D\langle D{\sf B},u_n\rangle_\HH ,u_n\big\rangle_\HH\bigg\|_{\ell-2,p}&=& O(r_n^{\kappa})
\eea
%
%
%!TEX encoding = UTF-8 Unicode
\im[\hspace{-2mm}(iii)] [C] (iii) holds. 
\im[\hspace{-2mm}(iv)] 
\hspace{0.5mm} {\bf (a)} $G_\infty^{-1}\in L^{\infty-}$. 
\bd\im[(b)]
There exists $\kappa>0$ such that 
\beas 
P\big[\Delta_{(M_n+W_\infty,X_\infty)}<s_n\big] &=& O(r_n^{1+\kappa})
\eeas
%and that 
%\beas 
%P\big[\Delta_{X_\infty}<s_n\big] &=& O(r_n^{1+\kappa}){\colorr remove}
%\eeas
for some positive random variables $s_n\in\bbD^{\ell-2,\infty}$ satisfying 
$\sup_{n\in\bbN}(\|s_n^{-1}\|_p+\|s_n\|_{\ell-2,p})<\infty$ for every $p>1$. 
\ed
\ed
\ed
\onelineskip
%%%

The functional $\psi_n$ is re-defined by $\psi_n=\psi(\xi_n)$ with 
{\colred 
\bea\label{170810-31} 
\xi_n 
&=& 
\frac{3s_n}{2s_n+12\Delta_n}
+\frac{e_n}{s_n^2}
+\frac{f_n}{\Delta_{X_\infty}^2}
\eea
}
\begin{en-text}
\bea\label{170810-31} 
\xi_n 
&=& 
\frac{3s_n}{2s_n+12\Delta_n}+r_n|\langle D\dotw_n,D\dotw_n\rangle_\HH|^2
+r_n|\langle DN_n,DN_n\rangle_\HH|^2
+r_n|\langle D\dotx_n,D\dotx_n\rangle_\HH|^2
\nn\\&&
%+r_n^{\ep}|W_\infty|^2
+\frac{e_n}{s_n^2}
+\frac{3s_n}{2s_n+12\Delta_{X_\infty}}
\eea
\end{en-text}
for $\Delta_n=\Delta_{(M_n+W_\infty,X_\infty)}$ this time. 
%A positive number.$\ep$ and 
{\colred The functional $f_n$ is defined as before, and}
$e_n$ will be specified in the proof of the following lemma. 

\begin{lemme}\label{170810-6} 
Under $[C^\natural]$, 
%the inequality $($\ref{170810-7}$)$ is valid 
%for every $\alpha\in\bbZ_+^{\check{\sfd}}$ and $\psi_n$ given by $($\ref{170810-31}$)$. 
the properties $($a$)$ and $($b$)$ of Lemma \ref{170810-5} hold true. 
\end{lemme}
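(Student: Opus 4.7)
The plan is to follow closely the proof of Lemma \ref{170810-5}, making two structural substitutions. First, the orthogonality estimate $[C]$ (iv) (c) used in deriving the lower bound (\ref{170810-2}) on $\Delta_{\check{M}_n(\theta)}$ is no longer available, so the comparison with $\Delta_{(cM_n+W_\infty,X_\infty)}$ for general $c\in(-1,0)\cup(0,1)$ must be replaced by a direct comparison with $\Delta_{(M_n+W_\infty,X_\infty)}$, using $[C^\natural]$ (iv) (b). Second, the $o(r_n)$ estimates (\ref{c33}), (\ref{c42}), (\ref{c7}) and the $o(1)$ estimate (\ref{c10.1}) appearing in $[C]$ (ii) are replaced by the quantitative bounds (\ref{c33n}), (\ref{c42n}), (\ref{c7n}) and (\ref{c10.1n}) in $[C^\natural]$ (ii), which are strictly stronger. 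The truncation $\psi_n=\psi(\xi_n)$ built from the new $\xi_n$ in (\ref{170810-31}) still satisfies $\|1-\psi_n\|_{\csfd+6,1+\kappa/2}=O(r_n^{p_1})$ for some $p_1>1$, in parallel with (\ref{r1}).

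For the critical lower bound I would Taylor-expand around $\theta=1$: writing $\theta^{-1}W_\infty=W_\infty+(\theta^{-1}-1)W_\infty$ and using the smoothness of the Malliavin matrix in $\theta$, for $\theta\in[\theta_0,1]$ with $\theta_0$ sufficiently close to $1$ one obtains
\[
\Delta_{\check{M}_n(\theta)} \geq \Delta_{(M_n+W_\infty,X_\infty)} + (\theta^{-1}-1)\tilde{A}_n(\theta)+r_n^{1/2}d^{**}_n(\theta)
\]
with $\tilde{A}_n(\theta)$ and $d^{**}_n(\theta)$ bounded in $\bbD^{\csfd+6,\infty}$ uniformly in $\theta$ and $n$. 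Incorporating the squared coefficients of $\tilde{A}_n(\theta)$ and $d^{**}_n(\theta)$, viewed as polynomials in $\theta,\theta^{-1}$, into the functional $e_n$, the condition $\xi_n\le 1/2$ then forces $\inf_{\theta\in[\theta_0,1]}\Delta_{\check{M}_n(\theta)}\ge s_n/13$. On $(0,\theta_0]$ the non-degeneracy argument of Lemma \ref{170810-5} based on $\Delta_{X_n(\theta)}$ is used verbatim; it only needs $\Delta_{X_\infty}^{-1}\in L^{\infty-}$, which is a consequence of $[C^\natural]$ (iv) (b) applied to the $X_\infty$ marginal and of the definition of $f_n$.

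With this replacement of (\ref{170810-2}) in hand, (\ref{170806-8}) is re-derived as in the proof of Lemma \ref{170810-5}, and each term in $\hat{g}^\alpha_n-\hat{h}^\alpha_n=\sum_{i=3}^{12}\partialbs^\alpha R^{(i)}_n$ is estimated exactly as there. For $R^{(3)}_n$ and $R^{(4)}_n$, the improved rates (\ref{c33n}), (\ref{c42n}), (\ref{c7n}), (\ref{c10.1n}) from $[C^\natural]$ (ii) supply the needed bounds directly; the additional factor $r_n^\kappa$ comfortably absorbs the polynomial factor $r_n^{-(\csfd+1)q}$ arising from $(\sfz,\sfx)\in\Lambda_n(\csfd)$ when $q\in(0,1/2)$ is chosen small compared to $\kappa$, delivering both (a) and (b). For the terms $R^{(i)}_n$, $i=5,\dots,12$, involving the random symbols, the argument based on $[C]$ (iii) (which coincides with $[C^\natural]$ (iii)) applies without change. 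The principal obstacle is the first step above: verifying that the Taylor-based lower bound, together with the chosen $e_n$, yields the uniform-in-$\theta$ control needed for the integration-by-parts machinery, since without the orthogonality $[C]$ (iv) (c) this no longer reduces to a single matrix manipulation but requires genuine bookkeeping of the $\theta$-perturbation near $\theta=1$.
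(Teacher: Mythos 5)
There is a genuine gap, and it sits exactly at the step you flag as ``the principal obstacle'': the control of $\Delta_{\check{M}_n(\theta)}$ for $\theta$ near $1$. Your plan keeps a \emph{fixed} splitting point $\theta_0<1$ and Taylor-expands, writing $\Delta_{\check{M}_n(\theta)}\geq \Delta_{(M_n+W_\infty,X_\infty)}+(\theta^{-1}-1)\tilde{A}_n(\theta)+r_n^{1/2}d^{**}_n(\theta)$ and absorbing the squared coefficients of $\tilde{A}_n(\theta)$ into $e_n$. But $\tilde{A}_n(\theta)$ is only \emph{bounded} in $\bbD^{\csfd+6,\infty}$, not small, and $\theta^{-1}-1$ is bounded below only by the fixed constant $\theta_0^{-1}-1$. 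Hence the resulting $e_n$ is $O(1)$ (times a fixed small constant) rather than $o(1)$ in $L^p$, so $P[e_n/s_n^2>1/6]$ is a fixed positive constant independent of $n$, not $O(r_n^{1+\kappa})$. The analogue of (\ref{r1}), $\|1-\psi_n\|_{\csfd+6,1+\kappa/2}=O(r_n^{p_1})$ with $p_1>1$, therefore fails, and with it the estimate $\rho^{(2)}_n(f)=o(r_n)$ and every $R[i]$ bound that consumes a factor $\|D^j\psi_n\|=o(r_n)$. No choice of fixed $\theta_0$ repairs this, because you would need $\theta_0^{-1}-1\to0$ with $n$ --- which is precisely what the paper does.

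The paper's actual device is an $n$-\emph{dependent} splitting point: the interval is cut at $\sqrt{1-r_n^{\ep}}$ instead of a fixed $\theta_0$ (see (\ref{170810-25})--(\ref{170810-26})). On the upper range $\theta\in(\sqrt{1-r_n^{\ep}},1]$ one has $\theta^{-1}-1=O(r_n^{\ep})$ automatically, so $\Delta_{\check{M}_n(\theta)}=\Delta_{(M_n+W_\infty,X_\infty)}+r_n^{\ep/2}d_n^{**}(\theta)$ with $d_n^{**}$ bounded after rescaling; the coefficients fed into $e_n$ are then genuinely small and (\ref{r1}) survives, with no orthogonality hypothesis needed. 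The price is paid on the enlarged lower range $(0,\sqrt{1-r_n^{\ep}}]$, where the powers of $(1-\theta^2)^{-1}$ generated when cancelling $|\sfz|$ against $\exp\big(2^{-1}(1-\theta^2)G_\infty[(\tti\sfz)^{\otimes2}]\big)$ are now only bounded by $r_n^{-\ep}$, producing the loss $r_n^{\ep L}$ in (\ref{17-810-30}); this is where the strengthened rates $O(r_n^{1+\kappa})$ of $[C^\natural]$ (ii) are consumed, by taking $\ep L<\kappa'$. You correctly noticed that the improved rates must absorb some polynomial loss, but you attributed that loss to the factor $r_n^{-(\csfd+1)q}$ from $\Lambda_n(\csfd)$, which is handled identically in both lemmas; the loss that actually matters is the $\theta$-dependent one just described, and it only arises once the splitting point is made $n$-dependent.
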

\proof 
The plot of the proof is quite similar to that of Lemma \ref{170810-5}, however 
some modifications are necessary. 
Let $\ep$ be a positive number. We may assume that $r_n<1$ for all $n\in\bbN$. 
Instead of (\ref{170806-5}) and (\ref{170806-6}), we will use 
the non-degeneracy of the forms 
\bea\label{170810-25} 
%\sup_{\theta\in\big(\sqrt{1-r_n^\ep},1\big]}
\sup_{\theta\in(\sqrt{1-r_n^\ep},1]}
E\bigg[\Delta_{\check{M}_n(\theta)}^{-p}
{\bf 1}_{\{\sum_{j=1}^{\sf k}|D^j\Xi|_{\mfh^{\otimes j}}>0\}}
\bigg] &<& \infty
\eea
and 
\bea\label{170810-26}
%\sup_{\theta\in\big(0,\sqrt{1-r_n^\ep}\big]}
\sup_{\theta\in(0,\sqrt{1-r_n^\ep}]}
E\bigg[\Delta_{X_n(\theta)}^{-p}
{\bf 1}_{\{\sum_{j=1}^{\sf k}|D^j\Xi|_{\mfh^{\otimes j}}>0\}}\bigg] 
&<& \infty
\eea
for every $p>1$ and a suitably differentiable functional $\Xi$. 
For the same reason as before, we have 
\beas
%\sup_n\sup_{\theta\in(\sqrt{1-r_n^\ep},1]}
\big|\varphi_n(\theta,\sfz,\sfx;\Xi) \big|
&\simleq& 
|(\sfz,\sfx)|^{-{\sf k}}
\eeas
uniformly in $\theta\in(\sqrt{1-r_n^\ep},1]$, $(\sfz,\sfx)\in\bbR^{\check{\sfd}}$ and $n\in\bbN$. 
For $\theta\in(0,\sqrt{1-r_n^\ep}]$, we will use 
$\csfd+\beta_x+1$ IBP below, just like before, and this procedure gives some 
power of $|\sfz|$. 
To cancel the power of $|\sfz|$ (including $\sfz$'s come from random polynomials when we use it), we attach 
the factor $(1-\theta^2)$, and then a power of $(1-\theta^2)^{-1}$ appears. 
We can replace it by $r_n^{-\ep L}$, where $L$ is a definite number. 
Thus what we obtained is 
\bea\label{17-810-30}
\sup_n\sup_{\theta\in(0,1)}\sup_{(\sfz,\sfx)\in\bbR^\csfd}
r_n^{\ep L}|(\sfz,\sfx)|^{{\sf k}}\big|\varphi_n(\theta,\sfz,\sfx;\Xi) \big|
&<& 
\infty. 
\eea

We need non-degeneracy (\ref{170810-25}) and (\ref{170810-26}) 
to apply the estimate (\ref{17-810-30}). 
For our purposes, when the functional $\Xi$ has $\psi_n$ or its derivative of certain order, 
it is sufficient to show non-degeneracy of $\check{M}_n(\theta)$ and $X_n(\theta)$ 
under truncation by $\psi_n$ with $\xi_n$ of (\ref{170810-31}). 
We make $\ep$ sufficiently small. 
Then it is easy to see 
\beas 
\Delta_{\check{M}_n(\theta)}
&=&
\Delta_{(M_n+W_\infty,X_\infty)}+r_n^{\ep/2} d_n^{**}(\theta)
\eeas
for some functional $d_n^{**}(\theta)$ such that 
$\sup_{\theta\in(\sqrt{1-r_n^{{\colred \ep}}},1],n\in\bbN}r_n^{-\ep/2}\|\tilde{d}_n(\theta)\|_{\csfd+6,p}<\infty$ 
for every $p>1$. 
Define $e_n$ as before with the coefficients of $d_n^{**}(\theta)$. 
Then we see (\ref{r1}) holds and 
$\Delta_{\check{M}_n(\theta)}$ and $\Delta_{X_n(\theta)}$ 
have uniform non-degeneracy under $\psi_n$, as before. 

For proof of the lemma, it is sufficient to show (\ref{170806-2}) for $i=3,...,12$. 
We can take the same way as the proof of Lemma \ref{170810-5}. 
Indeed, estimations of $R^{(i)}_n(\sfz,\sfx)$ $(i=4,...,12)$ are the same. 
Only estimation of $R^{(3)}_n(\sfz,\sfx)$ is slightly different. 
We do the same way for estimation of $R[i]$ $(i=1,...,24)$ 
with (\ref{17-810-30}), but in this situation, the bounds $o(r_n)$ 
that appeared in the previous proof become $O(r_n^{1+\kappa'})$ for some 
positive constant $\kappa'$, thanks to [D] (ii). 
Taking a sufficiently small $\ep$ so that $\ep L<\kappa'$, we obtain (\ref{170810-7}) 
in the present situation. 
\qed\halflineskip

\begin{lemme}\label{170811-1}\footnote{The definition of $\psi_n$ varies, depending on $[C]$ or $[C^\natural]$.} 
Suppose that either $[C]$ or $[C^\natural]$ is fulfilled. Then, for each $m\in\bbZ_+$, 
\beas 
\sup_{(z,x)\in\bbR^{\check{\sfd}}}\big||(z,x)|^m
\big(g^0_n(z,x)-h^0_n(z,x)\big)\big| 
&=& 
o(r_n)
\eeas
as $n\to\infty$. 
%{\rm[}The definition of $\psi_n$ varies, depending on $[C]$ or $[C^\natural]$. {\rm]}
\end{lemme}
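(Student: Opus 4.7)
The proof will proceed by Fourier inversion. Since the definitions in the text give $g_n^\alpha(z,x)=(z,x)^\alpha g_n^0(z,x)$ (via the inverse Fourier transform and the identity $\hat g_n^\alpha=\partialbs^\alpha\hat g_n^0$) and $h_n^\alpha(z,x)=(z,x)^\alpha h_n^0(z,x)$, and since $|(z,x)|^m$ is bounded on the whole of $\bbR^\csfd$ by a finite linear combination of the monomials $|(z,x)^\alpha|$ with $|\alpha|\leq m+1$ (after handling $|u|\leq 1$ and $|u|>1$ separately), the claim reduces to showing
\[
\sup_{(z,x)\in\bbR^\csfd}\bigl|g_n^\alpha(z,x)-h_n^\alpha(z,x)\bigr|\leq (2\pi)^{-\csfd}\int_{\bbR^\csfd}\bigl|\partialbs^\alpha R_n(\sfz,\sfx)\bigr|\,d\sfz d\sfx \;=\; o(r_n)
\]
for every multi-index $\alpha$, where $R_n(\sfz,\sfx)=\hat g_n^0(\sfz,\sfx)-\hat h_n^0(\sfz,\sfx)$ by Proposition \ref{170806-1}.

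The plan is to decompose the integration domain into $B_1:=\{|(\sfz,\sfx)|\leq 1\}$, the middle annulus $B_2:=\{1<|(\sfz,\sfx)|\leq r_n^{-q}\}$, and the exterior $B_3:=\{|(\sfz,\sfx)|>r_n^{-q}\}$. On $B_2$, the tail estimate (\ref{170810-7}) of Lemma \ref{170810-5} (or Lemma \ref{170810-6}) furnishes $|\partialbs^\alpha R_n|\leq Cr_n|(\sfz,\sfx)|^{-(\csfd+1)}$; sharpening to $\{|(\sfz,\sfx)|>R\}\cap B_2$ yields a contribution $O(r_n/R)$, made smaller than $\epsilon r_n$ by choosing $R$ large. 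On $B_3$, iterated Malliavin integration by parts of the type carried out in the proof of Lemma \ref{170810-5} will produce the decay $|\partialbs^\alpha\hat g_n^0(\sfz,\sfx)|\leq C_{\sf k}|(\sfz,\sfx)|^{-{\sf k}}$ for arbitrary ${\sf k}$, uniformly in $n$: under the truncation $\psi_n$, the nondegeneracy hypothesis [C](iv) (or [C$^\natural$](iv)) supplies uniform lower bounds on $\Delta_{\check M_n(\theta)}$ for $\theta$ near $1$ and on $\Delta_{X_n(\theta)}$ for $\theta$ bounded away from $1$, and [C](i) provides enough Malliavin regularity to iterate the IBP ${\sf k}$ times. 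A parallel decay for $\hat h_n^0$ is obtained by IBP against $X_\infty$, exploiting the Gaussian factor in $\Psi(\sfz,\sfx)$ together with nondegeneracy of $G_\infty$. Choosing ${\sf k}$ so that $q({\sf k}-\csfd)>1$ makes the $B_3$-contribution $o(r_n)$.

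The decisive difficulty lies on the core $B_1$, where the bound $|u|^{-(\csfd+1)}$ is not integrable at the origin and only the pointwise limit $r_n^{-1}\partialbs^\alpha R_n(\sfz,\sfx)\to 0$ from Lemma \ref{170810-5}(a) is directly provided. The plan is to upgrade this to locally uniform convergence by reusing the complexification argument carried out inside the proof of Lemma \ref{170810-5} itself (its deduction of [C](iii)$^\flat$ from [C](iii)): each $R_n^{(i)}$ splits into the action of a polynomial random symbol on $\Psi(\sfz,\sfx)$ plus $L^p$-bounded remainders of order $o(r_n)$; introducing cutoffs $\psi\bigl(\eta(|G_\infty|+|W_\infty|+|X_\infty|)\bigr)$ and using analyticity of the resulting functionals in the complex variables $(\sfz',\sfx')$, Cauchy's integral formula promotes the $L^p$-convergence of the coefficients supplied by [C](iii)(b) to locally uniform convergence of all $\partialbs^\alpha$-derivatives, which transfers to locally uniform $o(r_n)$-convergence of $\partialbs^\alpha R_n$ on $B_1$. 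Together with the trivial uniform bound $|\partialbs^\alpha\hat g_n^0|+|\partialbs^\alpha\hat h_n^0|\leq C$ on $B_1$ (inherited from bounded moments of $\check Z_n^\alpha$ and of the random symbols), this yields an $o(r_n)$ contribution on $B_1$. Combining the three pieces and letting $R\to\infty$ completes the argument; the pointwise-to-uniform upgrade on $B_1$ is the only genuinely delicate step.
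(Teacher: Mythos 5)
Your proposal follows essentially the same route as the paper: Fourier inversion after reducing $|(z,x)|^m$ to monomials $(z,x)^\alpha$, splitting the frequency domain at $|(\sfz,\sfx)|=r_n^{-q}$, controlling the exterior by iterated integration by parts (the paper uses exactly $\csfd+3$ IBP for $\hat{g}^\alpha_n$ and IBP in $X_\infty$ for $\hat{h}^\alpha_n$, then fixes $q\in(1/3,1/2)$, whereas you take the order ${\sf k}$ arbitrary), and controlling the interior by Lemma \ref{170810-5} (a)--(b). The one place you genuinely diverge is near the origin: you are right that (\ref{170810-7}) alone does not furnish an integrable dominant there, but the complexification/Cauchy argument you propose is heavier than needed --- the estimates underlying Lemma \ref{170810-5} already give $\sup_n\sup_{|(\sfz,\sfx)|\le R}r_n^{-1}|\hat{g}^\alpha_n-\hat{h}^\alpha_n|<\infty$ (one simply refrains from applying IBP on a compact set), so the pointwise convergence (\ref{170811-41}) plus bounded convergence disposes of $\{|(\sfz,\sfx)|\le R\}$; note also that your treatment of $B_2$ leaves the annulus $\{1<|(\sfz,\sfx)|\le R\}$ to be absorbed into the same compact-set argument. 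With that understanding the proof is correct.
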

\proof 
$\csfd+3$ times IBP provides 
\beas 
\sup_n\sup_{(\sfz,\sfx)\in\bbR^\csfd} |(\sfz,\sfx)|^{\csfd+3}
|\hat{g}^\alpha_n(\sfz,\sfx)|
&<&
\infty.
\eeas
Therefore, 
\bea\label{170811-10} 
\int_{\bbR^\csfd\setminus\Lambda_n(\csfd)}|\hat{g}^\alpha_n(\sfz,\sfx)|d\sfz d\sfx
&=&
O(r_n^{3q})
\eea
for every $\alpha\in\bbZ_+^\csfd$. 
We apply $\csfd+3$ times IBP with respect to $X_\infty$ to $\partialbs^\alpha E[\Psi(\sfz,\sfx)\psi_n]$ 
and $\exp\big(2^{-1}G_\infty[(\tti\sfz)^{\otimes2}]\big)$ with non-degeneracy of $G_\infty$ to derive 
\beas 
\sup_n\sup_{(\sfz,\sfx)\in\bbR^\csfd} |(\sfz,\sfx)|^{\csfd+3}
\big|\partialbs^\alpha E[\Psi(\sfz,\sfx)\psi_n]\big|
&<&
\infty.
\eeas
We remark that the factor $\sfx$ does not emerge but some product of $\sfz$ can newly appear though 
cancelled by the exponential. 
So
\beas
\int_{\bbR^\csfd\setminus\Lambda_n(\csfd)}\big|\partialbs^\alpha E[\Psi(\sfz,\sfx)\psi_n]\big|d\sfz d\sfx
&=&
O(r_n^{3q})
\eeas
for every $\alpha\in\bbZ_+^\csfd$. 
Let $\varsigma$ be any random symbol, like ${\mathfrak S}^{(3,0)}$, that appears in 
the $r_n$-order term of ${\mathfrak S}_n$. 
We apply $\csfd+\beta_x+1$ times IBP with respect to $X_\infty$ to 
$\partialbs^\alpha E\big[\Psi(\sfz,\sfx)\varsigma({\colred \tti}\sfz,{\colred \tti}\sfx)\big]$, and next use the Gaussianity of $\Psi$ in $\sfz$ 
to show 
\beas 
\int_{\bbR^\csfd\setminus\Lambda_n(\csfd)}\big|\partialbs^\alpha E\big[\Psi(\sfz,\sfx)\varsigma({\colred \tti}\sfz,{\colred \tti}\sfx)\big]\big|d\sfz d\sfx
&=&
O(r_n^q)
\eeas
for every $\alpha\in\bbZ_+^\csfd$. 
Thus 
\bea\label{170811-11}
\int_{\bbR^\csfd\setminus\Lambda_n(\csfd)}|\hat{h}^\alpha_n(\sfz,\sfx)|d\sfz d\sfx
&=&
O(r_n^{3q})+O(r_n^{1+q}).
\eea
This term becomes $o(r_n)$ if we choose $q\in(1/3,1/2)$. 

Now 
\beas
\Delta_n^\alpha
&:=&
\sup_{(z,x)\in\bbR^\csfd}\big|(\sfz,\sfx)^\alpha \big(g^0_n(z,x)-h^0_n(z,x)\big)\big|
\\&=&
\sup_{(z,x)\in\bbR^\csfd}\frac{1}{(2\pi)^\csfd}\bigg| \int_{\bbR^\csfd}
e^{-z[\tti\sfz]-x[\tti\sfx]}\big(\hat{g}^\alpha_n(\sfz,\sfx)-\hat{h}^\alpha_n(\sfz,\sfx)\big)d\sfz d\sfx
\bigg|
\\&\leq&
\frac{1}{(2\pi)^\csfd}\int_{\bbR^\csfd\setminus\Lambda_n(\csfd)}\big|\hat{g}^\alpha_n(\sfz,\sfx)\big|d\sfz d\sfx
+\frac{1}{(2\pi)^\csfd}\int_{\bbR^\csfd\setminus\Lambda_n(\csfd)}\big|\hat{h}^\alpha_n(\sfz,\sfx)\big|d\sfz d\sfx
\\&&
+\frac{r_n}{(2\pi)^\csfd}\int_{\Lambda_n(\csfd)}
r_n^{-1}\big|\hat{g}^\alpha_n(\sfz,\sfx)-\hat{h}^\alpha_n(\sfz,\sfx)\big|d\sfz d\sfx. 
\eeas
By (\ref{170811-10}), (\ref{170811-10}) and the properties (a) and (b) provided either 
Lemma \ref{170810-5} or Lemma \ref{170810-6}, we obtain 
$\Delta^\alpha_n=o(r_n)$. 

Here is the main theorem in this section. 

\begin{thm}\label{asy.exp7} Suppose that either $[C]$ or $[C^\natural]$ is fulfilled. 
Then, for any positive numbers $M$ and $\gamma$, 
\beas 
\sup_{f\in\cale(M,\gamma)}\Delta_n(f) &=& o(r_n)
\eeas
as $n\to\infty$. 
\end{thm}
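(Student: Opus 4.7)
The plan is to compare $E[f(Z_n,X_n)]$ with $\int f(z,x)p_n(z,x)\,dzdx$ by routing through the intermediate function $h^0_n$, exploiting the uniform estimate of Lemma \ref{170811-1} on the core and absorbing the remainder through the truncation $\psi_n$. First I write
\beas
E[f(Z_n,X_n)] - \int f\,p_n\,dzdx
&=& E[(1-\psi_n) f(Z_n,X_n)]
\\&&
+ \bigg(E[\psi_n f(Z_n,X_n)] - \int f\,h^0_n\,dzdx\bigg)
+ \int f\,(h^0_n - p_n)\,dzdx
\eeas
and treat each of the three summands separately.

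For the middle summand, the IBP estimates from the proof of Lemma \ref{170810-5} give $|\hat{g}^0_n(\sfz,\sfx)|\leq C_{\sf k}(1+|(\sfz,\sfx)|)^{-{\sf k}}$ for every ${\sf k}$, so $g^0_n$ is continuous and integrable, and Fourier inversion extends the identity $E[\psi_n f(Z_n,X_n)]=\int f\,g^0_n\,dzdx$ from Schwartz-class $f$ to every polynomially bounded measurable $f$. Then Lemma \ref{170811-1}, applied with $m=\check{\sfd}+\gamma+1$, yields
\beas
\bigg|\int f\,(g^0_n - h^0_n)\,dzdx\bigg|
&\leq&
M\sup_{(z,x)}\big[(1+|(z,x)|)^m|g^0_n - h^0_n|\big]
\int_{\bbR^{\check{\sfd}}}\frac{(1+|z|+|x|)^\gamma}{(1+|(z,x)|)^m}\,dzdx,
\eeas
which is $o(r_n)$ uniformly over $f\in\cale(M,\gamma)$.

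For the two boundary summands, I note from the definitions that
$h^0_n - p_n = E[(\psi_n-1)\phi(z;W_\infty,G_\infty)\delta_x(X_\infty)]$,
hence $\int f\,(h^0_n - p_n)\,dzdx = E[(\psi_n-1)f(G_\infty^{1/2}\zeta + W_\infty,X_\infty)]$. Both this quantity and $E[(1-\psi_n)f(Z_n,X_n)]$ are handled by H\"older's inequality combined with $\|1-\psi_n\|_{1+\kappa/2} = O(r_n^{(1+\kappa)/(1+\kappa/2)})= o(r_n)$ from (\ref{r1}), together with uniform $L^q$ bounds on $|(Z_n,X_n)|$ and on $|(G_\infty^{1/2}\zeta + W_\infty, X_\infty)|$ coming from the Sobolev integrability in $[C]$ (resp.\ $[C^\natural]$) applied to the polynomial envelope $|f(\cdot)|\leq M(1+|z|+|x|)^\gamma$.

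The principal obstacle is not any individual bound above but securing uniformity in $f\in\cale(M,\gamma)$. This is precisely why Lemma \ref{170811-1} establishes polynomial decay of $g^0_n - h^0_n$ at every order $m$, combining the pointwise $|(\sfz,\sfx)|^{\check{\sfd}+1}$-weighted estimate of Lemma \ref{170810-5} on $\Lambda_n(\check{\sfd})$ with the tail bounds (\ref{170811-10})--(\ref{170811-11}). Once that lemma is in hand, the theorem follows by weighted integration against the envelope of $\cale(M,\gamma)$.
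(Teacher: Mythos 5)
Your proposal is correct and follows essentially the same route as the paper: the identical three-way decomposition (truncation error, core comparison via Lemma \ref{170811-1} with $m>\check{\sfd}+\gamma$, and replacement of $p_n$ by $h^0_n$), all controlled by (\ref{r1}) and the weighted sup-bound on $g^0_n-h^0_n$. The only (harmless) variation is in the third piece: the paper bounds $|z|^{k_1}|x|^{2k_2}\big|E[(1-\psi_n)\phi(z;W_\infty,G_\infty)\delta_x(X_\infty)]\big|$ pointwise by $C\|1-\psi_n\|_{\nu,p}$ and integrates, whereas you dualize back to $E[(\psi_n-1)f(G_\infty^{1/2}\zeta+W_\infty,X_\infty)]$ and apply H\"older directly; both rest on the non-degeneracy of $X_\infty$ and give $o(r_n)$.
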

\proof 
The local density $g^0_n$ is a continuous version of the measure 
$\big(E[\psi_n|\check{Z}_n=(z,x)]dP^{\check{Z}_n}\big)/dzdx$, admits 
any order of moments and 
\beas 
E\big[f(\check{Z}_n)\psi_n\big] 
&=&
\int_{\bbR^\csfd} f(z,x)g^0_n(z,x)dzdx.
\eeas
Let $p=1+\kappa/2$, where $\kappa$ is the one given in [C] (iv) (b) or in [C$^\natural$] (iv) (b). 
Then 
\beas 
\sup_{f\in\cale(M,\gamma)}\big|E\big[f(\check{Z}_n)\big]-E\big[f(\check{Z}_n)\psi_n\big]\big|
&\leq&
\sup_{f\in\cale(M,\gamma)}\|f(\check{Z}_n)\|_{p/(p-1)}\|1-\psi_n\|_p
\yeq 
o(r_n). 
\eeas

%Denote by $p^{X_\infty}$ the density function of $X_\infty$, that exists by non-degeneracy of $X_\infty$. 
For $k_1,k_2\in\bbZ_+$, we have 
\beas &&
|z|^{k_1}|x|^{2k_2}\big|E\big[(1-\psi_n)\phi(z;W_\infty,G_\infty)\delta_x(X_\infty)\big]\big|
%\\&=&
%|z|^{k_1}|x|^{2k_2}E\big[(1-\psi_n)\phi(z;W_\infty,G_\infty)|X_\infty=x\big]p^{X_\infty}(x)
%\\&=&
%E\big[X_\infty^{2k_2}(1-\psi_n)|z|^{k_1}\phi(z;W_\infty,G_\infty)|X_\infty=x\big]p^{X_\infty}(x)
\\&=&
\big|E\big[X_\infty^{2k_2}(1-\psi_n)|z|^{k_1}\phi(z;W_\infty,G_\infty)\delta_x(X_\infty)\big]\big|
\\&\leq&
C(k_1,k_2)\|1-\psi_n\|_{\nu,p}
\eeas
for all $(z,x)\in\bbR^\csfd$, where $C(k_1,k_2)$ is a constant depending on $(k_1,k_2)$, 
where $\nu=2[1+\sfd_1/2]\leq\sfd_1+2$. 
Therefore, 
\beas&& 
\sup_{f\in\cale(M,\gamma)}
\bigg|\int_{\bbR^\csfd}f(z,x)E\big[X_\infty^{2k_2}(1-\psi_n)|z|^{k_1}\phi(z;W_\infty,G_\infty)\delta_x(X_\infty)\big]
dzdx\bigg|
\\&=& 
O(\|1-\psi_n\|_{\nu,p})
\yeq 
o(r_n). 
\eeas
This estimate makes it possible to replace $p_n$ by $h^0_n$. 

In this way, estimation of $\Delta_n(f)$ is reduced to 
\beas&&  
\bigg|\int_{\bbR^\csfd}f(z,x)g^0_n(z,x)dzdx-\int_{\bbR^\csfd}f(z,x)h^0_n(z,x)dzdx\bigg|
\\&\leq&
\int_{\bbR^\csfd}|f(z,x)|\big(1+|(z,x)|)^{-m}dzdx
\times
\sup_{(z,x)\in\bbR^\csfd}\big|(1+|(z,x)|)^m\big(g^0_n(z,x)-h^0_n(z,x)\big)\big|
\\&=&
o(r_n)
\eeas
by Lemma \ref{170811-1} if $m$ is chosen as $m>\csfd+\gamma$. 
\qed\onelineskip
}

{\colred It is easy to give the joint asymptotic expansion with a reference variable 
in the applications of the previous sections, while we do not give statements explicitly here.
In statistics, it is important because the reference variable will be the random Fisher information matrix, 
an asymptotically ancillary statistic, and so on. 
}

{\colred 
%\begin{rem}\rm 
On the other hand, 
it is also possible to give a similar asymptotic expansion of $E[f(Z_n)]$ without a reference variable $X_n$. 
In fact, our result already applies to such a case if we take variable $X_n=X_\infty\sim N(0,1)$ independent of 
other variables. %any variables else. 
The expansion formula is valid in particular for functions $f(z)$ of $z$. 
Integrating out $x$ from $p_n(z,x)$, we obtain a formula $\int p_n(z,x)dx$. 
Formally, this formula corresponds to the case $\beta_x=0$ and $\sfd_1=0$.  
%in the notation we used so far. 
As a matter of fact, some of differentiability conditions can be reduced 
due to lack of the reference variable $X_n$. 
%However, we would need to trace the proof once again to validate it. 
%\end{rem}
%
We shall give a simplified version of Theorem \ref{asy.exp7} with $[C^\natural]$ but 
without the reference variable $X_n$, 
among several possibilities. 
In what follows, we will only consider the variable
\beas 
Z_n &=& M_n+r_nN_n. 
\eeas
In this situation, %$W_n=W_\infty=\dotw_n=0$ and formally $X_n=X_\infty=\dotx_n=0$. 
we need the random symbols 
\beas
{\colred {\mathfrak S}^{(3,0)}_n(\tti\sfz)} &=&
\frac{1}{3}r_n^{-1}\bigg\langle D\big\langle DM_n[\tti\sfz],u_n[\tti\sfz]\big\rangle_\mfh,u_n[\tti\sfz]\bigg\rangle_\mfh
\>\equiv\>\frac{1}{3}{\sf qTor}[(\tti\sfz)^{\otimes3}],
\\
{\colred {\mathfrak S}^{(2,0)}_{0,n}(\tti\sfz)}
&=& 
\half r_n^{-1}G^{(2)}_n(\sfz)
\>=\>
\half r_n^{-1}\bigg(\big\langle DM_n[\tti\sfz],u_n[\tti\sfz]\big\rangle_\mfh-G_\infty[(\tti\sfz)^2]\bigg)
\>\equiv\>\half {\sf qTan}[(\tti\sfz)^{\otimes2}],
\\
{\colred {\mathfrak S}^{(1,0)}_n(\tti\sfz)}
&=&
N_n[\tti\sfz],
\\
{\mathfrak S}^{(2,0)}_{1,n}(\tti\sfz)
&=&
\bigg\langle DN_n[\tti\sfz],u_n[\tti\sfz]\bigg\rangle_\HH. 
\eeas
Let 
\beas 
\Psi(\sfz) &=& 
\exp\big(2^{-1}G_\infty[(\tti\sfz)^{\otimes2}]\big).
\eeas

We consider the following condition. 
Recall $\ell=\sfd+8$ when $\sfd_1=0$. 
%$\ell=\check{\sfd}+8$. % and denote by $\beta_x$ the maximum degree in $x$ of ${\colred {\mathfrak S}.}$ %[Estimate $\sum_{i=5}^{12}R^{(i)}_n$]. 
%{\colorr Let $\sfd_2=(\ell+\beta_x-7)\vee\big(2[(\sfd_1+2)/2]+2[(\beta_x+1)/2]\big)$=\sfd+1, where $[x]$ is the maximum integer not larger than $x$.}

\bd\im[[D\!\!]] 
{\bf (i)}  $u_n\in\bbD^{\ell+1,\infty}(\mfh\otimes\bbR^\sfd)$, 
%$G_\infty\in\bbD^{\ell\vee{\colorr (\ell+\beta_x-7)\vee \sfd_2},\infty}(\bbR^\sfd\otimes_+\bbR^\sfd)$, 
$G_\infty\in\bbD^{\ell{\colorr},\infty}(\bbR^\sfd\otimes_+\bbR^\sfd)$, 
%$W_n,
$N_n\in\bbD^{\ell,\infty}(\bbR^\sfd)$, 
%$W_\infty\in\bbD^{{\colorr \ell\vee(\ell+\beta_x-7)\vee \sfd_2},
%$W_\infty\in\bbD^{{\colorr \ell},\infty}(\bbR^\sfd)$, 
%$X_n\in\bbD^{{\colorr \ell},\infty}(\bbR^{\sfd_1})$, 
%$X_\infty\in\bbD^{{\colorr \ell\vee(\ell+\beta_x-6)\vee(\sfd_2+1)},\infty}(\bbR^{\sfd_1})$. 
%$X_\infty\in\bbD^{{\colorr \ell\vee(\sfd_2+1)},\infty}(\bbR^{\sfd_1})$. 
%{\colorr [Construct  $\psi_n\in\bbD^{\ell-2,\infty}(\bbR)$. ]}
% 
\bd
\im[\hspace{-2mm}(ii)]  
There exists a positive constant $\kappa$ such that 
the following estimates hold for every $p>1$: %{\colorr still need to check!}
\beas%\label{c1}
%\|u_n\|_{{\colorr \ell-2},p} &=& O(1)
\|u_n\|_{\ell,p} &=& O(1) % $\ell$ for bouindedness of $\ell-2$-derivative of $\psi_n$
\eeas
\beas%\label{c22}
\|G_n^{(2)}\|_{\ell-2,p} &=& O(r_n)%\|G_n^{(2)}\|_{\ell-1,p} &=& O(r_n)%\qquad(k=2,3)
\eeas
\beas%\label{c23}
\|G_n^{(3)}\|_{\ell-2,p} &=& O(r_n)%\qquad(k=2,3)
\eeas
\beas%\label{c33n}
\|\big\langle DG^{(3)}_n,u_n\big\rangle_\mfh\|_{\ell-1,p}
&=& O(r_n^{1+\kappa})%\qquad(k=2,3)
\eeas
\beas%\label{c42n}
\bigg\|\bigg\langle D\bigg(\big\langle DG^{(2)}_n,u_n\big\rangle_\mfh\bigg),u_n\bigg\rangle_\mfh\bigg\|_{\ell-3,p}
&=& O(r_n^{1+\kappa})%\qquad(k=2,3)
\eeas
\beas%\label{c10}
\|N_n\|_{\ell-1,p}&=&O(1)
\eeas
\beas%\label{c10.1n}
\bigg\| \big\langle D\langle DN_n,u_n\rangle_\HH ,u_n\big\rangle_\HH\bigg\|_{\ell-2,p}&=& O(r_n^{\kappa}).
\eeas
%
%
%!TEX encoding = UTF-8 Unicode
\im[\hspace{-2mm}(iii)] 
For each pair $({\mathfrak T}_n,{\mathfrak T})=({\mathfrak S}^{(3,0)}_n,{\mathfrak S}^{(3,0)})$, 
$({\mathfrak S}^{(2,0)}_{0,n},{\mathfrak S}^{(2,0)}_0)$, 
%$({\mathfrak S}^{(2,0)}_n,{\mathfrak S}^{(2,0)})$, 
%$({\mathfrak S}^{(1,1)}_n,{\mathfrak S}^{(1,1)})$, 
$({\mathfrak S}^{(1,0)}_n,{\mathfrak S}^{(1,0)})$, 
%$({\mathfrak S}^{(0,1)}_n,{\mathfrak S}^{(0,1)})$, 
$({\mathfrak S}^{(2,0)}_{1,n},{\mathfrak S}^{(2,0)}_1)$,  
%$({\mathfrak S}^{(1,1)}_{1,n},{\mathfrak S}^{(1,1)}_1)$,
the following conditions are satisfied. 
\bd
\im[(a)] ${\mathfrak T}$ is polynomial random symbol the coefficients of which are in 
%$\bbD^{\csfd+\beta_x+1,1+}=\bigcup_{p>1}\bbD^{\csfd+\beta_x+1,p}$. 
$L^{1+}=\cup_{p>1}L^p$. 
\im[(b)] 
For some $p>1$, there exists a polynomial random symbol $\bar{\mathfrak T}_n$ that has $L^p$ coefficients and  
the same degree as ${\mathfrak T}$, 
\beas 
E\big[\Psi(\sfz){\mathfrak T}_n({\colred\tti}\sfz)\big]
&=& E\big[\Psi(\sfz)\bar{\mathfrak T}_n({\colred\tti}\sfz)\big]
\eeas
and 
$\bar{\mathfrak T}_n\to{\mathfrak T}$ in $L^p$. 
\ed
 
\im[\hspace{-2mm}(iv)] 
\hspace{0.5mm} {\bf (a)} $G_\infty^{-1}\in L^{\infty-}$. 
\bd\im[(b)]
There exists $\kappa>0$ such that 
\beas 
P\big[\Delta_{M_n}<s_n\big] &=& O(r_n^{1+\kappa})
\eeas
%and that 
%\beas 
%P\big[\Delta_{X_\infty}<s_n\big] &=& O(r_n^{1+\kappa}){\colorr remove}
%\eeas
for some positive random variables $s_n\in\bbD^{\ell-2,\infty}$ satisfying 
$\sup_{n\in\bbN}(\|s_n^{-1}\|_p+\|s_n\|_{\ell-2,p})<\infty$ for every $p>1$. 
\ed
\ed
\ed
\onelineskip

In the present situation, the random symbol ${\mathfrak S}$ is defined by 
\beas 
{\mathfrak S}(\tti\sfz)
&=&
{\mathfrak S}^{(3,0)}(\tti\sfz)%[(\tti\sfz)^{\otimes3}]
+{\mathfrak S}^{(2,0)}_0(\tti\sfz)%[(\tti\sfz)^{\otimes2}]
%+{\mathfrak S}^{(2,0)}(\tti\sfz,\tti\sfx)%[(\tti\sfz)^{\otimes2}]
%\\&&
%+{\mathfrak S}^{(1,1)}(\tti\sfz,\tti\sfx)%[(\tti\sfz)^{\otimes2}]
+{\mathfrak S}^{(1,0)}(\tti\sfz)%[\tti\sfz]
%+{\mathfrak S}^{(0,1)}(\tti\sfz,\tti\sfx)%[\tti\sfx]
%\\&&
%{\colorg
+{\mathfrak S}^{(2,0)}_1(\tti\sfz).
%+{\mathfrak S}^{(1,1)}_1(\tti\sfz,\tti\sfx).
%}
\eeas
Let ${\mathfrak S}_n=1+r_n{\mathfrak S}$ and define $\hat{p}_n(z)$ by 
\beas 
\hat{p}(z) &=& E\big[{\mathfrak S}_n(\partial_z)^*\phi(z;0,G_\infty)\big]
\eeas
with naturally defined adjoint operation ${\mathfrak S}_n(\partial_z)^*$. 
We follow the proof of Theorem \ref{asy.exp7} but with 
\beas 
\varphi_n(\theta,\sfz;\Xi)
&=&
E\big[e^{\lambda_n(\theta;\sfz)}\Xi\big]
\eeas
for $\varphi_n(\theta,\sfz,\sfx;\Xi)$, where 
\beas 
\lambda_n(\theta;\sfz) 
&=& 
\theta M_n[\tti\sfz]+2^{-1}(1-\theta^2)G_\infty[(\tti\sfz)^{\otimes2}]+\theta r_nN_n[\tti\sfz].
\eeas
Then, in place of (\ref{17-810-30}), we obtain 
\beas 
\sup_n\sup_{\theta\in(0,1)}\sup_{\sfz\in\bbR^\sfd}r_n^{\ep L}
|\sfz|^{\sf k}\big|\varphi_n(\theta,\sfz;\Xi)\big|
&<& \infty. 
\eeas
For this estimate for $\theta\in(0,\sqrt{1-r_n^\ep}]$, only non-degeneracy of $G_\infty$ is used. 
In this way, we can prove the validity of the asymptotic expansion by $\hat{p}_n$. 
Denote by $\hat{\cale}(M,\gamma)$ the set of measurable functions $f:\bbR^\sfd\to\bbR$ such that 
$|f(z)|\leq M(1+|z|)^\gamma$ for all $z\in\bbR^\sfd$. 
Let 
\beas 
\hat{\Delta}_n(f) 
&=& 
\bigg|E\big[f(Z_n)\big]-\int_{\bbR^\sfd}f(z)\hat{p}_n(z)dz\bigg|
\eeas
for $f\in\hat{\cale}(M,\gamma)$. 
\begin{thm} Suppose that Condition $[D]$ is satisfied. 
Then, for any positive numbers $M$ and $\gamma$, 
\beas 
\sup_{f\in\hat{\cale}(M,\gamma)}\hat{\Delta}_n(f)&=&o(r_n)
\eeas
as $n\to\infty$. 
\end{thm}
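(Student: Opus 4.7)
The plan is to adapt the proof of Theorem \ref{asy.exp7} to the present simpler situation, exploiting the fact that without a reference variable $X_n$ the non-degeneracy of $G_\infty$ alone drives the integration-by-parts machinery. First I would define the truncation $\psi_n=\psi(\xi_n)$ with
\[
\xi_n \yeq \frac{3s_n}{2s_n+12\Delta_{M_n}}+\frac{e_n}{s_n^2},
\]
where $e_n$ is determined from the expansion $\Delta_{\check{M}_n(\theta)}=\Delta_{M_n}+r_n^{\ep/2}d_n^{**}(\theta)$ on $\theta\in(\sqrt{1-r_n^{\ep}},1]$. By hypothesis $[D]$ (iv) (b), the estimate $\|1-\psi_n\|_{\ell-2,p_1}=O(r_n^{p_1})$ holds for some $p_1>1$, so that $E[f(Z_n)(1-\psi_n)]=o(r_n)$ uniformly in $f\in\hat{\cale}(M,\gamma)$.

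Next I would apply Proposition \ref{170806-1} to obtain
\[
\varphi_n(1;\psi_n)\yeq \varphi_n(0;\psi_n)+r_nE\big[\Psi(\sfz)\,{\mathfrak S}(\tti\sfz)\big]+R_n(\sfz),
\]
where now $R_n=\sum_{i=3}^{12}R_n^{(i)}$ reduces to fewer terms since $W_n,W_\infty,X_n,X_\infty$ all vanish (or are absorbed). With $\hat{g}_n^\alpha(\sfz)=\partialbs^\alpha\varphi_n(1;\psi_n\check Z_n^\alpha)$ and $\hat{h}_n^\alpha(\sfz)$ the corresponding Fourier transform of the target density, I would establish both pointwise convergence $\hat{g}_n^\alpha-\hat{h}_n^\alpha=o(r_n)$ and the uniform bound
\[
\sup_n\sup_{\sfz\in\Lambda_n(\sfd)}|\sfz|^{\sfd+1}r_n^{-1}\big|\hat{g}_n^\alpha(\sfz)-\hat{h}_n^\alpha(\sfz)\big|<\infty,
\]
with $\Lambda_n(\sfd)=\{|\sfz|\leq r_n^{-q}\}$ for some $q\in(1/3,1/2)$. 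The argument proceeds exactly as in Lemmas \ref{170810-5}--\ref{170810-6}, except that integration by parts with respect to $\check M_n(\theta)$ on $\theta\in(\sqrt{1-r_n^{\ep}},1]$ and with respect to $G_\infty$ on the complementary interval (using $\exp(2^{-1}(1-\theta^2)G_\infty[(\tti\sfz)^{\otimes2}])$ together with non-degeneracy of $G_\infty$) replaces the role of $X_n(\theta)$. In particular, the substitute estimate
\[
\sup_n\sup_{\theta\in(0,1)}\sup_{\sfz\in\bbR^\sfd}r_n^{\ep L}|\sfz|^{\sf k}\big|\varphi_n(\theta,\sfz;\Xi)\big|<\infty
\]
is obtained as described in the text preceding the theorem, and for $\ep$ chosen small relative to the rate $\kappa$ in $[D]$ (ii), the gain $r_n^{1+\kappa'}$ inherited from $[D]$ (ii) absorbs the loss $r_n^{-\ep L}$ from the interpolation weight.

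The bulk of the work is then the term-by-term estimation of $\partialbs^\alpha R_n^{(i)}$, parallel to the treatment of the 24 pieces $R[i]$ constituting $R_n^{(3)}$ in Lemma \ref{170810-5}: each piece is bounded by $o(r_n)|\sfz|^{-(\sfd+1)}$ via a suitable number of IBPs together with the moment estimates in $[D]$ (ii), using the factor $\exp(2^{-1}(1-\theta^2)G_\infty[(\tti\sfz)^{\otimes2}])$ whenever cancellation of powers of $\sfz$ is required. The contributions from $R_n^{(i)}$ with $i=5,...,12$ that survive in the present context (those built from ${\mathfrak S}^{(3,0)}_n$, ${\mathfrak S}^{(2,0)}_{0,n}$, ${\mathfrak S}^{(1,0)}_n$ and ${\mathfrak S}^{(2,0)}_{1,n}$) are handled as before: $[D]$ (iii) (b) combined with Cauchy's integral formula upgrades the pointwise convergence to convergence of all derivatives $\partialbs^\alpha$, so a version of $[C]$ (iii)$^\flat$ is automatic.

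Finally, with the Fourier-side estimates in hand, Lemma \ref{170811-1} transfers to
\[
\sup_{z\in\bbR^\sfd}\big|(1+|z|)^m(g_n^0(z)-\hat p_n(z))\big|=o(r_n)
\]
for every $m\in\bbZ_+$, by splitting the integral at $\partial\Lambda_n(\sfd)$ and using that the tails are $O(r_n^{3q})+O(r_n^{1+q})=o(r_n)$ for $q\in(1/3,1/2)$. Choosing $m>\sfd+\gamma$ yields $\sup_{f\in\hat\cale(M,\gamma)}\hat\Delta_n(f)=o(r_n)$. The main obstacle, as in the proof of Theorem \ref{asy.exp7}, is the bookkeeping of the 24 error pieces in $R_n^{(3)}$: balancing the polynomial growth in $\sfz$ picked up by integration by parts against the gain $r_n$ offered by $G_n^{(2)}, G_n^{(3)}$ and their derivatives requires the sharper rate $r_n^{1+\kappa}$ in $[D]$ (ii) to compensate the loss $r_n^{-\ep L}$ from the interpolation; everything else is essentially the same bookkeeping performed in Lemmas \ref{170810-5} and \ref{170810-6}.
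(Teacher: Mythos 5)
Your proposal is correct and follows essentially the same route as the paper: the paper's own proof consists precisely of the remarks preceding the theorem, namely re-running the argument of Theorem \ref{asy.exp7} (via Lemmas \ref{170810-5}, \ref{170810-6} and \ref{170811-1}) with $\lambda_n(\theta;\sfz)=\theta M_n[\tti\sfz]+2^{-1}(1-\theta^2)G_\infty[(\tti\sfz)^{\otimes2}]+\theta r_nN_n[\tti\sfz]$, using non-degeneracy of $\Delta_{M_n}$ under the truncation for $\theta$ near $1$ and only the Gaussian factor $\exp\big(2^{-1}(1-\theta^2)G_\infty[(\tti\sfz)^{\otimes2}]\big)$ with $G_\infty^{-1}\in L^{\infty-}$ on $(0,\sqrt{1-r_n^\ep}\,]$, then choosing $\ep L<\kappa'$ and $q\in(1/3,1/2)$ exactly as you describe. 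The only cosmetic quibble is that the decay in $|\sfz|$ on the lower $\theta$-range is not obtained by integration by parts ``with respect to $G_\infty$'' but directly from the exponential factor; your parenthetical makes clear you mean the right thing.
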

}
%%%%%%%%%%%%%%%%%%%%%%%%%%%%%%%
%\section{Double Skorohod integrals}\label{20160816-1}

%
\begin{en-text}
$\frac{1}{3}$と次の項で$v_n$にしてIBPする．
\end{en-text}
%

%\subsection{Asymptotic expansion of double stochastic integral: differentiable $f$}

%
\begin{en-text}
Two representations: with derivatives of $\varphi$ or with 
$_{C^{2k}_b(\bbR^\sfd)}\langle \varphi,p_n\rangle_{C^{-2k}_b(\bbR^\sfd)}$. 

Replace $X_\infty$ by $X_\infty+\ep \eta_1$ and $Z_n+\ep \eta_0$, then $p_n^\ep$ is well-defined. 
The integral involving $p_n^\ep$ w. r. t. $x$ substitute $X_\infty+\ep \eta_1$ for $x$ in the formula. 
Finally let $\ep\down0$ to get a final expression of the formula. 
\bea\label{20160816-3} &&
E[\int\int g(z,x)\phi(z;0,G_\infty+\ep^2 I_\sfd)\delta_x(X_\infty+\ep \eta_1)dzdx]
\nn\\&=&
\int\int g(z,x)E[\phi(z;0,G_\infty+\ep^2 I_\sfd)|X_\infty+\ep \eta_1=x]p^{X_\infty+\ep \eta_1}(x)dzdx
\nn\\&=&
\int E\big[g(z,X_\infty+\ep \eta_1)E[\phi(z;0,G_\infty+\ep^2 I_\sfd)|X_\infty+\ep \eta_1]\big]dz
\nn\\&=&
\int E\big[g(z,X_\infty+\ep \eta_1)\phi(z;0,G_\infty+\ep^2 I_\sfd)\big]dz
\eea
So, first apply the expansion formula for $X_\infty+\ep \eta_1$ and $Z_n+\ep \eta_0$. 
The error bound has been obtained with derivatives $\varphi$, and 
the error bound is stable when $\ep\down0$. 
But the general formula with Watanabe's delta function is rewritten by the above formula. 
The right-hand side of (\ref{20160816-3}) is also stable when $\ep\down0$. Thus, we obtain an expansion formula 
with the shape of (\ref{20160816-3}) with $\ep=0$. 
\end{en-text}
%

%\subsection{Asymptotic expansion of double stochastic integral: non-degeneracy and measurable $f$}

%%%%%%%%%%%%%%%%%%%%%%%%%%%%

%\bibliographystyle{plain}
%\bibliography{bibtex20080401}
% BibTeX users please use one of
%\bibliographystyle{spbasic}      % basic style, author-year citations
\bibliographystyle{spmpsci}      % mathematics and physical sciences
\bibliography{bibtex-20160515-20160926-20170811+}   % name your BibTeX data base

\end{document}
%%%%%%%%%%%%%%%%%%%%%%%%%%%%%%%%%%%%%%
%%%%%%%%%%%%%%%%%%%%%%%%%%%%%%%%%%%%%%
%%%%%%%%%%%%%%%%%%%%%%%%%%%%%%%%%%%%%%
%%%%%%%%%%%%%%%%%%%%%%%%%%%%%%%%%%%%%%
%%%%%%%%%%%%%%%%%%%%%%%%%%%%%%%%%%%%%%

latexで数式中に太字にするには
{\bf A}
とかすればいいんですが、ローマン体になってしまいますし、
ギリシャ文字は太字にならなかったりします。

そこで
\usepackage{bm}
とboldmathパッケージを使うことをtexファイルのはじめに宣言し
{\bm A}
とすると、イタリック体の太字にできますし、
{\bm \phi}
とすると、ギリシャ文字も太字にできます。